\documentclass[11pt]{amsart}
\pdfoutput=1

\usepackage[utf8]{inputenc}
\usepackage[english]{babel}
\usepackage{amssymb}
\usepackage{stmaryrd}
\usepackage{xcolor}
\usepackage{microtype}
\usepackage{mathtools}
\usepackage{enumitem}

\usepackage{lmodern}
\usepackage[margin=1in, marginparwidth=.75in]{geometry}

\usepackage[numbers,compress]{natbib}

\usepackage[bookmarksdepth=3]{hyperref}
\hypersetup{
  pdftitle={Finite element exterior calculus for time-dependent Hamiltonian partial differential equations},
  pdfauthor={Ari Stern and Enrico Zampa},
  pdfsubject={MSC 2020: 65P10 (Primary), 65M60, 37M15, 37K25 (Secondary)},
  bookmarksopen=false
}

\usepackage[capitalize,nameinlink]{cleveref}

\theoremstyle{plain} 
\newtheorem{theorem}             {Theorem}  [section]
\newtheorem{lemma}      [theorem]{Lemma}
\newtheorem{corollary}  [theorem]{Corollary}
\newtheorem{proposition}[theorem]{Proposition}

\theoremstyle{definition}
\newtheorem{definition} [theorem]{Definition}
\newtheorem{example}    [theorem]{Example}
\newtheorem{assumption} [theorem]{Assumption}

\theoremstyle{remark}
\newtheorem{remark}     [theorem]{Remark}

\AddToHook{env/lemma/begin}{\crefalias{theorem}{lemma}}
\AddToHook{env/corollary/begin}{\crefalias{theorem}{corollary}}
\AddToHook{env/proposition/begin}{\crefalias{theorem}{proposition}}
\AddToHook{env/definition/begin}{\crefalias{theorem}{definition}}
\AddToHook{env/example/begin}{\crefalias{theorem}{example}}
\AddToHook{env/assumption/begin}{\crefalias{theorem}{assumption}}
\AddToHook{env/remark/begin}{\crefalias{theorem}{remark}}

\newlist{assumptionenumerate}{enumerate}{1}
\setlist[assumptionenumerate]{label=(\roman*),ref=\theassumption(\roman*)}
\crefname{assumptionenumeratei}{Assumption}{Assumptions}
\crefrangelabelformat{assumptionenumeratei}{#3#1--(\crefstripprefix{#1}{#2}#4}

\newlist{lemmaenumerate}{enumerate}{1}
\setlist[lemmaenumerate]{label=(\roman*),ref=\thelemma(\roman*)}
\crefname{lemmaenumeratei}{Lemma}{Lemmas}
\crefrangelabelformat{lemmaenumeratei}{#3#1--(\crefstripprefix{#1}{#2}#4}

\newcommand{\ringhat}[1]{\mathchoice
  {\smash{\mathring{\widehat{#1}}}}%
  {\vphantom{{\mathring{\widehat{#1}}}}\smash{\mathring{\widehat{#1}}}}%
  {\vphantom{{\mathring{\widehat{#1}}}}\smash{\mathring{\widehat{#1}}}}%
  {\vphantom{{\mathring{\widehat{#1}}}}\smash{\mathring{\widehat{#1}}}}%
}

\DeclareMathSymbol{\star}{\mathord}{letters}{"3F}

\title[FEEC for time-dependent Hamiltonian PDEs]{Finite element exterior calculus for time-dependent Hamiltonian partial differential equations}
\author{Ari Stern}
\address{Department of Mathematics, Washington University in St.~Louis}
\email{stern@wustl.edu}

\author{Enrico Zampa}
\address{Department of Mathematics, University of Vienna}
\email{enrico.zampa@univie.ac.at}

\begin{document}

\begin{abstract}
  The success of symplectic integrators for Hamiltonian ODEs has led
  to a decades-long program of research seeking analogously
  structure-preserving numerical methods for Hamiltonian PDEs. In this
  paper, we construct a large class of such methods by combining
  finite element exterior calculus (FEEC) for spatial
  semidiscretization with symplectic integrators for time
  discretization. The resulting methods satisfy a local
  \emph{multisymplectic} conservation law in space and time, which
  generalizes the symplectic conservation law of Hamiltonian ODEs, and
  which carries finer information about Hamiltonian structure than
  other approaches based on global function spaces. We give particular
  attention to conforming FEEC methods and hybridizable discontinuous
  Galerkin (HDG) methods. The theory and methods are illustrated by
  application to the semilinear Hodge wave equation.
\end{abstract}

\maketitle

\section{Introduction}

\subsection{Background and motivation}

Hamiltonian ordinary differential equations (ODEs) and partial
differential equations (PDEs) are ubiquitous in physical
systems. Typically, it is infeasible to solve these equations exactly,
so numerical methods are used to compute approximate
solutions. However, while the exact solutions themselves may be out of
reach, the Hamiltonian structure leads the solutions to have certain
properties that \emph{can} be characterized exactly---symmetries,
conservation laws, etc.---and which it would be desirable for
numerical solutions to share. Over the last few decades, this has
motivated a major line of research into \emph{structure-preserving}
numerical methods, for which the numerical solutions share these
important features of the exact solutions.

Hamiltonian ODEs satisfy the \emph{symplectic conservation law}, which
many standard numerical integrators (e.g., explicit Runge--Kutta
methods) fail to preserve. The development of structure-preserving
\emph{symplectic integrators}, particularly since the 1980s, has led
to major advances in simulation of such systems, with numerical
advantages that have been well studied and documented in the decades
since \citep{SaCa1994,LeRe2004,HaLuWa2006}. This success story for
Hamiltonian ODEs naturally raises a longstanding question: How can we
construct similarly structure-preserving methods for Hamiltonian PDEs?

One approach begins by considering time-dependent Hamiltonian PDEs to
be Hamiltonian dynamics on an infinite-dimensional function
space. Semidiscretizing in space (also known as the ``method of
lines'') then gives an approximation to the infinite-dimensional
dynamics by a finite-dimensional system of ODEs, to which a symplectic
integrator may be applied. This approach is particularly amenable to
finite element semidiscretization, where the infinite-dimensional
function space is replaced by some finite element space. However,
there are some notable obstacles:
\begin{enumerate}
\item It leaves open the question of which semidiscretization methods
  are structure-preserving, in the sense that the resulting ODEs are
  also Hamiltonian.
\item The symplectic structure on a \emph{global} function space does
  not fully capture the finer-scale \emph{local} structure of
  Hamiltonian PDEs. In particular, the symplectic conservation law is
  global, but there are also local conservation laws one would like a
  numerical method to preserve.
\end{enumerate}
There has been considerable work on the first issue, and various
(globally) Hamiltonian finite element methods have been studied for
problems including linear hyperbolic systems \citep{XuVdVBo2008},
surface waves \citep{BrFeVdV2017}, the wave equation and Maxwell's
equations \citep{SaCiNgPe17,SaDuCo22,SaVa24}, and the linearized
shallow water equations \citep{NuSa2025}. The second obstacle, however,
is more fundamental.

A different approach---the one we take in this paper---is to start
from the local Hamiltonian perspective. This originated from
independent work in 1935 of \citet{deDonder1935} and \citet{Weyl1935},
who developed a canonical form for certain Hamiltonian PDEs that
involves finite-dimensional partial derivatives rather than the
infinite-dimensional functional derivatives of the global Hamiltonian
approach (i.e., ordinary calculus rather than calculus of
variations). This theory has continued to advance, and we mention
important contributions due to \citet{Bridges1997,Bridges2006} in the
1990s and 2000s. In this setting, Hamiltonian PDEs satisfy a local
\emph{multisymplectic conservation law}, which implies the global
symplectic conservation law as a consequence (i.e., when integrated
over space) and also contains finer-scale information about the local
structure of solutions.

Initial work on multisymplectic methods for Hamiltonian PDEs,
beginning in the late 1990s, tended to employ rectangular grids (e.g.,
applying a symplectic integrator along each coordinate axis) or
low-order finite difference and finite volume methods on unstructured
meshes; we mention the work of Marsden and collaborators
\citep{MaPaSh1998,MaSh1999,MaPeShWe2001,LeMaOrWe2003} as well as Reich
and collaborators
\citep{Reich2000a,Reich2000b,BrRe2001,MoRe2003,FrMoRe2006}. Since
finite element methods would seem to fit more naturally with the
global-function-space approach, their use in this context was mostly
limited to the construction of multisymplectic finite difference
stencils \citep{GuJiLiWu2001,ZhBaLiWu2003,Chen2008}; however, we also
note the more recent work on 1+1-dimensional multisymplectic finite
element methods by \citet{CeJa21}.

In 2020, \citet{McSt2020} developed a general theory of
multisymplectic finite element methods for stationary Hamiltonian PDEs
in the canonical de~Donder--Weyl form. This work, based on the
\emph{hybridization} framework of \citet{CoGoLa2009}, showed that
numerous standard finite element methods---including conforming,
nonconforming, and hybridizable discontinuous Galerkin (HDG)
methods---satisfy a local multisymplectic conservation law involving
the numerical traces and fluxes arising in the hybrid formulation. In
2024, \citet{McSt2024} extended this to time-dependent de~Donder--Weyl
systems, constructing multisymplectic methods by applying hybrid
semidiscretization in space followed by symplectic integration in
time.

While this work established the Hamiltonian structure-preserving
properties of a wide range of high-order methods on unstructured
meshes, its purview was limited to Hamiltonian PDEs in the
de~Donder--Weyl form, which includes the scalar wave equation but
excludes---for instance---Maxwell's equations, the vector wave
equation, and the Hodge wave equation for differential
$k$-forms. Expanding the approach to these additional systems requires
a more general notion of canonical Hamiltonian PDEs, due to
\citet{Bridges2006}, involving the exterior calculus of differential
forms.

In a recent paper \citep{StZa2025}, the present authors developed a
theory of multisymplectic methods for stationary Hamiltonian PDEs in
the canonical form of \citet{Bridges2006}, extending the work of
\citep{McSt2020} for stationary de~Donder--Weyl systems. These
structure-preserving methods are based on \emph{finite element
  exterior calculus} (FEEC) \citep{ArFaWi2006,ArFaWi2010,Arnold2018}
and the FEEC hybridization framework of \citet*{AwFaGuSt2023},
including conforming, nonconforming, and HDG methods.

In this paper, we complete this program by developing multisymplectic
FEEC methods for time-dependent Hamiltonian PDEs in the more general
form of \citet{Bridges2006}, mirroring how \citep{McSt2024} did so for
time-dependent de~Donder--Weyl systems. Specifically, for an
$ ( n + 1 ) $-dimensional system of Hamiltonian PDEs, we first
semidiscretize in space using the multisymplectic FEEC methods of
\citep{StZa2025}, and subsequently discretize in time using a
symplectic integrator. The resulting structure-preserving methods are
shown to satisfy a discrete local multisymplectic conservation law in
space and time.

\subsection{Outline of paper and contributions}

The paper is organized as follows:

\begin{itemize}
\item \Cref{sec:hamiltonian} briefly recalls the canonical formalism
  of \citet{Bridges2006} for stationary Hamiltonian PDEs, as in
  \citep{StZa2025}, before extending to the time-dependent systems
  considered throughout this paper. We show that the canonical
  equations in spacetime have the novel form
  \begin{align*}
    \dot{q} + \mathrm{D} p &= \frac{ \partial H }{ \partial p }, \\
    -\dot{p} + \mathrm{D} q &= \frac{ \partial H }{ \partial q } ,
  \end{align*}
  where $ \mathrm{D} $ is the \emph{Hodge--Dirac operator}, and
  characterize the multisymplectic conservation law for this
  system. Several examples are introduced (and considered throughout
  the paper), particularly the semilinear Hodge wave equation.

\item \Cref{sec:semidiscrete} studies semidiscretization of these
  systems of PDEs by hybridized FEEC methods. The methods of
  \citep{StZa2025} that are multisymplectic for stationary systems are
  shown to yield a semidiscrete multisymplectic conservation law when
  applied to time-dependent systems. Specific methods for the
  semilinear Hodge wave equation are considered in depth; one class of
  HDG methods, constructed as above, is shown to be multisymplectic,
  while another not constructed in this way is shown to be dissipative
  and non-multisymplectic.

\item \Cref{sec:global_hamiltonian} connects the local Hamiltonian
  framework of the previous section to the global Hamiltonian approach
  considered in other work. Under hypotheses satisfied by all the
  methods we consider, we show that the multisymplectic
  semidiscretization methods yield a Hamiltonian system of ODEs, with
  respect to some global ``discrete Hamiltonian.''

\item \Cref{sec:time_integrators} discusses the time-discretization of
  the semidiscretized ODEs via symplectic Runge--Kutta and partitioned
  Runge--Kutta methods. We prove that the resulting fully-discrete
  system satisfies a discrete multisymplectic conservation law in
  space and time. As an example, we discuss the application of the
  St\"ormer/Verlet ``leapfrog'' method to the semilinear Hodge wave
  equation, which requires only a linear variational solver---even in
  the presence of nonlinear source terms.

\item Finally, \cref{sec:numerical_examples} illustrates the foregoing
  methods and theory with numerical examples for the semilinear Hodge
  wave equation.
  
\end{itemize} 

\subsection{Acknowledgments} Ari Stern acknowledges the support of the
National Science Foundation (DMS-2208551) and the Simons Foundation
(SFI-MPS-TSM-00014348).

\section{Canonical time-dependent Hamiltonian PDEs}
\label{sec:hamiltonian}

\subsection{Background: the stationary case}
\label{sec:background}

Before developing the time-dependent case of canonical Hamiltonian
PDEs, we briefly recall the stationary case from \citet[Section
2]{StZa2025}. Here, and throughout the paper, we fix a spatial domain
$ \Omega \subset \mathbb{R}^n $ equipped with the Euclidean metric.

\subsubsection{Exterior algebra}
Let $ \operatorname{Alt} ^k \mathbb{R}^n $ denote the space of
alternating $k$-linear forms
$ \mathbb{R}^n \times \cdots \times \mathbb{R}^n \rightarrow
\mathbb{R} $, and let
$ \operatorname{Alt} \mathbb{R}^n \coloneqq \bigoplus _{ k = 0 } ^n
\operatorname{Alt} ^k \mathbb{R}^n $. The \emph{wedge product} (or
\emph{exterior product})
$ \wedge \colon \operatorname{Alt} ^k \mathbb{R}^n \times
\operatorname{Alt} ^\ell \mathbb{R}^n \rightarrow \operatorname{Alt}
^{ k + \ell } \mathbb{R}^n $ gives
$ ( \operatorname{Alt} \mathbb{R}^n , \wedge ) $ the structure of an
associative algebra, called the \emph{exterior algebra} on
$\mathbb{R}^n$. The Euclidean inner product on $\mathbb{R}^n$ induces
an inner product $ ( \cdot , \cdot ) $ on
$ \operatorname{Alt} \mathbb{R}^n $, and we denote the Euclidean
volume form (i.e., the determinant) by
$ \mathrm{vol} \in \operatorname{Alt} ^n \mathbb{R}^n $. The
\emph{Hodge star} operator
$ \star \colon \operatorname{Alt} ^k \mathbb{R}^n \rightarrow
\operatorname{Alt} ^{ n - k } \mathbb{R}^n $ is defined by the
condition
\begin{equation*}
  v \wedge \star w = ( v, w ) \,\mathrm{vol}, \qquad v, w \in \operatorname{Alt} ^k \mathbb{R}^n ,
\end{equation*}
which implies that $ \star $ is an isometric automorphism on
$ \operatorname{Alt} \mathbb{R}^n $.

\subsubsection{Exterior calculus}
Next, denote by $ \Lambda ^k (\Omega) $ the space of smooth
differential $k$-forms on $\Omega$ and let
$ \Lambda (\Omega) \coloneqq \bigoplus _{ k = 0 } ^n \Lambda ^k
(\Omega) $. These consist of smooth maps
$ \Omega \rightarrow \operatorname{Alt} ^k \mathbb{R}^n $ and
$ \Omega \rightarrow \operatorname{Alt} \mathbb{R}^n $,
respectively. The wedge product and Hodge star extend to
$ \Lambda (\Omega) $ by applying them pointwise at each
$ x \in \Omega $. The \emph{exterior differential}
$ \mathrm{d} ^k \colon \Lambda ^k (\Omega) \rightarrow \Lambda ^{ k +
  1 } (\Omega) $ and \emph{codifferential}
$ \delta ^k \coloneqq ( - 1 ) ^k \star ^{-1} \mathrm{d} ^{ n - k }
\star \colon \Lambda ^k (\Omega) \rightarrow \Lambda ^{ k -1 }
(\Omega) $ extend to operators
$ \mathrm{d} \coloneqq \bigoplus _{ k = 0 } ^n \mathrm{d} ^k $ and
$ \delta \coloneqq \bigoplus _{ k = 0 } ^n \delta ^k $ on
$ \Lambda (\Omega) $, where $ \mathrm{d} $ is $ ( +1) $-graded and
$ \delta $ is $ ( -1) $-graded. The exterior differential and
codifferential satisfy the important identity
\begin{equation}
  \label{eq:d_delta_identity}
  \mathrm{d} ( \tau \wedge \star v ) = \mathrm{d} \tau \wedge \star v - \tau \wedge \star \delta v , \qquad \tau \in \Lambda ^{ k -1 } (\Omega) ,\ v \in \Lambda ^k (\Omega) .
\end{equation}
Finally, we define the \emph{Hodge--Dirac operator}
$ \mathrm{D} \coloneqq \mathrm{d} + \delta $ on $ \Lambda (\Omega)
$. Since $ \mathrm{d} \mathrm{d} = 0 $ and $ \delta \delta = 0 $, the
square of the Hodge--Dirac operator is
$ \mathrm{D} ^2 = \mathrm{d} \delta + \delta \mathrm{d} $,
which is the \emph{Hodge--Laplace operator}.

\subsubsection{Canonical Hamiltonian PDEs} A canonical Hamiltonian
system of PDEs on $\Omega$ has the form
\begin{equation}
  \label{eq:hamiltonian_z_space}
  \mathrm{D} z = \frac{ \partial H }{ \partial z } ,
\end{equation}
where $ z \in \Lambda (\Omega) $ is unknown and
$ H \colon \Omega \times \operatorname{Alt} \mathbb{R}^n \rightarrow
\mathbb{R} $ is a given function called the \emph{Hamiltonian} of the
system. We can write this in terms of the individual components
$ z ^k \in \Lambda ^k (\Omega) $ as
\begin{equation*}
  \begin{bmatrix}
    & \delta ^0 \\
    \mathrm{d} ^0 & & \ddots \\
    & \ddots & & \delta ^n \\
    & & \mathrm{d} ^{ n -1 } 
  \end{bmatrix}
  \begin{bmatrix}
    z ^0 \\
    z ^1 \\
    \vdots \\
    z ^n 
  \end{bmatrix} =
  \begin{bmatrix}
    \partial H / \partial z ^0 \\
    \partial H / \partial z ^1 \\
    \vdots \\
    \partial H / \partial z ^n 
  \end{bmatrix},
\end{equation*}
where the matrix on the left-hand side corresponds to $ \mathrm{D}
$. This formalism is due to \citet{Bridges2006}.

\begin{example}
  If $ u \in \Lambda ^k (\Omega) $ is a solution to the semilinear
  Hodge--Laplace problem
  \begin{equation*}
    \mathrm{D} ^2 u = \frac{ \partial F }{ \partial u } ,
  \end{equation*}
  for some given potential function
  $ F \colon \Omega \times \operatorname{Alt} ^k \mathbb{R}^n
  \rightarrow \mathbb{R} $, then we can write this in the first-order
  form
  \begin{align*}
    \delta u &= \sigma ,\\
    \mathrm{d} \sigma + \delta \rho &= \frac{ \partial F }{ \partial u } ,\\
    \mathrm{d} u &= \rho .
  \end{align*}
  This says that $ z = \sigma \oplus u \oplus \rho
  $ is a solution to \eqref{eq:hamiltonian_z_space} with
  $ H ( x, z ) = \frac{1}{2} \lvert \sigma \rvert ^2 + F ( x, u ) +
  \frac{1}{2} \lvert \rho \rvert ^2 $.
\end{example}

\subsubsection{The multisymplectic conservation law} The
\emph{canonical multisymplectic $2$-form} on
$ \operatorname{Alt} \mathbb{R}^n $ is denoted
$ \omega \colon \operatorname{Alt} \mathbb{R}^n \times
\operatorname{Alt} \mathbb{R}^n \rightarrow \operatorname{Alt} ^{ n -1
} \mathbb{R}^n $ and defined by
\begin{equation}
  \label{eq:ms_form}
  \omega ( w _1 , w _2 ) \coloneqq \sum _{ k = 1 } ^n ( w _1 ^{ k -1 } \wedge \star w _2 ^k - w _2 ^{ k -1 } \wedge \star w _1 ^k ) .
\end{equation}
For $ w _1, w _2 \in \Lambda (\Omega) $, the multisymplectic form is
related to the Hodge--Dirac operator by the identity
\begin{equation*}
  \mathrm{d} \omega ( w _1, w _2 ) =   \bigl( ( \mathrm{D} w _1 , w _2 ) - ( w _1 , \mathrm{D} w _2 ) \bigr) \mathrm{vol}
\end{equation*}
(\citet[Proposition 2.5]{Bridges2006}, \citet[Equation 8]{StZa2025}),
which can be written equivalently as
\begin{equation}
  \label{eq:div_omega_D}
  \operatorname{div} \omega ( w _1 , w _2 ) = ( \mathrm{D} w _1 , w _2 ) - ( w _1 , \mathrm{D} w _2 ) .
\end{equation}
In particular, suppose $ w _1 , w _2 $ are \emph{first variations} of
a solution $z$ to \eqref{eq:hamiltonian_z_space}, meaning that each is a
solution to the linearized equation
\begin{equation*}
  \mathrm{D} w _i = \frac{ \partial ^2 H }{ \partial z ^2 } w _i ,
\end{equation*}
for $ i = 1 , 2 $. Then the identity above, together with the symmetry
of the Hessian, implies
\begin{equation*}
  \operatorname{div} \omega ( w _1, w _2 ) = 0 ,
\end{equation*}
which is called the \emph{multisymplectic conservation law}.

\subsubsection{Boundary traces and the integral form of the
  multisymplectic conservation law} Let $ K \Subset \Omega $ be a
subdomain with boundary $ \partial K $, which we assume (for now) to
be smooth. Let $ \widehat{ \star } $ be the Hodge star on
$ \partial K $, with respect to the orientation induced by $K$ and the
metric induced by the Euclidean inner product on $\mathbb{R}^n$. The
\emph{tangential} and \emph{normal traces} of
$ w \in \Lambda (\Omega) $ on $ \partial K $ are defined by
\begin{equation*}
  w ^{\mathrm{tan}} \coloneqq \operatorname{tr} w \in \Lambda ( \partial K ) , \qquad w ^{\mathrm{nor}} \coloneqq \widehat{ \star } ^{-1} \operatorname{tr} \star w \in \Lambda ( \partial K ) ,
\end{equation*}
where $ \operatorname{tr} $ denotes pullback of differential forms by
the boundary inclusion $ \partial K \hookrightarrow \Omega $. Next,
let $ ( \cdot , \cdot ) _K $ be the $ L ^2 $ inner product on
$ \Lambda (K) $, defined by
$ ( w _1 , w _2 ) _K \coloneqq \sum _{ k = 0 } ^n \int _K w _1 ^k
\wedge \star w _2 ^k $, and similarly let
$ \langle \cdot , \cdot \rangle _{ \partial K } $ be the $ L ^2 $
inner product on $ \Lambda ( \partial K ) $ arising from the boundary
Hodge star $ \widehat{ \star } $. Using Stokes's theorem and the
identity \eqref{eq:d_delta_identity}, we obtain
\begin{equation}
  \label{eq:ibp}
  \langle w _1 ^{\mathrm{tan}} , w _2 ^{\mathrm{nor}} \rangle _{ \partial K } = ( \mathrm{d} w _1 , w _2 ) _K - ( w _1 , \delta w _2 ) _K ,
\end{equation}
which is the integration-by-parts formula for differential forms on
$K$.

Finally, define the antisymmetric bilinear form
$ [ \cdot , \cdot ] _{ \partial K } $ on $ \Lambda ( \partial K ) $ by
\begin{align*}
  [ w _1 , w _2 ] _{ \partial K }
  &\coloneqq \langle w _1 ^{\mathrm{tan}} , w _2 ^{\mathrm{nor}} \rangle _{ \partial K } - \langle w _2 ^{\mathrm{tan}} , w _1 ^{\mathrm{nor}} \rangle _{ \partial K } \\
  &= ( \mathrm{D} w _1 , w _2 ) _K - ( w _1 , \mathrm{D} w _2 ) _K \\
  &= \int _{ \partial K } \operatorname{tr} \omega ( w _1 , w _2 ) .
\end{align*}
It follows that the multisymplectic conservation law is equivalent to
the statement that
\begin{equation*}
  [ w _1 , w _2 ] _{ \partial K } = 0 ,
\end{equation*}
for all $ K \Subset \Omega $, when
$ w _1 , w _2 \in \Lambda (\Omega) $ are first variations of a
solution to \eqref{eq:hamiltonian_z_space}, cf.~\citep[Proposition
2.18]{StZa2025}. We call this the \emph{integral form of the
  multisymplectic conservation law}. In the special case of the
Hodge--Laplace problem, this expresses the symmetry of the
Dirichlet-to-Neumann operator mapping tangential boundary values to
normal boundary values (\citet[Equation 3.6]{BeSh2008}, \citet[Example
2.20]{StZa2025}).

\subsection{Canonical Hamiltonian PDEs in spacetime} To extend the
framework summarized in the previous section to time-dependent
problems, we replace $\Omega$ by $ I \times \Omega $, where $I$ is a
time interval, and equip $ I \times \Omega $ with the Minkowski metric
$ - \mathrm{d} t \otimes \mathrm{d} t + \mathrm{d} x ^1 \otimes
\mathrm{d} x ^1 + \cdots + \mathrm{d} x ^n \otimes \mathrm{d} x ^n
$. This induces an $ L ^2 $ pseudo-inner product on
$ \Lambda ( I \times \Omega ) $, which we denote by
$ ( \cdot , \cdot ) _{ I \times \Omega } $. Let
$ \overline{ \mathrm{d} } $, $ \overline{ \delta } $, and
$ \overline{ \mathrm{D} } $ denote the spacetime exterior
differential, codifferential, and Hodge--Dirac operators on
$ \Lambda ( I \times \Omega ) $. We continue to use $ \mathrm{d} $,
$ \delta $, and $ \mathrm{D} $ to denote those operators on
$ \Lambda (\Omega) $; in the spacetime setting, these are interpreted
as partial differential operators with respect to space.

Now, a canonical Hamiltonian system in spacetime has the form
\begin{equation}
  \label{eq:hamiltonian_z_spacetime}
  \overline{ \mathrm{D} } z = \frac{ \partial H }{ \partial z },
\end{equation}
where $ z \in \Lambda ( I \times \Omega ) $. To interpret this as a
time-evolution problem on $\Omega$, we write
$ z = q - \mathrm{d} t \wedge p $, where
$ q, p \colon I \rightarrow \Lambda (\Omega) $. The next result
expresses the spacetime operators $ \overline{ \mathrm{d} } $,
$ \overline{ \delta } $, and $ \overline{ \mathrm{D} } $ in terms of
the components $q$ and $p$, using ``dot'' notation for time
differentiation.

\begin{proposition}
  Let
  $ z = q - \mathrm{d} t \wedge p \in \Lambda ( I \times \Omega ) $,
  where $ q, p \colon I \rightarrow \Lambda (\Omega) $. Then:
  \begin{subequations}
    \begin{alignat}{2}
      \overline{ \mathrm{d} } z &= \mathrm{d} q &&+ \mathrm{d} t \wedge ( \dot{q} + \mathrm{d} p ) , \label{eq:spacetime_d} \\
      \overline{ \delta } z &= ( - \dot{p} + \delta q ) &&+ \mathrm{d} t \wedge \delta p , \label{eq:spacetime_del} \\
      \overline{ \mathrm{D} } z &= ( - \dot{p} + \mathrm{D} q ) &&+ \mathrm{d} t \wedge ( \dot{q} + \mathrm{D} p ) \label{eq:spacetime_D}.
    \end{alignat}
  \end{subequations}
\end{proposition}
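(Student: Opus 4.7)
My plan is to verify each of \eqref{eq:spacetime_d}--\eqref{eq:spacetime_D} by direct computation, relying on the explicit structure of $\overline{\mathrm{d}}$ and $\overline{\star}$ on the Minkowski product $I \times \Omega$.

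For \eqref{eq:spacetime_d}, the key observation is that any time-dependent spatial form $\alpha \colon I \to \Lambda(\Omega)$ satisfies $\overline{\mathrm{d}}\alpha = \mathrm{d}\alpha + \mathrm{d}t \wedge \dot{\alpha}$, where $\mathrm{d}$ is the spatial exterior derivative. Applying this to $z = q - \mathrm{d}t \wedge p$ via the graded Leibniz rule, and using $\overline{\mathrm{d}}\mathrm{d}t = 0$ together with $\mathrm{d}t \wedge \mathrm{d}t = 0$, kills the $\dot{p}$ cross-term and immediately produces \eqref{eq:spacetime_d}.

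For \eqref{eq:spacetime_del}, I first establish two auxiliary formulas for the spacetime Hodge star: for a spatial $k$-form $\alpha$,
\begin{equation*}
  \overline{\star}\alpha = (-1)^k \mathrm{d}t \wedge \star \alpha, \qquad \overline{\star}(\mathrm{d}t \wedge \alpha) = -\star \alpha .
\end{equation*}
These follow from the defining identity $\beta \wedge \overline{\star}\beta = \langle \beta, \beta \rangle\, \overline{\mathrm{vol}}$ with $\overline{\mathrm{vol}} = \mathrm{d}t \wedge \mathrm{vol}$, the crucial sign coming from $\langle \mathrm{d}t, \mathrm{d}t \rangle = -1$. Using $\overline{\delta}^k = (-1)^k \overline{\star}^{-1}\overline{\mathrm{d}}\overline{\star}$ (the same formula as in the Riemannian case, obtained by adjointness modulo boundary terms), I then apply these building blocks to each piece of $z$. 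For the $q$-piece, the $\dot{q}$-contribution generated by $\overline{\mathrm{d}}$ is annihilated by the trailing $\mathrm{d}t \wedge \mathrm{d}t$, and what remains, after inverting $\overline{\star}$, collapses to the purely spatial term $\delta q$. For the $\mathrm{d}t \wedge p$-piece, the $\dot{p}$-contribution survives, and after the second Hodge star it emerges as the $-\dot{p}$ summand, while the spatial part contributes $\mathrm{d}t \wedge \delta p$. Summing the two gives \eqref{eq:spacetime_del}.

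Identity \eqref{eq:spacetime_D} is then immediate from \eqref{eq:spacetime_d} and \eqref{eq:spacetime_del} via $\mathrm{D} = \mathrm{d} + \delta$. The main obstacle is the sign bookkeeping in the second identity: the Minkowski minus sign must be tracked through two Hodge stars, one exterior derivative, and the degree-dependent factor $(-1)^k$ in the codifferential, and one must verify that these conspire to produce formulas uniform in $k$, so that the computation extends by linearity from fixed-degree components to the full graded form $z \in \Lambda(I \times \Omega)$.
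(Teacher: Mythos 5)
Your proof of \eqref{eq:spacetime_d} and the derivation of \eqref{eq:spacetime_D} by summation coincide with the paper's. For \eqref{eq:spacetime_del}, however, you take a genuinely different route. The paper never touches the spacetime Hodge star: it characterizes $\overline{\delta}$ weakly via $( \overline{\delta} z , \zeta ) _{ I \times \Omega } = ( z , \overline{\mathrm{d}} \zeta ) _{ I \times \Omega }$ for compactly supported $\zeta = \phi - \mathrm{d} t \wedge \psi$, applies the already-proved formula \eqref{eq:spacetime_d} to $\zeta$, and integrates by parts separately in $t$ and in $x$; the Minkowski sign enters exactly once, through $\langle \mathrm{d} t , \mathrm{d} t \rangle = -1$ in the pairing of the $\mathrm{d} t \wedge ( \cdot )$ components. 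You instead compute $\overline{\delta}$ pointwise from $(-1)^k \overline{\star}^{-1} \overline{\mathrm{d}} \, \overline{\star}$ together with the explicit Minkowski Hodge star formulas $\overline{\star} \alpha = (-1)^k \mathrm{d} t \wedge \star \alpha$ and $\overline{\star} ( \mathrm{d} t \wedge \alpha ) = - \star \alpha$. I checked both of these formulas and the degree-by-degree sign bookkeeping you defer to: everything conspires correctly, yielding $\overline{\delta} q = \delta q$ and $\overline{\delta} ( - \mathrm{d} t \wedge p ) = - \dot{p} + \mathrm{d} t \wedge \delta p$, so your argument is sound. Two remarks. First, your identity $\overline{\delta}^k = (-1)^k \overline{\star}^{-1} \overline{\mathrm{d}} \, \overline{\star}$ is indeed signature-independent, but only because it is written with $\overline{\star}^{-1}$ rather than $\overline{\star}$ (the familiar $\pm \star \mathrm{d} \star$ versions pick up an extra signature-dependent sign from $\overline{\star}\,\overline{\star} = \pm 1$); since the paper effectively \emph{defines} $\overline{\delta}$ by adjointness, your parenthetical justification of this formula is exactly the bridge needed, and should not be omitted. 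Second, what each approach buys: the paper's weak argument is shorter and nearly sign-free, while yours produces reusable explicit formulas for $\overline{\star}$ and makes the mechanism (which time-derivative terms survive and why) visible at the level of the operators themselves, at the cost of tracking three degree-dependent signs through the composition.
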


\begin{proof}
  First, by the definition of the exterior differential and the
  Leibniz rule, we have
  \begin{align*}
    \overline{ \mathrm{d} } z
    &= \overline{ \mathrm{d} } q + \mathrm{d}t \wedge \overline{ \mathrm{d} } p \\
    &= ( \mathrm{d} t \wedge \dot{q} + \mathrm{d} q ) + \mathrm{d} t \wedge ( \mathrm{d} t \wedge \dot{p} + \mathrm{d} p ) \\
    &= \mathrm{d} q + \mathrm{d} t \wedge ( \dot{q} + \mathrm{d} p ) ,
  \end{align*}
  where the last step uses $ \mathrm{d} t \wedge \mathrm{d} t = 0 $ to
  cancel the term involving $ \dot{p} $. This gives
  \eqref{eq:spacetime_d}. Next, we recall that the exterior
  differential and codifferential are related by
  \begin{equation*}
    ( \overline{ \delta } z , \zeta ) _{ I \times \Omega } = ( z, \overline{ \mathrm{d} } \zeta ) _{ I \times \Omega } ,
  \end{equation*}
  for all $ \zeta $ smooth and compactly supported in
  $ I \times \Omega $. Writing
  $ \zeta = \phi - \mathrm{d} t \wedge \psi $ and using
  \eqref{eq:spacetime_d},
  \begin{align*}
    ( z , \overline{ \mathrm{d} } \zeta ) _{ I \times \Omega }
    &= \bigl( ( q - \mathrm{d} t \wedge p ) , \mathrm{d} \phi + \mathrm{d} t \wedge ( \dot{ \phi } + \mathrm{d} \psi ) \bigr) _{ I \times \Omega } \\
    &= ( q, \mathrm{d} \phi ) _{ I \times \Omega } + ( p, \dot{ \phi } + \mathrm{d} \psi ) _{ I \times \Omega } \\
    &= ( - \dot{p} + \delta q , \phi ) _{ I \times \Omega } + ( \delta p, \psi ) _{ I \times \Omega } \\
    &= \bigl( ( - \dot{p} + \delta q ) + \mathrm{d} t \wedge \delta p , \zeta \bigr) _{ I \times \Omega } ,
  \end{align*}
  where the third line employs the integration-by-parts identities
  \begin{equation*}
    ( q, \mathrm{d} \phi ) _{ I \times \Omega } = ( \delta q , \phi ) _{ I \times \Omega } , \qquad 
    ( p, \dot{ \phi } ) _{ I \times \Omega } = ( - \dot{p} , \phi ) _{ I \times \Omega } , \qquad ( p , \mathrm{d} \psi ) _{ I \times \Omega } = ( \delta p , \psi ) _{ I \times \Omega } .
  \end{equation*} 
  This gives \eqref{eq:spacetime_del}. Finally, adding
  \eqref{eq:spacetime_d} and \eqref{eq:spacetime_del} immediately gives
  \eqref{eq:spacetime_D}.
\end{proof}

\begin{corollary}
  The canonical Hamiltonian system \eqref{eq:hamiltonian_z_spacetime}
  for $ z = q - \mathrm{d} t \wedge p $ is equivalent to
  \begin{subequations}
    \label{eq:hamiltonian_qp}
    \begin{align}
      \dot{q} + \mathrm{D} p &= \frac{ \partial H }{ \partial p } \label{eq:hamiltonian_qdot} , \\
      -\dot{p} + \mathrm{D} q &= \frac{ \partial H }{ \partial q } \label{eq:hamiltonian_pdot} .
    \end{align}
  \end{subequations}
\end{corollary}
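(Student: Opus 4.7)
The plan is to combine the previous proposition with the uniqueness of the decomposition of a spacetime form into its purely spatial part plus a $\mathrm{d}t$-wedge part. Since \eqref{eq:spacetime_D} already exhibits $\overline{\mathrm{D}} z$ in exactly this decomposition, it suffices to decompose $\partial H/\partial z$ analogously and then match components.

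The only step that requires any real care is the identification of the two pieces of $\partial H/\partial z$. I will view $H(x,z)$ as a function $H(x,q,p)$ via the relation $z = q - \mathrm{d}t \wedge p$, so that the chain rule yields $\mathrm{d}H(z)\cdot w = (\partial H/\partial q,\phi)_\Omega + (\partial H/\partial p,\psi)_\Omega$ for any variation $w = \phi - \mathrm{d}t \wedge \psi$. On the other hand, by definition $\mathrm{d}H(z)\cdot w$ equals the Minkowski pairing of $\partial H/\partial z$ with $w$. Writing $\partial H/\partial z = A + \mathrm{d}t \wedge B$ and computing this pairing pointwise—using that the cross terms between purely spatial and $\mathrm{d}t$-wedge components vanish, and that $(\mathrm{d}t \wedge \alpha,\mathrm{d}t \wedge \beta) = -(\alpha,\beta)$ in Minkowski signature—the minus from the signature cancels the minus in the convention $z = q - \mathrm{d}t \wedge p$, and I expect to obtain $A = \partial H/\partial q$ and $B = \partial H/\partial p$.

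Once this identification is established, substituting \eqref{eq:spacetime_D} into \eqref{eq:hamiltonian_z_spacetime} and equating the purely spatial and $\mathrm{d}t$-wedge parts on each side immediately yields \eqref{eq:hamiltonian_pdot} and \eqref{eq:hamiltonian_qdot}, respectively; the converse (two equations imply the single canonical one) is the same argument run backwards. The main obstacle is the sign bookkeeping in the decomposition of $\partial H/\partial z$—in particular, confirming that the two sources of sign cancel rather than reinforce, which is what makes the final system \eqref{eq:hamiltonian_qp} come out with the classical Hamiltonian signs. Everything else is an immediate corollary of the proposition.
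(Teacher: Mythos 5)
Your proposal is correct and follows essentially the same route as the paper: the paper's proof likewise decomposes $\partial H/\partial z$ by testing the (pseudo-)inner product against $w = s - \mathrm{d}t\wedge r$, obtains $\partial H/\partial z = \partial H/\partial q + \mathrm{d}t\wedge\partial H/\partial p$ (with the plus sign, exactly because the Minkowski signature minus cancels the minus in the convention, as you note), and then matches the spatial and $\mathrm{d}t$-wedge components against \eqref{eq:spacetime_D}. Your sign bookkeeping is right, so there is nothing to add.
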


\begin{proof}
  For any $ w = s - \mathrm{d} t \wedge r \in \Lambda ( I \times \Omega ) $, observe that
  \begin{equation*}
    \biggl( \frac{ \partial H }{ \partial z } , w \biggr) _{ I \times \Omega } = \biggl( \frac{ \partial H }{ \partial q } , s \biggr) _\Omega + \biggl( \frac{ \partial H }{ \partial p } , r \biggr) _\Omega = \biggl( \frac{ \partial H }{ \partial q } + \mathrm{d} t \wedge \frac{ \partial H }{ \partial p } , w \biggr) _{ I \times \Omega } .
  \end{equation*}
  Hence,
  $ \partial H / \partial z = \partial H / \partial q + \mathrm{d} t
  \wedge \partial H / \partial p $. Setting this equal to
  $ \overline{ \mathrm{D} } z $ using \eqref{eq:spacetime_D} gives
  \eqref{eq:hamiltonian_qp}.
\end{proof}

\begin{remark}
  We mention two important special cases of
  \eqref{eq:hamiltonian_qp}. First, when $ n = 0 $, we recover the
  canonical Hamiltonian system of ODEs,
  \begin{align*}
    \dot{q} &= \frac{ \partial H }{ \partial p } ,\\
    - \dot{p} &= \frac{ \partial H }{ \partial q } .
  \end{align*}
  Second, for arbitrary $n$, a stationary solution of
  \eqref{eq:hamiltonian_qp} satisfies
  \begin{subequations}
    \label{eq:stationary_qp}
    \begin{align}
      \mathrm{D} p &= \frac{ \partial H }{ \partial p } , \\
      \mathrm{D} q &= \frac{ \partial H }{ \partial q }.
    \end{align}
  \end{subequations}
  If $H$ is \emph{separable}, meaning that it is a sum of functions
  depending only on $q$ and only on $p$, then this decouples into two
  stationary Hamiltonian PDEs on $ \Lambda (\Omega) $. Alternatively,
  we can view \eqref{eq:stationary_qp} (even in the non-separable
  case) as a stationary Hamiltonian system on
  $ \Lambda (\Omega) \otimes \mathbb{R}^2 $, via a straightforward
  generalization to vector-valued differential forms of the framework
  discussed in \cref{sec:background}.
\end{remark}

We next introduce a useful notation that allows us to express
\eqref{eq:hamiltonian_qp} as a single equation and simplifies many of
the calculations to follow. Let
$ \mathbf{\Lambda} (\Omega) \coloneqq \Lambda (\Omega) \otimes
\mathbb{R}^2 $ be the space of $ \mathbb{R}^2 $-valued differential
forms on $\Omega$, and identify
$ z = q - \mathrm{d} t \wedge p \in \Lambda ( I \times \Omega ) $ with
$ \mathbf{z} =
\begin{bsmallmatrix}
  q \\ p 
\end{bsmallmatrix} \colon I \rightarrow \mathbf{\Lambda} (\Omega) $. We equip $ \mathbb{R}^2  $ with the Euclidean inner product and the canonical symplectic structure given by the symplectic matrix $ J =
\begin{bsmallmatrix}
  0 & -1 \\
  1 & 0 
\end{bsmallmatrix} $, and we define $ \mathbf{J} \coloneqq \Lambda (\Omega) \otimes J $, i.e., $ \mathbf{J} \mathbf{z} =
\begin{bsmallmatrix}
  - p \\ q 
\end{bsmallmatrix} $. Finally, let $ \mathbf{D} \coloneqq \mathrm{D} \otimes \mathbb{R}^2 $, i.e., $ \mathbf{D} \mathbf{z} =
\begin{bsmallmatrix}
  \mathrm{D} q \\ \mathrm{D} p 
\end{bsmallmatrix} $, which is the Hodge--Dirac operator on $ \mathbf{\Lambda} (\Omega) $. With this notation, \eqref{eq:hamiltonian_qp} becomes
\begin{equation}
  \label{eq:hamiltonian_JD}
  \mathbf{J} \dot{\mathbf{z}} + \mathbf{D} \mathbf{z} = \frac{ \partial H }{ \partial \mathbf{z} } .
\end{equation}
If we denote
$ \operatorname{\mathbf{Alt}} \mathbb{R}^n \coloneqq
\operatorname{Alt} \mathbb{R}^n \otimes \mathbb{R}^2 $, then we may
view the Hamiltonian as being a function
$ H \colon I \times \Omega \times \operatorname{\mathbf{Alt}}
\mathbb{R}^n \rightarrow \mathbb{R} $.

\begin{remark}
  This can be generalized further by replacing $ \mathbb{R}^2 $ with
  an arbitrary symplectic vector space. The $ n = 0 $ case then
  recovers Hamiltonian mechanics on symplectic vector spaces more
  generally, cf.~\citet[Chapter 2]{MaRa1999}.
\end{remark}

\begin{example}
  When $ H = 0 $, the system \eqref{eq:hamiltonian_qp} becomes
  \begin{align*}
    \dot{q} + \mathrm{D} p &= 0 ,\\
    - \dot{p} + \mathrm{D} q &= 0 .
  \end{align*}
  This can be seen as a generalization of the homogeneous Maxwell's
  equations involving forms of all degrees. Differentiating both
  equations in time and substituting gives
  \begin{equation*}
    \ddot{q} + \mathrm{D} ^2 q = 0 , \qquad \ddot{p} + \mathrm{D} ^2 p = 0 ,
  \end{equation*}
  which says $q$ and $p$ each satisfy the homogeneous \emph{Hodge wave
    equation}.
\end{example}

\begin{example}
  \label{ex:hodge_wave}
  Suppose $ u \colon I \rightarrow \Lambda ^k (\Omega) $ is a solution
  to the $k$-form semilinear Hodge wave equation,
  \begin{equation*}
    \ddot{u} + \mathrm{D} ^2 u = -\frac{ \partial F }{ \partial u } ,
  \end{equation*}
  for some potential
  $ F \colon I \times \Omega \times \operatorname{Alt} ^k \mathbb{R}^n
  \rightarrow \mathbb{R} $. Introducing variables $ p = \dot{u} $,
  $ \sigma = - \delta u $, and $ \rho = -\mathrm{d} u $ implies
  that we have a solution to the first-order system
  \begin{subequations}
    \label{eq:hodge_wave}
    \begin{align}
      \dot{ \sigma } + \delta p &= 0 , \label{eq:hodge_wave_sigmadot}\\
      \dot{u} &= p , \label{eq:hodge_wave_udot} \\
      \dot{ \rho } + \mathrm{d} p &= 0 , \label{eq:hodge_wave_rhodot}\\
      \delta u &= - \sigma , \label{eq:hodge_wave_delu} \\
      - \dot{p} + \mathrm{d} \sigma + \delta \rho &= \frac{ \partial F }{ \partial u } , \label{eq:hodge_wave_pdot} \\
      \mathrm{d} u &= - \rho . \label{eq:hodge_wave_du}
    \end{align}
  \end{subequations}
  Letting $ q = \sigma \oplus u \oplus \rho $, this says that $ \mathbf{z} =
  \begin{bsmallmatrix}
    q \\ p 
  \end{bsmallmatrix}
  $ solves the canonical Hamiltonian system of PDEs with
  $ H ( t, x , \mathbf{z} ) = - \frac{1}{2} \lvert \sigma \rvert ^2 +
  \bigl( \frac{1}{2} \lvert p \rvert ^2 + F ( t, x, u ) \bigr) -
  \frac{1}{2} \lvert \rho \rvert ^2 $.

  Furthermore, if the constraints \eqref{eq:hodge_wave_delu} and
  \eqref{eq:hodge_wave_du} hold at the initial time, then
  \eqref{eq:hodge_wave_sigmadot}--\eqref{eq:hodge_wave_rhodot}
  ensure that these constraints are preserved at all subsequent
  times. Hence, we can eliminate the two constraints and simply evolve
  the remaining four equations
  \begin{align*}
    \dot{ \sigma } + \delta p &= 0 ,\\
    \dot{u} &= p , \\
    \dot{ \rho } + \mathrm{d} p &= 0 ,\\
    - \dot{p} + \mathrm{d} \sigma + \delta \rho &= \frac{ \partial F }{ \partial u } .
  \end{align*}
  As another way to see why it suffices to evolve these components
  only, consider the subspace
  \begin{equation*}
    \mathbf{S} \coloneqq \Biggl\{
    \begin{bmatrix}
      -\delta u \oplus u \oplus -\mathrm{d} u \\ p
    \end{bmatrix} : u , p \in \Lambda ^k (\Omega) \Biggr\} \subset \mathbf{\Lambda} (\Omega) .
  \end{equation*}
  It is straightforward to check that $ \mathbf{z} \in \mathbf{S} $
  implies
  $ \partial H / \partial \mathbf{z} - \mathbf{D} \mathbf{z} \in
  \mathbf{J} \mathbf{S} $ for the Hamiltonian defined above, and
  therefore $\mathbf{S}$ is an invariant subspace of
  \eqref{eq:hamiltonian_JD}.

  Finally, in the linear case
  $ F ( t, x , u ) = \bigl( f ( t, x ) , u \bigr) $ for some
  $ f \colon I \rightarrow \Lambda ^k (\Omega) $, we can simply evolve
  \begin{align*}
    \dot{ \sigma } + \delta p &= 0 ,\\
    - \dot{p} + \mathrm{d} \sigma + \delta \rho &= f ,\\
    \dot{ \rho } + \mathrm{d} p &= 0 ,
  \end{align*}
  after which $u$ may be obtained (if desired) by integrating $p$ over
  time. This recovers the approach in \citet[\S8.5]{Arnold2018}, who
  observes that writing the system above in matrix form,
  \begin{equation*}
    \partial _t 
    \begin{bmatrix}
      \sigma \\
      p \\
      \rho 
    \end{bmatrix}
    =
    \begin{bmatrix}
      0 & -\delta & 0 \\
      \mathrm{d} & 0 & \delta \\
        0 & -\mathrm{d} & 0 
    \end{bmatrix}
    \begin{bmatrix}
      \sigma \\
      p \\
      \rho 
    \end{bmatrix}
    -
    \begin{bmatrix}
      0 \\
      f \\
      0
    \end{bmatrix},
  \end{equation*}
  reveals its symmetric hyperbolic structure. For the scalar wave
  equation when $ k = 0 $ or $ k = n $, this recovers a first-order
  mixed formulation that appears, e.g., in \citet{MoPe2018}.
\end{example}

\subsection{The multisymplectic conservation law for time-dependent systems}

Recall the canonical multisymplectic $2$-form
$ \omega \colon \operatorname{Alt} \mathbb{R}^n \times
\operatorname{Alt} \mathbb{R}^n \rightarrow \operatorname{Alt} ^{ n -1
} \mathbb{R}^n $ from \cref{sec:background}. To extend this to the
$ \mathbb{R}^2 $-valued forms we have just introduced, we let
$ \mathbf{w} _i =
\begin{bsmallmatrix}
  s _i \\
  r _i 
\end{bsmallmatrix} \in \operatorname{\mathbf{Alt}} \mathbb{R}^n  $ for $ i = 1, 2 $, and define
$ \boldsymbol{\omega} \colon \operatorname{\mathbf{Alt}} \mathbb{R}^n
\times \operatorname{\mathbf{Alt}} \mathbb{R}^n \rightarrow
\operatorname{Alt} ^{ n -1 } \mathbb{R}^n $ by
\begin{equation*}
  \boldsymbol{\omega} ( \mathbf{w} _1 , \mathbf{w} _2 ) \coloneqq \omega ( s _1, s _2 ) + \omega ( r _1 , r _2 ) .
\end{equation*}
It follows immediately from \eqref{eq:div_omega_D} and the foregoing
definitions that, for
$ \mathbf{w} _1, \mathbf{w} _2 \in \mathbf{\Lambda} (\Omega) $, we
have
\begin{equation}
  \label{eq:div_omega_D_bold}
  \operatorname{div} \boldsymbol{\omega} ( \mathbf{w} _1 , \mathbf{w} _2 ) = ( \mathbf{D} \mathbf{w} _1 , \mathbf{w} _2 ) - ( \mathbf{w} _1 , \mathbf{D} \mathbf{w} _2 ) .
\end{equation}

\begin{definition}
  Let $ \mathbf{z} \colon I \rightarrow \mathbf{\Lambda} (\Omega) $ be
  a solution to \eqref{eq:hamiltonian_JD}. A \emph{first variation} of
  $\mathbf{z}$ is a solution
  $ \mathbf{w} _i \colon I \rightarrow \mathbf{\Lambda} (\Omega) $ to
  the linearized equation
  \begin{equation}
    \label{eq:variational_JD}
    \mathbf{J} \dot{\mathbf{w}} _i + \mathbf{D} \mathbf{w} _i = \frac{ \partial ^2 H }{ \partial \mathbf{z} ^2 } \mathbf{w} _i ,
  \end{equation}
  called the \emph{variational equation} of \eqref{eq:hamiltonian_JD}
  at $\mathbf{z}$.
\end{definition}

\begin{theorem}
  \label{thm:mscl_differential}
  If $ \mathbf{w} _1 , \mathbf{w} _2 $ are first variations of a
  solution to \eqref{eq:hamiltonian_JD}, then they satisfy
  \begin{equation}
    \label{eq:mscl_differential}
    \partial _t ( \mathbf{J} \mathbf{w} _1 , \mathbf{w} _2 ) + \operatorname{div} \boldsymbol{\omega} ( \mathbf{w} _1 , \mathbf{w} _2 ) = 0 ,
  \end{equation}
  which we call the \emph{multisymplectic conservation law}.  
\end{theorem}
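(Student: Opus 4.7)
The plan is to mimic the classical derivation of the symplectic conservation law for ODEs, but applied to the spacetime first-order form \eqref{eq:variational_JD} so that both the time derivative and the Hodge--Dirac operator appear symmetrically. First I would test the variational equation for $ \mathbf{w} _1 $ pointwise against $ \mathbf{w} _2 $ in the inner product on $ \operatorname{\mathbf{Alt}} \mathbb{R}^n $, and symmetrically test the equation for $ \mathbf{w} _2 $ against $ \mathbf{w} _1 $. Subtracting the two identities, the right-hand sides cancel because the Hessian $ \partial ^2 H / \partial \mathbf{z} ^2 $ is symmetric; this leaves
\begin{equation*}
  ( \mathbf{J} \dot{ \mathbf{w} } _1 , \mathbf{w} _2 ) - ( \mathbf{w} _1 , \mathbf{J} \dot{ \mathbf{w} } _2 ) + ( \mathbf{D} \mathbf{w} _1 , \mathbf{w} _2 ) - ( \mathbf{w} _1 , \mathbf{D} \mathbf{w} _2 ) = 0 .
\end{equation*}

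Next I would identify the two pieces of this identity with the two terms on the left of \eqref{eq:mscl_differential}. For the spatial piece, the identity \eqref{eq:div_omega_D_bold} applies pointwise and directly rewrites $ ( \mathbf{D} \mathbf{w} _1 , \mathbf{w} _2 ) - ( \mathbf{w} _1 , \mathbf{D} \mathbf{w} _2 ) $ as $ \operatorname{div} \boldsymbol{\omega} ( \mathbf{w} _1 , \mathbf{w} _2 ) $. For the temporal piece, I would use that $ \mathbf{J} $ is a constant antisymmetric operator on the $ \mathbb{R}^2 $-factor, so $ ( \mathbf{w} _1 , \mathbf{J} \dot{ \mathbf{w} } _2 ) = - ( \mathbf{J} \mathbf{w} _1 , \dot{ \mathbf{w} } _2 ) $; combined with the Leibniz rule $ \partial _t ( \mathbf{J} \mathbf{w} _1 , \mathbf{w} _2 ) = ( \mathbf{J} \dot{ \mathbf{w} } _1 , \mathbf{w} _2 ) + ( \mathbf{J} \mathbf{w} _1 , \dot{ \mathbf{w} } _2 ) $, this collapses the temporal pair to $ \partial _t ( \mathbf{J} \mathbf{w} _1 , \mathbf{w} _2 ) $. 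Substituting both identifications yields \eqref{eq:mscl_differential}.

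The computation is short and essentially routine; the only point I would take care with is that the statement is a \emph{local} (pointwise in $ (t,x) $) identity rather than an integrated one, so I want to make sure every ingredient is used pointwise. The Hessian symmetry is pointwise, the Leibniz rule for $ \partial _t $ acts on the inner product pointwise in $x$, and \eqref{eq:div_omega_D_bold} is itself a pointwise identity (it follows from dividing $ \mathrm{d} \boldsymbol{\omega} $ by $ \mathrm{vol} $). Thus no integration by parts in space or time is required, which is essential because we want the local conservation law and not merely its global (integrated) consequence.
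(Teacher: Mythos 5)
Your proposal is correct and follows essentially the same route as the paper's proof: test the variational equation \eqref{eq:variational_JD} for each $\mathbf{w}_i$ against the other, cancel the right-hand sides by symmetry of the Hessian, and then identify the temporal bracket with $\partial_t(\mathbf{J}\mathbf{w}_1,\mathbf{w}_2)$ via antisymmetry of $\mathbf{J}$ and the Leibniz rule, and the spatial bracket with $\operatorname{div}\boldsymbol{\omega}(\mathbf{w}_1,\mathbf{w}_2)$ via \eqref{eq:div_omega_D_bold}. Your added remark that every step is pointwise, so no integration is needed and the conclusion is genuinely local, is a correct and worthwhile observation that the paper leaves implicit.
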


\begin{proof}
  From \eqref{eq:variational_JD}, we have
  \begin{align*}
    ( \mathbf{J} \dot{\mathbf{w}} _1 , \mathbf{w} _2 ) + ( \mathbf{D} \mathbf{w} _1 , \mathbf{w} _2 ) &= \biggl( \frac{ \partial ^2 H }{ \partial \mathbf{z} ^2 } \mathbf{w} _1, \mathbf{w} _2 \biggl), \\
    ( \mathbf{w} _1 , \mathbf{J} \dot{\mathbf{w}} _2 ) + ( \mathbf{w} _1 , \mathbf{D} \mathbf{w} _2 ) &= \biggl( \mathbf{w} _1 , \frac{ \partial ^2 H }{ \partial \mathbf{z} ^2 } \mathbf{w} _2 \biggr) .
  \end{align*}
  The right-hand sides are equal by the symmetry of the Hessian, so
  subtracting gives
  \begin{equation*}
    \bigl[ ( \mathbf{J} \dot{\mathbf{w}} _1 , \mathbf{w} _2 ) - ( \mathbf{w} _1 , \mathbf{J} \dot{\mathbf{w}} _2 ) \bigr] + \bigl[ ( \mathbf{D} \mathbf{w} _1 , \mathbf{w} _2 ) - ( \mathbf{w} _1 , \mathbf{D} \mathbf{w} _2 ) \bigr] = 0 .
  \end{equation*}
  The first term in brackets equals
  $ \partial _t ( \mathbf{J} \mathbf{w} _1 , \mathbf{w} _2 ) $ by the
  Leibniz rule and the antisymmetry of $ \mathbf{J} $, and the second
  term in brackets equals
  $ \operatorname{div} \boldsymbol{\omega} ( \mathbf{w} _1 ,
  \mathbf{w} _2 ) $ by \eqref{eq:div_omega_D_bold}.
\end{proof}

\begin{remark}
  The multisymplectic conservation law can be equivalently obtained in
  the spacetime setting by viewing
  $ w _i = s _i - \mathrm{d} t \wedge r _i $ as a first variation of
  $ z = q - \mathrm{d} t \wedge p $ in
  $ \Lambda ( I \times \Omega ) $. Using \eqref{eq:spacetime_D}, it
  can be seen that \eqref{eq:mscl_differential} is equivalent to
  $ ( \overline{ \mathrm{D} } w _1 , w _2 ) - ( w _1 , \overline{
    \mathrm{D} } w _2 ) = 0 $; the left-hand side may be interpreted
  as a spacetime divergence with respect to the Minkowski metric.
\end{remark}

There is also an integral form of the multisymplectic conservation law
on any spatial subdomain $ K \Subset \Omega $. Assuming (for now) that
$ \partial K $ is smooth, the tangential and normal trace may be
extended to $ \mathbb{R}^2 $-valued forms in the natural way: if $ \mathbf{w} =
\begin{bsmallmatrix}
  s \\ r
\end{bsmallmatrix} \in \mathbf{\Lambda} (\Omega) $, then
\begin{equation*}
  \mathbf{w} ^{\mathrm{tan}} \coloneqq
  \Bigl[ \begin{smallmatrix}
    s ^{\mathrm{tan}} \\
    r ^{\mathrm{tan}} 
  \end{smallmatrix} \Bigr]  \in \mathbf{\Lambda} ( \partial K ) , \qquad \mathbf{w} ^{\mathrm{nor}} \coloneqq
  \Bigl[\begin{smallmatrix}
    s ^{\mathrm{nor}} \\
    r ^{\mathrm{nor}} 
  \end{smallmatrix}\Bigr] \in \mathbf{\Lambda} ( \partial K ) .
\end{equation*}
We may also use the Euclidean inner product on $\mathbb{R}^2$ to
extend the $ L ^2 $ inner products $ ( \cdot , \cdot ) _K $ and
$ \langle \cdot , \cdot \rangle _{ \partial K } $ to
$ \mathbf{\Lambda} (K) $ and $ \mathbf{\Lambda} ( \partial K ) $,
respectively, obtaining the integration-by-parts identity
\begin{equation*}
  \langle \mathbf{w} _1 ^{\mathrm{tan}} , \mathbf{w} _2 ^{\mathrm{nor}} \rangle _{ \partial K } = ( \mathbf{d} \mathbf{w} _1 , \mathbf{w} _2 ) _K - ( \mathbf{w} _1 , \boldsymbol{\delta} \mathbf{w} _2 ) _K .
\end{equation*}
Finally, we extend the antisymmetric bilinear form
$ [ \cdot , \cdot ] _{ \partial K } $ to
$ \mathbf{\Lambda} ( \partial K ) $, defining
\begin{subequations}
  \label{eq:bracket_identities}
  \begin{align}
    [ \mathbf{w} _1 , \mathbf{w} _2 ] _{ \partial K }
    &\coloneqq \langle \mathbf{w} _1 ^{\mathrm{tan}} , \mathbf{w} _2 ^{\mathrm{nor}} \rangle _{ \partial K } - \langle \mathbf{w} _2 ^{\mathrm{tan}} , \mathbf{w} _1 ^{\mathrm{nor}} \rangle _{ \partial K } \label{eq:bracket} \\
    &= ( \mathbf{D} \mathbf{w} _1 , \mathbf{w} _2 ) _K - ( \mathbf{w} _1 , \mathbf{D} \mathbf{w} _2 ) _K \label{eq:bracket_D} \\
    &= \int _{ \partial K } \boldsymbol{\omega} ( \mathbf{w} _1 , \mathbf{w} _2 ) .
  \end{align}
\end{subequations}
Hence, \eqref{eq:mscl_differential} is equivalent to
\begin{equation}
  \label{eq:mscl_integral}
  \frac{\mathrm{d}}{\mathrm{d}t} ( \mathbf{J} \mathbf{w} _1 , \mathbf{w} _2 ) _K + [ \mathbf{w} _1 , \mathbf{w} _2 ] _{ \partial K } = 0 ,
\end{equation}
for all $ K \Subset \Omega $, which we call the \emph{integral form of
  the multisymplectic conservation law}.

\begin{example}
  \label{ex:hodge_wave_integral_mscl}
  Let us return to the semilinear Hodge wave equation, discussed in
  \cref{ex:hodge_wave}, to see how the multisymplectic conservation
  law manifests. If $ \mathbf{z} =
  \begin{bsmallmatrix}
    q \\ p 
  \end{bsmallmatrix} $ with $ q = \sigma \oplus u \oplus \rho $ satisfies \eqref{eq:hodge_wave}, then first variations $ \mathbf{w} _i =
  \begin{bsmallmatrix}
    s _i \\ r _i 
  \end{bsmallmatrix} $ with $ s _i = \tau _i \oplus v _i \oplus \eta _i $ are solutions to the linearized system
  \begin{subequations}
    \label{eq:hodge_wave_var}
    \begin{align}
      \dot{ \tau } _i + \delta r _i &= 0 , \label{eq:hodge_wave_var_taudot}\\
      \dot{v} _i &= r _i , \label{eq:hodge_wave_var_vdot} \\
      \dot{ \eta } _i + \mathrm{d} r _i &= 0 , \label{eq:hodge_wave_var_etadot}\\
      \delta v _i &= - \tau _i , \label{eq:hodge_wave_var_delv} \\
      - \dot{r} _i + \mathrm{d} \tau _i + \delta \eta _i &= \frac{ \partial ^2 F }{ \partial u ^2 } v _i , \label{eq:hodge_wave_var_rdot} \\
      \mathrm{d} v _i &= - \eta _i . \label{eq:hodge_wave_var_dv}
    \end{align}
  \end{subequations}
  In terms of these components, we have
  \begin{align*}
    ( \mathbf{J} \mathbf{w} _1 , \mathbf{w} _2 ) &= ( v _1 , r _2 ) - ( v _2 , r _1 ) ,\\
   \boldsymbol{\omega} ( \mathbf{w} _1 , \mathbf{w} _2 ) &= ( \tau _1 \wedge \star v _2 - \tau _2 \wedge \star v _1 ) + ( v _1 \wedge \star \eta _2 - v _2 \wedge \star \eta _1 ) .
  \end{align*}
  Note that $ r _i $ does not appear on the second line: since it is
  nonvanishing only at degree $k$, we have
  $ \omega ( r _1 , r _2 ) = 0 $ by \eqref{eq:ms_form}. Hence, the
  multisymplectic conservation law \eqref{eq:mscl_differential} can be
  written as
  \begin{equation*}
    \partial _t ( v _1 , r _2 ) + \operatorname{div} ( \tau _1 \wedge \star v _2 + v _1 \wedge \star \eta _2 ) = \partial _t ( v _2 , r _1 ) + \operatorname{div} ( \tau _2 \wedge \star v _1 + v _2 \wedge \star \eta _1 ).
  \end{equation*}
  Likewise, for $ K \Subset \Omega $ we have
  \begin{equation*}
    [ \mathbf{w} _1 , \mathbf{w} _2 ] _{ \partial K } = \Bigl( \langle \tau _1 ^{\mathrm{tan}} , v _2 ^{\mathrm{nor}} \rangle _{ \partial K } - \langle \tau _2 ^{\mathrm{tan}} , v _1 ^{\mathrm{nor}} \rangle _{ \partial K } \Bigr) + \Bigl( \langle v _1 ^{\mathrm{tan}} , \eta _2 ^{\mathrm{nor}} \rangle _{ \partial K } - \langle v _2 ^{\mathrm{tan}} , \eta _1 ^{\mathrm{nor}} \rangle _{ \partial K } \Bigr),
  \end{equation*}
  so the integral form of the multisymplectic conservation law
  \eqref{eq:mscl_integral} can be written as
  \begin{equation*}
    \frac{\mathrm{d}}{\mathrm{d}t} ( v _1 , r _2 ) _K + \langle \tau _1 ^{\mathrm{tan}} , v _2 ^{\mathrm{nor}} \rangle _{ \partial K } + \langle v _1 ^{\mathrm{tan}} , \eta _2 ^{\mathrm{nor}} \rangle _{ \partial K } = \frac{\mathrm{d}}{\mathrm{d}t} ( v _2 , r _1 ) _K + \langle \tau _2 ^{\mathrm{tan}} , v _1 ^{\mathrm{nor}} \rangle _{ \partial K } + \langle v _2 ^{\mathrm{tan}} , \eta _1 ^{\mathrm{nor}} \rangle _{ \partial K } .
  \end{equation*}
  The stationary case recovers the Dirichlet-to-Neumann operator
  symmetry of \citep[Example 2.20]{StZa2025}.
\end{example}

\section{Multisymplectic semidiscretization}
\label{sec:semidiscrete}

\subsection{Hybrid FEEC methods}

We now present a framework for semidiscretizing canonical systems of
PDEs using hybrid FEEC methods, employing essentially the approach of
\citet{StZa2025} for the stationary case. The preliminaries will be
presented fairly quickly, and we refer the reader to \citep{StZa2025}
and references therein for a more detailed account of the spatial
discretization ingredients.

Let $ \Omega \subset \mathbb{R}^n $ be a bounded Lipschitz domain, and
let $ \mathcal{T} _h $ be a partition of $\Omega$ into non-overlapping
Lipschitz subdomains $ K \in \mathcal{T} _h $ (e.g., a simplicial
triangulation). We denote the Sobolev-like spaces
\begin{equation*} 
  H \Lambda (K) \coloneqq \bigl\{ w \in L ^2 \Lambda (K) : \mathrm{d} w \in L ^2 \Lambda (K) \bigr\} , \qquad H ^\ast \Lambda (K) \coloneqq \bigl\{ w \in L ^2 \Lambda (K) : \delta w \in L ^2 \Lambda (K) \bigr\} ,
\end{equation*} 
where $ \mathrm{d} $ and $ \delta $ are taken in the sense of
distributions. It follows that the Hodge--Dirac operator
$ \mathrm{D} $ maps
$ H \Lambda (K) \cap H ^\ast \Lambda (K) \rightarrow L ^2 \Lambda (K)
$. As in the previous section, the ``bold'' spaces
$ \mathbf{L} ^2 \mathbf{\Lambda} (K) $, $ \mathbf{H \Lambda } (K) $, and
$ \mathbf{H ^\ast \Lambda } (K) $ are defined by taking tensor
products with $ \mathbb{R}^2 $.

\citet{Weck2004} showed that it is possible to define a weak
tangential trace of $ w _1 \in H \Lambda (K) $ and weak normal trace
of $ w _2 \in H ^\ast \Lambda (K) $ such that the integration-by-parts
identity \eqref{eq:ibp} continues to hold, where
$ \langle \cdot , \cdot \rangle _{ \partial K } $ is interpreted as a
duality pairing extending the $ L ^2 $ inner product on
$ \partial K $. We can therefore define a weak version of
$ [ \cdot , \cdot ] _{ \partial K } $ by \eqref{eq:bracket} whenever
the arguments possess both tangential and normal traces, e.g., when
both live in $ \mathbf{H \Lambda} (K) \cap \mathbf{H ^\ast \Lambda} (K) $.

Next, we define ``broken'' subspaces of differential forms and traces,
\begin{alignat*}{2}
  W _h &\coloneqq \prod _{ K \in \mathcal{T} _h } W _h (K) , \qquad & W _h (K) &\subset H \Lambda (K) \cap H ^\ast \Lambda (K) ,\\
  \widehat{ W } _h ^{\mathrm{nor}} &\coloneqq \prod _{ K \in \mathcal{T} _h } \widehat{ W } _h ^{\mathrm{nor}} ( \partial K ) , \qquad & \widehat{ W } _h ^{\mathrm{nor}} ( \partial K ) &\subset L ^2 \Lambda ( \partial K  ) ,\\
  \widehat{ W } _h ^{\mathrm{tan}} &\coloneqq \prod _{ K \in \mathcal{T} _h } \widehat{ W } _h ^{\mathrm{tan}} ( \partial K ) , \qquad & \widehat{ W } _h ^{\mathrm{tan}} ( \partial K ) &\subset L ^2 \Lambda ( \partial K  ) ,
\end{alignat*}
with the additional assumption that
$ w _h ^{\mathrm{nor}} , w _h ^{\mathrm{tan}} \in L ^2 \Lambda (
\partial \mathcal{T} _h ) \coloneqq \prod _{ K \in \mathcal{T} _h } L
^2 \Lambda ( \partial K ) $ for all $ w _h \in W _h $. The trace
spaces are generally \emph{double-valued} on the interior skeleton
$ \partial \mathcal{T} _h \setminus \partial \Omega $ and nonvanishing
on the domain boundary $ \partial \Omega $. We also define two
\emph{single-valued} tangential trace spaces
$ \ringhat{ V } _h ^{\mathrm{tan}} \subset \widehat{ V } _h
^{\mathrm{tan}} $ by
\begin{equation*}
  \ringhat{ V } _h ^{\mathrm{tan}}  \coloneqq \bigl\{ \widehat{ w } _h ^{\mathrm{tan}} \in \widehat{ W } _h ^{\mathrm{tan}} : \llbracket \widehat{ w } _h ^{\mathrm{tan}} \rrbracket = 0 \bigr\} , \qquad \widehat{ V } _h ^{\mathrm{tan}}  \coloneqq \bigl\{ \widehat{ w } _h ^{\mathrm{tan}} \in \widehat{ W } _h ^{\mathrm{tan}} : \llbracket \widehat{ w } _h ^{\mathrm{tan}} \rrbracket = 0 \text{ on } \partial \mathcal{T} _h \setminus \partial \Omega \bigr\}.
\end{equation*}
Here, $ \llbracket \widehat{ w } _h ^{\mathrm{tan}} \rrbracket $ is
the tangential jump, which by convention equals
$ \widehat{ w } _h ^{\mathrm{tan}} $ on $ \partial \Omega $. See
\citep[Definition 3.2]{StZa2025} for a detailed discussion of jumps
and averages for both tangential and normal traces. As above, we
define ``bold'' versions of these subspaces by taking tensor products
with $ \mathbb{R}^2 $.

To impose a relation between the normal and tangential traces, we
choose a \emph{local flux function}, which is a bounded linear map
\begin{equation*}
  \mathbf{\Phi}  \coloneqq \prod _{ K \in \mathcal{T} _h } \mathbf{\Phi} _K , \qquad \mathbf{\Phi} _K \colon \mathbf{W} _h (K) \times \widehat{ \mathbf{W} } _h ^{\mathrm{nor}} ( \partial K ) \times \widehat{ \mathbf{W} } _h ^{\mathrm{tan}} ( \partial K ) \rightarrow \mathbf{L}  ^2 \mathbf{\Lambda} ( \partial K ) .
\end{equation*}
We also replace the smooth source term
$ \partial H / \partial \mathbf{z} $ in \eqref{eq:hamiltonian_JD} by a
weaker local source term
\begin{equation*}
  \mathbf{f} \coloneqq \prod _{ K \in \mathcal{T} _h } \mathbf{f} _K , \qquad \mathbf{f} _K \colon I \times \mathbf{W} _h (K) \rightarrow \mathbf{L} ^2 \mathbf{\Lambda} (K) .
\end{equation*}
Assume that $ \mathbf{f} $ is at least $ C ^1 $ in $ \mathbf{z} _h $,
so that we may describe first variations of weak solutions in terms of
the derivative $ \partial \mathbf{f} / \partial \mathbf{z} _h $. The
case where this derivative is symmetric corresponds to the symmetry of
the Hessian in the Hamiltonian case.

We are finally ready to describe the weak form of
\eqref{eq:hamiltonian_JD} on which our methods are based. We seek
$ \mathbf{z} _h \colon I \rightarrow \mathbf{W} _h $ and
$ \widehat{ \mathbf{z} } _h \coloneqq ( \widehat{ \mathbf{z} } _h
^{\mathrm{nor}} , \widehat{ \mathbf{z} } _h ^{\mathrm{tan}} ) \colon I
\rightarrow \widehat{ \mathbf{W} } _h ^{\mathrm{nor}} \times \widehat{
  \mathbf{V} } _h ^{\mathrm{tan}} $ satisfying
\begin{subequations}
  \label{eq:weak}
  \begin{alignat}{2}
    ( \mathbf{J} \dot{\mathbf{z}} _h , \mathbf{w} _h ) _{ \mathcal{T} _h } + ( \mathbf{z} _h , \mathbf{D} \mathbf{w} _h ) _{ \mathcal{T} _h } + [ \widehat{ \mathbf{z} } _h , \mathbf{w} _h ] _{ \partial \mathcal{T} _h } &= \bigl( \mathbf{f} ( t, \mathbf{z} _h ) , \mathbf{w} _h \bigr)  _{ \mathcal{T} _h }  , \quad &\forall \mathbf{w} _h &\in \mathbf{W} _h , \label{eq:weak_w} \\
    \bigl\langle \mathbf{\Phi} ( \mathbf{z} _h , \widehat{ \mathbf{z} } _h ) , \widehat{ \mathbf{w} } _h ^{\mathrm{nor}} \bigr\rangle _{ \partial \mathcal{T} _h } &= 0 , \quad &\forall \widehat{ \mathbf{w} } _h ^{\mathrm{nor}} &\in \widehat{ \mathbf{W} } _h ^{\mathrm{nor}} , \label{eq:weak_wnor}\\
    \langle \widehat{ \mathbf{z} } _h ^{\mathrm{nor}} , \widehat{ \mathbf{w} } _h ^{\mathrm{tan}} \rangle _{ \partial \mathcal{T} _h } &= 0 , \quad & \forall \widehat{ \mathbf{w} } _h ^{\mathrm{tan}} &\in \ringhat{ \mathbf{V} } _h ^{\mathrm{tan}} \label{eq:weak_wtan} .
  \end{alignat}
\end{subequations}
Note that this semidiscretized formulation does not specify initial or
boundary conditions. The multisymplectic conservation law is a
statement about variations within a \emph{family} of solutions, in
which the initial and boundary values may vary as well. Of course, to
find a \emph{particular} solution, we would impose initial and
boundary values in addition to \eqref{eq:weak}.

\begin{remark}
  The tensor product construction of
  $ \mathbf{W} _h = W _h \otimes \mathbb{R}^2 $ ensures that
  $ ( \mathbf{J} \cdot , \cdot ) _{ \mathcal{T} _h } $ is a symplectic
  form on $ \mathbf{W} _h $. In particular, its nondegeneracy implies
  that $ \dot{ \mathbf{z} } _h $ is a well-defined function of
  $ \mathbf{z} _h $ and $ \widehat{ \mathbf{z} } _h $ for each
  $ t \in I $, which allows us to interpret \eqref{eq:weak_w} as an
  equation describing dynamics. This would not necessarily be true if
  we had taken an arbitrary subspace
  $ \mathbf{W} _h (K) \subset \mathbf{H} \mathbf{\Lambda} (K) \cap
  \mathbf{H} ^\ast \mathbf{\Lambda} (K) $.
\end{remark}

A first variation of a solution
$ ( \mathbf{z} _h, \widehat{ \mathbf{z} } _h ) $ to \eqref{eq:weak} is
a solution to the linearized weak problem, consisting of
$ \mathbf{w} _i \colon I \rightarrow \mathbf{W} _h $ and
$ \widehat{ \mathbf{w} } _i \coloneqq ( \widehat{ \mathbf{w} } _i
^{\mathrm{nor}} , \widehat{ \mathbf{w} } _i ^{\mathrm{tan}} ) \colon I
\rightarrow \widehat{ \mathbf{W} } _h ^{\mathrm{nor}} \times \widehat{
  \mathbf{V} } _h ^{\mathrm{tan}} $ satisfying
\begin{subequations}
	\label{eq:weakvar}
  \begin{alignat}{2}
    ( \mathbf{J} \dot{\mathbf{w}} _i , \mathbf{w} _h ) _{ \mathcal{T} _h } + ( \mathbf{w} _i , \mathbf{D} \mathbf{w} _h ) _{ \mathcal{T} _h } + [ \widehat{ \mathbf{w} } _i , \mathbf{w} _h ] _{ \partial \mathcal{T} _h } &= \biggl( \frac{ \partial \mathbf{f} }{ \partial \mathbf{z} _h }  \mathbf{w} _i , \mathbf{w} _h \biggr)  _{ \mathcal{T} _h }  , \quad &\forall \mathbf{w} _h &\in \mathbf{W} _h , \label{eq:weakvar_w}\\
    \bigl\langle \mathbf{\Phi} ( \mathbf{w} _i , \widehat{ \mathbf{w} } _i ) , \widehat{ \mathbf{w} } _h ^{\mathrm{nor}} \bigr\rangle _{ \partial \mathcal{T} _h } &= 0 , \quad &\forall \widehat{ \mathbf{w} } _h ^{\mathrm{nor}} &\in \widehat{ \mathbf{W} } _h ^{\mathrm{nor}} , \label{eq:weakvar_wnor}\\
    \langle \widehat{ \mathbf{w} } _i ^{\mathrm{nor}} , \widehat{ \mathbf{w} } _h ^{\mathrm{tan}} \rangle _{ \partial \mathcal{T} _h } &= 0 , \quad & \forall \widehat{ \mathbf{w} } _h ^{\mathrm{tan}} &\in \ringhat{ \mathbf{V} } _h ^{\mathrm{tan}} \label{eq:weakvar_wtan} .
  \end{alignat}
\end{subequations}
Note that \eqref{eq:weak} and \eqref{eq:weakvar} only differ in the
right-hand sides of \eqref{eq:weak_w} and \eqref{eq:weakvar_w}.

\begin{example}
		\label{ex:AFW}
  As a first example, we extend the \emph{AFW-H method} of
  \citet[Section~4.1]{StZa2025} from stationary to time-dependent
  systems. This is a hybridization of conforming FEEC and is named for
  \citet*{ArFaWi2006,ArFaWi2010}. In the stationary case, it includes
  the hybridized method of \citet*{AwFaGuSt2023} for the
  Hodge--Laplace problem and a similar hybridization of the method of
  \citet{LeSt2016} for the Hodge--Dirac problem.

  Let $ W _h (K) $ be a subcomplex of $ H \Lambda (K) $ for each
  $ K \in \mathcal{T} _h $, e.g., the trimmed piecewise-polynomial
  forms $ W _h ^k (K) = \mathcal{P} _r ^- \Lambda ^k (K) $ for some
  polynomial degree $r$ (cf.~\citep{ArFaWi2006,ArFaWi2010}), and take
  \begin{equation*}
    \widehat{ W } _h ^{\mathrm{nor}} ( \partial K ) = \widehat{ W } _h ^{\mathrm{tan}} ( \partial K ) = W _h ^{\mathrm{tan}} ( \partial K ) \coloneqq \bigl\{ w _h ^{\mathrm{tan}} : w _h \in W _h ( K ) \bigr\} .
  \end{equation*}
  In addition to the broken complex $ W _h $, we get two conforming
  subcomplexes
  $ \mathring{ V } _h \subset V _h \subset H \Lambda (\Omega) $,
  \begin{equation*}
    \mathring{ V } _h \coloneqq \bigl\{ w _h \in W _h : \llbracket w _h ^{\mathrm{tan}} \rrbracket = 0 \bigr\} , \qquad  V _h \coloneqq \bigl\{ w _h \in W _h : \llbracket w _h ^{\mathrm{tan}} \rrbracket = 0 \text{ on } \partial \mathcal{T} _h \setminus \partial \Omega \bigr\} ,
  \end{equation*}
  and the single-valued trace spaces are therefore
  $ \ringhat{ V } _h ^{\mathrm{tan}} = \mathring{ V
  } _h ^{\mathrm{tan}} $ and
  $ \widehat{ V } _h ^{\mathrm{tan}} = V _h
  ^{\mathrm{tan}} $. Finally, taking the local flux function to be 
  \begin{equation*}
    \mathbf{\Phi} ( \mathbf{z} _h , \widehat{ \mathbf{z} } _h ) = \widehat{ \mathbf{z} } _h ^{\mathrm{tan}} - \mathbf{z} _h ^{\mathrm{tan}} ,
  \end{equation*}
  the method \eqref{eq:weak} becomes
  \begin{subequations}
    \label{eq:afw-h}
    \begin{alignat}{2}
      ( \mathbf{J} \dot{\mathbf{z}} _h , \mathbf{w} _h ) _{ \mathcal{T} _h } + ( \mathbf{z} _h , \mathbf{D} \mathbf{w} _h ) _{ \mathcal{T} _h } + [ \widehat{ \mathbf{z} } _h , \mathbf{w} _h ] _{ \partial \mathcal{T} _h } &= \bigl( \mathbf{f} ( t, \mathbf{z} _h ), \mathbf{w} _h \bigr)  _{ \mathcal{T} _h }  , \quad &\forall \mathbf{w} _h &\in \mathbf{W} _h , \label{eq:afw-h_w} \\
      \bigl\langle \widehat{ \mathbf{z} } _h ^{\mathrm{tan}} - \mathbf{z} _h ^{\mathrm{tan}} , \widehat{ \mathbf{w} } _h ^{\mathrm{nor}} \bigr\rangle _{ \partial \mathcal{T} _h } &= 0 , \quad &\forall \widehat{ \mathbf{w} } _h ^{\mathrm{nor}} &\in \widehat{ \mathbf{W} } _h ^{\mathrm{nor}} , \label{eq:afw-h_wnor}\\
      \langle \widehat{ \mathbf{z} } _h ^{\mathrm{nor}} , \widehat{ \mathbf{w} } _h ^{\mathrm{tan}} \rangle _{ \partial \mathcal{T} _h } &= 0 , \quad & \forall \widehat{ \mathbf{w} } _h ^{\mathrm{tan}} &\in \ringhat{ \mathbf{V} } _h ^{\mathrm{tan}} \label{eq:afw-h_wtan} .
    \end{alignat}
  \end{subequations}
  Observe that taking
  $ \widehat{ \mathbf{w} } _h ^{\mathrm{nor}} = \widehat{ \mathbf{z} }
  _h ^{\mathrm{tan}} - \mathbf{z} _h ^{\mathrm{tan}} $ in
  \eqref{eq:afw-h_wnor} implies
  $ \mathbf{z} _h ^{\mathrm{tan}} = \widehat{ \mathbf{z} } _h
  ^{\mathrm{tan}} \in \widehat{ \mathbf{V} } _h ^{\mathrm{tan}} $, and
  therefore $ \mathbf{z} _h \in \mathbf{V} _h $.  By essentially the
  same argument as \citep[Theorem~4.1]{StZa2025}, it follows that
  \eqref{eq:afw-h} is a hybridization of the following conforming
  method: Find $ \mathbf{z} _h \colon I \rightarrow \mathbf{V} _h $
  such that
  \begin{equation}
    ( \mathbf{J} \dot{ \mathbf{z} } _h , \mathbf{w} _h ) _\Omega + ( \mathbf{d} \mathbf{z} _h , \mathbf{w} _h ) _\Omega + ( \mathbf{z} _h , \mathbf{d} \mathbf{w} _h ) _\Omega = \bigl( \mathbf{f} ( t, \mathbf{z} _h ) , \mathbf{w} _h \bigr) _\Omega , \quad \forall \mathbf{w} _h \in \mathring{ \mathbf{V} } _h .
    \label{eq:afw}
  \end{equation}
\end{example}

\begin{example}
	\label{ex:LDG}
  We next extend the LDG-H method of \citep[Section~4.2]{StZa2025}, which is
  a hybridizable discontinuous Galerkin (HDG) method, from stationary
  to time-dependent systems. For this method, one chooses trace spaces
  of the form
  \begin{equation*}
    \widehat{ W } _h ^{\mathrm{nor}} ( \partial K ) = L ^2 \Lambda ( \partial K ) , \qquad \widehat{ W } _h ^{\mathrm{tan}} ( \partial K ) = \prod _{ e \subset \partial K } \widehat{ W } _h ^{\mathrm{tan}} (e) ,
  \end{equation*}
  assuming that
  $ \widehat{ W } _h ^{\mathrm{tan}} ( e ^+ ) = \widehat{ W } _h
  ^{\mathrm{tan}} ( e ^- ) $ at interior facets $e$. One of the
  simplest choices is to take
  \begin{equation*}
    W _h (K) = \mathcal{P} _r \Lambda (K) , \qquad \widehat{ W } _h ^{\mathrm{tan}} (e) = \mathcal{P} _r \Lambda (e) ,
  \end{equation*}
  which gives an ``equal-order'' HDG method. The LDG-H flux function
  has the form
  \begin{equation*}
    \mathbf{\Phi} ( \mathbf{z} _h , \widehat{ \mathbf{z} } _h ) = ( \widehat{ \mathbf{z} } _h ^{\mathrm{nor}} - \mathbf{z} _h ^{\mathrm{nor}} ) + \boldsymbol{\alpha} ( \widehat{ \mathbf{z} } _h ^{\mathrm{tan}} - \mathbf{z} _h ^{\mathrm{tan}} ) ,
  \end{equation*}
  where
  $ \boldsymbol{\alpha} = \prod _{ e \subset \partial \mathcal{T} _h }
  \boldsymbol{\alpha} _e $ is a bounded, symmetric ``penalty''
  operator on
  $ \mathbf{L} ^2 \mathbf{\Lambda} ( \partial \mathcal{T} _h ) $. For
  example, $ \boldsymbol{\alpha} $ might be piecewise constant, where
  $ \boldsymbol{\alpha} _e ^k $ is multiplication by a scalar (or
  symmetric $ 2 \times 2 $ matrix) for each facet
  $ e \subset \partial \mathcal{T} _h $ and form degree
  $ k = 0 , \ldots, n - 1 $. Alternatively, $ \boldsymbol{\alpha} $
  may incorporate projection onto a lower-degree trace space, as in
  the reduced-stabilization techniques of
  \citet{Lehrenfeld2010,LeSc2016} and \citet{Oikawa2015,Oikawa2016};
  see \citep[Section~4.2]{StZa2025} for details. For the remainder of
  the paper, we will consider the LDG-H method with equal-order spaces
  and piecewise-constant penalties. See \citep[Theorem 4.9]{StZa2025}
  for a characterization of other choices that yield
  structure-preserving methods.

  Since
  $ \widehat{ \mathbf{W} } _h ^{\mathrm{nor}} = \mathbf{L} ^2
  \mathbf{\Lambda} ( \partial \mathcal{T} _h ) $, equation
  \eqref{eq:weak_wnor} says that
  $ \widehat{ \mathbf{z} } _h ^{\mathrm{nor}} = \mathbf{z} _h
  ^{\mathrm{nor}} - \boldsymbol{\alpha} ( \widehat{ \mathbf{z} } _h
  ^{\mathrm{tan}} - \mathbf{z} _h ^{\mathrm{tan}} ) $. Substituting
  this into \eqref{eq:weak_w} and \eqref{eq:weak_wtan} and integrating
  by parts gives an equivalent, symmetric formulation of the LDG-H
  method in the remaining variables: Find
  $ ( \mathbf{z} _h , \widehat{ \mathbf{z} } _h ^{\mathrm{tan}} )
  \colon I \rightarrow \mathbf{W} _h \times \widehat{ \mathbf{V} } _h
  ^{\mathrm{tan}} $ satisfying
  \begin{subequations}
    \label{eq:ldg-h}
    \begin{alignat}{2}
      \begin{multlined}[b]
        ( \mathbf{J} \dot{\mathbf{z}} _h , \mathbf{w} _h ) _{ \mathcal{T} _h } + ( \mathbf{z} _h , \boldsymbol{\delta} \mathbf{w} _h ) _{ \mathcal{T} _h } + ( \boldsymbol{\delta} \mathbf{z} _h , \mathbf{w} _h ) _{ \mathcal{T} _h } \\
        + \langle \widehat{ \mathbf{z} } _h ^{\mathrm{tan}} , \mathbf{w} _h ^{\mathrm{nor}} \rangle _{ \partial \mathcal{T} _h } + \bigl\langle \boldsymbol{\alpha} ( \widehat{ \mathbf{z} } _h ^{\mathrm{tan}} - \mathbf{z} _h ^{\mathrm{tan}} ) , \mathbf{w} _h ^{\mathrm{tan}} \bigr\rangle _{ \partial \mathcal{T} _h }
      \end{multlined}
      &= \bigl( \mathbf{f} ( t, \mathbf{z}  _h) , \mathbf{w} _h \bigr) _{ \mathcal{T} _h } , \quad &\forall \mathbf{w} _h &\in \mathbf{W} _h ,\label{eq:ldg-h_w}\\
      \bigl\langle \mathbf{z} _h ^{\mathrm{nor}} - \boldsymbol{\alpha} ( \widehat{ \mathbf{z} } _h ^{\mathrm{tan}} - \mathbf{z} _h ^{\mathrm{tan}} ) , \widehat{ \mathbf{w} } _h ^{\mathrm{tan}} \bigr\rangle _{ \partial \mathcal{T} _h } &= 0 , \quad &\forall \widehat{ \mathbf{w} } _h ^{\mathrm{tan}} &\in \ringhat{\mathbf{V}} _h ^{\mathrm{tan}}. \label{eq:ldg-h_wtan}
    \end{alignat}
  \end{subequations}
\end{example}

\subsection{Weak multisymplecticity}

We now develop a notion of what it means for a weak solution to
satisfy a multisymplectic conservation law locally on
$ K \in \mathcal{T} _h $. Similarly to the approach in
\citep{StZa2025}, this is done by modifying the integral form of the
multisymplectic conservation law \eqref{eq:mscl_integral} so that the
boundary terms involve the numerical traces
$ \widehat{ \mathbf{w} } _i $ rather than the traces of
$ \mathbf{w} _i $ on $ \partial K $.

\begin{definition}
  We say that \eqref{eq:weak} is \emph{(weakly) multisymplectic} if, whenever
  $(\mathbf{z}_h, \widehat{\mathbf{z}}_h)$ satisfies
  \eqref{eq:weak_w}--\eqref{eq:weak_wnor} with
  $\partial \mathbf{f} / \partial \mathbf{z}_h $ being symmetric, and
  $(\mathbf{w}_i, \widehat{\mathbf{w}}_i)$ satisfy
  \eqref{eq:weakvar_w}--\eqref{eq:weakvar_wnor} for $ i = 1, 2 $, we
  have
  \begin{equation}
    \label{eq:mscl_weak}
    \frac{\mathrm{d}}{\mathrm{d}t} ( \mathbf{J} \mathbf{w} _1 , \mathbf{w} _2 ) _K + [ \widehat{ \mathbf{w} } _1 , \widehat{ \mathbf{w} } _2 ] _{ \partial K } = 0 ,
  \end{equation}
  for all $K\in\mathcal{T}_h$.
\end{definition}

The main result of this section extends Lemma 3.9 of \citep{StZa2025}
to the time-dependent case, while also generalizing Theorem 4.6
\citet{McSt2024} for time-dependent de~Donder--Weyl systems.

\begin{theorem}
  \label{thm:local_ms}
  If $ ( \mathbf{z} _h , \widehat{ \mathbf{z} } _h ) $ satisfies
  \eqref{eq:weak_w} with
  $ \partial \mathbf{f} / \partial \mathbf{z} _h $ being symmetric,
  and if $ ( \mathbf{w} _1 , \widehat{ \mathbf{w} } _1 ) $ and
  $ ( \mathbf{w} _2 , \widehat{ \mathbf{w} } _2 ) $ satisfy
  \eqref{eq:weakvar_w}, then
  \begin{equation}
    \frac{\mathrm{d}}{\mathrm{d}t} ( \mathbf{J} \mathbf{w} _1 , \mathbf{w} _2 ) _K + [ \widehat{ \mathbf{w} } _1 , \widehat{ \mathbf{w} } _2 ] _{ \partial K } = [ \widehat{ \mathbf{w} } _1 - \mathbf{w} _1 , \widehat{ \mathbf{w} } _2 - \mathbf{w} _2 ] _{ \partial K } ,
  \end{equation}
  for all $ K \in \mathcal{T} _h $.  Consequently, the
  multisymplecticity condition \eqref{eq:mscl_weak} holds if and only
  if
  \begin{equation}
    \label{eq:mscl_jump}
    [ \widehat{ \mathbf{w} } _1 - \mathbf{w} _1 ,
    \widehat{ \mathbf{w} } _2 - \mathbf{w} _2 ] _{ \partial K } = 0.
  \end{equation}
\end{theorem}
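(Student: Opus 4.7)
The plan is to test the linearized weak equation \eqref{eq:weakvar_w} for $i = 1$ against $\mathbf{w}_h = \mathbf{w}_2$ and for $i = 2$ against $\mathbf{w}_h = \mathbf{w}_1$, and then subtract. The crucial structural input is that $\mathbf{W}_h = \prod_{K} \mathbf{W}_h(K)$ is broken: for each fixed $K \in \mathcal{T}_h$, the restriction $\mathbf{w}_i|_K$ extended by zero to the other elements belongs to $\mathbf{W}_h$, so it is an admissible test function, and \eqref{eq:weakvar_w} localizes to a single element. This reduces everything to an identity on $K$ and $\partial K$.

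After writing the two equations and subtracting, the right-hand sides cancel by symmetry of $\partial \mathbf{f} / \partial \mathbf{z}_h$, exactly as in the proof of \cref{thm:mscl_differential}. The time-derivative pair $(\mathbf{J}\dot{\mathbf{w}}_1, \mathbf{w}_2)_K - (\mathbf{J}\dot{\mathbf{w}}_2, \mathbf{w}_1)_K$ collapses to $\frac{\mathrm{d}}{\mathrm{d}t}(\mathbf{J}\mathbf{w}_1, \mathbf{w}_2)_K$ by the Leibniz rule together with the antisymmetry of $\mathbf{J}$. The Hodge--Dirac pair $(\mathbf{w}_1, \mathbf{D}\mathbf{w}_2)_K - (\mathbf{w}_2, \mathbf{D}\mathbf{w}_1)_K$ becomes $-[\mathbf{w}_1, \mathbf{w}_2]_{\partial K}$ by \eqref{eq:bracket_D}. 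This leaves the identity
\begin{equation*}
  \frac{\mathrm{d}}{\mathrm{d}t}(\mathbf{J}\mathbf{w}_1, \mathbf{w}_2)_K = [\mathbf{w}_1, \mathbf{w}_2]_{\partial K} - [\widehat{\mathbf{w}}_1, \mathbf{w}_2]_{\partial K} + [\widehat{\mathbf{w}}_2, \mathbf{w}_1]_{\partial K}.
\end{equation*}

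To match the claimed identity, I add $[\widehat{\mathbf{w}}_1, \widehat{\mathbf{w}}_2]_{\partial K}$ to both sides and check that the resulting right-hand side equals $[\widehat{\mathbf{w}}_1 - \mathbf{w}_1, \widehat{\mathbf{w}}_2 - \mathbf{w}_2]_{\partial K}$. This is a short manipulation using the bilinearity and antisymmetry of $[\cdot, \cdot]_{\partial K}$: expanding $[\widehat{\mathbf{w}}_1 - \mathbf{w}_1, \widehat{\mathbf{w}}_2 - \mathbf{w}_2]_{\partial K}$ gives four terms, and combining $-[\mathbf{w}_1, \widehat{\mathbf{w}}_2]_{\partial K} = [\widehat{\mathbf{w}}_2, \mathbf{w}_1]_{\partial K}$ by antisymmetry yields exactly the right-hand side above. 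The final ``consequently'' statement is then immediate: \eqref{eq:mscl_weak} holds if and only if the right-hand side vanishes, which is \eqref{eq:mscl_jump}.

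The main subtlety is less computational than conceptual: one must confirm that the bracket $[\cdot, \cdot]_{\partial K}$ is well-defined and bilinear on pairs involving both $\mathbf{w}_i \in \mathbf{W}_h(K) \subset \mathbf{H}\mathbf{\Lambda}(K) \cap \mathbf{H}^{\ast}\mathbf{\Lambda}(K)$ and the numerical traces $\widehat{\mathbf{w}}_i$; since both sides live in $\mathbf{L}^2 \mathbf{\Lambda}(\partial K)$, this poses no difficulty and the duality pairing reduces to the $L^2$ pairing. Beyond that, the proof is essentially an elementwise application of the variational-equation trick used in \cref{thm:mscl_differential}, the only genuine novelty being the bookkeeping with the mixed bracket terms that track the discrepancy between the interior traces of $\mathbf{w}_i$ and the numerical traces $\widehat{\mathbf{w}}_i$.
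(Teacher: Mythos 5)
Your proof is correct and follows essentially the same route as the paper's: localize \eqref{eq:weakvar_w} by testing against extensions by zero of $\mathbf{w}_2\rvert_K$ and $\mathbf{w}_1\rvert_K$, subtract using symmetry of $\partial\mathbf{f}/\partial\mathbf{z}_h$, apply the Leibniz rule, antisymmetry, and \eqref{eq:bracket_D}, then add the expanded cross term $[\widehat{\mathbf{w}}_1-\mathbf{w}_1,\widehat{\mathbf{w}}_2-\mathbf{w}_2]_{\partial K}$ to both sides. The intermediate identity you derive is just a rearrangement of the paper's, and your remark on the well-definedness of the bracket is a harmless (and reasonable) addition.
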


\begin{proof}
  Since $ \mathbf{w} _1 $ satisfies \eqref{eq:weakvar_w}, letting
  $ \mathbf{w} _h $ be the extension by zero of
  $ \mathbf{w} _2 \rvert _K $ gives
  \begin{equation*}
    ( \mathbf{J} \dot{\mathbf{w}} _1 , \mathbf{w} _2 ) _K + ( \mathbf{w} _1 , \mathbf{D} \mathbf{w} _2 ) _K + [ \widehat{ \mathbf{w} } _1 , \mathbf{w} _2 ] _{ \partial K } = \biggl( \frac{ \partial \mathbf{f} }{ \partial \mathbf{z} _h } \mathbf{w} _1 , \mathbf{w} _2 \biggr)  _K ,
  \end{equation*}
  and likewise,
  \begin{equation*}
    ( \mathbf{J} \dot{\mathbf{w}} _2 , \mathbf{w} _1 ) _K + ( \mathbf{w} _2 , \mathbf{D} \mathbf{w} _1 ) _K + [ \widehat{ \mathbf{w} } _2 , \mathbf{w} _1 ] _{ \partial K } = \biggl( \frac{ \partial \mathbf{f} }{ \partial \mathbf{z} _h } \mathbf{w} _2 , \mathbf{w} _1 \biggr)  _K .
  \end{equation*}
  Subtracting these, the right-hand side vanishes by symmetry of
  $ \partial \mathbf{f} / \partial \mathbf{z} _h $, leaving
  \begin{equation*}
    \frac{\mathrm{d}}{\mathrm{d}t} ( \mathbf{J} \mathbf{w} _1 , \mathbf{w} _2 ) _K - [ \mathbf{w} _1 , \mathbf{w} _2 ] _{ \partial K } + [ \widehat{ \mathbf{w} } _1 , \mathbf{w} _2 ] _{ \partial K } + [ \mathbf{w} _1 , \widehat{ \mathbf{w} } _2 ] _{ \partial K } = 0 .
  \end{equation*}
  Here, we have used the Leibniz rule, the antisymmetry of
  $ \mathbf{J} $ and $ [ \cdot , \cdot ] _{ \partial K } $, and
  \eqref{eq:bracket_D}. Finally, adding
  \begin{equation*}
    [ \widehat{ \mathbf{w} } _1 - \mathbf{w} _1 , \widehat{ \mathbf{w} } _2 - \mathbf{w} _2 ] _{ \partial K } = [ \widehat{ \mathbf{w} } _1 , \widehat{ \mathbf{w} } _2 ] _{ \partial K } - [ \widehat{ \mathbf{w} } _1 , \mathbf{w} _2 ] _{ \partial K } - [ \mathbf{w} _1 , \widehat{ \mathbf{w} } _2 ] _{ \partial K } + [ \mathbf{w} _1 , \mathbf{w} _2 ] _{ \partial K } 
  \end{equation*}
  to both sides completes the proof.
\end{proof}

It follows that multisymplecticity is a property of the local flux
function $ \mathbf{\Phi} $, which determines the relationship between
$ \mathbf{w} _i $ and $ \widehat{ \mathbf{w} } _i $.

\begin{definition}
  \label{def:ms_flux}
  The local flux function $ \mathbf{\Phi} $ is \emph{multisymplectic}
  if \eqref{eq:mscl_jump} holds for all
  $ ( \mathbf{w} _1 , \widehat{ \mathbf{w} } _1 ) $ and
  $ ( \mathbf{w} _2 , \widehat{ \mathbf{w} } _2 ) $ satisfying
  \eqref{eq:weakvar_wnor}.
\end{definition}

\begin{example}
  For the AFW-H method, \eqref{eq:weakvar_wnor} reads
  \begin{equation*}
    \bigl\langle \widehat{ \mathbf{w} } _i ^{\mathrm{tan}} - \mathbf{w} _i ^{\mathrm{tan}} , \widehat{ \mathbf{w} } _h ^{\mathrm{nor}} \bigr\rangle _{ \partial \mathcal{T} _h } = 0 , \quad \forall \widehat{ \mathbf{w} } _h ^{\mathrm{nor}} \in \widehat{ \mathbf{W} } _h ^{\mathrm{nor}}.
  \end{equation*}
  Taking
  $ \widehat{ \mathbf{w} } _h ^{\mathrm{nor}} = \widehat{ \mathbf{w} }
  _i ^{\mathrm{tan}} - \mathbf{w} _i ^{\mathrm{tan}} $ implies
  $ \widehat{ \mathbf{w} } _i ^{\mathrm{tan}} - \mathbf{w} _i
  ^{\mathrm{tan}} = 0 $, which immediately gives
  \eqref{eq:mscl_jump}. Hence, the AFW-H method is multisymplectic.
\end{example}

\begin{example}
  \label{ex:ldg_ms}
  For the LDG-H method, \eqref{eq:weakvar_wnor} gives
  $ \widehat{ \mathbf{w} } _i ^{\mathrm{nor}} = \mathbf{w} _i
  ^{\mathrm{nor}} - \boldsymbol{\alpha} ( \widehat{ \mathbf{w} } _i
  ^{\mathrm{tan}} - \mathbf{w} _i ^{\mathrm{tan}} ) $. Therefore,
  \begin{equation*}
    [ \widehat{ \mathbf{w} } _1 - \mathbf{w} _1 , \widehat{ \mathbf{w} } _2 - \mathbf{w} _2 ] _{ \partial K }
    = \bigl\langle \boldsymbol{\alpha} ( \widehat{ \mathbf{w} } _1
    ^{\mathrm{tan}} - \mathbf{w} _1 ^{\mathrm{tan}} ) , \widehat{
      \mathbf{w} } _2 ^{\mathrm{tan}} - \mathbf{w} _2 ^{\mathrm{tan}}
    \bigr\rangle _{ \partial K } - \bigl\langle \boldsymbol{\alpha} (
    \widehat{ \mathbf{w} } _2 ^{\mathrm{tan}} - \mathbf{w} _2
    ^{\mathrm{tan}} ) , \widehat{ \mathbf{w} } _1 ^{\mathrm{tan}} -
    \mathbf{w} _1 ^{\mathrm{tan}} \bigr\rangle _{ \partial K } ,
  \end{equation*} 
  which vanishes since $ \boldsymbol{\alpha} $ is symmetric. Hence,
  \eqref{eq:mscl_jump} holds, and the LDG-H method is multisymplectic.
\end{example}

The definition of multisymplectic flux is essentially identical to
that for non-time-dependent systems \citep[Definition 3.11]{StZa2025},
except for being on the ``bold'' spaces of $ \mathbb{R}^2 $-valued
forms. Consequently, \emph{every multisymplectic method in
  \citep{StZa2025} for non-time-dependent systems yields a
  multisymplectic semidiscretization method for time-dependent
  systems}. We now formalize this statement as follows.

\begin{proposition}
  \label{prop:diagonal_flux}
  A local flux function
  $ \mathbf{\Phi} ( \mathbf{z} _h , \widehat{ \mathbf{z} } _h ) =
  \begin{bmatrix}
    \Phi _q ( q _h , \widehat{ q } _h ) \\
    \Phi _p ( p _h , \widehat{ p } _h ) 
  \end{bmatrix} $ 
  is multisymplectic in the sense of \cref{def:ms_flux} if and only if
  $\Phi _q $ and $ \Phi _p $ are multisymplectic in the sense of \citep[Definition
  3.11]{StZa2025}.
\end{proposition}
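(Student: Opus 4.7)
The plan is to exploit the direct-sum structure of the $\mathbb{R}^2$-valued setting in order to decouple the multisymplecticity of $\mathbf{\Phi}$ into independent conditions on $\Phi_q$ and $\Phi_p$. Two observations drive the argument. First, writing $\mathbf{w}_i = \begin{bsmallmatrix} s_i \\ r_i \end{bsmallmatrix}$ and $\widehat{\mathbf{w}}_i = \begin{bsmallmatrix} \widehat{s}_i \\ \widehat{r}_i \end{bsmallmatrix}$, the definition of $\boldsymbol{\omega}$ and the bracket identity in \eqref{eq:bracket_identities} give the additive splitting
\begin{equation*}
[\widehat{\mathbf{w}}_1 - \mathbf{w}_1, \widehat{\mathbf{w}}_2 - \mathbf{w}_2]_{\partial K} = [\widehat{s}_1 - s_1, \widehat{s}_2 - s_2]_{\partial K} + [\widehat{r}_1 - r_1, \widehat{r}_2 - r_2]_{\partial K}.
\end{equation*}
Second, since $\widehat{\mathbf{W}}_h^{\mathrm{nor}} = \widehat{W}_h^{\mathrm{nor}} \otimes \mathbb{R}^2$, a test function $\widehat{\mathbf{w}}_h^{\mathrm{nor}}$ may be chosen to have only its $q$-block or only its $p$-block nonzero, and combined with the diagonal form of $\mathbf{\Phi}$ this means \eqref{eq:weakvar_wnor} is equivalent to the pair of scalar constraints $\langle \Phi_q(s_i,\widehat{s}_i), \widehat{w}_{h,q}^{\mathrm{nor}}\rangle_{\partial\mathcal{T}_h} = 0$ and $\langle \Phi_p(r_i,\widehat{r}_i), \widehat{w}_{h,p}^{\mathrm{nor}}\rangle_{\partial\mathcal{T}_h} = 0$.

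For the ($\Leftarrow$) direction, suppose $\Phi_q$ and $\Phi_p$ are each multisymplectic in the sense of \citep[Definition~3.11]{StZa2025}. Given $(\mathbf{w}_i, \widehat{\mathbf{w}}_i)$ satisfying \eqref{eq:weakvar_wnor}, the decoupled constraints above show that the pairs $(s_i, \widehat{s}_i)$ and $(r_i, \widehat{r}_i)$ each satisfy the scalar analogue of \eqref{eq:weakvar_wnor}, so both terms on the right-hand side of the additive splitting vanish, and therefore so does the sum.

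For the ($\Rightarrow$) direction, suppose $\mathbf{\Phi}$ is multisymplectic. Given $(s_i, \widehat{s}_i)$ satisfying the scalar constraint for $\Phi_q$, set $r_i = 0$ and $\widehat{r}_i = 0$; linearity of $\Phi_p$ makes $\Phi_p(0,0)=0$, so the full bold constraint \eqref{eq:weakvar_wnor} holds. Invoking the multisymplecticity of $\mathbf{\Phi}$ and using the additive splitting then yields $[\widehat{s}_1 - s_1, \widehat{s}_2 - s_2]_{\partial K} = 0$, proving $\Phi_q$ is multisymplectic; the argument for $\Phi_p$ is symmetric, replacing the $p$-slot with zero instead.

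No step poses a real analytic difficulty; the main care required is bookkeeping, namely to verify that the direct-sum decomposition of $\widehat{\mathbf{W}}_h^{\mathrm{nor}}$ is compatible with the block structure of $\mathbf{\Phi}$ so that the component constraints genuinely decouple, and to note the use of linearity of $\Phi_q$ and $\Phi_p$ when extending by zero in the converse direction.
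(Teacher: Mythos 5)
Your proof is correct and follows essentially the same route as the paper's: decouple the constraint \eqref{eq:weakvar_wnor} into independent conditions on $(s_i,\widehat{s}_i)$ and $(r_i,\widehat{r}_i)$, split the bracket additively, and use the independence of the two blocks (via extension by zero) to get both implications. The paper states the equivalence more tersely, leaving the zero-extension step implicit, but the underlying argument is identical.
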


\begin{proof}
  Writing $ \mathbf{w} _i = \bigl[
  \begin{smallmatrix}
    s _i \\ r _i 
  \end{smallmatrix} \bigr] $, $ \widehat{ \mathbf{w} } _i =
  \Bigl[
  \begin{smallmatrix}
    \widehat{ s } _i \\ \widehat{ r } _i 
  \end{smallmatrix}
  \Bigr] $, and $ \widehat{ \mathbf{w} } _h ^{\mathrm{nor}} =   \Bigl[
  \begin{smallmatrix}
    \widehat{ s } _h \\ \widehat{ r } _h 
  \end{smallmatrix}
  \Bigr] $, the condition \eqref{eq:weakvar_wnor} is equivalent to
  \begin{alignat*}{2}
    \bigl\langle \Phi _q ( s _i , \widehat{ s } _i ) , \widehat{ s } _h ^{\mathrm{nor}} \bigr\rangle _{ \partial \mathcal{T} _h  } &= 0 , \quad & \forall \widehat{ s } _h ^{\mathrm{nor}} &\in \widehat{ W } _h ^{\mathrm{nor}} ,\\
    \bigl\langle \Phi _p ( r _i , \widehat{ r } _i ) , \widehat{ r } _h ^{\mathrm{nor}} \bigr\rangle _{ \partial \mathcal{T} _h  } &= 0 , \quad & \forall \widehat{ r } _h ^{\mathrm{nor}} &\in \widehat{ W } _h ^{\mathrm{nor}} .
  \end{alignat*}
  Hence, \eqref{eq:mscl_jump} holding for all such
  $ ( \mathbf{w} _i , \widehat{ \mathbf{w} } _i ) $ is equivalent to
  \begin{align*}
    [ \widehat{ s } _1 - s _1 , \widehat{ s } _2 - s _2 ] _{ \partial K } &= 0 ,\\
    [ \widehat{ r } _1 - r _1 , \widehat{ r } _2 - r _2 ] _{ \partial K } &= 0 ,
  \end{align*}
  for all such $ ( s _i , \widehat{ s } _i ) $ and
  $ ( r _i , \widehat{ r } _i ) $, which is precisely
  multisymplecticity of $\Phi _q$ and $ \Phi _p $.
\end{proof}

\begin{corollary}
  A local flux function $ \mathbf{\Phi} = \Phi \otimes \mathbb{R}^2 $
  is multisymplectic if and only if $\Phi$ is.
\end{corollary}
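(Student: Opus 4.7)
The plan is to derive this corollary as an immediate specialization of the preceding \cref{prop:diagonal_flux}. Writing out the tensor product, I would observe that $\mathbf{\Phi} = \Phi \otimes \mathbb{R}^2$ acts on $\mathbf{z}_h = \bigl[\begin{smallmatrix} q_h \\ p_h \end{smallmatrix}\bigr]$ and $\widehat{\mathbf{z}}_h$ componentwise, i.e., $\mathbf{\Phi}(\mathbf{z}_h, \widehat{\mathbf{z}}_h) = \bigl[\begin{smallmatrix} \Phi(q_h, \widehat{q}_h) \\ \Phi(p_h, \widehat{p}_h) \end{smallmatrix}\bigr]$. Thus $\mathbf{\Phi}$ fits the hypothesis of \cref{prop:diagonal_flux} with $\Phi_q = \Phi_p = \Phi$.

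Applying the proposition, $\mathbf{\Phi}$ is multisymplectic if and only if both diagonal blocks $\Phi$ are multisymplectic in the sense of \citep[Definition 3.11]{StZa2025}. Since these are the same map, this is equivalent to the single condition that $\Phi$ is multisymplectic, which is what the corollary asserts.

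There is essentially no obstacle here; the statement is a direct corollary of \cref{prop:diagonal_flux}, and the only ``work'' is noting that the tensor product structure gives rise to a diagonal flux of the form covered by the proposition. For that reason, I would keep the written proof to a single sentence invoking \cref{prop:diagonal_flux} with $\Phi_q = \Phi_p = \Phi$, rather than reproducing any of the bracket calculations.
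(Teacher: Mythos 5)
Your proof is correct and matches the paper's own argument exactly: the paper also proves this corollary by applying the preceding proposition with $\Phi_q = \Phi_p = \Phi$. Nothing further is needed.
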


\begin{proof}
  Apply \cref{prop:diagonal_flux} with $ \Phi _q = \Phi _p = \Phi $.
\end{proof}

\begin{remark}
  Multisymplecticity of AFW-H is a special case of this result: its
  local flux function is $ \mathbf{\Phi} = \Phi \otimes \mathbb{R}^2 $
  with
  $ \Phi ( z _h , \widehat{ z } _h ) = \widehat{ z } _h
  ^{\mathrm{tan}} - z _h ^{\mathrm{tan}} $, which is multisymplectic
  by \citep[Theorem~4.3]{StZa2025}.

  For LDG-H, if the penalty operator has the form
  $ \boldsymbol{\alpha} =
  \begin{bsmallmatrix}
    \alpha _q \\ & \alpha _p 
  \end{bsmallmatrix} 
  $, where $\alpha _q $ and $ \alpha _p $ are symmetric operators on
  $ L ^2 \Lambda ( \partial \mathcal{T} _h ) $, then this corresponds
  to
  $ \Phi _q ( q _h , \widehat{ q } _h ) = (\widehat{ q } _h
  ^{\mathrm{nor}} - q _h ^{\mathrm{nor}}) + \alpha _q ( \widehat{ q }
  _h ^{\mathrm{tan}} - q _h ^{\mathrm{tan}} ) $ and
  $ \Phi _p ( p _h , \widehat{ p } _h ) = (\widehat{ p } _h
  ^{\mathrm{nor}} - p _h ^{\mathrm{nor}}) + \alpha _p ( \widehat{ p }
  _h ^{\mathrm{tan}} - p _h ^{\mathrm{tan}} ) $, which are
  multisymplectic by \citep[Theorem~4.9]{StZa2025}. However,
  \cref{ex:ldg_ms} shows that LDG-H is multisymplectic more generally,
  even when $ \boldsymbol{\alpha} $ is not block-diagonal.
\end{remark}

\subsection{Strong multisymplecticity}
\label{sec:strong_ms}

Under some additional hypotheses, it is possible to extend
\eqref{eq:mscl_weak} from a single element $ K \in \mathcal{T} _h $ to
an arbitrary collection of elements
$\mathcal{K} \subset \mathcal{T}_h$, which cover a region with
boundary ${\partial (\overline{\bigcup\mathcal{K}})}$. This stronger
notion of multisymplecticity is characterized as follows.

\begin{definition}
  We say that \eqref{eq:weak} is \emph{strongly multisymplectic} if,
  whenever $(\mathbf{z}_h, \widehat{\mathbf{z}}_h)$ is a solution with
  $\partial \mathbf{f} / \partial \mathbf{z}_h $ being symmetric, and
  $(\mathbf{w}_i, \widehat{\mathbf{w}}_i)$ with $i = 1,2$ are first
  variations, i.e. solutions of \eqref{eq:weakvar}, we have
  \begin{equation}
    \label{eq:mscl_strong}
    \frac{\mathrm{d}}{\mathrm{d}t} ( \mathbf{J} \mathbf{w} _1 , \mathbf{w} _2 ) _{\mathcal{K}}+ [ \widehat{ \mathbf{w} } _1 , \widehat{ \mathbf{w} } _2 ] _{\partial (\overline{\bigcup\mathcal{K}})} = 0 ,
  \end{equation}
  for any collection of elements $\mathcal{K} \subset \mathcal{T}_h$.
\end{definition}

\begin{definition}
  We say that the local flux $\mathbf{\Phi}$ is \emph{strongly
    conservative} if \eqref{eq:weak_wnor}--\eqref{eq:weak_wtan} imply
  that $ \widehat{ \mathbf{z} } _h ^{\mathrm{nor}} $ is single-valued,
  in the sense that
  $ \widehat{\mathbf{z}}_h^{\mathrm{nor}}\rvert_{e^+} +
  \widehat{\mathbf{z}}_h^{\mathrm{nor}}\rvert_{e^-} = 0 $ on every
  interior facet $ e = \partial K ^+ \cap \partial K ^- $.  (In the
  notation of \citep[Definition 3.2]{StZa2025}, this says that the
  normal jump
  $ \llbracket \widehat{\mathbf{z}}_h^{\mathrm{nor}} \rrbracket $
  vanishes.)
\end{definition}

The following result simultaneously generalizes Theorem 3.17 in
\citet{StZa2025} and Theorem 4.7 in \citet{McSt2024}.

\begin{theorem}
  If the local flux function $ \mathbf{\Phi} $ is strongly
  conservative and multisymplectic, then \eqref{eq:weak} is strongly
  multisymplectic.
\end{theorem}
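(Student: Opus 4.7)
The plan is to reduce strong multisymplecticity to the element-wise weak multisymplecticity already established. Since $\mathbf{\Phi}$ is multisymplectic and $\partial\mathbf{f}/\partial\mathbf{z}_h$ is symmetric, \cref{thm:local_ms} together with \cref{def:ms_flux} gives
\begin{equation*}
  \frac{\mathrm{d}}{\mathrm{d}t}(\mathbf{J}\mathbf{w}_1,\mathbf{w}_2)_K + [\widehat{\mathbf{w}}_1,\widehat{\mathbf{w}}_2]_{\partial K} = 0
\end{equation*}
for every $K\in\mathcal{T}_h$. Summing this identity over $K\in\mathcal{K}$ produces the desired time-derivative term $\frac{\mathrm{d}}{\mathrm{d}t}(\mathbf{J}\mathbf{w}_1,\mathbf{w}_2)_{\mathcal{K}}$, so the whole problem reduces to showing
\begin{equation*}
  \sum_{K\in\mathcal{K}} [\widehat{\mathbf{w}}_1,\widehat{\mathbf{w}}_2]_{\partial K} = [\widehat{\mathbf{w}}_1,\widehat{\mathbf{w}}_2]_{\partial(\overline{\bigcup\mathcal{K}})}.
\end{equation*}

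Next I would split each $\partial K$ into facets lying on $\partial(\overline{\bigcup\mathcal{K}})$ and facets $e$ shared with a neighbor $K'\in\mathcal{K}$, and show that the contributions from any such interior facet cancel between $K$ and $K'$. Using the definition \eqref{eq:bracket}, the total interior-facet contribution on $e=\partial K^+\cap\partial K^-$ is a sum of pairings of the form $\langle\widehat{\mathbf{w}}_1^{\mathrm{tan}},\widehat{\mathbf{w}}_2^{\mathrm{nor}}\rangle$ evaluated from each side. The tangential pieces match across $e$ because $\widehat{\mathbf{w}}_i^{\mathrm{tan}}\in\widehat{\mathbf{V}}_h^{\mathrm{tan}}$ is single-valued on interior facets; the normal pieces satisfy $\widehat{\mathbf{w}}_i^{\mathrm{nor}}\rvert_{e^+} + \widehat{\mathbf{w}}_i^{\mathrm{nor}}\rvert_{e^-}=0$ because the variational equation \eqref{eq:weakvar_wnor} has the same form as \eqref{eq:weak_wnor}, so strong conservativity of $\mathbf{\Phi}$ applies verbatim to the first variations. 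The two opposite-orientation pairings on $e$ thus sum to zero, leaving only the facets on $\partial(\overline{\bigcup\mathcal{K}})$, which reassemble precisely into $[\widehat{\mathbf{w}}_1,\widehat{\mathbf{w}}_2]_{\partial(\overline{\bigcup\mathcal{K}})}$ via \eqref{eq:bracket_identities}.

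The main obstacle I foresee is purely bookkeeping: verifying the cancellation on interior facets requires pinning down the sign conventions for tangential and normal traces under the two opposite orientations induced by $K^+$ and $K^-$, so that zero tangential jump and zero normal jump (in the senses of \citep[Definition~3.2]{StZa2025}) actually produce the desired cancellation. Once these conventions are in hand, the argument is essentially a one-line application of strong conservativity, and no new analytic input beyond \cref{thm:local_ms} is needed.
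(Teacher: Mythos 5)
Your proposal is correct and follows essentially the same route as the paper: sum the local weak multisymplectic conservation law (from \cref{thm:local_ms} and multisymplecticity of $\mathbf{\Phi}$) over $K\in\mathcal{K}$, then use strong conservativity of the first variations—valid because \eqref{eq:weakvar_wnor}--\eqref{eq:weakvar_wtan} have the same form as \eqref{eq:weak_wnor}--\eqref{eq:weak_wtan}—together with single-valuedness of the tangential traces to cancel the interior-facet contributions. The only difference is that you spell out the facet-by-facet cancellation, whereas the paper delegates it to the proof of Theorem~3.17 in \citep{StZa2025}.
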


\begin{proof}
  Following the proof of Theorem 3.17 in \citet{StZa2025}, we have
  \begin{equation}
    \label{eq:cons_id}    
    [\widehat{\mathbf{w}}_1, \widehat{\mathbf{w}}_2]_{\partial (\overline{\bigcup\mathcal{K}})} = [\widehat{\mathbf{w}}_1, \widehat{\mathbf{w}}_2]_{\partial \mathcal{K}} \coloneqq \sum_{K\in\mathcal{K}}[\widehat{\mathbf{w}}_1, \widehat{\mathbf{w}}_2]_{\partial K},
  \end{equation}
  since
  $ \llbracket \widehat{ \mathbf{w} } _i ^{\mathrm{nor}} \rrbracket =
  0 $ causes the interior-facet contributions of
  $ \partial \mathcal{K} $ to cancel. Therefore, summing
  \eqref{eq:mscl_weak} over all $K \in \mathcal{K}$ and applying
  \eqref{eq:cons_id} gives \eqref{eq:mscl_strong}, as claimed.
\end{proof}

Finally, we remark that strong conservativity of a flux in the form of
\cref{prop:diagonal_flux} is equivalent to strong conservativity of
$\Phi _q$ and $ \Phi _p $. Thus, strongly multisymplectic methods for
stationary problems immediately give strongly multsymplectic
semidiscretization methods for time-dependent problems.  In
particular, the AFW-H method is multisymplectic but \emph{not strongly
  multisymplectic} except in dimension $ n = 1 $
\citep[Theorem~4.3]{StZa2025}. On the other hand, the LDG-H method is
strongly multisymplectic under some mild assumptions on the spaces and
penalties \citep[Theorem~4.9]{StZa2025}, including the equal-order
method with piecewise-constant penalties
\citep[Corollary~4.10]{StZa2025}.

\subsection{Methods for the semilinear Hodge wave equation}
\label{sec:hodge_wave_methods}

Let us now apply the framework of this section to the $k$-form
semilinear Hodge wave equation introduced in \cref{ex:hodge_wave}. We
seek solutions of the form
\begin{equation*}
  \mathbf{z} _h =
  \begin{bmatrix}
    \sigma _h \oplus u _h \oplus \rho _h \\
    p _h 
  \end{bmatrix}, \qquad \widehat{ \mathbf{z} } _h =
  \begin{bmatrix}
    \widehat{ \sigma } _h \oplus \widehat{ u } _h \oplus \widehat{ \rho } _h \\
    \widehat{ p } _h
  \end{bmatrix},
\end{equation*}
with the right-hand side having the form
\begin{equation*}
  \mathbf{f} ( t, \mathbf{z} _h ) =
  \begin{bmatrix}
    - \sigma _h \oplus f ( t, u _h ) \oplus - \rho _h \\
    p _h 
  \end{bmatrix}, \qquad f \colon I \times W _h ^k \rightarrow L ^2 \Lambda ^k (\Omega) .
\end{equation*}
With this setup, \eqref{eq:weak_w} corresponds to the following
semidiscretization of \eqref{eq:hodge_wave}:
\begin{subequations}
  \label{eq:weak_wave}
  \begin{alignat}{2}
    ( \dot{ \sigma } _h , \tau _h ) _{ \mathcal{T} _h } + ( p _h , \mathrm{d} \tau _h ) _{ \mathcal{T} _h } - \langle \widehat{ p } _h ^{\mathrm{nor}} , \tau _h ^{\mathrm{tan}} \rangle _{ \partial \mathcal{T} _h } &= 0, \quad &\forall \tau _h &\in W _h ^{ k -1 } , \label{eq:weak_wave_sigmadot}\\
    ( \dot{ u } _h , r _h ) _{ \mathcal{T} _h } &= ( p _h , r _h ) _{ \mathcal{T} _h } , \quad &\forall r _h &\in W _h ^k , \label{eq:weak_wave_udot}\\
    ( \dot{ \rho } _h , \eta _h ) _{ \mathcal{T} _h } + ( p _h , \delta \eta _h ) _{ \mathcal{T} _h } + \langle \widehat{ p } _h ^{\mathrm{tan}} , \eta _h ^{\mathrm{nor}} \rangle _{ \partial \mathcal{T} _h } &= 0, \quad &\forall \eta _h &\in W _h ^{ k + 1 } , \label{eq:weak_wave_rhodot} \\
    ( u _h , \mathrm{d} \tau _h ) _{ \mathcal{T} _h } - \langle \widehat{ u } _h ^{\mathrm{nor}} , \tau _h ^{\mathrm{tan}} \rangle _{ \partial \mathcal{T} _h } &= - ( \sigma _h , \tau _h ) _{ \mathcal{T} _h } , \quad &\forall \tau _h &\in W _h ^{ k -1 } , \label{eq:weak_wave_thetadot}\\
    \begin{multlined}[b]
      - ( \dot{ p } _h , v _h ) _{ \mathcal{T} _h } + ( \sigma _h , \delta v _h ) _{ \mathcal{T} _h } + ( \rho _h , \mathrm{d} v _h ) _{ \mathcal{T} _h } \\
      + \langle \widehat{ \sigma } _h ^{\mathrm{tan}} , v _h ^{\mathrm{nor}} \rangle _{ \partial \mathcal{T} _h } - \langle \widehat{ \rho } _h ^{\mathrm{nor}} , v _h ^{\mathrm{tan}} \rangle _{ \partial \mathcal{T} _h }
    \end{multlined} &= \bigl( f ( t, u _h ) , v _h \bigr)  _{ \mathcal{T} _h } , \quad &\forall v _h &\in W _h ^k , \label{eq:weak_wave_pdot}\\
    ( u _h , \delta \eta _h ) _{ \mathcal{T} _h } + \langle \widehat{ u } _h ^{\mathrm{tan}} , \eta _h ^{\mathrm{nor}} \rangle _{ \partial \mathcal{T} _h } &= - ( \rho _h , \eta _h ) _{ \mathcal{T} _h } , \quad &\forall \eta _h &\in W _h ^{ k + 1 } . \label{eq:weak_wave_xidot}
  \end{alignat}
\end{subequations}
In addition to $ \dot{ u } _h = p _h $, which holds by
\eqref{eq:weak_wave_udot}, we also assume
$ \dot{ \widehat{ u } } _h = \widehat{ p } _h $ so that
\eqref{eq:weak_wave_sigmadot} and \eqref{eq:weak_wave_rhodot}
automatically preserve the constraints \eqref{eq:weak_wave_thetadot}
and \eqref{eq:weak_wave_xidot}, respectively.

To ensure that we can ignore form degrees other than those appearing in
\eqref{eq:weak_wave}---analogous to restricting to the invariant
subspace $ \mathbf{S} $ in \cref{ex:hodge_wave}---we fix the following
components of $\mathbf{\Phi}$:
\begin{align*}
  \Phi _q ^j ( \mathbf{z} _h , \widehat{ \mathbf{z} } _h )
  &= \begin{cases}
    ( \widehat{ q } _h ^{\mathrm{tan}} ) ^j - ( q _h ^j ) ^{\mathrm{tan}} , & j \leq  k - 2 , \\
    ( \widehat{ q } _h ^{\mathrm{nor}} ) ^j - ( q _h ^{ j + 1 } ) ^{\mathrm{nor}} , & j \geq k + 1 ,
  \end{cases}\\
  \Phi _p ^j ( \mathbf{z} _h , \widehat{ \mathbf{z} } _h ) 
  &= \begin{cases}
    ( \widehat{ p } _h ^{\mathrm{tan}} ) ^j - ( p _h ^j ) ^{\mathrm{tan}} , & j \leq k -1 , \\
    ( \widehat{ p } _h ^{\mathrm{nor}} ) ^j - ( p _h ^{ j + 1 } ) ^{\mathrm{nor}} , & j \geq k .
  \end{cases}
\end{align*}
Hence, the only remaining flux components to specify are
$ \Phi _q ^{ k -1 } $ and $ \Phi _q ^k $. This form of
$ \mathbf{\Phi} $ also ensures that, for the methods below, the
multisymplectic flux condition \eqref{eq:mscl_jump} simplifies to
\begin{multline}
  \langle \widehat{ \tau } _1 ^{\mathrm{tan}} - \tau _1 ^{\mathrm{tan}} , \widehat{ v } _2 ^{\mathrm{nor}} - v _2 ^{\mathrm{nor}} \rangle _{ \partial K } + \langle \widehat{ v } _1 ^{\mathrm{tan}} - v _1 ^{\mathrm{tan}} , \widehat{ \eta } _2 ^{\mathrm{nor}} - \eta _2 ^{\mathrm{nor}} \rangle _{ \partial K } \\
  = \langle \widehat{ \tau } _2 ^{\mathrm{tan}} - \tau _2 ^{\mathrm{tan}} , \widehat{ v } _1 ^{\mathrm{nor}} - v _1 ^{\mathrm{nor}} \rangle _{ \partial K } + \langle \widehat{ v } _2 ^{\mathrm{tan}} - v _2 ^{\mathrm{tan}} , \widehat{ \eta } _1 ^{\mathrm{nor}} - \eta _1 ^{\mathrm{nor}} \rangle _{ \partial K }, \label{eq:mscl_jump_hodge}
\end{multline}
since all other terms of
$ [ \widehat{ \mathbf{w} } _1 - \mathbf{w} _1 , \widehat{ \mathbf{w} }
  _2 - \mathbf{w} _2 ] _{ \partial K } $ vanish. The semidiscrete
multisymplectic conservation law \eqref{eq:mscl_weak} becomes
\begin{equation*}
  \frac{\mathrm{d}}{\mathrm{d}t} ( v _1 , r _2 ) _K + \langle \widehat{ \tau } _1 ^{\mathrm{tan}} , \widehat{ v } _2 ^{\mathrm{nor}} \rangle _{ \partial K } + \langle \widehat{ v } _1 ^{\mathrm{tan}} , \widehat{ \eta } _2 ^{\mathrm{nor}} \rangle _{ \partial K } = \frac{\mathrm{d}}{\mathrm{d}t} ( v _2 , r _1 ) _K + \langle \widehat{ \tau } _2 ^{\mathrm{tan}} , \widehat{ v } _1 ^{\mathrm{nor}} \rangle _{ \partial K } + \langle \widehat{ v } _2 ^{\mathrm{tan}} , \widehat{ \eta } _1 ^{\mathrm{nor}} \rangle _{ \partial K } ,
\end{equation*}
which is essentially that of \cref{ex:hodge_wave_integral_mscl} with
hats on the trace variables.

\begin{remark}
  \label{rmk:sigmanor_rhotan}
  Just as $ \dot{ p } _h ^{ k \pm 1 } = 0 $ corresponds to the
  constraints \eqref{eq:weak_wave_thetadot} and
  \eqref{eq:weak_wave_xidot}, the condition
  $ \dot{ p } _h ^{ k \pm 2 } = 0 $ corresponds to a pair of
  constraints
  \begin{subequations}
    \label{eq:sigmanor_rhotan}
    \begin{alignat}{2}
      ( \sigma _h  , \mathrm{d} w _h ) _{ \mathcal{T} _h } - \langle \widehat{ \sigma } _h ^{\mathrm{nor}} , w _h ^{\mathrm{tan}} \rangle _{ \partial \mathcal{T} _h } &= 0 , \quad &\forall w _h &\in W _h ^{ k - 2 } , \label{eq:sigmanor} \\
      ( \rho _h  , \delta w _h ) _{ \mathcal{T} _h } + \langle \widehat{ \rho } _h ^{\mathrm{tan}} , w _h ^{\mathrm{nor}} \rangle _{ \partial \mathcal{T} _h } &= 0 , \quad &\forall w _h &\in W _h ^{ k + 2 } . \label{eq:rhotan} 
    \end{alignat}
  \end{subequations}
  By \eqref{eq:weak_wave_thetadot} with $ \tau _h = \mathrm{d} w _h $
  and \eqref{eq:weak_wave_xidot} with $ \eta _h = \delta w _h $, we
  see that \eqref{eq:sigmanor_rhotan} is equivalent to
  \begin{subequations}
    \label{eq:sigmanor_rhotan_boundary}
    \begin{alignat}{2}
      \langle \widehat{ \sigma } _h ^{\mathrm{nor}} , w _h ^{\mathrm{tan}} \rangle _{ \partial \mathcal{T} _h } &= \langle  \widehat{ u } _h ^{\mathrm{nor}}  , \mathrm{d} w _h ^{\mathrm{tan}} \rangle _{ \partial \mathcal{T} _h } , \quad &\forall w _h &\in W _h ^{ k - 2 } , \label{eq:sigmanor_unor} \\
      \langle \widehat{ \rho } _h ^{\mathrm{tan}} , w _h ^{\mathrm{nor}} \rangle _{ \partial \mathcal{T} _h } &= \langle \widehat{ u } _h ^{\mathrm{tan}} , \delta w _h ^{\mathrm{nor}} \rangle _{ \partial \mathcal{T} _h }  , \quad &\forall w _h &\in W _h ^{ k + 2 } \label{eq:rhotan_utan} .
    \end{alignat}
  \end{subequations}
  For the methods below, we will show that there exist well-defined
  $ \widehat{ \sigma } _h ^{\mathrm{nor}} $ and
  $ \widehat{ \rho } _h ^{\mathrm{tan}} $ satisfying these
  constraints---but they need not be computed in practice, since they
  do not appear in \eqref{eq:weak_wave}.
\end{remark}

\subsubsection{Two implementations of the AFW-H method}
  For the AFW-H method, we take the fluxes
  \begin{align*}
    \Phi _q ^{ k -1 } ( \mathbf{z} _h , \widehat{ \mathbf{z} } _h ) &= \widehat{ \sigma } _h ^{\mathrm{tan}} - \sigma _h ^{\mathrm{tan}} ,\\
    \Phi _q ^k ( \mathbf{z} _h , \widehat{ \mathbf{z} } _h ) &= \widehat{ u } _h ^{\mathrm{tan}} - u _h ^{\mathrm{tan}} .
  \end{align*}
  This is clearly multisymplectic: variations satisfy
  $ \widehat{ \tau } _i ^{\mathrm{tan}} = \tau _i ^{\mathrm{tan}} $
  and $ \widehat{ v } _i ^{\mathrm{tan}} = v _i ^{\mathrm{tan}} $ for
  $ i = 1, 2 $, and hence all the terms of \eqref{eq:mscl_jump_hodge}
  vanish. While we only have weak multisymplecticity in general,
  strong multisymplecticity holds in certain special cases, namely
  $ n = 1 $ and $ k = n $, cf.~\citep[Section 4.1]{StZa2025}.
  
  As seen above, taking
  $ \dot{ \widehat{ u } } _h = \widehat{ p } _h $ implies that
  \eqref{eq:weak_wave_sigmadot} and \eqref{eq:weak_wave_rhodot} are
  the time derivatives of \eqref{eq:weak_wave_thetadot} and
  \eqref{eq:weak_wave_xidot}, respectively. By choosing which of these
  pairs of equations to eliminate, we will obtain two implementations
  of AFW-H with equivalent solutions.

  First, suppose we eliminate \eqref{eq:weak_wave_sigmadot} and
  \eqref{eq:weak_wave_rhodot}. This yields the dynamical equations
  \begin{subequations}
    \begin{alignat}{2}
      ( \dot{ u } _h , r _h ) _{ \mathcal{T} _h } &= ( p _h , r _h ) _{ \mathcal{T} _h } , \quad &\forall r _h &\in W _h ^k , \label{eq:afw-h_wave_r} \\
    \begin{multlined}[b]
      - ( \dot{ p } _h , v _h ) _{ \mathcal{T} _h } + ( \sigma _h , \delta v _h ) _{ \mathcal{T} _h } + ( \rho _h , \mathrm{d} v _h ) _{ \mathcal{T} _h } \\
      + \langle \widehat{ \sigma } _h ^{\mathrm{tan}} , v _h ^{\mathrm{nor}} \rangle _{ \partial \mathcal{T} _h } - \langle \widehat{ \rho } _h ^{\mathrm{nor}} , v _h ^{\mathrm{tan}} \rangle _{ \partial \mathcal{T} _h }
    \end{multlined} &= \bigl( f ( t, u _h ) , v _h \bigr)  _{ \mathcal{T} _h } , \quad &\forall v _h &\in W _h ^k , \label{eq:afw-h_wave_v} \\
      \intertext{together with the constraints}
      ( u _h , \mathrm{d} \tau _h ) _{ \mathcal{T} _h } - \langle \widehat{ u } _h ^{\mathrm{nor}} , \tau _h ^{\mathrm{tan}} \rangle _{ \partial \mathcal{T} _h } &= - ( \sigma _h , \tau _h ) _{ \mathcal{T} _h } , \quad &\forall \tau _h &\in W _h ^{ k -1 } , \label{eq:afw-h_wave_tau} \\
      ( u _h , \delta \eta _h ) _{ \mathcal{T} _h } + \langle \widehat{ u } _h ^{\mathrm{tan}} , \eta _h ^{\mathrm{nor}} \rangle _{ \partial \mathcal{T} _h } &= - ( \rho _h , \eta _h ) _{ \mathcal{T} _h } , \quad &\forall \eta _h &\in W _h ^{ k + 1 } , \label{eq:afw-h_wave_eta} \\
      \intertext{the flux conditions}
      \langle \widehat{ \sigma } _h ^{\mathrm{tan}} - \sigma _h ^{\mathrm{tan}} , \widehat{ v } _h ^{\mathrm{nor}} \rangle _{ \partial \mathcal{T} _h } &= 0 , \quad &\forall \widehat{ v } _h ^{\mathrm{nor}} &\in \widehat{ W } _h ^{k-1, \mathrm{nor}}, \label{eq:afw-h_wave_vnor} \\
      \langle \widehat{ u } _h ^{\mathrm{tan}} - u _h ^{\mathrm{tan}} , \widehat{ \eta } _h ^{\mathrm{nor}} \rangle _{ \partial \mathcal{T} _h } &= 0 , \quad &\forall \widehat{ \eta } _h ^{\mathrm{nor}} &\in \widehat{ W } _h ^{k, \mathrm{nor}}, \label{eq:afw-h_wave_etanor} \\
      \intertext{and the conservativity conditions}
      \langle \widehat{ u } _h ^{\mathrm{nor}} , \widehat{ \tau } _h ^{\mathrm{tan}} \rangle _{ \partial \mathcal{T} _h } &= 0 , \quad &\forall \widehat{ \tau } _h ^{\mathrm{tan}} &\in \ringhat{V} _h ^{k-1, \mathrm{tan}} , \label{eq:afw-h_wave_tautan} \\
      \langle \widehat{ \rho } _h ^{\mathrm{nor}} , \widehat{ v } _h ^{\mathrm{tan}} \rangle _{ \partial \mathcal{T} _h } &= 0 , \quad &\forall \widehat{ v } _h ^{\mathrm{tan}} &\in \ringhat{V} _h ^{k, \mathrm{tan}} . \label{eq:afw-h_wave_vtan}
    \end{alignat}
  \end{subequations}
  This is a hybridization of the conforming AFW method with dynamical equations
  \begin{subequations}
    \begin{alignat}{2}
      ( \dot{ u } _h , r _h ) _\Omega &= ( p _h , r _h ) _\Omega , \quad &\forall r _h &\in  \mathring{ V } _h ^k , \label{eq:afw_wave_r} \\
      - ( \dot{ p } _h , v _h ) _\Omega + ( \mathrm{d} \sigma _h , v _h ) _\Omega + ( \rho _h , \mathrm{d} v _h ) _\Omega &= \bigl( f ( t, u _h ) , v _h \bigr)  _\Omega , \quad &\forall v _h &\in \mathring{ V }  _h ^k , \label{eq:afw_wave_v} \\
      \intertext{and constraints}
      ( u _h , \mathrm{d} \tau _h ) _\Omega &= - ( \sigma _h , \tau _h ) _\Omega , \quad &\forall \tau _h &\in \mathring{ V } _h ^{ k -1 } , \label{eq:afw_wave_tau} \\
      ( \mathrm{d} u _h , \eta _h ) _\Omega &= - ( \rho _h , \eta _h ) _\Omega , \quad &\forall \eta _h &\in \mathring{ V } _h ^{ k + 1 } . \label{eq:afw_wave_eta} 
    \end{alignat}
  \end{subequations}
  For $ k = 0 $, this coincides with \citet[Equation 3]{SaVa24}.

  Alternatively, suppose we eliminate the constraints
  \eqref{eq:weak_wave_thetadot} and \eqref{eq:weak_wave_xidot},
  assuming that they hold at the initial time. The resulting method
  has the dynamical equations
  \begin{subequations}
    \begin{alignat}{2}
      ( \dot{ \sigma } _h , \tau _h ) _{ \mathcal{T} _h } + ( p _h , \mathrm{d} \tau _h ) _{ \mathcal{T} _h } - \langle \widehat{ p } _h ^{\mathrm{nor}} , \tau _h ^{\mathrm{tan}} \rangle _{ \partial \mathcal{T} _h } &= 0, \quad &\forall \tau _h &\in W _h ^{ k -1 } , \label{eq:afw2-h_wave_tau} \\
      ( \dot{ u } _h , r _h ) _{ \mathcal{T} _h } &= ( p _h , r _h ) _{ \mathcal{T} _h } , \quad &\forall r _h &\in W _h ^k , \label{eq:afw2-h_wave_r} \\
      ( \dot{ \rho } _h , \eta _h ) _{ \mathcal{T} _h } + ( p _h , \delta \eta _h ) _{ \mathcal{T} _h } + \langle \widehat{ p } _h ^{\mathrm{tan}} , \eta _h ^{\mathrm{nor}} \rangle _{ \partial \mathcal{T} _h } &= 0, \quad &\forall \eta _h &\in W _h ^{ k + 1 } , \label{eq:afw2-h_wave_eta} \\
      \begin{multlined}[b]
        - ( \dot{ p } _h , v _h ) _{ \mathcal{T} _h } + ( \sigma _h , \delta v _h ) _{ \mathcal{T} _h } + ( \rho _h , \mathrm{d} v _h ) _{ \mathcal{T} _h } \\
        + \langle \widehat{ \sigma } _h ^{\mathrm{tan}} , v _h ^{\mathrm{nor}} \rangle _{ \partial \mathcal{T} _h } - \langle \widehat{ \rho } _h ^{\mathrm{nor}} , v _h ^{\mathrm{tan}} \rangle _{ \partial \mathcal{T} _h }
      \end{multlined} &= \bigl( f ( t, u _h ) , v _h \bigr) _{ \mathcal{T} _h } , \quad &\forall v _h &\in W _h ^k , \label{eq:afw2-h_wave_v} \\
      \intertext{the flux conditions}
      \langle \widehat{ \sigma } _h ^{\mathrm{tan}} - \sigma _h ^{\mathrm{tan}} , \widehat{ v } _h ^{\mathrm{nor}} \rangle _{ \partial \mathcal{T} _h } &= 0 , \quad &\forall \widehat{ v } _h ^{\mathrm{nor}} &\in \widehat{ W } _h ^{k-1, \mathrm{nor}}, \label{eq:afw2-h_wave_vnor} \\
      \langle \widehat{ p } _h ^{\mathrm{tan}} - p _h ^{\mathrm{tan}} , \widehat{ \eta } _h ^{\mathrm{nor}} \rangle _{ \partial \mathcal{T} _h } &= 0 , \quad &\forall \widehat{ \eta } _h ^{\mathrm{nor}} &\in \widehat{ W } _h ^{k, \mathrm{nor}}, \label{eq:afw2-h_wave_etanor} \\
      \intertext{and the conservativity conditions}
      \langle \widehat{ p } _h ^{\mathrm{nor}} , \widehat{ \tau } _h ^{\mathrm{tan}} \rangle _{ \partial \mathcal{T} _h } &= 0 , \quad &\forall \widehat{ \tau } _h ^{\mathrm{tan}} &\in \ringhat{V} _h ^{k-1, \mathrm{tan}} , \label{eq:afw2-h_wave_tautan} \\
      \langle \widehat{ \rho } _h ^{\mathrm{nor}} , \widehat{ v } _h ^{\mathrm{tan}} \rangle _{ \partial \mathcal{T} _h } &= 0 , \quad &\forall \widehat{ v } _h ^{\mathrm{tan}} &\in \ringhat{V} _h ^{k, \mathrm{tan}} . \label{eq:afw2-h_wave_vtan}
    \end{alignat}
  \end{subequations}
  This is a hybridization of the conforming AFW method
  \begin{subequations}
    \begin{alignat}{2}
      ( \dot{ \sigma } _h , \tau _h ) _\Omega + ( p _h , \mathrm{d} \tau _h ) _\Omega &= 0, \quad &\forall \tau _h &\in \mathring{ V } _h ^{ k -1 } , \label{eq:afw2_wave_tau} \\
      ( \dot{ u } _h , r _h ) _\Omega &= ( p _h , r _h ) _\Omega , \quad &\forall r _h &\in  \mathring{ V } _h ^k ,  \label{eq:afw2_wave_r} \\
      ( \dot{ \rho } _h , \eta _h ) _\Omega + ( \mathrm{d} p _h , \eta _h ) _\Omega &= 0, \quad &\forall \eta _h &\in \mathring{ V } _h ^{ k + 1 } ,  \label{eq:afw2_wave_eta} \\
      - ( \dot{ p } _h , v _h ) _\Omega + ( \mathrm{d} \sigma _h , v _h ) _\Omega + ( \rho _h , \mathrm{d} v _h ) _\Omega &= \bigl( f ( t, u _h ) , v _h \bigr) _\Omega , \quad &\forall v _h &\in \mathring{ V } _h ^k ,  \label{eq:afw2_wave_v} 
    \end{alignat}
  \end{subequations}
  containing only dynamical equations and no constraints.  For
  $ k = 0 $, this coincides with \citet[Equation 5]{SaVa24}.

  In the linear case where $f = f (t) $, this second formulation
  allows us to eliminate the variable $ u _h $; if desired, it may be
  recovered by integrating $ p _h $ over time. Modulo notation and
  sign conventions, this is a hybridization of the conforming method
  for the linear Hodge wave equation given in \citet[Equation
  8.6]{Arnold2018}; see also \citet[Equation
  4.7]{Quenneville-Belair2015}.

  We conclude by showing that the unused trace variables
  $ \widehat{ \sigma } _h ^{\mathrm{nor}} $ and
  $ \widehat{ \rho } _h ^{\mathrm{tan}} $ may be determined in order
  to satisfy the constraints discussed in \cref{rmk:sigmanor_rhotan}.

  \begin{proposition}
    \label{prop:afw-h_sigmanor_rhotan}
    Given a solution to the AFW-H method, there exist
    $ \widehat{ \sigma } _h ^{\mathrm{nor}} \in \widehat{ W } _h ^{ k
      - 2 , \mathrm{nor} } $ and
    $ \widehat{ \rho } _h ^{\mathrm{tan}} \in \widehat{ V } _h ^{k+1,
      \mathrm{tan}} $ satisfying \eqref{eq:sigmanor_rhotan_boundary},
    such that $ \widehat{ \sigma } _h ^{\mathrm{nor}} $ satisfies the
    weak conservativity condition
    \begin{equation*}
      \langle \widehat{ \sigma } _h ^{\mathrm{nor}} , \widehat{ w } _h ^{\mathrm{tan}} \rangle _{ \partial \mathcal{T} _h } = 0 , \quad \forall \widehat{ w } _h ^{\mathrm{tan}} \in \ringhat{V} _h ^{k-2, \mathrm{tan}} .
    \end{equation*}
  \end{proposition}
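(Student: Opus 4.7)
The plan is to exhibit both $\widehat{\sigma}_h^{\mathrm{nor}}$ and $\widehat{\rho}_h^{\mathrm{tan}}$ by explicit construction, using the subcomplex structure of AFW-H together with integration by parts on the closed boundary manifolds $\partial K$. The asymmetry between the two variables is dictated by the fact that $V_h$ is a subcomplex of $H\Lambda$, so the facet-wise $\mathrm{d}$ preserves the tangential trace spaces but the facet-wise $\delta$ need not; this forces different strategies for the two variables.

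For $\widehat{\sigma}_h^{\mathrm{nor}}$, the first step is to observe that the right-hand side of \eqref{eq:sigmanor_unor} depends on $w_h \in W_h^{k-2}$ only through $w_h^{\mathrm{tan}}$, since for an $H\Lambda$-subcomplex the facet-wise $\mathrm{d}$ acts on tangential traces alone. Hence $w_h^{\mathrm{tan}} \mapsto \langle \widehat{u}_h^{\mathrm{nor}}, \mathrm{d} w_h^{\mathrm{tan}} \rangle_{\partial \mathcal{T}_h}$ is a well-defined linear functional on $\widehat{W}_h^{k-2,\mathrm{tan}}$ (which in AFW-H coincides with $\widehat{W}_h^{k-2,\mathrm{nor}}$), and Riesz representation with respect to the $L^2$ pairing on $\partial\mathcal{T}_h$ yields a unique $\widehat{\sigma}_h^{\mathrm{nor}} \in \widehat{W}_h^{k-2,\mathrm{nor}}$ satisfying \eqref{eq:sigmanor_unor}. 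Weak conservativity then follows by choosing $\widehat{w}_h^{\mathrm{tan}} \in \ringhat{V}_h^{k-2,\mathrm{tan}}$: the subcomplex property gives $\mathrm{d}\widehat{w}_h^{\mathrm{tan}} \in \ringhat{V}_h^{k-1,\mathrm{tan}}$, and applying the conservativity relation \eqref{eq:afw-h_wave_tautan} for $\widehat{u}_h^{\mathrm{nor}}$ with this test function yields $\langle \widehat{\sigma}_h^{\mathrm{nor}}, \widehat{w}_h^{\mathrm{tan}} \rangle_{\partial\mathcal{T}_h} = \langle \widehat{u}_h^{\mathrm{nor}}, \mathrm{d}\widehat{w}_h^{\mathrm{tan}} \rangle_{\partial\mathcal{T}_h} = 0$.

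For $\widehat{\rho}_h^{\mathrm{tan}}$, the plan is instead to define it directly by the facet-wise formula $\widehat{\rho}_h^{\mathrm{tan}} \coloneqq \mathrm{d}\widehat{u}_h^{\mathrm{tan}}$. Since $\widehat{u}_h^{\mathrm{tan}} \in \widehat{V}_h^{k,\mathrm{tan}}$ is tangentially single-valued on interior facets and $V_h$ is an $H\Lambda$-subcomplex, single-valuedness is preserved under $\mathrm{d}$, giving $\widehat{\rho}_h^{\mathrm{tan}} \in \widehat{V}_h^{k+1,\mathrm{tan}}$. To verify \eqref{eq:rhotan_utan}, I use that each $\partial K$ is a closed $(n-1)$-manifold, on which the boundary operators $\mathrm{d}$ and $\delta$ are $L^2$-adjoint with no boundary contribution. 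Thus for $w_h \in W_h^{k+2}$, Stokes's theorem on $\partial K$ gives $\langle \mathrm{d}\widehat{u}_h^{\mathrm{tan}}, w_h^{\mathrm{nor}} \rangle_{\partial K} = \langle \widehat{u}_h^{\mathrm{tan}}, \delta w_h^{\mathrm{nor}} \rangle_{\partial K}$, and summation over $K \in \mathcal{T}_h$ yields \eqref{eq:rhotan_utan}.

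The main obstacle is conceptual rather than technical: the symmetric-looking guess $\widehat{\sigma}_h^{\mathrm{nor}} = \delta \widehat{u}_h^{\mathrm{nor}}$ (mirroring the construction for $\widehat{\rho}_h^{\mathrm{tan}}$) fails because $\widehat{W}_h^{k-2,\mathrm{nor}}$ is a tangential trace space of an $H\Lambda$-subcomplex and need not be closed under the boundary $\delta$. The Riesz-representation step effectively replaces this naive $\delta\widehat{u}_h^{\mathrm{nor}}$ by its $L^2$-projection into $\widehat{W}_h^{k-2,\mathrm{nor}}$, and the weak conservativity then survives precisely because the test functions in $\ringhat{V}_h^{k-2,\mathrm{tan}}$ have derivatives landing back in the conforming space $\ringhat{V}_h^{k-1,\mathrm{tan}}$, which is exactly the space against which \eqref{eq:afw-h_wave_tautan} makes $\widehat{u}_h^{\mathrm{nor}}$ orthogonal.
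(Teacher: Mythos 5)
Your construction of $\widehat{\sigma}_h^{\mathrm{nor}}$ is exactly the paper's: well-definedness of the right-hand side of \eqref{eq:sigmanor_unor} as a functional of $w_h^{\mathrm{tan}}$ alone, Riesz representation in $\widehat{W}_h^{k-2,\mathrm{nor}} = W_h^{k-2,\mathrm{tan}}$, and weak conservativity from \eqref{eq:afw-h_wave_tautan} tested with $\widehat{\tau}_h^{\mathrm{tan}} = \mathrm{d}\widehat{w}_h^{\mathrm{tan}}$. Nothing to add there.

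For $\widehat{\rho}_h^{\mathrm{tan}}$ your route is genuinely different from the paper's---the paper first shows $\rho_h = -\mathrm{d}u_h \in V_h^{k+1}$ from \eqref{eq:afw-h_wave_eta}--\eqref{eq:afw-h_wave_etanor} and then takes $\widehat{\rho}_h^{\mathrm{tan}} = \rho_h^{\mathrm{tan}}$, verifying \eqref{eq:rhotan_utan} via the bulk integration-by-parts formula \eqref{eq:ibp} and $\mathrm{d}\rho_h = 0$---but your version has a sign error that makes the stated construction fail. In \eqref{eq:rhotan_utan}, $\delta w_h^{\mathrm{nor}}$ denotes the normal trace of the \emph{bulk} codifferential, $(\delta w_h)^{\mathrm{nor}}$, which agrees with the \emph{boundary} codifferential of $w_h^{\mathrm{nor}}$ only up to a sign; that sign is where your ``Stokes on the closed manifold $\partial K$'' step goes wrong. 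The convention-free check is two applications of \eqref{eq:ibp}: on one hand $\langle (\mathrm{d}u_h)^{\mathrm{tan}}, w_h^{\mathrm{nor}}\rangle_{\partial K} = (\mathrm{d}\mathrm{d}u_h, w_h)_K - (\mathrm{d}u_h,\delta w_h)_K = -(\mathrm{d}u_h,\delta w_h)_K$, and on the other $\langle u_h^{\mathrm{tan}}, (\delta w_h)^{\mathrm{nor}}\rangle_{\partial K} = (\mathrm{d}u_h,\delta w_h)_K - (u_h,\delta\delta w_h)_K = +(\mathrm{d}u_h,\delta w_h)_K$. Hence $\langle \mathrm{d}\widehat{u}_h^{\mathrm{tan}}, w_h^{\mathrm{nor}}\rangle_{\partial K} = -\langle\widehat{u}_h^{\mathrm{tan}}, \delta w_h^{\mathrm{nor}}\rangle_{\partial K}$, so your candidate $+\mathrm{d}\widehat{u}_h^{\mathrm{tan}}$ satisfies the negative of \eqref{eq:rhotan_utan}. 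The correct choice is $\widehat{\rho}_h^{\mathrm{tan}} = -\mathrm{d}\widehat{u}_h^{\mathrm{tan}}$, consistent with the constraint $\mathrm{d}u = -\rho$ and equal to the paper's $\rho_h^{\mathrm{tan}}$. With that sign fixed, your argument goes through (the single-valuedness and subcomplex observations are fine), and it is in fact slightly leaner than the paper's, since it never needs to identify $\rho_h$ with $-\mathrm{d}u_h$.
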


  \begin{proof}
    The right-hand side of \eqref{eq:sigmanor_unor} vanishes whenever
    $ w _h ^{\mathrm{tan}} = 0 $, since this implies
    $ \mathrm{d} w _h ^{\mathrm{tan}} = 0 $, so it is a well-defined
    functional on $ W _h ^{ k - 2 , \mathrm{tan} } $.  Since
    $ \langle \cdot , \cdot \rangle _{ \partial \mathcal{T} _h } $ is
    an inner product on
    $ \widehat{ W } _h ^{ k - 2 , \mathrm{nor} } = W _h ^{ k - 2 ,
      \mathrm{tan} } $, the Riesz representation theorem gives a
    unique $ \widehat{ \sigma } _h ^{\mathrm{nor}} $ satisfying
    \eqref{eq:sigmanor_unor}. Furthermore, since
    $ w _h \in \mathring{ V } _h ^{ k - 2 } $ implies
    $ \mathrm{d} w _h \in \mathring{ V } _h ^{ k - 1 } $, equation
    \eqref{eq:afw-h_wave_tautan} with
    $ \widehat{ \tau } _h ^{\mathrm{tan}} = \mathrm{d} w _h
    ^{\mathrm{tan}} $ implies conservativity of
    $ \widehat{ \sigma } _h ^{\mathrm{nor}} $.

    Next, \eqref{eq:afw-h_wave_eta} and \eqref{eq:afw-h_wave_etanor}
    imply that $ \rho _h = - \mathrm{d} u _h \in V _h ^{ k + 1 }
    $. Hence, for any $ w _h \in W _h ^{ k + 2 } $, we have
    \begin{equation*}
      \langle \rho _h ^{\mathrm{tan}} , w _h ^{\mathrm{nor}} \rangle _{ \partial \mathcal{T} _h } = ( \mathrm{d} \rho _h , w _h ) _{ \mathcal{T} _h } - ( \rho _h , \delta w _h ) _{ \mathcal{T} _h } = \langle \widehat{ u } _h ^{\mathrm{tan}} , \delta w _h ^{\mathrm{nor}} \rangle _{ \partial \mathcal{T} _h } ,
    \end{equation*}
    since $ \mathrm{d} \rho _h = - \mathrm{d} \mathrm{d} u _h = 0 $.
    Thus,
    $ \widehat{ \rho } _h ^{\mathrm{tan}} = \rho _h ^{\mathrm{tan}}
    \in \widehat{ V } _h ^{ k + 1 , \mathrm{tan} } $ satisfies
    \eqref{eq:rhotan_utan}, which completes the proof.
  \end{proof}

  \subsubsection{A multisymplectic LDG-H method}
  \label{sec:ldg-h_wave}
  
  Next, we consider an LDG-H method given by the fluxes
  \begin{subequations}
  \begin{align}
    \Phi _q ^{ k -1 } ( \mathbf{z} _h , \widehat{ \mathbf{z} } _h ) &= ( \widehat{ u } _h ^{\mathrm{nor}} - u _h ^{\mathrm{nor}} ) + \alpha ^{ k -1 } ( \widehat{ \sigma } _h ^{\mathrm{tan}} - \sigma _h ^{\mathrm{tan}} ) , \label{eq:ldg-h_wave_flux_qkm}\\
    \Phi _q ^k ( \mathbf{z} _h , \widehat{ \mathbf{z} } _h ) &= ( \widehat{ \rho } _h ^{\mathrm{nor}} - \rho _h ^{\mathrm{nor}} ) + \alpha ^k ( \widehat{ u } _h ^{\mathrm{tan}} - u _h ^{\mathrm{tan}} ) ,\label{eq:ldg-h_wave_flux_qk}
  \end{align}
  \end{subequations}
  where $ \alpha ^{ k -1 } $ and $ \alpha ^k $ are symmetric operators
  on $ L ^2 \Lambda ^{ k -1 } ( \partial \mathcal{T} _h ) $ and
  $ L ^2 \Lambda ^k ( \partial \mathcal{T} _h ) $, respectively. By
  the symmetry of these operators, variations satisfy
  \begin{multline*}
    \langle \widehat{ \tau } _1 ^{\mathrm{tan}} - \tau _1 ^{\mathrm{tan}} , \widehat{ v } _2 ^{\mathrm{nor}} - v _2 ^{\mathrm{nor}} \rangle _{ \partial K } - \langle \widehat{ \tau } _2 ^{\mathrm{tan}} - \tau _2 ^{\mathrm{tan}} , \widehat{ v } _1 ^{\mathrm{nor}} - v _1 ^{\mathrm{nor}} \rangle _{ \partial K } \\
    = \bigl\langle \alpha ^{ k -1 } ( \widehat{ \tau } _1 ^{\mathrm{tan}} - \tau _1 ^{\mathrm{tan}} ) , \widehat{ \tau } _2 ^{\mathrm{tan}} - \tau _2 ^{\mathrm{tan}} \bigr\rangle _{ \partial K } - \bigl\langle \alpha ^{ k -1 } ( \widehat{ \tau } _2 ^{\mathrm{tan}} - \tau _2 ^{\mathrm{tan}} ) , \widehat{ \tau } _1 ^{\mathrm{tan}} - \tau _1 ^{\mathrm{tan}} \bigr\rangle _{ \partial K } = 0 ,
  \end{multline*}
  and similarly,
  \begin{multline*}
    \langle \widehat{ v } _1 ^{\mathrm{tan}} - v _1 ^{\mathrm{tan}} , \widehat{ \eta } _2 ^{\mathrm{nor}} - \eta _2 ^{\mathrm{nor}} \rangle _{ \partial K } - \langle \widehat{ v } _2 ^{\mathrm{tan}} - v _2 ^{\mathrm{tan}} , \widehat{ \eta } _1 ^{\mathrm{nor}} - \eta _1 ^{\mathrm{nor}} \rangle _{ \partial K } \\
    = \bigl\langle \alpha ^k ( \widehat{ v } _1 ^{\mathrm{tan}} - v _1 ^{\mathrm{tan}} ) , \widehat{ v } _2 ^{\mathrm{tan}} - v _2 ^{\mathrm{tan}} \bigr\rangle _{ \partial K } - \bigl\langle \alpha ^k ( \widehat{ v } _2 ^{\mathrm{tan}} - v _2 ^{\mathrm{tan}} ) , \widehat{ v } _1 ^{\mathrm{tan}} - v _1 ^{\mathrm{tan}} \bigr\rangle _{ \partial K } = 0 ,
  \end{multline*}
  so the multisymplecticity condition \eqref{eq:mscl_jump_hodge}
  holds.
  
  As before, we take $ \dot{ \widehat{ u } } _h = \widehat{ p } _h $
  in order to eliminate \eqref{eq:weak_wave_sigmadot} and
  \eqref{eq:weak_wave_rhodot}; we also eliminate the normal trace
  variables and integrate by parts as in \eqref{eq:ldg-h}. This yields
  the dynamical equations
  \begin{subequations}
    \label{eq:ldg-h_wave}
    \begin{alignat}{2}
      ( \dot{u} _h , r _h ) _{ \mathcal{T} _h } &= ( p _h , r _h ) _{ \mathcal{T} _h } , \quad &\forall r _h &\in W _h ^k , \label{eq:ldg-h_wave_r} \\
      \begin{multlined}[b]
        - ( \dot{p} _h , v _h ) _{ \mathcal{T} _h } + ( \sigma _h , \delta v _h ) _{ \mathcal{T} _h } + ( \delta \rho _h , v _h ) _{ \mathcal{T} _h } \\
        + \langle \widehat{ \sigma } _h ^{\mathrm{tan}} , v _h ^{\mathrm{nor}} \rangle _{ \partial \mathcal{T} _h } + \bigl\langle \alpha ^k ( \widehat{ u } _h ^{\mathrm{tan}} - u _h ^{\mathrm{tan}} ) , v _h ^{\mathrm{tan}} \bigr\rangle _{ \partial \mathcal{T} _h }
      \end{multlined} &= \bigl( f ( t, u _h ) , v _h \bigr) _{ \mathcal{T} _h } , \quad &\forall v _h &\in W _h ^k , \label{eq:ldg-h_wave_v} \\
      \intertext{together with the constraints}
      ( \delta u _h , \tau _h ) _{ \mathcal{T} _h } + \bigl\langle \alpha ^{ k -1 } ( \widehat{ \sigma } _h ^{\mathrm{tan}} - \sigma _h ^{\mathrm{tan}} ) , \tau _h ^{\mathrm{tan}} \bigr\rangle _{ \partial \mathcal{T} _h } &= - ( \sigma _h , \tau _h ) _{ \mathcal{T} _h } , \quad &\forall \tau _h &\in W _h ^{ k -1 } , \label{eq:ldg-h_wave_tau} \\
      ( u _h , \delta \eta _h ) _{ \mathcal{T} _h } + \langle \widehat{ u } _h ^{\mathrm{tan}} , \eta _h ^{\mathrm{nor}} \rangle _{ \partial \mathcal{T} _h } &= - ( \rho _h , \eta _h )  _{ \mathcal{T} _h } , \quad &\forall \eta _h &\in W _h ^{ k + 1 } , \label{eq:ldg-h_wave_eta} \\
      \intertext{and the conservativity conditions}
      \bigl\langle u _h ^{\mathrm{nor}} - \alpha ^{ k -1 } ( \widehat{ \sigma } _h ^{\mathrm{tan}} - \sigma _h ^{\mathrm{tan}} ) , \widehat{ \tau } _h ^{\mathrm{tan}} \bigr\rangle _{ \partial \mathcal{T} _h } &= 0 , \quad &\forall \widehat{ \tau } _h ^{\mathrm{tan}} &\in \ringhat{V} _h ^{k-1, \mathrm{tan}}, \label{eq:ldg-h_wave_tautan} \\
      \bigl\langle \rho _h ^{\mathrm{nor}} - \alpha ^k ( \widehat{ u } _h ^{\mathrm{tan}} - u _h ^{\mathrm{tan}} ) , \widehat{ v } _h ^{\mathrm{tan}} \bigr\rangle _{ \partial \mathcal{T} _h } &= 0 , \quad &\forall \widehat{ v } _h ^{\mathrm{tan}} &\in \ringhat{V} _h ^{k, \mathrm{tan}} . \label{eq:ldg-h_wave_vtan}
    \end{alignat}
  \end{subequations}
  The case $ k = 0 $ recovers the Hamiltonian LDG-H method for the
  semilinear scalar wave equation of \citet[Equation 4]{SaVa24}, which
  in the linear case is that of \citet[Equation 10]{SaCiNgPe17}. The
  foregoing results of this section show that this method is strongly
  multisymplectic, which strengthens the results of
  \citep{SaCiNgPe17,SaVa24} showing that they are symplectic. This can
  also be deduced from the multisymplecticity results of
  \citet[Section 4.5]{McSt2024} for scalar problems.

  The following result gives a sufficient condition on the penalty
  operators for \eqref{eq:ldg-h_wave_tau}--\eqref{eq:ldg-h_wave_vtan}
  to be solved uniquely in terms of $ u _h $, and thus for the
  dynamics to be well-defined.

  \begin{theorem}
    \label{thm:ldg-h_wave_definite}
    If $ \alpha ^{ k -1 } $ is negative-definite and $\alpha ^k $ is
    positive-definite, then for all $ u _h \in W _h ^k $, there exist
    unique $ \sigma _h \in W _h ^{ k -1 } $,
    $ \rho _h \in W _h ^{ k + 1 } $,
    $ \widehat{ \sigma } _h ^{\mathrm{tan}} \rvert _{ \partial
      \mathcal{T} _h \setminus \partial \Omega } \in \ringhat{V} _h
    ^{k-1, \mathrm{tan}} $, and
    $ \widehat{ u } _h ^{\mathrm{tan}} \rvert _{ \partial \mathcal{T}
      _h \setminus \partial \Omega } \in \ringhat{V} _h ^{k,
      \mathrm{tan}} $ satisfying
    \eqref{eq:ldg-h_wave_tau}--\eqref{eq:ldg-h_wave_vtan}. Hence,
    given $f$ and boundary conditions
    $ \widehat{ \sigma } _h ^{\mathrm{tan}} \rvert _{ \partial \Omega
    } $ and
    $ \widehat{ u } _h ^{\mathrm{tan}} \rvert _{ \partial \Omega } $,
    the remaining equations
    \eqref{eq:ldg-h_wave_r}--\eqref{eq:ldg-h_wave_v} give well-defined
    dynamics for $ ( u _h , p _h ) \in \mathbf{W} _h ^k $.
  \end{theorem}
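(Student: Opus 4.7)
The plan is to observe that the static subsystem \eqref{eq:ldg-h_wave_tau}--\eqref{eq:ldg-h_wave_vtan} decouples, with $u_h$ as data, into two independent square linear systems in finite dimensions: one for $(\sigma_h, \widehat{\sigma}_h^{\mathrm{tan}})$ governed by \eqref{eq:ldg-h_wave_tau} and \eqref{eq:ldg-h_wave_tautan}, and one for $(\rho_h, \widehat{u}_h^{\mathrm{tan}})$ governed by \eqref{eq:ldg-h_wave_eta} and \eqref{eq:ldg-h_wave_vtan}. Existence will then follow from uniqueness of the corresponding homogeneous problem, in which $u_h = 0$ and the prescribed boundary data $\widehat{\sigma}_h^{\mathrm{tan}}|_{\partial\Omega}$, $\widehat{u}_h^{\mathrm{tan}}|_{\partial\Omega}$ vanish; in that setting the unknown traces live in the single-valued interior spaces $\ringhat{V}_h^{k-1,\mathrm{tan}}$ and $\ringhat{V}_h^{k,\mathrm{tan}}$, and may themselves be used as test functions.

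For the $\sigma$-block, I would test \eqref{eq:ldg-h_wave_tau} with $\tau_h = \sigma_h$ and \eqref{eq:ldg-h_wave_tautan} with $\widehat{\tau}_h^{\mathrm{tan}} = \widehat{\sigma}_h^{\mathrm{tan}}$, then subtract to eliminate the mixed pairing. With $u_h = 0$, this produces the identity
\[
\bigl\langle \alpha^{k-1}(\widehat{\sigma}_h^{\mathrm{tan}} - \sigma_h^{\mathrm{tan}}), \widehat{\sigma}_h^{\mathrm{tan}} - \sigma_h^{\mathrm{tan}}\bigr\rangle_{\partial \mathcal{T}_h} = (\sigma_h, \sigma_h)_{\mathcal{T}_h}.
\]
The right-hand side is nonnegative while the left-hand side is nonpositive by negative-definiteness of $\alpha^{k-1}$, so both sides vanish, forcing $\sigma_h = 0$ and $\widehat{\sigma}_h^{\mathrm{tan}} = \sigma_h^{\mathrm{tan}} = 0$.

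The $\rho$-block is treated analogously: testing \eqref{eq:ldg-h_wave_eta} with $\eta_h = \rho_h$ gives $\langle \widehat{u}_h^{\mathrm{tan}}, \rho_h^{\mathrm{nor}}\rangle_{\partial \mathcal{T}_h} = -(\rho_h, \rho_h)_{\mathcal{T}_h}$, while testing \eqref{eq:ldg-h_wave_vtan} with $\widehat{v}_h^{\mathrm{tan}} = \widehat{u}_h^{\mathrm{tan}}$ gives $\langle \rho_h^{\mathrm{nor}}, \widehat{u}_h^{\mathrm{tan}}\rangle_{\partial \mathcal{T}_h} = \langle \alpha^k \widehat{u}_h^{\mathrm{tan}}, \widehat{u}_h^{\mathrm{tan}}\rangle_{\partial \mathcal{T}_h}$. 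Combining yields
\[
(\rho_h, \rho_h)_{\mathcal{T}_h} + \bigl\langle \alpha^k \widehat{u}_h^{\mathrm{tan}}, \widehat{u}_h^{\mathrm{tan}}\bigr\rangle_{\partial \mathcal{T}_h} = 0,
\]
and positive-definiteness of $\alpha^k$ forces $\rho_h = 0$ and $\widehat{u}_h^{\mathrm{tan}} = 0$.

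Unique solvability of the two static blocks makes $\sigma_h, \rho_h, \widehat{\sigma}_h^{\mathrm{tan}}, \widehat{u}_h^{\mathrm{tan}}$ bounded affine functions of $u_h$ and the prescribed boundary data, so substituting into \eqref{eq:ldg-h_wave_r}--\eqref{eq:ldg-h_wave_v} reduces the whole scheme to an ODE system in $(u_h, p_h)$ with nondegenerate mass matrices on the left-hand sides, as already noted for the abstract formulation \eqref{eq:weak_w}. The main technical obstacle I anticipate is careful sign bookkeeping: since the single-valued traces are paired against the double-valued $\sigma_h^{\mathrm{tan}}$ and $\rho_h^{\mathrm{nor}}$ on interior facets, one must verify via the jump/average conventions of \citep[Definition 3.2]{StZa2025} that the duality pairings used above are symmetric in their two arguments and combine correctly into the clean definite quadratic forms, so that testing each equation with its own unknown is legitimate and the sign of each term is as claimed.
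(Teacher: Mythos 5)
Your proposal is correct and follows essentially the same route as the paper: the same decoupling into a $(\sigma_h,\widehat{\sigma}_h^{\mathrm{tan}})$ block and a $(\rho_h,\widehat{u}_h^{\mathrm{tan}})$ block, with unique solvability obtained from the sign-definiteness of $\alpha^{k-1}$ and $\alpha^k$ by testing each equation against its own unknown. The only cosmetic difference is that the paper packages the two equations of each block into a single definite bilinear form, whereas you argue via injectivity of the homogeneous square system; the underlying computation (and the cancellation of the cross terms $\langle\widehat{u}_h^{\mathrm{tan}},\rho_h^{\mathrm{nor}}\rangle$ versus $\langle\rho_h^{\mathrm{nor}},\widehat{u}_h^{\mathrm{tan}}\rangle$, which is immediate since $\langle\cdot,\cdot\rangle_{\partial\mathcal{T}_h}$ is a symmetric $L^2$ pairing) is identical.
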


  \begin{proof}
    Adding \eqref{eq:ldg-h_wave_tau} and \eqref{eq:ldg-h_wave_tautan}
    and rearranging gives
    \begin{equation*}
      - ( \sigma _h , \tau _h ) _{ \mathcal{T} _h } + \bigl\langle \alpha ^{ k -1 } ( \widehat{ \sigma } _h ^{\mathrm{tan}} - \sigma _h ^{\mathrm{tan}} ) , \widehat{ \tau } _h ^{\mathrm{tan}} - \tau _h ^{\mathrm{tan}} \bigr\rangle _{ \partial \mathcal{T} _h } = ( \delta u _h , \tau _h ) _{ \mathcal{T} _h } + \langle u _h ^{\mathrm{nor}} , \widehat{ \tau } _h ^{\mathrm{tan}} \rangle _{ \partial \mathcal{T} _h } .
    \end{equation*}
    If $ \alpha ^{ k -1 } $ is negative-definite, then so is the
    bilinear form on the left-hand side. Hence, we can solve uniquely
    for $ \sigma _h $ and
    $ \widehat{ \sigma } _h ^{\mathrm{tan}} \rvert _{ \partial
      \mathcal{T} _h \setminus \partial \Omega } $ in terms of
    $ u _h $. Next, subtracting \eqref{eq:ldg-h_wave_eta} from
    \eqref{eq:ldg-h_wave_vtan} and rearranging,
    \begin{equation*}
      ( \rho _h , \eta _h ) _{ \mathcal{T} _h } + \langle \alpha ^k \widehat{ u } _h ^{\mathrm{tan}} , \widehat{ v } _h ^{\mathrm{tan}} \rangle _{ \partial \mathcal{T} _h } + \langle \widehat{ u } _h ^{\mathrm{tan}} , \eta _h ^{\mathrm{nor}} \rangle _{ \partial \mathcal{T} _h } - \langle \rho _h ^{\mathrm{nor}} , \widehat{ v } _h ^{\mathrm{tan}} \rangle _{ \partial \mathcal{T} _h } = - ( u _h , \delta \eta _h ) _{ \mathcal{T} _h } + \langle \alpha ^k u _h ^{\mathrm{tan}} , \widehat{ v } _h ^{\mathrm{tan}} \rangle _{ \partial \mathcal{T} _h } .
    \end{equation*}
    If $ \alpha ^k $ is positive-definite, then so is the bilinear
    form on the left-hand side. (Observe that the last two
    left-hand-side terms cancel when $ \eta _h = \rho _h $ and
    $ \widehat{ v } _h ^{\mathrm{tan}} = \widehat{ u } _h
    ^{\mathrm{tan}} $.) Hence, we can solve uniquely for $ \rho _h $
    and $ \widehat{ u } _h ^{\mathrm{tan}} $ in terms of $u _h $,
    which completes the proof.
  \end{proof}

  \begin{remark}
    This proof is easily adapted to natural boundary conditions for
    $ \widehat{ u } _h ^{\mathrm{nor}} $ and
    $ \widehat{ \rho } _h ^{\mathrm{nor}} $ on $ \partial \Omega $. In
    that case, we would replace \eqref{eq:ldg-h_wave_tautan} and
    \eqref{eq:ldg-h_wave_vtan} by
    \begin{alignat}{2}
      \bigl\langle u _h ^{\mathrm{nor}} - \alpha ^{ k -1 } ( \widehat{ \sigma } _h ^{\mathrm{tan}} - \sigma _h ^{\mathrm{tan}} ) , \widehat{ \tau } _h ^{\mathrm{tan}} \bigr\rangle _{ \partial \mathcal{T} _h } &= \langle \widehat{ u } _h ^{\mathrm{nor}} , \widehat{ \tau } _h ^{\mathrm{tan}} \rangle _{ \partial \Omega } , \quad &\forall \widehat{ \tau } _h ^{\mathrm{tan}} &\in \widehat{V} _h ^{k-1, \mathrm{tan}}, \tag{\ref*{eq:ldg-h_wave_tautan}$^\prime$} \\
      \bigl\langle \rho _h ^{\mathrm{nor}} - \alpha ^k ( \widehat{ u } _h ^{\mathrm{tan}} - u _h ^{\mathrm{tan}} ) , \widehat{ v } _h ^{\mathrm{tan}} \bigr\rangle _{ \partial \mathcal{T} _h } &= \langle \widehat{ \rho } _h ^{\mathrm{nor}} , \widehat{ v } _h ^{\mathrm{tan}} \rangle _{ \partial \Omega } , \quad &\forall \widehat{ v } _h ^{\mathrm{tan}} &\in \widehat{V} _h ^{k, \mathrm{tan}} . \tag{\ref*{eq:ldg-h_wave_vtan}$^\prime$}
    \end{alignat}
    We would then solve for
    $ \widehat{ \sigma } _h ^{\mathrm{tan}} \in \widehat{ V } _h
    ^{k-1, \mathrm{tan}} $ and
    $ \widehat{ u } _h ^{\mathrm{tan}} \in \widehat{ V } _h ^{k,
      \mathrm{tan}} $ on all of $ \partial \mathcal{T} _h $ rather
    than just $ \partial \mathcal{T} _h \setminus \partial \Omega $.
  \end{remark}

  \begin{proposition}
    \label{prop:ldg-h_sigmanor_rhotan}
    Given a solution to the LDG-H method, there exist
    $ \widehat{ \sigma } _h ^{\mathrm{nor}} \in \widehat{ W } _h ^{ k
      - 2 , \mathrm{nor} } $ and
    $ \widehat{ \rho } _h ^{\mathrm{tan}} \in \widehat{ V } _h ^{k+1,
      \mathrm{tan}} $ satisfying \eqref{eq:sigmanor_rhotan_boundary},
    such that $ \widehat{ \sigma } _h ^{\mathrm{nor}} $ satisfies the
    strong conservativity condition
    $ \llbracket \widehat{ \sigma } _h ^{\mathrm{nor}} \rrbracket = 0
    $.
  \end{proposition}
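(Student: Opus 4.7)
The plan is to mirror the proof of \cref{prop:afw-h_sigmanor_rhotan}, substituting the conforming-subcomplex steps used there with a local Riesz-representation argument followed by a two-element summing argument adapted to the LDG-H setting. Two inputs from the preceding discussion are essential: the strong conservativity of the LDG-H flux (\cref{ex:ldg_ms} together with \cite[Corollary~4.10]{StZa2025}), which guarantees $ \llbracket \widehat{u}_h^{\mathrm{nor}}\rrbracket = 0 $ on every interior facet, and the built-in single-valuedness $ \widehat{u}_h^{\mathrm{tan}} \in \widehat{V}_h^{k,\mathrm{tan}} $.

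For $\widehat{\sigma}_h^{\mathrm{nor}}$, the right-hand side of \eqref{eq:sigmanor_unor} depends on $ w_h \in W_h^{k-2}(K) $ only through $ w_h^{\mathrm{tan}}|_{\partial K} $, since $ \mathrm{d}w_h^{\mathrm{tan}} = (\mathrm{d}w_h)^{\mathrm{tan}} $; it thus defines a bounded linear functional on the finite-dimensional trace space $ W_h^{k-2,\mathrm{tan}}(\partial K) \subset \widehat{W}_h^{k-2,\mathrm{nor}}(\partial K) = L^2\Lambda^{k-2}(\partial K) $. Riesz representation in this trace space produces a unique $ \widehat{\sigma}_h^{\mathrm{nor}}|_{\partial K} $ satisfying \eqref{eq:sigmanor_unor} element by element. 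To establish $ \llbracket \widehat{\sigma}_h^{\mathrm{nor}}\rrbracket = 0 $ on an interior facet $ e = \partial K^+ \cap \partial K^- $, I would test the two defining identities with $ w_h \in W_h^{k-2} $ supported only on $e$ with a common tangential trace $ \psi $ on each side. Summing the two identities and using $ \widehat{u}_h^{\mathrm{nor}}|_{e^+} + \widehat{u}_h^{\mathrm{nor}}|_{e^-} = 0 $ to cancel the right-hand side yields $ \langle \widehat{\sigma}_h^{\mathrm{nor}}|_{e^+} + \widehat{\sigma}_h^{\mathrm{nor}}|_{e^-}, \psi \rangle_e = 0 $ for every $ \psi $ in the trace space on $e$. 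Since the sum itself lies in that space by construction, it must vanish.

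For $\widehat{\rho}_h^{\mathrm{tan}}$ the same strategy applies. The right-hand side of \eqref{eq:rhotan_utan} depends on $ w_h \in W_h^{k+2}(K) $ only through $ w_h^{\mathrm{nor}}|_{\partial K} $, so Riesz representation on the trace space inside $ \widehat{W}_h^{k+1,\mathrm{tan}}(\partial K) $ yields a local $ \widehat{\rho}_h^{\mathrm{tan}}|_{\partial K} $ satisfying the constraint. Running the two-element summing argument again, now with the built-in single-valuedness of $ \widehat{u}_h^{\mathrm{tan}} $ providing the cancellation on the right-hand side, forces the local tangential values on each side of any interior facet to coincide, placing $ \widehat{\rho}_h^{\mathrm{tan}} $ in $ \widehat{V}_h^{k+1,\mathrm{tan}} $.

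The main obstacle will be the orientation and sign bookkeeping in the summing step: the boundary Hodge star $ \widehat{\star} $ on $ \partial K^+ $ and $ \partial K^- $ induces opposite orientations on the shared facet, and the sign conventions defining the normal and tangential jumps $ \llbracket \cdot \rrbracket $ must be reconciled against these orientation flips so that the two local identities add (rather than cancel) in the way indicated above. Once this bookkeeping is carried out, each step is a direct local adaptation of the AFW-H proof.
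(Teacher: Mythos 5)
There is a genuine gap in the step that is supposed to deliver the strong conservativity of $ \widehat{ \sigma } _h ^{\mathrm{nor}} $. Your ``two-element summing argument'' requires, for each interior facet $ e = \partial K ^+ \cap \partial K ^- $ and each $ \psi $ in the facet trace space, test functions $ w _h ^{\pm} \in W _h ^{ k - 2 } ( K ^{\pm} ) $ whose tangential traces equal $ \psi $ on $e$ and vanish on the remaining facets of $ \partial K ^{\pm} $. Such test functions do not exist in general: the tangential traces of a polynomial differential form on the different facets of $ \partial K $ are coupled (they must agree on the intersections of adjacent facets), so a boundary trace that is nonzero on $e$ and zero on the other facets is achievable only for $ \psi $ vanishing suitably near $ \partial e $. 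You therefore only control $ \langle \widehat{ \sigma } _h ^{\mathrm{nor}} \rvert _{ e ^+ } + \widehat{ \sigma } _h ^{\mathrm{nor}} \rvert _{ e ^- } , \psi \rangle _e $ for a proper subspace of $ \psi $, which does not force the sum to vanish. Relatedly, the element-by-element Riesz representative you construct in $ W _h ^{ k - 2 , \mathrm{tan} } ( \partial K ) $ couples the facets of each element through the representation, and there is no reason for its one-sided values on $e$ to cancel across the facet; strong conservativity is not a property one can expect to verify a posteriori for this locally constructed object.

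The paper's proof avoids this by building conservativity into the ansatz rather than deriving it: it seeks $ \widehat{ \sigma } _h ^{\mathrm{nor}} $ in the global jump space $ \llbracket W _h ^{ k - 2 , \mathrm{tan} } \rrbracket $, whose elements satisfy $ \llbracket \widehat{ \sigma } _h ^{\mathrm{nor}} \rrbracket = 0 $ by definition, and then uses \citep[Proposition~3.4]{StZa2025} together with $ \llbracket \widehat{ u } _h ^{\mathrm{nor}} \rrbracket = 0 $ to rewrite \eqref{eq:sigmanor_unor} as a single global identity pairing $ \widehat{ \sigma } _h ^{\mathrm{nor}} $ against $ \llbracket w _h ^{\mathrm{tan}} \rrbracket $; one Riesz representation on that jump space then gives existence and uniqueness, with no facet-by-facet localization needed. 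The same device handles $ \widehat{ \rho } _h ^{\mathrm{tan}} $, with the additional care---absent from your sketch---that facets on $ \partial \Omega $ must be treated separately, since the normal jump is defined to vanish there while $ \widehat{ \rho } _h ^{\mathrm{tan}} \in \widehat{ V } _h ^{ k + 1 , \mathrm{tan} } $ is nonvanishing on $ \partial \Omega $. Your concern about orientation and sign bookkeeping is not the real obstruction; the obstruction is that facet traces cannot be localized.
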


  \begin{proof}
    First, by the definition of tangential jump, any
    $ \widehat{ \sigma } _h ^{\mathrm{nor}} \in \llbracket W _h ^{ k -
      2 , \mathrm{tan} } \rrbracket \coloneqq \bigl\{ \llbracket w _h
    ^{\mathrm{tan}} \rrbracket : w _h \in W _h ^{ k - 2 } \bigr\} $
    will satisfy the strong conservativity condition. We claim that
    there exists a unique such
    $ \widehat{ \sigma } _h ^{\mathrm{nor}} $ satisfying
    \eqref{eq:sigmanor_unor}. Since
    $ \llbracket \widehat{ \sigma } _h ^{\mathrm{nor}} \rrbracket = 0
    $ and
    $ \llbracket \widehat{ u } _h ^{\mathrm{nor}} \rrbracket = 0 $,
    \citep[Proposition 3.4]{StZa2025} implies that
    \eqref{eq:sigmanor_unor} becomes
    \begin{equation*}
      \bigl\langle \widehat{ \sigma } _h ^{\mathrm{nor}} , \llbracket w _h ^{\mathrm{tan}} \rrbracket \bigr\rangle _{ \partial \mathcal{T} _h } = \bigl\langle \widehat{ u } _h ^{\mathrm{nor}} , \llbracket \mathrm{d} w _h ^{\mathrm{tan}} \rrbracket \bigr\rangle _{ \partial \mathcal{T} _h } , \quad \forall w _h \in W _h ^{ k - 2 } .
    \end{equation*}
    Now, the right-hand side vanishes whenever
    $ \llbracket w _h ^{\mathrm{tan}} \rrbracket = 0 $, since this
    implies
    $ \llbracket \mathrm{d} w _h ^{\mathrm{tan}} \rrbracket = 0 $, so
    it is a well-defined functional on
    $ \llbracket W _h ^{ k - 2, \mathrm{tan} } \rrbracket $. Hence,
    existence and uniqueness of
    $ \widehat{ \sigma } _h ^{\mathrm{nor}} \in \llbracket W _h ^{ k -
      2, \mathrm{tan} } \rrbracket $ follows from the Riesz
    representation theorem.

    By a similar argument, when
    $ \widehat{ \rho } _h ^{\mathrm{tan}} \rvert _{ \partial
      \mathcal{T} _h \setminus \partial \Omega } \in \llbracket W _h
    ^{k+2, \mathrm{nor}} \rrbracket $, the condition
    \eqref{eq:rhotan_utan} becomes
    \begin{equation*}
      \bigl\langle \widehat{ \rho } _h ^{\mathrm{tan}} , \llbracket w _h ^{\mathrm{nor}} \rrbracket \bigr\rangle _{ \partial \mathcal{T} _h \setminus \partial \Omega } + \langle \widehat{ \rho } _h ^{\mathrm{tan}} , w _h ^{\mathrm{nor}} \rangle _{ \partial \Omega } = \bigl\langle \widehat{ u } _h ^{\mathrm{tan}} , \llbracket \delta w _h ^{\mathrm{nor}} \rrbracket \bigr\rangle _{ \partial \mathcal{T} _h \setminus \partial \Omega } + \langle \widehat{ u } _h ^{\mathrm{tan}} , \delta w _h ^{\mathrm{nor}} \rangle _{ \partial \Omega }, \quad \forall w _h \in W _h ^{ k + 2 } .
    \end{equation*}
    (Here, the boundary terms must be handled separately, since
    $ \llbracket w _h ^{\mathrm{nor}} \rrbracket $ is defined to
    vanish on $ \partial \Omega $, cf.~\citep[Definition
    3.2]{StZa2025}.) The right-hand side vanishes whenever
    $ \llbracket w _h ^{\mathrm{nor}} \rrbracket = 0 $, since this
    implies $ \llbracket \delta w _h ^{\mathrm{nor}} \rrbracket = 0 $,
    so it is a well-defined functional on the elements of
    $ \widehat{ V } _h ^{ k + 1 , \mathrm{tan}} $ extending
    $ \llbracket W _h ^{ k + 2 , \mathrm{nor} } \rrbracket $. Hence,
    the Riesz representation theorem gives a unique such
    $ \widehat{ \rho } _h ^{\mathrm{tan}} $, which completes the
    proof.
  \end{proof}
  
  Unlike with the AFW-H method, however, the form of the flux prevents
  us from simply eliminating the constraints to get a formulation
  involving $ \widehat{ p } _h ^{\mathrm{tan}} $ rather than
  $ \widehat{ u } _h ^{\mathrm{tan}} $. Indeed, even if we eliminate
  the constraints, $ \widehat{ u } _h ^{\mathrm{tan}} $ still appears
  on the left-hand side of \eqref{eq:ldg-h_wave_vtan}. Obtaining a
  formulation in $ \widehat{ p } _h ^{\mathrm{tan}} $ alone requires
  adopting a different flux that, as we shall see next, fails to be
  multisymplectic.
  
  \subsubsection{A non-multisymplectic LDG-H method}
  Let us take the fluxes
  \begin{align*}
    \Phi _q ^{ k -1 } ( \mathbf{z} _h , \widehat{ \mathbf{z} } _h ) &= ( \widehat{ p } _h ^{\mathrm{nor}} - p _h ^{\mathrm{nor}} ) + \alpha ^{ k -1 } ( \widehat{ \sigma } _h ^{\mathrm{tan}} - \sigma _h ^{\mathrm{tan}} ) ,\\
    \Phi _q ^k ( \mathbf{z} _h , \widehat{ \mathbf{z} } _h ) &= ( \widehat{ \rho } _h ^{\mathrm{nor}} - \rho _h ^{\mathrm{nor}} ) + \alpha ^k ( \widehat{ p } _h ^{\mathrm{tan}} - p _h ^{\mathrm{tan}} ) ,
  \end{align*}
  where again $ \alpha ^{ k -1 } $ and $ \alpha ^k $ are symmetric
  operators on $ L ^2 \Lambda ^{ k -1 } ( \partial \mathcal{T} _h ) $
  and $ L ^2 \Lambda ^k ( \partial \mathcal{T} _h ) $, respectively.

  Suppose we eliminate the constraints \eqref{eq:weak_wave_thetadot}
  and \eqref{eq:weak_wave_xidot}, assuming that they hold at the
  initial time, as well as eliminating the normal trace variables and
  integrating by parts as in \eqref{eq:ldg-h}. The resulting method
  has the dynamical equations
  \begin{subequations}
    \label{eq:ldg-h_wave_mixed}
    \begin{alignat}{2}
      ( \dot{ \sigma } _h , \tau _h ) _{ \mathcal{T} _h } + ( \delta p _h , \tau _h ) _{ \mathcal{T} _h } + \bigl\langle \alpha ^{ k -1 } ( \widehat{ \sigma } _h ^{\mathrm{tan}} - \sigma _h ^{\mathrm{tan}} ) , \tau _h ^{\mathrm{tan}} \bigr\rangle _{ \partial \mathcal{T} _h } &= 0, \quad &\forall \tau _h &\in W _h ^{ k -1 } , \label{eq:ldg-h_wave_mixed_tau} \\
      ( \dot{ u } _h , r _h ) _{ \mathcal{T} _h } &= ( p _h , r _h ) _{ \mathcal{T} _h } , \quad &\forall r _h &\in W _h ^k , \label{eq:ldg-h_wave_mixed_r} \\
      ( \dot{ \rho } _h , \eta _h ) _{ \mathcal{T} _h } + ( p _h , \delta \eta _h ) _{ \mathcal{T} _h } + \langle \widehat{ p } _h ^{\mathrm{tan}} , \eta _h ^{\mathrm{nor}} \rangle _{ \partial \mathcal{T} _h } &= 0, \quad &\forall \eta _h &\in W _h ^{ k + 1 } , \label{eq:ldg-h_wave_mixed_eta} \\
    \begin{multlined}[b]
      - ( \dot{ p } _h , v _h ) _{ \mathcal{T} _h } + ( \sigma _h , \delta v _h ) _{ \mathcal{T} _h } + ( \delta \rho _h , v _h ) _{ \mathcal{T} _h } \\
      + \langle \widehat{ \sigma } _h ^{\mathrm{tan}} , v _h ^{\mathrm{nor}} \rangle _{ \partial \mathcal{T} _h } + \bigl\langle \alpha ^k ( \widehat{ p } _h ^{\mathrm{tan}} - p _h ^{\mathrm{tan}} ) , v _h ^{\mathrm{tan}} \bigr\rangle _{ \partial \mathcal{T} _h }
    \end{multlined} &= \bigl( f ( t, u _h ) , v _h \bigr)  _{ \mathcal{T} _h } ,\quad &\forall v _h &\in W _h ^k , \label{eq:ldg-h_wave_mixed_v}\\
      \intertext{together with the conservativity conditions}
      \bigl\langle p _h ^{\mathrm{nor}} - \alpha ^{ k -1 } ( \widehat{ \sigma } _h ^{\mathrm{tan}} - \sigma _h ^{\mathrm{tan}} ) , \widehat{ \tau } _h ^{\mathrm{tan}} \bigr\rangle _{ \partial \mathcal{T} _h } &= 0 , \quad &\forall \widehat{ \tau } _h ^{\mathrm{tan}} &\in \ringhat{V} _h ^{k-1, \mathrm{tan}}, \label{eq:ldg-h_wave_mixed_tautan}\\
      \bigl\langle \rho _h ^{\mathrm{nor}} - \alpha ^k ( \widehat{ p } _h ^{\mathrm{tan}} - p _h ^{\mathrm{tan}} ) , \widehat{ v } _h ^{\mathrm{tan}} \bigr\rangle _{ \partial \mathcal{T} _h } &= 0 , \quad &\forall \widehat{ v } _h ^{\mathrm{tan}} &\in \ringhat{V} _h ^{k, \mathrm{tan}} . \label{eq:ldg-h_wave_mixed_vtan}
    \end{alignat}
  \end{subequations}
  For $ k = 0 $, this coincides with \citet[Equation 6]{SaVa24}. In
  the linear case where $ f = f (t) $, this formulation allows us to
  eliminate the variable $ u _h $ and the equation
  \eqref{eq:ldg-h_wave_mixed_r}, evolving only the remaining
  variables; for $ k = 0 $, this recovers \citet[Equation
  5]{NgPeCo2011_wave}. Again, as with the second formulation of AFW-H,
  we may recover $ u _h $, if desired, by integrating $ p _h $ over
  time.

  For this formulation, we may provide arbitrary initial conditions
  for $ u _h $, $ p _h $, and the single-valued traces
  $ \widehat{ u } _h ^{\mathrm{nor}} $ and
  $ \widehat{ u } _h ^{\mathrm{tan}} $; solve the constraint equations
  \eqref{eq:weak_wave_thetadot} and \eqref{eq:weak_wave_xidot} to
  obtain initial values for $ \sigma _h $ and $ \rho _h $,
  respectively; and then evolve forward according to
  \eqref{eq:ldg-h_wave_mixed}. Notice that, if $ \alpha ^{ k -1 } $
  and $ \alpha ^k $ are nondegenerate on
  $ \ringhat{V} _h ^{k-1, \mathrm{tan}} $ and
  $ \ringhat{V} _h ^{k, \mathrm{tan}} $, then we can solve
  \eqref{eq:ldg-h_wave_mixed_tautan} and
  \eqref{eq:ldg-h_wave_mixed_vtan} for
  $ \widehat{ \sigma } _h ^{\mathrm{tan}} \rvert _{ \partial
    \mathcal{T} _h \setminus \partial \Omega } $ and
  $ \widehat{ p } _h ^{\mathrm{tan}} \rvert _{ \partial \mathcal{T} _h
    \setminus \partial \Omega } $ in terms of the remaining variables.
  While this is sufficient for well-defined dynamics, the stronger
  definiteness hypotheses of \cref{thm:ldg-h_wave_definite} imply
  \emph{stable} dynamics, as follows.

  \begin{lemma}
    \label{lem:ldg-h_wave_mixed_energy}
    Consider \eqref{eq:ldg-h_wave_mixed} for the case of the
    homogeneous linear Hodge wave equation $ f = 0 $ with homogeneous
    Dirichlet boundary conditions
    $ \widehat{ \sigma } _h ^{\mathrm{tan}} \in \ringhat{V} _h ^{k-1,
      \mathrm{tan}} $ and
    $ \widehat{ p } _h ^{\mathrm{tan}} \in \ringhat{V} _h ^{k,
      \mathrm{tan}} $. Then
    \begin{multline}
      \label{eq:ldg-h_wave_mixed_energy}
      \frac{\mathrm{d}}{\mathrm{d}t} \frac{1}{2} \bigl( \lVert \sigma _h \rVert _{ \mathcal{T} _h } ^2 + \lVert p _h \rVert _{ \mathcal{T} _h } ^2 + \lVert \rho _h \rVert _{ \mathcal{T} _h } ^2 \bigr) \\
      = \bigl\langle \alpha ^{ k -1 } ( \widehat{ \sigma } _h ^{\mathrm{tan}} - \sigma _h ^{\mathrm{tan}} ), \widehat{ \sigma } _h ^{\mathrm{tan}} - \sigma _h ^{\mathrm{tan}} \bigr\rangle _{ \partial \mathcal{T} _h } - \bigl\langle \alpha ^k ( \widehat{ p } _h ^{\mathrm{tan}} - p _h ^{\mathrm{tan}} ) , \widehat{ p } _h ^{\mathrm{tan}} - p _h ^{\mathrm{tan}} \bigr\rangle _{ \partial \mathcal{T} _h } .
    \end{multline}
  \end{lemma}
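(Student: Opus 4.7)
The plan is to test the evolution equations \eqref{eq:ldg-h_wave_mixed_tau}, \eqref{eq:ldg-h_wave_mixed_eta}, and \eqref{eq:ldg-h_wave_mixed_v} against the dynamical variables themselves, sum in a way that produces the time derivative of the energy, and use the conservativity conditions \eqref{eq:ldg-h_wave_mixed_tautan}--\eqref{eq:ldg-h_wave_mixed_vtan} together with the homogeneous Dirichlet boundary hypothesis to handle the remaining boundary terms.

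First I would take $ \tau _h = \sigma _h $ in \eqref{eq:ldg-h_wave_mixed_tau}, $ \eta _h = \rho _h $ in \eqref{eq:ldg-h_wave_mixed_eta}, and $ v _h = p _h $ in \eqref{eq:ldg-h_wave_mixed_v} with $ f = 0 $. The left-hand sides yield $ ( \dot{ \sigma } _h , \sigma _h ) _{ \mathcal{T} _h } $, $ ( \dot{ \rho } _h , \rho _h ) _{ \mathcal{T} _h } $, and $ ( \dot{ p } _h , p _h ) _{ \mathcal{T} _h } $ (up to sign), which combine via the Leibniz rule into $ \tfrac{ \mathrm{d} }{ \mathrm{d} t } \tfrac{1}{2} \bigl( \lVert \sigma _h \rVert _{ \mathcal{T} _h } ^2 + \lVert \rho _h \rVert _{ \mathcal{T} _h } ^2 + \lVert p _h \rVert _{ \mathcal{T} _h } ^2 \bigr) $. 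Summing the first and third equations and subtracting the second, the volume terms $ ( \delta p _h , \sigma _h ) _{ \mathcal{T} _h } $ and $ ( \sigma _h , \delta p _h ) _{ \mathcal{T} _h } $ cancel by symmetry of the $ L ^2 $ inner product, and likewise $ ( \delta \rho _h , p _h ) _{ \mathcal{T} _h } $ cancels against $ ( p _h , \delta \rho _h ) _{ \mathcal{T} _h } $. What remains is
\begin{equation*}
  \tfrac{ \mathrm{d} }{ \mathrm{d} t } \tfrac{1}{2} \bigl( \lVert \sigma _h \rVert ^2 + \lVert p _h \rVert ^2 + \lVert \rho _h \rVert ^2 \bigr) = - \bigl\langle \alpha ^{ k -1 } ( \widehat{ \sigma } _h ^{\mathrm{tan}} - \sigma _h ^{\mathrm{tan}} ) , \sigma _h ^{\mathrm{tan}} \bigr\rangle + \langle \widehat{ \sigma } _h ^{\mathrm{tan}} , p _h ^{\mathrm{nor}} \rangle + \bigl\langle \alpha ^k ( \widehat{ p } _h ^{\mathrm{tan}} - p _h ^{\mathrm{tan}} ) , p _h ^{\mathrm{tan}} \bigr\rangle - \langle \widehat{ p } _h ^{\mathrm{tan}} , \rho _h ^{\mathrm{nor}} \rangle,
\end{equation*}
with all boundary pairings on $ \partial \mathcal{T} _h $.

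The key observation is then that the homogeneous Dirichlet hypothesis places $ \widehat{ \sigma } _h ^{\mathrm{tan}} \in \ringhat{V} _h ^{k-1, \mathrm{tan}} $ and $ \widehat{ p } _h ^{\mathrm{tan}} \in \ringhat{V} _h ^{k, \mathrm{tan}} $, so they are admissible test functions in \eqref{eq:ldg-h_wave_mixed_tautan} and \eqref{eq:ldg-h_wave_mixed_vtan} respectively. Substituting $ \widehat{ \tau } _h ^{\mathrm{tan}} = \widehat{ \sigma } _h ^{\mathrm{tan}} $ gives $ \langle p _h ^{\mathrm{nor}} , \widehat{ \sigma } _h ^{\mathrm{tan}} \rangle = \bigl\langle \alpha ^{ k -1 } ( \widehat{ \sigma } _h ^{\mathrm{tan}} - \sigma _h ^{\mathrm{tan}} ) , \widehat{ \sigma } _h ^{\mathrm{tan}} \bigr\rangle $, and $ \widehat{ v } _h ^{\mathrm{tan}} = \widehat{ p } _h ^{\mathrm{tan}} $ gives $ \langle \rho _h ^{\mathrm{nor}} , \widehat{ p } _h ^{\mathrm{tan}} \rangle = \bigl\langle \alpha ^k ( \widehat{ p } _h ^{\mathrm{tan}} - p _h ^{\mathrm{tan}} ) , \widehat{ p } _h ^{\mathrm{tan}} \bigr\rangle $.

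Plugging these identities into the displayed expression combines the $ \alpha ^{ k -1 } $ terms into $ \bigl\langle \alpha ^{ k -1 } ( \widehat{ \sigma } _h ^{\mathrm{tan}} - \sigma _h ^{\mathrm{tan}} ) , \widehat{ \sigma } _h ^{\mathrm{tan}} - \sigma _h ^{\mathrm{tan}} \bigr\rangle _{ \partial \mathcal{T} _h } $, and the $ \alpha ^k $ terms into $ - \bigl\langle \alpha ^k ( \widehat{ p } _h ^{\mathrm{tan}} - p _h ^{\mathrm{tan}} ) , \widehat{ p } _h ^{\mathrm{tan}} - p _h ^{\mathrm{tan}} \bigr\rangle _{ \partial \mathcal{T} _h } $, giving \eqref{eq:ldg-h_wave_mixed_energy}. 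The only mildly delicate step is keeping careful track of signs, in particular the opposite sign with which \eqref{eq:ldg-h_wave_mixed_v} contributes to the energy (owing to the $ -\dot p _h $) and the matching opposite sign appearing between the two $ \alpha $ contributions in the final identity; no inequality or coercivity is needed, so this is essentially an algebraic bookkeeping argument rather than an analytic one.
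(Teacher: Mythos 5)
Your proposal is correct and follows essentially the same route as the paper's proof: test with $\tau_h=\sigma_h$, $\eta_h=\rho_h$, $v_h=p_h$, then use \eqref{eq:ldg-h_wave_mixed_tautan} with $\widehat{\tau}_h^{\mathrm{tan}}=\widehat{\sigma}_h^{\mathrm{tan}}$ and \eqref{eq:ldg-h_wave_mixed_vtan} with $\widehat{v}_h^{\mathrm{tan}}=\widehat{p}_h^{\mathrm{tan}}$ to complete the squares. One small slip in the prose: the combination that yields the energy derivative and cancels the volume terms is the $\tau$-equation \emph{plus} the $\eta$-equation \emph{minus} the $v$-equation (because of the $-\dot{p}_h$), not ``first and third minus second''; your displayed intermediate identity is nonetheless the correct one.
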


  \begin{proof}
    By \eqref{eq:ldg-h_wave_mixed_tau} with $ \tau _h = \sigma _h $,
    \eqref{eq:ldg-h_wave_mixed_v} with $ v _h = p _h $, and
    \eqref{eq:ldg-h_wave_mixed_eta} with $ \eta _h = \rho _h $, we
    have
    \begin{multline*}
      \frac{\mathrm{d}}{\mathrm{d}t} \frac{1}{2} \bigl( \lVert \sigma _h \rVert _{ \mathcal{T} _h } ^2 + \lVert p _h \rVert _{ \mathcal{T} _h } ^2 + \lVert \rho _h \rVert _{ \mathcal{T} _h } ^2 \bigr) \\
      = \langle \widehat{ \sigma } _h ^{\mathrm{tan}} , p _h ^{\mathrm{nor}} \rangle _{ \partial \mathcal{T} _h } - \langle \widehat{ p } _h ^{\mathrm{tan}} , \rho _h ^{\mathrm{nor}} \rangle _{ \partial \mathcal{T} _h } - \bigl\langle \alpha ^{ k -1 } ( \widehat{ \sigma } _h ^{\mathrm{tan}} - \sigma _h ^{\mathrm{tan}} ), \sigma _h ^{\mathrm{tan}} \bigr\rangle _{ \partial \mathcal{T} _h } + \bigl\langle \alpha ^k ( \widehat{ p } _h ^{\mathrm{tan}} - p _h ^{\mathrm{tan}} ) , p _h ^{\mathrm{tan}} \bigr\rangle _{ \partial \mathcal{T} _h } .
    \end{multline*}
    Applying \eqref{eq:ldg-h_wave_mixed_tautan} with
    $ \widehat{ \tau } _h ^{\mathrm{tan}} = \widehat{ \sigma } _h
    ^{\mathrm{tan}} $ and \eqref{eq:ldg-h_wave_mixed_vtan} with
    $ \widehat{ v } _h ^{\mathrm{tan}} = \widehat{ p } _h
    ^{\mathrm{tan}} $ gives \eqref{eq:ldg-h_wave_mixed_energy}.
  \end{proof}

  \begin{corollary}
    If $ \alpha ^{ k -1 } $ is negative-definite and $ \alpha ^k $ is
    positive-definite, then under the hypotheses of
    \cref{lem:ldg-h_wave_mixed_energy}, we have
    \begin{equation*}
      \frac{\mathrm{d}}{\mathrm{d}t} \frac{1}{2} \bigl( \lVert \sigma _h \rVert _{ \mathcal{T} _h } ^2 + \lVert p _h \rVert _{ \mathcal{T} _h } ^2 + \lVert \rho _h \rVert _{ \mathcal{T} _h } ^2 \bigr) \leq 0 ,
    \end{equation*}
    with equality if and only if
    $ \widehat{ \sigma } _h ^{\mathrm{tan}} = \sigma _h
    ^{\mathrm{tan}} $ (i.e.,
    $ \widehat{ p } _h ^{\mathrm{nor}} = p _h ^{\mathrm{nor}} $) and
    $ \widehat{ p } _h ^{\mathrm{tan}} = p _h ^{\mathrm{tan}} $.
  \end{corollary}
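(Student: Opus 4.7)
The plan is to read off the corollary directly from the energy identity in \cref{lem:ldg-h_wave_mixed_energy}, using the sign hypotheses on the penalty operators to control each boundary term separately.

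First, I would take the identity \eqref{eq:ldg-h_wave_mixed_energy} as the starting point. Its right-hand side is a sum of two quadratic boundary forms: $\langle \alpha^{k-1}(\widehat{\sigma}_h^{\mathrm{tan}} - \sigma_h^{\mathrm{tan}}), \widehat{\sigma}_h^{\mathrm{tan}} - \sigma_h^{\mathrm{tan}}\rangle_{\partial\mathcal{T}_h}$ and $-\langle \alpha^k(\widehat{p}_h^{\mathrm{tan}} - p_h^{\mathrm{tan}}), \widehat{p}_h^{\mathrm{tan}} - p_h^{\mathrm{tan}}\rangle_{\partial\mathcal{T}_h}$. Since $\alpha^{k-1}$ is negative-definite, the first is nonpositive and vanishes exactly when $\widehat{\sigma}_h^{\mathrm{tan}} = \sigma_h^{\mathrm{tan}}$; since $\alpha^k$ is positive-definite, the negative of the second form is nonpositive and vanishes exactly when $\widehat{p}_h^{\mathrm{tan}} = p_h^{\mathrm{tan}}$. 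Summing gives the claimed inequality, with equality if and only if both tangential traces match.

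To justify the parenthetical remark that $\widehat{\sigma}_h^{\mathrm{tan}} = \sigma_h^{\mathrm{tan}}$ is equivalent to $\widehat{p}_h^{\mathrm{nor}} = p_h^{\mathrm{nor}}$, I would invoke the strong form of the flux equation arising from \eqref{eq:weak_wnor} applied to the LDG-H flux for this method. Because $\widehat{W}_h^{\mathrm{nor}} = L^2\Lambda(\partial\mathcal{T}_h)$, the condition $\Phi_q^{k-1} = 0$ holds pointwise, i.e.,
\begin{equation*}
  (\widehat{p}_h^{\mathrm{nor}} - p_h^{\mathrm{nor}}) + \alpha^{k-1}(\widehat{\sigma}_h^{\mathrm{tan}} - \sigma_h^{\mathrm{tan}}) = 0.
\end{equation*}
Since $\alpha^{k-1}$ is definite (hence injective), this algebraic identity shows that the vanishing of $\widehat{\sigma}_h^{\mathrm{tan}} - \sigma_h^{\mathrm{tan}}$ is equivalent to the vanishing of $\widehat{p}_h^{\mathrm{nor}} - p_h^{\mathrm{nor}}$, which justifies the alternate phrasing in parentheses.

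There is no real obstacle here: the lemma does all the analytic work, and the corollary is essentially a sign-analysis of two definite quadratic forms together with a one-line consequence of the flux equation. The only thing to keep track of is the sign conventions—specifically that the negative-definiteness of $\alpha^{k-1}$ (as opposed to positive-definiteness) is exactly what is needed to make its term dissipative, because of how $\alpha^{k-1}$ enters the flux $\Phi_q^{k-1}$ with a plus sign while the corresponding contribution to the energy identity arises with the opposite sign relative to the $\alpha^k$ term.
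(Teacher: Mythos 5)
Your proposal is correct and matches the paper's intent exactly: the paper states this corollary without proof precisely because it is the sign analysis you describe, reading off nonpositivity of each boundary term in \eqref{eq:ldg-h_wave_mixed_energy} from the definiteness hypotheses, with equality characterized by the vanishing of the two tangential trace differences. Your justification of the parenthetical via the pointwise flux identity $\widehat{p}_h^{\mathrm{nor}} - p_h^{\mathrm{nor}} = -\alpha^{k-1}(\widehat{\sigma}_h^{\mathrm{tan}} - \sigma_h^{\mathrm{tan}})$ and injectivity of $\alpha^{k-1}$ is also exactly the relation the paper uses when eliminating the normal traces in \eqref{eq:ldg-h_wave_mixed}.
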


  Finally, we consider the multisymplecticity of this method. Clearly,
  the flux is not multisymplectic, since it does not restrict any of
  the variables appearing in \eqref{eq:mscl_jump_hodge}: in
  particular, since $ r _i $ and $ \widehat{ r } _i $ appear in the
  flux but not in \eqref{eq:mscl_jump_hodge}, we may choose any
  $ \tau _i , \widehat{ \tau } _i ^{\mathrm{tan}} $, $ v _i $,
  $ \widehat{ v } _i $, $ \eta _i $, and
  $ \widehat{ \eta } _i ^{\mathrm{nor}} $ such that
  \eqref{eq:mscl_jump_hodge} fails. However, multisymplecticity of the
  flux (as in \cref{def:ms_flux}) is a sufficient but not necessary
  condition for multisymplecticity of the method. To prove that the
  method as a whole is non-multisymplectic, we must show that there
  exist variations of \eqref{eq:ldg-h_wave_mixed} such that
  \eqref{eq:mscl_jump_hodge} fails.

  \begin{theorem}
    If $ \alpha ^{ k -1 } $ is negative-definite and $ \alpha ^k $ is
    positive-definite, then the method \eqref{eq:ldg-h_wave_mixed} is
    not multisymplectic.
  \end{theorem}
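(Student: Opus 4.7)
The plan is to apply \cref{thm:local_ms} and exhibit admissible variations for which the defect $[\widehat{\mathbf{w}}_1 - \mathbf{w}_1, \widehat{\mathbf{w}}_2 - \mathbf{w}_2]_{\partial K}$ does not vanish on some element $K$. Since $\widehat{\mathbf{W}}_h^{\mathrm{nor}} = \mathbf{L}^2\mathbf{\Lambda}(\partial\mathcal{T}_h)$, the strong form of \eqref{eq:weakvar_wnor} for the present flux reads
\begin{align*}
  \widehat{r}_i^{\mathrm{nor}} - r_i^{\mathrm{nor}} &= -\alpha^{k-1}\bigl(\widehat{\tau}_i^{\mathrm{tan}} - \tau_i^{\mathrm{tan}}\bigr), \\
  \widehat{\eta}_i^{\mathrm{nor}} - \eta_i^{\mathrm{nor}} &= -\alpha^k\bigl(\widehat{r}_i^{\mathrm{tan}} - r_i^{\mathrm{tan}}\bigr),
\end{align*}
which control the normal trace jumps of $r_i$ and $\eta_i$ in terms of the tangential jumps of $\tau_i$ and $r_i$. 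Substituting these into \eqref{eq:mscl_jump_hodge} and exploiting symmetry of $\alpha^{k-1}, \alpha^k$, the mscl defect reduces to a combination of pairings involving the $v$-trace jumps $\widehat{v}_i^{\mathrm{tan}} - v_i^{\mathrm{tan}}$ and $\widehat{v}_i^{\mathrm{nor}} - v_i^{\mathrm{nor}}$, which are not constrained by the flux at all. This structural observation already suggests the failure of multisymplecticity.

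Next, to produce a genuine counterexample rather than one only at the level of the flux, I would work in the linear homogeneous case ($f = 0$), where first variations coincide with solutions of the method itself. By \cref{lem:ldg-h_wave_mixed_energy}, the definiteness hypotheses force generic nontrivial solutions to exhibit strictly nonzero trace mismatches $\widehat{\sigma}_h^{\mathrm{tan}} - \sigma_h^{\mathrm{tan}}$ and $\widehat{p}_h^{\mathrm{tan}} - p_h^{\mathrm{tan}}$. I would then select two admissible variations $\mathbf{w}_1, \mathbf{w}_2$ whose trace jumps on some $K$ realize an asymmetric combination in \eqref{eq:mscl_jump_hodge} --- for instance, choosing initial data so that $\widehat{\tau}_1^{\mathrm{tan}} - \tau_1^{\mathrm{tan}} = 0$ on $\partial K$ but $\widehat{r}_1^{\mathrm{tan}} - r_1^{\mathrm{tan}} \neq 0$, while $\widehat{v}_2^{\mathrm{tan}} - v_2^{\mathrm{tan}} = 0$ on $\partial K$ but $\widehat{\tau}_2^{\mathrm{tan}} - \tau_2^{\mathrm{tan}} \neq 0$, so that one side of \eqref{eq:mscl_jump_hodge} vanishes while the other survives thanks to the nondegeneracy of $\alpha^k$.

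The main obstacle is verifying that such variations are actually admissible, since they must solve the global variational system \eqref{eq:weakvar} rather than only satisfy pointwise conditions on $\partial K$. I would handle this by reducing to a minimal mesh (e.g.\ $n = 1$, $k = 0$, two elements, lowest-order polynomials), where the variational equations become a finite-dimensional linear ODE whose solution space is explicit. A short direct computation then exhibits two solutions whose traces realize the desired asymmetric pattern and produce a nonzero defect on $K$, contradicting weak multisymplecticity. The definiteness of $\alpha^{k-1}$ and $\alpha^k$ enters twice: through \cref{thm:ldg-h_wave_definite} to guarantee well-posedness of the dynamics, and through \cref{lem:ldg-h_wave_mixed_energy} to guarantee that the trace mismatches driving the defect are genuinely nonzero.
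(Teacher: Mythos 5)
Your setup is right---you correctly identify that non-multisymplecticity of the flux alone does not suffice, that one must exhibit actual admissible variations violating \eqref{eq:mscl_jump_hodge}, and that the freedom lies in the initial data for $v_i$, $r_i$, $\widehat{v}_i$. But the proof has a genuine gap at exactly the step you flag as ``the main obstacle'': you never actually produce the variations. Deferring the construction to ``a short direct computation'' on a minimal mesh ($n=1$, $k=0$, two elements, lowest order) is doubly problematic. First, the computation is not carried out, so nothing is proved. Second, even if it were, it would only establish non-multisymplecticity of that one instantiation of \eqref{eq:ldg-h_wave_mixed}, whereas the theorem asserts it for every mesh, polynomial degree, form degree $k$, and dimension $n$ (subject only to the definiteness hypotheses). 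Your proposed trace pattern ($\widehat{\tau}_1^{\mathrm{tan}}-\tau_1^{\mathrm{tan}}=0$ with $\widehat{r}_1^{\mathrm{tan}}-r_1^{\mathrm{tan}}\neq 0$, etc.) is also not obviously realizable by solutions of the variational system, and after substituting the flux relations the surviving terms pair $v_1$-jumps against $r_2$- and $\tau_2$-jumps in a way that is not visibly sign-definite, so there is no guarantee the defect is nonzero even if the pattern were achieved. Your appeal to \cref{lem:ldg-h_wave_mixed_energy} does not fill this hole: the energy identity shows dissipation \emph{when} trace mismatches are nonzero, but does not itself show that the particular mismatches entering \eqref{eq:mscl_jump_hodge} are nonzero for suitable data.

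The paper closes this gap with a construction that works on any mesh and avoids all case analysis: take $v_1=0$ and $\widehat{v}_1=0$ at the initial time, so that the constraints \eqref{eq:weak_wave_thetadot}--\eqref{eq:weak_wave_xidot} force $\tau_1=0$ and $\eta_1=0$; then take $v_2=-r_1$ and $\widehat{v}_2=-\widehat{r}_1$ for an as-yet-unspecified $r_1$. After substituting the flux relations (the same ones you wrote down), the defect collapses to the single-signed quantity $\langle \alpha^{k-1}\widehat{\tau}_1^{\mathrm{tan}},\widehat{\tau}_1^{\mathrm{tan}}\rangle - \langle \alpha^k(\widehat{r}_1^{\mathrm{tan}}-r_1^{\mathrm{tan}}),\widehat{r}_1^{\mathrm{tan}}-r_1^{\mathrm{tan}}\rangle\le 0$, with equality only if $r_1^{\mathrm{nor}}$ and $r_1^{\mathrm{tan}}$ agree with the single-valued numerical traces, i.e.\ only if $r_1$ is continuous. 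Choosing any discontinuous $r_1\in W_h^k$ (which exists because $W_h$ is broken) makes the defect strictly negative, hence nonzero on some $K$. The definiteness of the penalties is used precisely here---to make the reduced expression a definite quadratic form---rather than through \cref{thm:ldg-h_wave_definite} or the energy lemma. If you want to salvage your argument, the missing idea is this antisymmetric pairing of the two variations ($v_2=-r_1$), which turns the bilinear defect into a quadratic form whose sign the hypotheses control.
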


  \begin{proof}
    Since we are free to choose arbitrary initial conditions for
    $ u _h $, $ p _h $, and $ \widehat{ u } _h $, it follows that we
    are free to do so for the corresponding variation components
    $ v _i $, $ r _i $, and $ \widehat{ v } _i $. We show that these
    initial conditions may be chosen such that
    \begin{multline*}
      \langle \widehat{ \tau } _1 ^{\mathrm{tan}} - \tau _1 ^{\mathrm{tan}} , \widehat{ v } _2 ^{\mathrm{nor}} - v _2 ^{\mathrm{nor}} \rangle _{ \partial \mathcal{T} _h } + \langle \widehat{ v } _1 ^{\mathrm{tan}} - v _1 ^{\mathrm{tan}} , \widehat{ \eta } _2 ^{\mathrm{nor}} - \eta _2 ^{\mathrm{nor}} \rangle _{ \partial \mathcal{T} _h } \\
      - \langle \widehat{ \tau } _2 ^{\mathrm{tan}} - \tau _2 ^{\mathrm{tan}} , \widehat{ v } _1 ^{\mathrm{nor}} - v _1 ^{\mathrm{nor}} \rangle _{ \partial \mathcal{T} _h } - \langle \widehat{ v } _2 ^{\mathrm{tan}} - v _2 ^{\mathrm{tan}} , \widehat{ \eta } _1 ^{\mathrm{nor}} - \eta _1 ^{\mathrm{nor}} \rangle _{ \partial \mathcal{T} _h }
    \end{multline*}
    is nonvanishing at the initial time, which implies that
    \eqref{eq:mscl_jump_hodge} fails to hold for some
    $ K \in \mathcal{T} _h $.

    First, initialize $ v _1 = 0 $ and $ \widehat{ v } _1 = 0 $. The
    constraints \eqref{eq:weak_wave_thetadot} and
    \eqref{eq:weak_wave_xidot} imply $ \tau _1 = 0 $ and
    $ \eta _1 = 0 $, respectively, so the expression above simplifies
    to
    \begin{equation*}
      \langle \widehat{ \tau } _1 ^{\mathrm{tan}} , \widehat{ v } _2 ^{\mathrm{nor}} - v _2 ^{\mathrm{nor}} \rangle _{ \partial \mathcal{T} _h } - \langle \widehat{ v } _2 ^{\mathrm{tan}} - v _2 ^{\mathrm{tan}} , \widehat{ \eta } _1 ^{\mathrm{nor}} \rangle _{ \partial \mathcal{T} _h } .
    \end{equation*}
    Next, given any initial condition for $ r _1 $ (which we have yet
    to specify), take $ v _2 = - r _1 $ and
    $ \widehat{ v } _2 = - \widehat{ r } _1 $. Substituting these
    above and applying the flux definitions for
    $ \widehat{ r } _1 ^{\mathrm{nor}} $ and
    $ \widehat{ \eta } _1 ^{\mathrm{nor}} $ gives
    \begin{equation*}
      \langle \alpha ^{ k -1 } \widehat{ \tau } _1 ^{\mathrm{tan}} , \widehat{ \tau } _1 ^{\mathrm{tan}} \rangle _{ \partial \mathcal{T} _h } - \bigl\langle \alpha ^k ( \widehat{ r } _1 ^{\mathrm{tan}} - r _1 ^{\mathrm{tan}} )  , \widehat{ r } _1 ^{\mathrm{tan}} - r _1 ^{\mathrm{tan}} \bigr\rangle _{ \partial \mathcal{T} _h } \leq 0 ,
    \end{equation*}
    with equality if and only if
    $ \widehat{ \tau } _1 ^{\mathrm{tan}} = 0 $ (i.e.,
    $ \widehat{ r } _1 ^{\mathrm{nor}} = r _1 ^{\mathrm{nor}} $) and
    $ \widehat{ r } _1 ^{\mathrm{tan}} = r _1 ^{\mathrm{tan}}
    $. Finally, since $ \widehat{ r } _1 ^{\mathrm{nor}} $ and
    $ \widehat{ r } _1 ^{\mathrm{tan}} $ are single-valued, equality
    holds only if $ r _1 ^{\mathrm{nor}} $ and
    $ r _1 ^{\mathrm{tan}} $ are also single valued, i.e., $ r _1 $ is
    continuous. Therefore, choosing any discontinuous initial
    condition for $ r _1 $ causes the expression above to be strictly
    negative at the initial time, and thus \eqref{eq:mscl_jump_hodge}
    fails to hold.
  \end{proof}

  \begin{remark}
    Existence of $ \widehat{ \sigma } _h ^{\mathrm{nor}} $ and
    $ \widehat{ \rho } _h ^{\mathrm{tan}} $ satisfying
    \eqref{eq:sigmanor_rhotan_boundary} is proved exactly as in
    \cref{prop:ldg-h_sigmanor_rhotan}, so we do not repeat the proof
    here.
  \end{remark}

\section{Global Hamiltonian structure preservation}
\label{sec:global_hamiltonian}

Time-dependent Hamiltonian PDEs are often viewed as ordinary
Hamiltonian dynamical systems evolving on some infinite-dimensional
function space. (See, for instance, \citet[Chapter 3]{MaRa1999} and
references therein.) From this viewpoint, structure-preserving
semidiscretization methods aim to approximate these
infinite-dimensional Hamiltonian systems by finite-dimensional
Hamiltonian systems (e.g., on a finite-dimensional subspace for
conforming Galerkin methods). This alternative approach gives a
\emph{global} symplectic conservation law on all of $\Omega$, but not
necessarily the finer \emph{local} structure of the multisymplectic
approach developed in the preceding sections.

In this section, we relate the two approaches. First, we give an
infinite-dimensional Hamiltonian description for the canonical systems
of \cref{sec:hamiltonian}. Next, we describe the multisymplectic
semidiscretization methods of \cref{sec:semidiscrete} as
finite-dimensional Hamiltonian systems. In both the infinite- and
finite-dimensional cases, the global symplectic conservation law is
seen to be a special case of the integral form of the multisymplectic
conservation law. This establishes that the multisymplectic approach
gives finer information about Hamiltonian structure preservation than
the global symplectic approach.

\subsection{The smooth setting}
\label{sec:global_hamiltonian_smooth}

Let
$ H \colon I \times \Omega \times \operatorname{\mathbf{Alt}}
\mathbb{R}^n \rightarrow \mathbb{R} $, as in
\cref{sec:hamiltonian}. Define the \emph{global Hamiltonian}
$ \mathcal{H} \colon I \times \mathbf{\Lambda} (\Omega) \rightarrow
\mathbb{R} $ to be the functional
\begin{equation*}
  \mathcal{H} ( t, \mathbf{z} ) \coloneqq \int _\Omega \Bigl[ H ( t, x, \mathbf{z} ) - \tfrac{1}{2} ( \mathbf{D} \mathbf{z} , \mathbf{z} ) \Bigr] \mathrm{vol} .
\end{equation*}
Letting $ \mathring{\mathbf{\Lambda}} (\Omega) $ denote the subspace
of forms having compact support in $\Omega$, the functional derivative
of $\mathcal{H}$ along
$ \mathbf{w} \in \mathring{ \mathbf{\Lambda} } (\Omega) $ is
\begin{equation*}
  \frac{ \partial \mathcal{H}  }{ \partial \mathbf{z} } \mathbf{w} \coloneqq \frac{\mathrm{d}}{\mathrm{d}\epsilon} \mathcal{H} ( t, x, \mathbf{z} + \epsilon \mathbf{w} ) \biggr\rvert _{ \epsilon = 0 } = \biggl( \frac{ \partial H }{ \partial \mathbf{z} } - \mathbf{D} \mathbf{z} , \mathbf{w} \biggr) _\Omega .
\end{equation*}
(In the literature, functional derivatives are often denoted
$ \delta \mathcal{H} / \delta \mathbf{z} $, but we have chosen the
notation above to avoid confusion with the codifferential $\delta$.)
Hence, \eqref{eq:hamiltonian_JD} is equivalent to
\begin{equation*}
  ( \mathbf{J} \dot{\mathbf{z}} , \mathbf{w} ) _\Omega = \frac{ \partial \mathcal{H} }{ \partial \mathbf{z} } \mathbf{w} , \quad \forall \mathbf{w} \in \mathring{\mathbf{\Lambda}} (\Omega) ,
\end{equation*}
which is the \emph{weak form} of Hamilton's equations on
$ \mathbf{\Lambda} (\Omega) $, cf.~\citet[p.~106]{MaRa1999}.

\begin{remark}
  If $\Omega$ is not compact, then the \emph{Hamiltonian density}
  $ H ( t, x, \mathbf{z} ) - \frac{1}{2} ( \mathbf{D} \mathbf{z} ,
  \mathbf{z} ) $ might not be integrable for all
  $ \mathbf{z} \in \mathbf{\Lambda} (\Omega) $, causing
  $ \mathcal{H} ( t, \mathbf{z} ) $ to be undefined. However, this is
  only a minor technical obstacle: we can still make sense of the
  functional derivative along
  $ \mathbf{w} \in \mathring{ \mathbf{\Lambda} } (\Omega) $ by
  restricting the integrals above to
  $ \operatorname{supp} \mathbf{w} $, which is compact.
\end{remark}

A compactly supported first variation
$ \mathbf{w} _i \colon I \rightarrow \mathring{\mathbf{\Lambda}}
(\Omega) $ of a solution to Hamilton's equations in weak form
satisfies
\begin{equation*}
  ( \mathbf{J} \dot{\mathbf{\mathbf{w}}} _i , \mathbf{w} ) _\Omega = \frac{ \partial ^2 \mathcal{H} }{ \partial \mathbf{z} ^2 } ( \mathbf{w} _i , \mathbf{w} ) , \quad \forall \mathbf{w} \in \mathring{\mathbf{\Lambda}} (\Omega) ,
\end{equation*}
which is equivalent to the variational equation
\eqref{eq:variational_JD}. It follows that if
$ \mathbf{w} _1 , \mathbf{w} _2 $ are a pair of compactly supported
first variations, then we have the \emph{global symplectic
  conservation law}
\begin{equation*}
  \frac{\mathrm{d}}{\mathrm{d}t} ( \mathbf{J} \mathbf{w} _1 , \mathbf{w} _2 ) _\Omega = 0 .
\end{equation*}
Note that this also follows immediately from the integral form of the
multisymplectic conservation law \eqref{eq:mscl_integral}, e.g., by
taking
$ K = \operatorname{supp} \mathbf{w} _1 \cap \operatorname{supp}
\mathbf{w} _2$. Hence, multisymplecticity implies symplecticity.

\begin{example}
  \label{ex:global_hamiltonian_wave}
  
  Recall from \cref{ex:hodge_wave} that the semilinear Hodge wave
  equation has Hamiltonian
  $ H (t, x, \mathbf{z} ) = - \frac{1}{2} \lvert \sigma \rvert ^2 +
  \bigl( \frac{1}{2} \lvert p \rvert ^2 + F ( t, x, u ) \bigr) -
  \frac{1}{2} \lvert \rho \rvert ^2 $. For the global Hamiltonian
  approach, we first write $ \mathbf{z} =
  \begin{bmatrix}
    \sigma \oplus u \oplus \rho \\
    \theta \oplus p \oplus \xi 
  \end{bmatrix} $ and subsequently show that we can set $\theta$ and $\xi$ equal to zero. We then have
  \begin{multline*}
    \mathcal{H} ( t, \mathbf{z} ) = - \frac{1}{2} \lVert \sigma \rVert ^2 _\Omega + \biggl( \frac{1}{2} \lVert p \rVert _\Omega ^2 + \int _\Omega F ( t, x, u ) \,\mathrm{vol} \biggr) - \frac{1}{2} \lVert \rho \rVert _\Omega ^2 \\
    - \frac{1}{2} \Bigl[ ( \delta u , \sigma ) _\Omega + ( \mathrm{d} \sigma + \delta \rho , u ) _\Omega + ( \mathrm{d} u, \rho ) _\Omega + ( \delta p , \theta ) _\Omega + ( \mathrm{d} \theta + \delta \xi , p ) _\Omega + ( \mathrm{d} p, \xi ) _\Omega \Bigr] .
  \end{multline*}
  Hence, Hamilton's equations are
  \begin{subequations}
    \label{eq:hodge_wave_global}
    \begin{alignat}{2}
      \dot{ \sigma } &= \frac{ \partial \mathcal{H} }{ \partial \theta } &&= - \delta p , \label{eq:hodge_wave_global_sigma} \\
      \dot{u} &= \frac{ \partial \mathcal{H} }{ \partial p } &&= p - \mathrm{d} \theta - \delta \xi,\\
      \dot{ \rho } &= \frac{ \partial \mathcal{H} }{ \partial \xi } &&= - \mathrm{d} p , \label{eq:hodge_wave_global_rho} \\
      -\dot{\theta} &= \frac{ \partial \mathcal{H} }{ \partial \sigma } &&= - \sigma - \delta u , \label{eq:hodge_wave_global_theta}\\
      -\dot{p} &= \frac{ \partial \mathcal{H} }{ \partial u } &&= \frac{ \partial F }{ \partial u } - \mathrm{d} \sigma - \delta \rho ,\\
      - \dot{ \xi } &= \frac{ \partial \mathcal{H} }{ \partial \rho } &&= - \rho - \mathrm{d} u \label{eq:hodge_wave_global_xi} .
    \end{alignat}
  \end{subequations}
  This system is immediately seen to be equivalent to
  \eqref{eq:hodge_wave} when $ \theta $ and $ \xi $ vanish. By the
  same argument as in \cref{ex:hodge_wave}, if $\theta$ and $\xi$
  vanish at the initial time with $ \sigma = - \delta u $ and
  $ \rho = - \delta u $, then these conditions remain true for all
  time, and we may eliminate \eqref{eq:hodge_wave_global_theta} and
  \eqref{eq:hodge_wave_global_xi} to obtain a first-order system in
  $\sigma$, $u$, $\rho$, and $p$ alone that remains in the invariant
  subspace $\mathbf{S}$ with $ \mathbf{z} =
  \begin{bmatrix}
    - \delta u \oplus u \oplus - \mathrm{d} u \\
    p 
  \end{bmatrix} $.

  An alternative but equivalent choice of global Hamiltonian is
  \begin{equation*}
    \widetilde{ \mathcal{H} } (t, \mathbf{z} ) = - \frac{1}{2} \lVert \sigma \rVert ^2 _\Omega + \biggl( \frac{1}{2} \lVert p \rVert _\Omega ^2 + \int _\Omega F ( t, x, u ) \,\mathrm{vol} \biggr) - \frac{1}{2} \lVert \rho \rVert _\Omega ^2 
    - ( \delta u , \sigma ) _\Omega - ( \mathrm{d} u, \rho ) _\Omega - ( \delta p , \theta ) _\Omega - ( \mathrm{d} p, \xi ) _\Omega .
  \end{equation*}
  Integration by parts shows that this agrees with $\mathcal{H}$ up to
  boundary terms, so it has the same functional derivatives along
  compactly supported test functions, and hence yields the same
  dynamics \eqref{eq:hodge_wave_global}. Furthermore, restricting
  $ \widetilde{ \mathcal{H} } $ to $ \mathbf{S} $, which is
  parametrized by $ ( u, p ) \in \mathbf{\Lambda} ^k (\Omega) $,
  yields
  \begin{equation*}
    \widetilde{ \mathcal{H} } _{ \mathbf{S} } ( t, u, p ) = \frac{1}{2} \lVert p \rVert _\Omega ^2 + \frac{1}{2} \lVert \mathrm{D} u \rVert _\Omega ^2 + \int _\Omega F ( t, x, u ) \,\mathrm{vol}.
  \end{equation*}
  This can be interpreted as a global Hamiltonian on
  $ \mathbf{\Lambda} ^k (\Omega) $ whose dynamics are
  \begin{alignat*}{2}
    \dot{ u } &= \frac{ \partial \widetilde{ \mathcal{H} } _{ \mathbf{S} }  }{ \partial p } &&= p ,\\
    - \dot{p} &= \frac{ \partial \widetilde{ \mathcal{H} } _{ \mathbf{S} }  }{ \partial u } &&= \mathrm{D} ^2 u + \frac{ \partial F }{ \partial u } ,
  \end{alignat*}
  which is again equivalent to the semilinear Hodge wave equation.
  This generalizes the usual global Hamiltonian formulation of the
  scalar semilinear wave equation, cf.~\citet[\S3.2]{MaRa1999}, where
  $ \widetilde{ \mathcal{H} } _{ \mathbf{S} } $ is interpreted
  as energy. Note that these equations are first-order in time and
  second-order in space, whereas the previous formulation including
  $\sigma$ and $\rho$ is first-order in both time and space.
\end{example}

\subsection{Global Hamiltonian structure of multisymplectic methods}
\label{sec:discrete_hamiltonian}

We now express the multisymplectic semidiscretization methods of
\cref{sec:semidiscrete} as global Hamiltonian systems corresponding to
a discrete Hamiltonian $ \mathcal{H} _h $. For simplicity, we assume
that we have sufficient regularity to write
$ \mathbf{f} ( t, \mathbf{z} _h ) = \partial H / \partial \mathbf{z}
_h $.

To put \eqref{eq:weak} into global Hamiltonian form, we must choose
boundary conditions and eliminate the trace variables and constraints
\eqref{eq:weak_wnor}--\eqref{eq:weak_wtan} so that \eqref{eq:weak_w}
reduces to Hamiltonian dynamics on some symplectic vector space
$ \widetilde{ \mathbf{W} } _h = \widetilde{ W } _h \otimes \mathbb{R}
^2 $, whose symplectic form we denote by
$ \widetilde{ \mathbf{J} } \coloneqq \widetilde{ W } _h \otimes J
$. (In all of our examples, $ \widetilde{ W } _h \subset W _h $ is a
subspace, so $ \widetilde{ \mathbf{J} } $ is simply the restriction of
$ \mathbf{J} $ to the symplectic subspace
$ \widetilde{ \mathbf{W} } _h \subset \mathbf{W} _h $.) The following
assumption formalizes conditions under which we can perform such a
reduction for solutions satisfying homogeneous Dirichlet boundary
conditions
$ \widehat{ \mathbf{z} } _h ^{\mathrm{tan}} \in \ringhat{ \mathbf{V} }
_h ^{\mathrm{tan}} $.

\begin{assumption}
  \label{assumption:theta}
  Suppose
  $ \mathbf{\Theta} \colon I \times \widetilde{ \mathbf{W} } _h
  \rightarrow \mathbf{W} _h \times \widehat{ \mathbf{W} } _h
  ^{\mathrm{nor}} \times \ringhat{ \mathbf{V} } _h ^{\mathrm{tan}} $,
  $ (t, \widetilde{ \mathbf{z} } _h ) \mapsto ( \mathbf{z} _h ,
  \widehat{ \mathbf{z} } _h ) $, satisfies the following conditions
  for all $ t \in I $:
  \begin{assumptionenumerate}
  \item The map $ \widetilde{ \mathbf{z} } _h \mapsto \mathbf{z} _h $
    is constant in $t$ and symplectic, i.e.,
    $ ( \mathbf{J} \mathbf{w} _1 , \mathbf{w} _2 ) _{ \mathcal{T} _h }
    = ( \widetilde{ \mathbf{J} } \widetilde{ \mathbf{w} } _1 ,
    \widetilde{ \mathbf{w} } _2 ) _{ \mathcal{T} _h } $ with
    $ \mathbf{w} _i = \frac{ \partial \mathbf{z} _h }{ \partial
      \widetilde{ \mathbf{z} } _h } \widetilde{ \mathbf{w} } _i $ for
    all
    $ \widetilde{ \mathbf{w} } _1 , \widetilde{ \mathbf{w} } _2 \in
    \widetilde{ \mathbf{W} } _h $.
    \label{assumption:theta_symplectic}

  \item If
    $ \dot{ \widetilde{ \mathbf{z} } } _h \in \widetilde{ \mathbf{W} }
    _h $ is such that \eqref{eq:weak_w} holds with
    $ \dot{ \mathbf{z} } _h = \frac{ \partial \mathbf{z} _h }{
      \partial \widetilde{ \mathbf{z} } _h } \dot{ \widetilde{
        \mathbf{z} } } _h $ and
    $ \mathbf{w} _h = \frac{ \partial \mathbf{z} _h }{ \partial
      \widetilde{ \mathbf{z} } _h } \widetilde{ \mathbf{w} } _h $ for
    all
    $ \widetilde{ \mathbf{w} } _h \in \widetilde{ \mathbf{W} } _h $,
    then \eqref{eq:weak_w} holds for all
    $ \mathbf{w} _h \in \mathbf{W} _h $.
    \label{assumption:theta_wtilde}

  \item Equations \eqref{eq:weak_wnor}--\eqref{eq:weak_wtan} hold for
    all
    $ \widetilde{ \mathbf{z} } _h \in \widetilde{ \mathbf{W} } _h
    $. \label{assumption:theta_constraints}
  \end{assumptionenumerate}
\end{assumption}

\begin{theorem}
  \label{thm:discrete_hamiltonian}
  Suppose $ \mathbf{\Phi} $ is multisymplectic and
  \cref{assumption:theta} holds. Then
  $ ( \mathbf{z} _h , \widehat{ \mathbf{z} } _h ) = \mathbf{\Theta} (
  t, \widetilde{ \mathbf{z} } _h ) $ satisfies \eqref{eq:weak} if and
  only if
  $ \widetilde{ \mathbf{z} } _h \colon I \rightarrow \widetilde{
    \mathbf{W} } _h $ satisfies Hamilton's equations,
  \begin{equation*}
    ( \widetilde{ \mathbf{J} } \dot{ \widetilde{ \mathbf{z} } } _h, \widetilde{ \mathbf{w}} _h ) _{ \mathcal{T} _h } = \frac{ \partial \mathcal{H} _h }{ \partial \widetilde{ \mathbf{z} } _h } \widetilde{ \mathbf{w} } _h , \quad \forall \widetilde{ \mathbf{w} } _h \in \widetilde{ \mathbf{W} } _h ,
  \end{equation*}
  where the discrete Hamiltonian
  $ \mathcal{H} _h \colon I \times \widetilde{ \mathbf{W} } _h
  \rightarrow \mathbb{R} $ is given by
  \begin{equation*}
    \mathcal{H} _h ( t, \widetilde{ \mathbf{z} } _h ) \coloneqq \int _\Omega H ( t, x, \mathbf{z} _h ) \,\mathrm{vol} - \frac{1}{2} \Bigl( ( \mathbf{z} _h , \mathbf{D} \mathbf{z} _h ) _{ \mathcal{T} _h } + [ \widehat{ \mathbf{z} } _h , \mathbf{z} _h ] _{ \partial \mathcal{T} _h } \Bigr) .
  \end{equation*}
\end{theorem}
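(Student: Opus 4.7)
The plan is to leverage the three parts of \cref{assumption:theta} to reduce \eqref{eq:weak} to its dynamical equation \eqref{eq:weak_w} tested against a restricted class of test functions, and then to directly compute the functional derivative $\partial\mathcal{H}_h/\partial\widetilde{\mathbf{z}}_h$. By \cref{assumption:theta_constraints}, the constraints \eqref{eq:weak_wnor}--\eqref{eq:weak_wtan} are automatic for any $\widetilde{\mathbf{z}}_h\in\widetilde{\mathbf{W}}_h$, and by \cref{assumption:theta_wtilde}, \eqref{eq:weak_w} tested against all of $\mathbf{W}_h$ is equivalent to its restriction to the pushforwards $\mathbf{w}_h = (\partial\mathbf{z}_h/\partial\widetilde{\mathbf{z}}_h)\widetilde{\mathbf{w}}_h$. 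The $t$-independence in \cref{assumption:theta_symplectic} additionally gives $\dot{\mathbf{z}}_h = (\partial\mathbf{z}_h/\partial\widetilde{\mathbf{z}}_h)\dot{\widetilde{\mathbf{z}}}_h$ and hence $(\widetilde{\mathbf{J}}\dot{\widetilde{\mathbf{z}}}_h, \widetilde{\mathbf{w}}_h)_{\mathcal{T}_h} = (\mathbf{J}\dot{\mathbf{z}}_h, \mathbf{w}_h)_{\mathcal{T}_h}$. Thus the theorem reduces to showing that, for test functions of the above form,
\[
\frac{\partial\mathcal{H}_h}{\partial\widetilde{\mathbf{z}}_h}\widetilde{\mathbf{w}}_h = \biggl(\frac{\partial H}{\partial\mathbf{z}},\mathbf{w}_h\biggr)_{\mathcal{T}_h} - (\mathbf{z}_h,\mathbf{D}\mathbf{w}_h)_{\mathcal{T}_h} - [\widehat{\mathbf{z}}_h,\mathbf{w}_h]_{\partial\mathcal{T}_h}.
\]

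Next, I would compute $\partial\mathcal{H}_h/\partial\widetilde{\mathbf{z}}_h$ by the chain rule, writing $(\mathbf{w}_h,\widehat{\mathbf{w}}_h) = (\partial\mathbf{\Theta}/\partial\widetilde{\mathbf{z}}_h)\widetilde{\mathbf{w}}_h$. The $H$-term contributes $(\partial H/\partial\mathbf{z},\mathbf{w}_h)_{\mathcal{T}_h}$ directly. The symmetric bilinear term contributes $\tfrac{1}{2}[(\mathbf{w}_h,\mathbf{D}\mathbf{z}_h) + (\mathbf{z}_h,\mathbf{D}\mathbf{w}_h)]_{\mathcal{T}_h}$; using the $L^2$-symmetry $(\mathbf{w}_h,\mathbf{D}\mathbf{z}_h)=(\mathbf{D}\mathbf{z}_h,\mathbf{w}_h)$ together with \eqref{eq:bracket_D}, this collapses to $(\mathbf{z}_h,\mathbf{D}\mathbf{w}_h)_{\mathcal{T}_h} + \tfrac{1}{2}[\mathbf{z}_h,\mathbf{w}_h]_{\partial\mathcal{T}_h}$. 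The boundary bracket $\tfrac{1}{2}[\widehat{\mathbf{z}}_h,\mathbf{z}_h]_{\partial\mathcal{T}_h}$ contributes $\tfrac{1}{2}([\widehat{\mathbf{w}}_h,\mathbf{z}_h] + [\widehat{\mathbf{z}}_h,\mathbf{w}_h])_{\partial\mathcal{T}_h}$. After regrouping the boundary pieces using bilinearity and antisymmetry of $[\cdot,\cdot]_{\partial\mathcal{T}_h}$, the desired equivalence reduces to the single boundary identity
\[
[\widehat{\mathbf{z}}_h - \mathbf{z}_h,\,\mathbf{w}_h - \widehat{\mathbf{w}}_h]_{\partial\mathcal{T}_h} + [\widehat{\mathbf{z}}_h,\widehat{\mathbf{w}}_h]_{\partial\mathcal{T}_h} = 0.
\]

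The main obstacle is verifying this identity, but the two terms vanish for separate reasons. Since $\mathbf{\Phi}$ is linear, both $(\mathbf{z}_h,\widehat{\mathbf{z}}_h)$ (by \eqref{eq:weak_wnor}) and the variation $(\mathbf{w}_h,\widehat{\mathbf{w}}_h)$ (obtained by differentiating the automatic identity \eqref{eq:weak_wnor} in $\widetilde{\mathbf{z}}_h$) satisfy the linear constraint \eqref{eq:weakvar_wnor}. Hence multisymplecticity of $\mathbf{\Phi}$ in the sense of \cref{def:ms_flux} gives $[\widehat{\mathbf{w}}_h - \mathbf{w}_h,\widehat{\mathbf{z}}_h - \mathbf{z}_h]_{\partial K} = 0$ on every $K$, killing the first term. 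For the second term, unfolding $[\widehat{\mathbf{z}}_h,\widehat{\mathbf{w}}_h]_{\partial\mathcal{T}_h} = \langle\widehat{\mathbf{z}}_h^{\mathrm{tan}},\widehat{\mathbf{w}}_h^{\mathrm{nor}}\rangle_{\partial\mathcal{T}_h} - \langle\widehat{\mathbf{w}}_h^{\mathrm{tan}},\widehat{\mathbf{z}}_h^{\mathrm{nor}}\rangle_{\partial\mathcal{T}_h}$, the second pairing vanishes by \eqref{eq:weak_wtan} tested against $\widehat{\mathbf{w}}_h^{\mathrm{tan}}\in\ringhat{\mathbf{V}}_h^{\mathrm{tan}}$, while the first vanishes by the variational analog of \eqref{eq:weak_wtan} tested against $\widehat{\mathbf{z}}_h^{\mathrm{tan}}\in\ringhat{\mathbf{V}}_h^{\mathrm{tan}}$. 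Reading the argument in both directions establishes the claimed equivalence.
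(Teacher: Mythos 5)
Your proposal is correct and follows essentially the same route as the paper's proof: reduce via \cref{assumption:theta} to comparing $\partial\mathcal{H}_h/\partial\widetilde{\mathbf{z}}_h$ with the right-hand side of \eqref{eq:weak_w}, regroup the boundary terms into $[\widehat{\mathbf{z}}_h,\widehat{\mathbf{w}}_h]_{\partial\mathcal{T}_h}-[\widehat{\mathbf{z}}_h-\mathbf{z}_h,\widehat{\mathbf{w}}_h-\mathbf{w}_h]_{\partial\mathcal{T}_h}$, and kill the two pieces by multisymplecticity of $\mathbf{\Phi}$ (applied to the differentiated, linear constraint \eqref{eq:weak_wnor}) and by \eqref{eq:weak_wtan}, respectively. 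The intermediate algebra differs only in bookkeeping, and your identity $[\widehat{\mathbf{z}}_h-\mathbf{z}_h,\mathbf{w}_h-\widehat{\mathbf{w}}_h]_{\partial\mathcal{T}_h}+[\widehat{\mathbf{z}}_h,\widehat{\mathbf{w}}_h]_{\partial\mathcal{T}_h}=0$ is exactly the paper's vanishing claim up to antisymmetry.
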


\begin{proof}
  Let $ \widetilde{ \mathbf{w} } _h \in \widetilde{ \mathbf{W} } _h $
  be arbitrary. First, by the chain rule and
  \cref{assumption:theta_symplectic}, we have
  \begin{equation*}
    ( \widetilde{ \mathbf{J} } \dot{ \widetilde{ \mathbf{z} } } _h , \widetilde{ \mathbf{w} } _h ) _{ \mathcal{T} _h } = ( \mathbf{J} \dot{ \mathbf{z} } _h , \mathbf{w} _h ) _{ \mathcal{T} _h } .
  \end{equation*}
  Next, letting
  $ ( \mathbf{w} _h , \widehat{ \mathbf{w} } _h ) \coloneqq
  \frac{\partial \mathbf{\Theta} }{ \partial \widetilde{ \mathbf{z} }
    _h } \widetilde{ \mathbf{w} } _h $, we calculate
  \begin{align*}
    \frac{ \partial \mathcal{H} _h }{ \partial \widetilde{ \mathbf{z} } _h } \widetilde{ \mathbf{w} } _h &= \biggl( \frac{ \partial H }{ \partial \mathbf{z} _h } , \mathbf{w} _h \biggr) _{ \mathcal{T} _h } - \frac{1}{2} \Bigl( ( \mathbf{D} \mathbf{z} _h , \mathbf{w} _h ) _{ \mathcal{T} _h } + ( \mathbf{z} _h , \mathbf{D} \mathbf{w} _h ) _{ \mathcal{T} _h } + [ \widehat{ \mathbf{z} } _h , \mathbf{w} _h ] _{ \partial \mathcal{T} _h } - [ \mathbf{z} _h , \widehat{ \mathbf{w} } _h ] _{ \partial \mathcal{T} _h } \Bigr) \\
    &= \biggl( \frac{ \partial H }{ \partial \mathbf{z} _h } , \mathbf{w} _h \biggr) _{ \mathcal{T} _h } - ( \mathbf{z} _h , \mathbf{D} \mathbf{w} _h ) _{ \mathcal{T} _h } - [ \widehat{ \mathbf{z} } _h , \mathbf{w} _h ] _{ \partial \mathcal{T} _h } + \frac{1}{2} \Bigl( [ \widehat{ \mathbf{z} } _h , \mathbf{w} _h ] _{ \partial \mathcal{T} _h } + [ \mathbf{z} _h , \widehat{ \mathbf{w} } _h ] _{ \partial \mathcal{T} _h } - [ \mathbf{z} _h , \mathbf{w} _h ] _{ \partial \mathcal{T} _h } \Bigr) \\
    &= \biggl( \frac{ \partial H }{ \partial \mathbf{z} _h } , \mathbf{w} _h \biggr) _{ \mathcal{T} _h } - ( \mathbf{z} _h , \mathbf{D} \mathbf{w} _h ) _{ \mathcal{T} _h } - [ \widehat{ \mathbf{z} } _h , \mathbf{w} _h ] _{ \partial \mathcal{T} _h } + \frac{1}{2} \Bigl( [ \widehat{ \mathbf{z} } _h , \widehat{ \mathbf{w} } _h ] _{ \partial \mathcal{T} _h } - [ \widehat{ \mathbf{z} } _h - \mathbf{z} _h , \widehat{ \mathbf{w} } _h - \mathbf{w} _h ] _{ \partial \mathcal{T} _h } \Bigr) ,
  \end{align*}
  where the second line uses the integration-by-parts-identity
  \eqref{eq:bracket_D}. It suffices to show that the last group of
  terms on the right-hand side vanishes, since then equality of the
  right-hand sides is equivalent to \eqref{eq:weak_w} by
  \cref{assumption:theta_wtilde}, and
  \eqref{eq:weak_wnor}--\eqref{eq:weak_wtan} hold by
  \cref{assumption:theta_constraints}.

  Differentiating \cref{assumption:theta_constraints} implies that
  $ ( \mathbf{w} _h , \widehat{ \mathbf{w} } _h ) $ also satisfies
  \eqref{eq:weak_wnor}--\eqref{eq:weak_wtan}, since these equations
  are linear. Thus, multisymplecticity of $ \mathbf{\Phi} $ implies
  that
  $ [ \widehat{ \mathbf{z} } _h - \mathbf{z} _h , \widehat{ \mathbf{w}
  } _h - \mathbf{w} _h ] _{ \partial \mathcal{T} _h } = 0 $, since
  $ ( \mathbf{z} _h , \widehat{ \mathbf{z} } _h ) $ and
  $ ( \mathbf{w} _h , \widehat{ \mathbf{w} } _h ) $ both satisfy
  \eqref{eq:weak_wnor}, which is identical to \eqref{eq:weakvar_wnor}.
  All that remains is
  \begin{equation*}
    [ \widehat{ \mathbf{z} } _h , \widehat{ \mathbf{w} } _h ] _{ \partial \mathcal{T} _h } = \langle \widehat{ \mathbf{z} } _h ^{\mathrm{tan}} , \widehat{ \mathbf{w} } _h ^{\mathrm{nor}} \rangle _{ \partial \mathcal{T} _h } - \langle \widehat{ \mathbf{w} } _h ^{\mathrm{tan}}, \widehat{ \mathbf{z} } _h ^{\mathrm{nor}} \rangle _{ \partial \mathcal{T} _h } ,
  \end{equation*}
  and both terms vanish by \eqref{eq:weak_wtan} since
  $ \widehat{ \mathbf{z} } _h ^{\mathrm{tan}} , \widehat{ \mathbf{w} }
  _h ^{\mathrm{tan}} \in \ringhat{\mathbf{V} } _h ^{\mathrm{tan}}
  $. Therefore, we have shown that
  \begin{equation*}
    \frac{1}{2} \Bigl( [ \widehat{ \mathbf{z} } _h , \widehat{ \mathbf{w} } _h ] _{ \partial \mathcal{T} _h } - [ \widehat{ \mathbf{z} } _h - \mathbf{z} _h , \widehat{ \mathbf{w} } _h - \mathbf{w} _h ] _{ \partial \mathcal{T} _h } \Bigr) = 0 ,
  \end{equation*}
  as claimed, which completes the proof.
\end{proof}

\begin{remark}
  \label{rmk:hamiltonian_bc}
  This proof can be adapted to other boundary conditions satisfying
  $ [ \widehat{ \mathbf{z} } _h , \widehat{ \mathbf{w} } _h ] _{
    \partial \mathcal{T} _h } = 0 $. This includes homogeneous Neumann
  boundary conditions, which are imposed naturally by requiring that
  \eqref{eq:weak_wtan} hold for test functions in
  $ \widehat{ \mathbf{V} } _h ^{\mathrm{tan}} $, not merely
  $ \ringhat{ \mathbf{V} } _h ^{\mathrm{tan}} $; in that case, we
  would take
  $ \mathbf{\Theta} \colon I \times \widetilde{ \mathbf{W} } _h
  \rightarrow \mathbf{W} _h \times \widehat{ \mathbf{W} } _h
  ^{\mathrm{nor}} \times \widehat{ \mathbf{V} } _h ^{\mathrm{tan}} $.
\end{remark}

As in \cref{sec:global_hamiltonian_smooth}, this Hamiltonian structure
immediately implies a symplectic conservation law. Indeed, under the
hypotheses of \cref{thm:discrete_hamiltonian}, variations
$ \widetilde{ \mathbf{w} } _i \colon I \rightarrow \widetilde{
  \mathbf{W} } _h $ with
$ ( \mathbf{w} _i , \widehat{ \mathbf{w} } _i ) = \frac{ \partial \mathbf{\Theta} }{
  \partial \widetilde{ \mathbf{z} } _h } \widetilde{ \mathbf{w}}  _i $ satisfy
\begin{equation*}
  ( \widetilde{ \mathbf{J} } \dot{\widetilde{ \mathbf{w} }} _i , \widetilde{ \mathbf{w} } _h ) _{ \mathcal{T} _h } = \biggl( \frac{ \partial ^2 \mathcal{H} _h }{ \partial \widetilde{ \mathbf{z} } _h ^2 } \widetilde{ \mathbf{w}}  _i , \widetilde{ \mathbf{w} } _h \biggr) _{ \mathcal{T} _h } , \quad \forall \widetilde{ \mathbf{w} } _h \in \widetilde{ \mathbf{W} } _h .
\end{equation*}
Hence, the antisymmetry of $ \widetilde{ \mathbf{J} } $ and symmetry
of the Hessian yield the symplectic conservation law,
\begin{equation*}
  \frac{\mathrm{d}}{\mathrm{d}t} ( \mathbf{J} \mathbf{w} _1, \mathbf{w} _2 ) _{ \mathcal{T} _h } = \frac{\mathrm{d}}{\mathrm{d}t} ( \widetilde{ \mathbf{J} } \widetilde{ \mathbf{w} } _1, \widetilde{ \mathbf{w} } _2 ) _{ \mathcal{T} _h } = 0 ,
\end{equation*}
for all such pairs of variations, where the first equality is by
\cref{assumption:theta_symplectic}.  However, a stronger conclusion
follows directly from the multisymplectic conservation law,
\emph{without} these extra hypotheses. Summing \eqref{eq:mscl_weak}
over $ K \in \mathcal{T} _h $ gives
\begin{equation*}
  \frac{\mathrm{d}}{\mathrm{d}t} ( \mathbf{J} \mathbf{w} _1, \mathbf{w} _2 ) _{ \mathcal{T} _h } + [ \widehat{ \mathbf{w} } _1 , \widehat{ \mathbf{w} } _2 ] _{ \partial \mathcal{T} _h } = 0 ,
\end{equation*}
for \emph{arbitrary} pairs of variations. In particular, if
$ \widehat{ \mathbf{w} } _1 ^{\mathrm{tan}} , \widehat{ \mathbf{w} }
_2 ^{\mathrm{tan}} \in \ringhat{\mathbf{V}} _h ^{\mathrm{tan}} $, then
\eqref{eq:weakvar_wtan} implies that
$ [ \widehat{ \mathbf{w} } _1 , \widehat{ \mathbf{w} } _2 ] _{
  \partial \mathcal{T} _h } = 0 $, which recovers the symplectic
conservation law. Along similar lines as \cref{rmk:hamiltonian_bc},
this argument extends to other boundary conditions such that
$ [ \widehat{ \mathbf{w} } _1 , \widehat{ \mathbf{w} } _2 ] _{
  \partial \mathcal{T} _h } = 0 $.

\begin{example}
  \label{ex:afw_discrete_hamiltonian}
  For the AFW-H method introduced in \cref{ex:AFW}, with homogeneous
  Dirichlet boundary conditions, we take
  $ \widetilde{ \mathbf{W} } _h \coloneqq \mathring{ \mathbf{V} } _h $
  and define the map $ \mathbf{\Theta} $ as follows. First, we take
  $ \mathbf{z} _h \coloneqq \widetilde{ \mathbf{z} } _h $ and
  $ \widehat{ \mathbf{z} } _h ^{\mathrm{tan}} \coloneqq \mathbf{z} _h
  ^{\mathrm{tan}} \in \ringhat{ \mathbf{V} } _h ^{\mathrm{tan}}
  $. Then, we find
  $ \dot{\mathbf{z}} _h \in \mathring{ \mathbf{V} } _h $ satisfying
  \eqref{eq:afw} and solve for
  $ \widehat{ \mathbf{z} } _h ^{\mathrm{nor}} \in \widehat{ \mathbf{W}
  } _h ^{\mathrm{nor}} $ satisfying \eqref{eq:afw-h_w}. (This
  procedure for recovering the traces from $ \mathbf{z} _h $ is a
  minor modification of the converse direction in the proof of
  \citep[Theorem~4.1]{StZa2025}.) This satisfies
  \cref{assumption:theta} by construction:
  \cref{assumption:theta_symplectic} holds since
  $ \mathring{ \mathbf{V} } _h \subset \mathbf{W} _h $ is a symplectic
  subspace (i.e., $ \mathbf{J} $ is nondegenerate on
  $ \mathring{ \mathbf{V} } _h $), and
  \crefrange{assumption:theta_wtilde}{assumption:theta_constraints} hold by
  the fact that the AFW-H method \eqref{eq:afw-h} is a hybridization
  of the AFW method \eqref{eq:afw}. Now, subtracting
  \eqref{eq:afw-h_w} and \eqref{eq:afw} with
  $ \mathbf{w} _h = \mathbf{z} _h \in \mathring{ \mathbf{V} } _h $
  gives
  \begin{equation*}
    \frac{1}{2} \Bigl( ( \mathbf{z} _h , \mathbf{D} \mathbf{z} _h ) _{ \mathcal{T} _h }  + [ \widehat{ \mathbf{z} } _h , \mathbf{z} _h ] _{ \partial \mathcal{T} _h } \Bigr) = ( \mathbf{d} \mathbf{z} _h , \mathbf{z} _h ) _\Omega ,
  \end{equation*}
  so applying \cref{thm:discrete_hamiltonian}, we conclude that the
  discrete Hamiltonian for AFW(-H) is
  \begin{equation*}
    \mathcal{H} _h ( t, \mathbf{z} _h ) = \int _\Omega H ( t, x, \mathbf{z} _h ) \,\mathrm{vol} - ( \mathbf{d} \mathbf{z} _h , \mathbf{z} _h ) _\Omega .
  \end{equation*}
\end{example}

\begin{example}
  \label{ex:ldg-h_discrete_hamiltonian}
  For the LDG-H method introduced in \cref{ex:LDG}, again with
  homogeneous Dirichlet boundary conditions, we take
  $ \widetilde{ \mathbf{W} } _h \coloneqq \mathbf{W} _h $. Assuming
  that the symmetric bilinear form
  $ \langle \boldsymbol{\alpha} \cdot , \cdot \rangle _{ \partial
    \mathcal{T} _h } $ is nondegenerate on
  $ \ringhat{ \mathbf{V} } _h ^{\mathrm{tan}} $, we define
  $ \mathbf{\Theta} $ by taking
  $ \mathbf{z} _h \coloneqq \widetilde{ \mathbf{z} } _h $, solving
  \eqref{eq:ldg-h_wtan} for
  $ \widehat{ \mathbf{z} } _h ^{\mathrm{tan}} \in \ringhat{ \mathbf{V}
  } _h ^{\mathrm{tan}} $, and letting
  $ \widehat{ \mathbf{z} } _h ^{\mathrm{nor}} \coloneqq \mathbf{z} _h
  ^{\mathrm{nor}} - \boldsymbol{\alpha} ( \widehat{ \mathbf{z} } _h
  ^{\mathrm{tan}} - \mathbf{z} _h ^{\mathrm{tan}} ) $. This clearly
  satisfies \cref{assumption:theta}. It follows that
  \begin{align*}
    \frac{1}{2} \Bigl( ( \mathbf{z} _h , \mathbf{D} \mathbf{z} _h ) _{ \mathcal{T} _h } + [ \widehat{ \mathbf{z} } _h , \mathbf{z} _h ] _{ \partial \mathcal{T} _h }
    \Bigr)
    &= ( \mathbf{z} _h , \boldsymbol{\delta} \mathbf{z} _h ) _{ \mathcal{T} _h } + \frac{1}{2} \langle \widehat{ \mathbf{z} } _h ^{\mathrm{tan}} , \mathbf{z} _h ^{\mathrm{nor}} \rangle _{ \partial \mathcal{T} _h }  - \frac{1}{2} \langle \widehat{ \mathbf{z} } _h ^{\mathrm{nor}} - \mathbf{z} _h ^{\mathrm{nor}} , \mathbf{z} _h ^{\mathrm{tan}} \rangle _{ \partial \mathcal{T} _h } \\
    &= ( \mathbf{z} _h , \boldsymbol{\delta} \mathbf{z} _h ) _{ \mathcal{T} _h } + \frac{1}{2} \bigl\langle \widehat{ \mathbf{z} } _h ^{\mathrm{tan}} , \boldsymbol{\alpha} ( \widehat{ \mathbf{z} } _h ^{\mathrm{tan}} - \mathbf{z} _h ^{\mathrm{tan}} ) \bigr\rangle _{ \partial \mathcal{T} _h } + \frac{1}{2} \bigl\langle \boldsymbol{\alpha} ( \widehat{ \mathbf{z} } _h ^{\mathrm{tan}} - \mathbf{z} _h ^{\mathrm{tan}} ) , \mathbf{z} _h ^{\mathrm{tan}} \bigr\rangle _{ \partial \mathcal{T} _h } \\
    &= ( \mathbf{z} _h , \boldsymbol{\delta} \mathbf{z} _h ) _{ \mathcal{T} _h } + \frac{1}{2} \langle \boldsymbol{\alpha} \widehat{ \mathbf{z} } _h ^{\mathrm{tan}} , \widehat{ \mathbf{z} } _h ^{\mathrm{tan}} \rangle _{ \partial \mathcal{T} _h } - \frac{1}{2} \langle \boldsymbol{\alpha} \mathbf{z} _h ^{\mathrm{tan}} , \mathbf{z} _h ^{\mathrm{tan}} \rangle _{ \partial \mathcal{T} _h } .
  \end{align*}
  Therefore, applying \cref{thm:discrete_hamiltonian}, we conclude
  that the discrete Hamiltonian for LDG-H is
  \begin{equation*}
    \mathcal{H} _h ( t , \mathbf{z} _h ) = \int _\Omega H ( t, x, \mathbf{z} _h ) \,\mathrm{vol} - ( \mathbf{z} _h , \boldsymbol{\delta} \mathbf{z} _h ) _{ \mathcal{T} _h } - \frac{1}{2} \Bigl( \langle \boldsymbol{\alpha} \widehat{ \mathbf{z} } _h ^{\mathrm{tan}} , \widehat{ \mathbf{z} } _h ^{\mathrm{tan}} \rangle _{ \partial \mathcal{T} _h } - \langle \boldsymbol{\alpha} \mathbf{z} _h ^{\mathrm{tan}} , \mathbf{z} _h ^{\mathrm{tan}} \rangle _{ \partial \mathcal{T} _h } \Bigr) .
  \end{equation*}
\end{example}

\subsection{Global structure of methods for the semilinear Hodge wave
  equation}
\label{sec:hodge_wave_hamiltonian}

We now consider the global Hamiltonian structure of the
methods in \cref{sec:hodge_wave_methods}. Recall that the Hamiltonian
for the the semilinear Hodge wave equation is
$ H (t, x, \mathbf{z} ) = - \frac{1}{2} \lvert \sigma \rvert ^2 +
\bigl( \frac{1}{2} \lvert p \rvert ^2 + F ( t, x, u ) \bigr) -
\frac{1}{2} \lvert \rho \rvert ^2 $. Following
\cref{sec:discrete_hamiltonian}, we assume sufficient regularity to
take $ f ( t, u _h ) = \partial F / \partial u _h $.

As above, we impose homogeneous Dirichlet boundary conditions
$ \widehat{ \mathbf{z} } _h ^{\mathrm{tan}} \in \ringhat{ \mathbf{V} }
_h ^{\mathrm{tan}} $, but the arguments may be adapted to homogeneous
Neumann or other boundary conditions as described in
\cref{rmk:hamiltonian_bc}.

\subsubsection{The AFW-H method} Take
$ \widetilde{ \mathbf{W} } _h \coloneqq \mathring{ \mathbf{V} } _h ^k
$, and define $ \mathbf{\Theta} $ as follows:
\begin{itemize}
\item Take $ ( u _h , p _h ) \coloneqq \widetilde{ \mathbf{z} } _h $.

\item Solve for $ \sigma _h \in \mathring{ V } _h ^{ k -1 } $
  satisfying \eqref{eq:afw_wave_tau} and
  $ \rho _h \in \mathring{ V } _h ^{ k + 1 } $ satisfying
  \eqref{eq:afw_wave_eta}.

\item Take the tangential traces
  $ \widehat{ \sigma } _h ^{\mathrm{tan}} \coloneqq \sigma _h
  ^{\mathrm{tan}} $,
  $ \widehat{ u } _h ^{\mathrm{tan}} \coloneqq u _h ^{\mathrm{tan}} $,
  and
  $ \widehat{ p } _h ^{\mathrm{tan}} \coloneqq p _h ^{\mathrm{tan}} $.

\item Solve for $ \dot{ \sigma } _h \in \mathring{ V } _h ^{ k -1 } $
  satisfying \eqref{eq:afw2_wave_tau} and
  $ \dot{ p } _h \in \mathring{ V } _h ^k $ satisfying
  \eqref{eq:afw2_wave_v}.

\item Solve for the normal traces
  $ \widehat{ u } _h ^{\mathrm{nor}} \in \widehat{ W } _h ^{ k -1 ,
    \mathrm{nor} } $ satisfying \eqref{eq:afw-h_wave_tau},
  $ \widehat{ p } _h ^{\mathrm{nor}} \in \widehat{ W } _h ^{ k -1 ,
    \mathrm{nor} } $ satisfying \eqref{eq:afw2-h_wave_tau}, and
  $ \widehat{ \rho } _h ^{\mathrm{nor}} \in \widehat{ W } _h ^{k,
    \mathrm{nor}} $ satisfying \eqref{eq:afw2-h_wave_v}.

\item Take $ \widehat{ \sigma } _h ^{\mathrm{nor}} $ and
  $ \widehat{ \rho } _h ^{\mathrm{tan}} $ as in
  \cref{prop:afw-h_sigmanor_rhotan} (but these need not be computed,
  per \cref{rmk:sigmanor_rhotan}).
\end{itemize}
This satisfies the hypotheses of \cref{thm:discrete_hamiltonian}, and
hence we obtain the following corollary.

\begin{corollary}
  \label{cor:afw-h_discrete_hamiltonian}
  For the semilinear Hodge wave equation, the AFW(-H) method is
  equivalent to Hamilton's equations for
  $ ( u _h , p _h ) \in \mathring{ \mathbf{V} } _h ^k $, where the
  discrete Hamiltonian is
  \begin{equation*}
    \mathcal{H} _h ( t, u _h , p _h ) = \frac{1}{2} \Bigl( \lVert \sigma _h \rVert _\Omega ^2 + \lVert p _h \rVert _\Omega ^2 + \lVert \rho _h \rVert _\Omega ^2 \Bigr) + \int _\Omega F ( t, x, u _h ) \,\mathrm{vol}.
  \end{equation*}
\end{corollary}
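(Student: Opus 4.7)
The plan is to invoke \cref{thm:discrete_hamiltonian} with $ \widetilde{\mathbf{W}} _h = \mathring{\mathbf{V}} _h ^k $ and the map $\mathbf{\Theta}$ constructed in the bullet list above, then simplify the resulting formula for $ \mathcal{H} _h $ using the Hamiltonian density of the semilinear Hodge wave equation together with the constraint equations \eqref{eq:afw_wave_tau}--\eqref{eq:afw_wave_eta}.

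First I would verify that $\mathbf{\Theta}$ satisfies \cref{assumption:theta}. Well-definedness of the solves for $ \sigma _h $, $ \rho _h $, their time derivatives, and the various traces is routine in the FEEC setting, with the trace-recovery step mirroring the converse direction of \citep[Theorem~4.1]{StZa2025}. For \cref{assumption:theta_symplectic}, the operator $ \mathbf{J} $ pairs the $p$-variation $ r _i $ (of degree $k$) only with the degree-$k$ component of the $q$-variation; by construction, $ \partial \mathbf{\Theta} / \partial \widetilde{\mathbf{z}} _h $ sends $ ( v _i, r _i ) \in \mathring{\mathbf{V}} _h ^k $ to a $ \mathbf{w} _i $ whose $ ( u, p ) $-components are exactly $ ( v _i, r _i ) $, so $ ( \mathbf{J} \mathbf{w} _1, \mathbf{w} _2 ) _{ \mathcal{T} _h } = ( v _1, r _2 ) _{ \mathcal{T} _h } - ( r _1, v _2 ) _{ \mathcal{T} _h } = ( \widetilde{\mathbf{J}} \widetilde{\mathbf{w}} _1, \widetilde{\mathbf{w}} _2 ) _{ \mathcal{T} _h } $. \Cref{assumption:theta_wtilde,assumption:theta_constraints} follow from the hybridization identification of AFW-H with the conforming AFW method (cf.~\cref{ex:AFW}).

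Applying \cref{thm:discrete_hamiltonian} then gives
\[
\mathcal{H} _h = \int _\Omega H ( t, x, \mathbf{z} _h ) \,\mathrm{vol} - \tfrac{1}{2} \bigl( ( \mathbf{z} _h, \mathbf{D} \mathbf{z} _h ) _{ \mathcal{T} _h } + [ \widehat{\mathbf{z}} _h, \mathbf{z} _h ] _{ \partial \mathcal{T} _h } \bigr),
\]
and by the computation in \cref{ex:afw_discrete_hamiltonian} the correction equals $ - ( \mathbf{d} \mathbf{z} _h, \mathbf{z} _h ) _\Omega $ since $ \mathbf{z} _h \in \mathring{\mathbf{V}} _h $. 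Expanding by form degree and using that $ ( \mathrm{d} p _h, p _h ) _\Omega = 0 $ (since $ p _h $ has degree $k$ and $ \mathrm{d} p _h $ has degree $ k + 1 $), this reduces to $ ( \mathrm{d} \sigma _h, u _h ) _\Omega + ( \mathrm{d} u _h, \rho _h ) _\Omega $. Testing \eqref{eq:afw_wave_tau} with $ \tau _h = \sigma _h $ and \eqref{eq:afw_wave_eta} with $ \eta _h = \rho _h $ gives $ ( u _h, \mathrm{d} \sigma _h ) _\Omega = - \lVert \sigma _h \rVert _\Omega ^2 $ and $ ( \mathrm{d} u _h, \rho _h ) _\Omega = - \lVert \rho _h \rVert _\Omega ^2 $, hence $ ( \mathbf{d} \mathbf{z} _h, \mathbf{z} _h ) _\Omega = - \lVert \sigma _h \rVert _\Omega ^2 - \lVert \rho _h \rVert _\Omega ^2 $. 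Substituting $ H = - \tfrac{1}{2} \lvert \sigma \rvert ^2 + \tfrac{1}{2} \lvert p \rvert ^2 + F - \tfrac{1}{2} \lvert \rho \rvert ^2 $ and collecting terms produces the stated formula.

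The main obstacle is verifying \cref{assumption:theta_wtilde} in detail: the image of $ \widetilde{\mathbf{W}} _h = \mathring{\mathbf{V}} _h ^k $ under $ \partial \mathbf{\Theta} / \partial \widetilde{\mathbf{z}} _h $ is a specific subspace of $ \mathbf{W} _h $ cutting across the three relevant form degrees $ k - 1, k, k + 1 $, so one must argue that testing \eqref{eq:afw-h_w} against this subspace suffices to recover it for all $ \mathbf{w} _h \in \mathbf{W} _h $. This is the hybridization argument from the proof of \citep[Theorem~4.1]{StZa2025}, adapted to accommodate the additional $p$-component of $ \widetilde{\mathbf{z}} _h $; once this bookkeeping is done, the calculation above is direct.
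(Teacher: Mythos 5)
Your proposal is correct and follows essentially the same route as the paper: apply \cref{thm:discrete_hamiltonian} via the map $\mathbf{\Theta}$ constructed in the preceding bullet list, reduce the correction term to $-(\mathbf{d}\mathbf{z}_h,\mathbf{z}_h)_\Omega$ as in \cref{ex:afw_discrete_hamiltonian}, and evaluate it using \eqref{eq:afw_wave_tau} with $\tau_h=\sigma_h$ and \eqref{eq:afw_wave_eta} with $\eta_h=\rho_h$. The only difference is cosmetic: you spell out the degree-counting that kills $(\mathrm{d}p_h,p_h)_\Omega$ and the verification of \cref{assumption:theta}, which the paper handles in the surrounding text rather than in the proof itself.
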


\begin{proof}
  This follows directly from \cref{thm:discrete_hamiltonian}, where we calculate
  \begin{equation*}
    \int _\Omega H ( t, x, \mathbf{z} _h )\,\mathrm{vol} = - \frac{1}{2} \lVert \sigma _h \rVert _\Omega ^2 + \frac{1}{2} \lVert p _h \rVert _\Omega ^2 - \frac{1}{2} \lVert \rho _h \rVert _\Omega ^2 + \int _\Omega F ( t, x, u _h ) \,\mathrm{vol}
  \end{equation*}
  and subtract
  \begin{equation*}
    \frac{1}{2} \Bigl( ( \mathbf{z} _h , \mathbf{D} \mathbf{z} _h ) _{ \mathcal{T} _h }  + [ \widehat{ \mathbf{z} } _h , \mathbf{z} _h ] _{ \partial \mathcal{T} _h } \Bigr) = ( \mathbf{d} \mathbf{z} _h , \mathbf{z} _h ) _\Omega = ( \mathrm{d} \sigma _h , u _h ) _\Omega + ( \mathrm{d} u _h , \rho _h ) _\Omega = - \lVert \sigma _h \rVert _\Omega ^2 - \lVert \rho _h \rVert _\Omega ^2
  \end{equation*}
  to obtain $ \mathcal{H} _h $.  The last equality above holds by
  \eqref{eq:afw_wave_tau} with $ \tau _h = \sigma _h $ and
  \eqref{eq:afw_wave_eta} with $ \eta _h = \rho _h $.
\end{proof}

This substantially generalizes previous work on the global Hamiltonian
structure of conforming finite element methods, including results of
\citet[Theorem 4.2]{SaDuCo22} for Maxwell's equations and
\citet[Theorem 4.1]{SaVa24} for the semilinear wave equation.

\begin{remark}
  The map $ \mathbf{\Theta} $ parametrizes the discrete state space by
  $ ( u _h , p _h ) \in \mathring{ \mathbf{V} } _h ^k $, just as the
  invariant subspace $\mathbf{S}$ is parametrized by
  $ ( u , p ) \in \mathbf{\Lambda} ^k (\Omega) $ in the smooth
  case. The discrete Hamiltonian $ \mathcal{H} _h $ may thus be seen
  as a discrete version of the global Hamiltonian
  $ \widetilde{ \mathcal{H} } _{ \mathbf{S} } $ from
  \cref{ex:global_hamiltonian_wave}.
\end{remark}

\subsubsection{The multisymplectic LDG-H method}
As in \cref{sec:ldg-h_wave}, assume that $ \alpha ^{ k -1 } $ is
negative-definite and $ \alpha ^k $ is positive-definite. We then take
$ \widetilde{ \mathbf{W} } _h \coloneqq \mathbf{W} _h ^k $ and define
$ \mathbf{\Theta} $ as follows:
\begin{itemize}
\item Take $ ( u _h , p _h ) \coloneqq \widetilde{ \mathbf{z} } _h $.

\item Solve for $ \sigma _h \in W _h ^{ k -1 } $,
  $ \rho _h \in W _h ^{ k + 1 } $,
  $ \widehat{ \sigma } _h ^{\mathrm{tan}}\in \ringhat{V} _h ^{k-1,
    \mathrm{tan}} $, and
  $ \widehat{ u } _h ^{\mathrm{tan}} \in \ringhat{V} _h ^{k,
    \mathrm{tan}} $ satisfying
  \eqref{eq:ldg-h_wave_tau}--\eqref{eq:ldg-h_wave_vtan}. The solution
  exists uniquely by \cref{thm:ldg-h_wave_definite}.

\item Solve for $ \dot{ \sigma } _h \in W _h ^{ k -1 } $,
  $ \dot{ \rho } _h \in W _h ^{ k + 1 } $,
  $ \dot{ \widehat{ \sigma }} _h ^{\mathrm{tan}}\in \ringhat{V} _h ^{k-1,
    \mathrm{tan}} $, and
  $ \widehat{ p } _h ^{\mathrm{tan}} \in \ringhat{V} _h ^{k,
    \mathrm{tan}} $ satisfying
  \begin{alignat*}{2}
    ( \delta p _h , \tau _h ) _{ \mathcal{T} _h } + \bigl\langle \alpha ^{ k -1 } ( \dot{ \widehat{ \sigma } } _h ^{\mathrm{tan}} - \dot{ \sigma } _h ^{\mathrm{tan}} ) , \tau _h ^{\mathrm{tan}} \bigr\rangle _{ \partial \mathcal{T} _h } &= - ( \dot{ \sigma } _h , \tau _h ) _{ \mathcal{T} _h } , \quad &\forall \tau _h &\in W _h ^{ k -1 } , \\
    ( p _h , \delta \eta _h ) _{ \mathcal{T} _h } + \langle \widehat{ p } _h ^{\mathrm{tan}} , \eta _h ^{\mathrm{nor}} \rangle _{ \partial \mathcal{T} _h } &= - ( \dot{ \rho } _h , \eta _h )  _{ \mathcal{T} _h } , \quad &\forall \eta _h &\in W _h ^{ k + 1 } , \\
    \bigl\langle p _h ^{\mathrm{nor}} - \alpha ^{ k -1 } ( \dot{ \widehat{ \sigma }} _h ^{\mathrm{tan}} - \dot{ \sigma } _h ^{\mathrm{tan}} ) , \widehat{ \tau } _h ^{\mathrm{tan}} \bigr\rangle _{ \partial \mathcal{T} _h } &= 0 , \quad &\forall \widehat{ \tau } _h ^{\mathrm{tan}} &\in \ringhat{V} _h ^{k-1, \mathrm{tan}}, \\
    \bigl\langle \dot{ \rho } _h ^{\mathrm{nor}} - \alpha ^k ( \widehat{ p } _h ^{\mathrm{tan}} - p _h ^{\mathrm{tan}} ) , \widehat{ v } _h ^{\mathrm{tan}} \bigr\rangle _{ \partial \mathcal{T} _h } &= 0 , \quad &\forall \widehat{ v } _h ^{\mathrm{tan}} &\in \ringhat{V} _h ^{k, \mathrm{tan}} ,
  \end{alignat*}
  which is obtained by differentiating
  \eqref{eq:ldg-h_wave_tau}--\eqref{eq:ldg-h_wave_vtan} with respect
  to time and substituting $ \dot{ u } _h = p _h $ and
  $ \dot{ \widehat{ u } } ^{\mathrm{tan}} _h = \widehat{ p }
  ^{\mathrm{tan}} _h $. The solution exists uniquely by the same
  argument as \cref{thm:ldg-h_wave_definite}.

\item Take the normal traces
  \begin{align*}
    \widehat{ u } _h ^{\mathrm{nor}}
    &= u _h ^{\mathrm{nor}} - \alpha
      ^{ k -1 } ( \widehat{ \sigma } _h ^{\mathrm{tan}} - \sigma _h
      ^{\mathrm{tan}} ),\\
    \widehat{ p } _h ^{\mathrm{nor}}
    &= p _h ^{\mathrm{nor}} - \alpha
      ^{ k -1 } ( \dot{ \widehat{ \sigma } } _h ^{\mathrm{tan}} - \dot{
      \sigma } _h ^{\mathrm{tan}} ) ,\\    
    \widehat{ \rho } _h ^{\mathrm{nor}}
    &= \rho _h ^{\mathrm{nor}} -
      \alpha ^k ( \widehat{ u } _h ^{\mathrm{tan}} - u _h ^{\mathrm{tan}}
      ).
  \end{align*}
\item Take $ \widehat{ \sigma } _h ^{\mathrm{nor}} $ and
  $ \widehat{ \rho } _h ^{\mathrm{tan}} $ as in
  \cref{prop:ldg-h_sigmanor_rhotan} (but these need not be computed,
  per \cref{rmk:sigmanor_rhotan}).
\end{itemize} 
This satisfies the hypotheses of \cref{thm:discrete_hamiltonian}, so
we obtain the following.

\begin{theorem}
  \label{thm:ldg-h_discrete_hamiltonian}
  For the semilinear Hodge wave equation, if $ \alpha ^{ k -1 } $ is
  negative-definite and $ \alpha ^k $ is positive-definite, then the
  multisymplectic LDG-H method is equivalent to Hamilton's equations
  for $ ( u _h , p _h ) \in \mathbf{W} _h ^k $, where the discrete
  Hamiltonian is
  \begin{multline*}
    \mathcal{H} _h ( t, u _h , p _h ) = \frac{1}{2} \Bigl( \lVert \sigma _h \rVert _\Omega ^2 + \lVert p _h \rVert _\Omega ^2 + \lVert \rho _h \rVert _\Omega ^2 \Bigr) + \int _\Omega F ( t, x, u _h ) \,\mathrm{vol} \\
    - \frac{1}{2} \bigl\langle \alpha ^{ k -1 } ( \widehat{ \sigma } _h ^{\mathrm{tan}} - \sigma _h ^{\mathrm{tan}} ), \widehat{ \sigma } _h ^{\mathrm{tan}} - \sigma _h ^{\mathrm{tan}} \bigr\rangle _{ \partial \mathcal{T} _h } + \frac{1}{2} \bigl\langle \alpha ^k ( \widehat{ u } _h ^{\mathrm{tan}} - u _h ^{\mathrm{tan}} ) , \widehat{ u } _h ^{\mathrm{tan}} - u _h ^{\mathrm{tan}} \bigr\rangle _{ \partial \mathcal{T} _h } .
  \end{multline*}
\end{theorem}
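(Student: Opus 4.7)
The plan is to apply \cref{thm:discrete_hamiltonian} directly to the map $\mathbf{\Theta}$ constructed in the preceding bullet list, with multisymplecticity of the LDG-H flux already in hand from \cref{ex:ldg_ms} (and its specialization to the wave-equation fluxes \eqref{eq:ldg-h_wave_flux_qkm}--\eqref{eq:ldg-h_wave_flux_qk} at the start of \cref{sec:ldg-h_wave}). What then remains is (a) verifying \cref{assumption:theta} for this $\mathbf{\Theta}$, and (b) computing the generic expression
\begin{equation*}
\mathcal{H} _h ( t, \widetilde{ \mathbf{z} } _h ) = \int _\Omega H ( t, x, \mathbf{z} _h ) \,\mathrm{vol} - \tfrac{1}{2} \Bigl( ( \mathbf{z} _h , \mathbf{D} \mathbf{z} _h ) _{ \mathcal{T} _h } + [ \widehat{ \mathbf{z} } _h , \mathbf{z} _h ] _{ \partial \mathcal{T} _h } \Bigr)
\end{equation*}
explicitly in terms of $(u_h, p_h)$ and the LDG-H penalties.

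For (a), part (i) is immediate since $\widetilde{\mathbf{W}}_h = \mathbf{W}_h^k$ is symplectic: the form $(\mathbf{J}(u_h;p_h),(v_h;r_h))_{\mathcal{T}_h} = (u_h,r_h)_{\mathcal{T}_h} - (v_h,p_h)_{\mathcal{T}_h}$ is nondegenerate on pure degree-$k$ forms. Part (iii) holds by construction: the first ``solve'' step in the definition of $\mathbf{\Theta}$ is precisely \eqref{eq:ldg-h_wave_tau}--\eqref{eq:ldg-h_wave_vtan}, and the remaining flux conditions for degrees $j\neq k-1,k$ collapse because the default-flux components in \cref{sec:hodge_wave_methods} force the traces of inactive degrees to agree with the element traces. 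Part (ii) is the most substantive: the $\widetilde{\mathbf{W}}_h$-restricted weak form \eqref{eq:weak_w} yields only \eqref{eq:ldg-h_wave_r}--\eqref{eq:ldg-h_wave_v}, so one must check \eqref{eq:weak_wave_sigmadot} and \eqref{eq:weak_wave_rhodot} (the equations testing against $\tau_h\in W_h^{k-1}$ and $\eta_h\in W_h^{k+1}$) separately. Here the point is that the second ``solve'' step in $\mathbf{\Theta}$ is precisely the time derivative of \eqref{eq:ldg-h_wave_tau}--\eqref{eq:ldg-h_wave_vtan} with $\dot u_h = p_h$, $\dot{\widehat u}_h^{\mathrm{tan}} = \widehat p_h^{\mathrm{tan}}$ substituted; unique solvability of this system under the definiteness assumptions on $\alpha^{k-1},\alpha^k$ follows by the same argument as \cref{thm:ldg-h_wave_definite}, and this defines $\dot\sigma_h,\dot\rho_h$ exactly so that the two ``missing'' equations are satisfied.

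For (b), the Hamiltonian integral is immediate from the form of $H$:
\begin{equation*}
\int_\Omega H(t,x,\mathbf{z}_h)\,\mathrm{vol} = -\tfrac{1}{2}\|\sigma_h\|_\Omega^2 + \tfrac{1}{2}\|p_h\|_\Omega^2 - \tfrac{1}{2}\|\rho_h\|_\Omega^2 + \int_\Omega F(t,x,u_h)\,\mathrm{vol}.
\end{equation*}
For the second piece, expand $(\mathbf{z}_h,\mathbf{D}\mathbf{z}_h)_{\mathcal{T}_h}$ using that $q_h = \sigma_h\oplus u_h\oplus\rho_h$ has active degrees $k\pm 1,k$ and $p_h$ only degree $k$, so that $(p_h,\mathrm{D}p_h)_{\mathcal{T}_h}=0$ and the $q_h$ contribution reduces, after broken integration by parts via \eqref{eq:ibp}, to $2(\sigma_h,\delta u_h)_{\mathcal{T}_h} + 2(\rho_h,\mathrm{d} u_h)_{\mathcal{T}_h}$ plus boundary pairings. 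Likewise, $[\widehat{\mathbf{z}}_h,\mathbf{z}_h]_{\partial\mathcal{T}_h}$ splits as a sum of tangential/normal pairings at each active degree; the pairings involving $\widehat\sigma_h^{\mathrm{nor}}$, $\widehat\rho_h^{\mathrm{tan}}$, $\widehat p_h^{\mathrm{nor}}$, $\widehat p_h^{\mathrm{tan}}$ collapse thanks to the default flux structure of \cref{sec:hodge_wave_methods} together with single-valuedness (\cref{prop:ldg-h_sigmanor_rhotan}) and the boundary condition $\widehat p_h^{\mathrm{tan}}\in\ringhat{V}_h^{k,\mathrm{tan}}$. The remaining pairings are converted using \eqref{eq:ldg-h_wave_tau} with $\tau_h=\sigma_h$, \eqref{eq:ldg-h_wave_eta} with $\eta_h=\rho_h$, and the conservativity conditions \eqref{eq:ldg-h_wave_tautan}, \eqref{eq:ldg-h_wave_vtan} tested against $\widehat\sigma_h^{\mathrm{tan}}$ and $\widehat u_h^{\mathrm{tan}}$, together with the flux identities $\widehat u_h^{\mathrm{nor}} = u_h^{\mathrm{nor}} - \alpha^{k-1}(\widehat\sigma_h^{\mathrm{tan}} - \sigma_h^{\mathrm{tan}})$ and $\widehat\rho_h^{\mathrm{nor}} = \rho_h^{\mathrm{nor}} - \alpha^k(\widehat u_h^{\mathrm{tan}} - u_h^{\mathrm{tan}})$. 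After cancellation this should yield
\begin{equation*}
\tfrac{1}{2}\Bigl((\mathbf{z}_h,\mathbf{D}\mathbf{z}_h)_{\mathcal{T}_h} + [\widehat{\mathbf{z}}_h,\mathbf{z}_h]_{\partial\mathcal{T}_h}\Bigr) = -\|\sigma_h\|_\Omega^2 - \|\rho_h\|_\Omega^2 + \tfrac{1}{2}\bigl\langle \alpha^{k-1}(\widehat\sigma_h^{\mathrm{tan}} - \sigma_h^{\mathrm{tan}}), \widehat\sigma_h^{\mathrm{tan}} - \sigma_h^{\mathrm{tan}}\bigr\rangle_{\partial\mathcal{T}_h} - \tfrac{1}{2}\bigl\langle\alpha^k(\widehat u_h^{\mathrm{tan}} - u_h^{\mathrm{tan}}), \widehat u_h^{\mathrm{tan}} - u_h^{\mathrm{tan}}\bigr\rangle_{\partial\mathcal{T}_h},
\end{equation*}
and subtracting this from $\int_\Omega H$ gives the claimed $\mathcal{H}_h$. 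Specializing to $\alpha^{k-1}=\alpha^k=0$ should formally recover \cref{cor:afw-h_discrete_hamiltonian}, which provides a useful sanity check.

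The main obstacle is the bookkeeping in step (b): keeping track of all the boundary bracket contributions at each form degree and pairing them correctly with the constraint or conservativity equation that converts each one into either an interior $L^2$-norm of $\sigma_h$ or $\rho_h$ (via \eqref{eq:ldg-h_wave_tau}/\eqref{eq:ldg-h_wave_eta}) or a penalty quadratic form (via the flux definitions \eqref{eq:ldg-h_wave_flux_qkm}--\eqref{eq:ldg-h_wave_flux_qk}). No conceptually new ideas beyond the LDG-H computation in \cref{ex:ldg-h_discrete_hamiltonian} should be required; what differs here is that the penalty structure couples degrees $k-1$ and $k$ asymmetrically, so the clean block-diagonal simplification of \cref{ex:ldg-h_discrete_hamiltonian} has to be replaced by term-by-term tracking through the Hodge-wave constraint system.
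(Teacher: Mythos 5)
Your proposal is correct and follows essentially the same route as the paper: apply \cref{thm:discrete_hamiltonian} via the map $\mathbf{\Theta}$ constructed in the preceding bullet list, compute $\int_\Omega H$ directly, and reduce $\tfrac{1}{2}\bigl((\mathbf{z}_h,\mathbf{D}\mathbf{z}_h)_{\mathcal{T}_h}+[\widehat{\mathbf{z}}_h,\mathbf{z}_h]_{\partial\mathcal{T}_h}\bigr)$ using \eqref{eq:ldg-h_wave_tau} with $\tau_h=\sigma_h$, \eqref{eq:ldg-h_wave_eta} with $\eta_h=\rho_h$, the conservativity conditions tested against $\widehat{\sigma}_h^{\mathrm{tan}}$ and $\widehat{u}_h^{\mathrm{tan}}$, and the flux definitions of the normal traces, arriving at exactly the intermediate identity the paper derives. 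The only difference is presentational: you spell out the verification of \cref{assumption:theta}, which the paper folds into the bullet-list construction before the theorem statement.
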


\begin{proof}
  Similarly to the proof of \cref{cor:afw-h_discrete_hamiltonian}, we
  apply \cref{thm:discrete_hamiltonian} by calculating
  \begin{equation*}
    \int _\Omega H ( t, x, \mathbf{z} _h )\,\mathrm{vol} = - \frac{1}{2} \lVert \sigma _h \rVert _\Omega ^2 + \frac{1}{2} \lVert p _h \rVert _\Omega ^2 - \frac{1}{2} \lVert \rho _h \rVert _\Omega ^2 + \int _\Omega F ( t, x, u _h ) \,\mathrm{vol}
  \end{equation*}
  and subtracting
  \begin{align*}
    \frac{1}{2} \Bigl( ( \mathbf{z} _h , \mathbf{D} \mathbf{z} _h ) _{ \mathcal{T} _h }  + [ \widehat{ \mathbf{z} } _h , \mathbf{z} _h ] _{ \partial \mathcal{T} _h } \Bigr)
    &= ( \mathbf{z} _h , \boldsymbol{\delta} \mathbf{z} _h ) _{ \mathcal{T} _h } + \frac{1}{2} \langle \widehat{ \mathbf{z} } _h ^{\mathrm{tan}} , \mathbf{z} _h ^{\mathrm{nor}} \rangle _{ \partial \mathcal{T} _h }  - \frac{1}{2} \langle \widehat{ \mathbf{z} } _h ^{\mathrm{nor}} - \mathbf{z} _h ^{\mathrm{nor}} , \mathbf{z} _h ^{\mathrm{tan}} \rangle _{ \partial \mathcal{T} _h } \\
    &= ( \sigma _h , \delta u _h ) _{ \mathcal{T} _h } + \frac{1}{2} \langle \widehat{ \sigma } _h ^{\mathrm{tan}} , u _h ^{\mathrm{nor}} \rangle _{ \partial \mathcal{T} _h } - \frac{1}{2} \langle \widehat{ u } _h ^{\mathrm{nor}} - u _h ^{\mathrm{nor}} , \sigma _h ^{\mathrm{tan}} \rangle _{ \partial \mathcal{T} _h } \\
    &\qquad + ( u _h , \delta \rho _h ) _{ \mathcal{T} _h } + \frac{1}{2} \langle \widehat{ u } _h ^{\mathrm{tan}} , \rho _h ^{\mathrm{nor}} \rangle _{ \partial \mathcal{T} _h } - \frac{1}{2} \langle \widehat{ \rho } _h ^{\mathrm{nor}} - \rho _h ^{\mathrm{nor}} , u _h ^{\mathrm{tan}} \rangle _{ \partial \mathcal{T} _h } .
  \end{align*}
  We now evaluate the two lines of this last expression
  separately. First, by \eqref{eq:ldg-h_wave_tautan} with
  $ \widehat{ \tau } _h ^{\mathrm{tan}} = \widehat{ \sigma } _h
  ^{\mathrm{tan}} $ and the definition of
  $ \widehat{ u } _h ^{\mathrm{nor}} $, we have
  \begin{multline*}
    ( \sigma _h , \delta u _h ) _{ \mathcal{T} _h } + \frac{1}{2} \langle \widehat{ \sigma } _h ^{\mathrm{tan}} , u _h ^{\mathrm{nor}} \rangle _{ \partial \mathcal{T} _h } - \frac{1}{2} \langle \widehat{ u } _h ^{\mathrm{nor}} - u _h ^{\mathrm{nor}} , \sigma _h ^{\mathrm{tan}} \rangle _{ \partial \mathcal{T} _h }\\
    \begin{aligned}
      &= ( \sigma _h , \delta u _h ) _{ \mathcal{T} _h } + \frac{1}{2} \bigl\langle \widehat{ \sigma } _h ^{\mathrm{tan}} , \alpha ^{ k -1 } ( \widehat{ \sigma } _h ^{\mathrm{tan}} - \sigma _h ^{\mathrm{tan}} ) \bigr\rangle _{ \partial \mathcal{T} _h } + \frac{1}{2} \bigl\langle \alpha ^{ k -1 } ( \widehat{ \sigma } _h ^{\mathrm{tan}} - \sigma _h ^{\mathrm{tan}} ) , \sigma _h ^{\mathrm{tan}} \bigr\rangle _{ \partial \mathcal{T} _h } \\
      &= - \lVert \sigma _h \rVert ^2 _{ \mathcal{T} _h } + \frac{1}{2} \bigl\langle \widehat{ \sigma } _h ^{\mathrm{tan}} , \alpha ^{ k -1 } ( \widehat{ \sigma } _h ^{\mathrm{tan}} - \sigma _h ^{\mathrm{tan}} ) \bigr\rangle _{ \partial \mathcal{T} _h } - \frac{1}{2} \bigl\langle \alpha ^{ k -1 } ( \widehat{ \sigma } _h ^{\mathrm{tan}} - \sigma _h ^{\mathrm{tan}} ) , \sigma _h ^{\mathrm{tan}} \bigr\rangle _{ \partial \mathcal{T} _h } \\
      &= - \lVert \sigma _h \rVert ^2 _{ \mathcal{T} _h } + \frac{1}{2} \bigl\langle \alpha ^{ k -1 } ( \widehat{ \sigma } _h ^{\mathrm{tan}} - \sigma _h ^{\mathrm{tan}} ), \widehat{ \sigma } _h ^{\mathrm{tan}} - \sigma _h ^{\mathrm{tan}} \bigr\rangle _{ \partial \mathcal{T} _h } ,
    \end{aligned}
  \end{multline*}
  where the second equality uses \eqref{eq:ldg-h_wave_tau} with
  $ \tau _h = \sigma _h $, and the last line collects terms. Next, by
  \eqref{eq:ldg-h_wave_eta} with $ \eta _h = \rho _h $ and the
  definition of $ \widehat{ \rho } _h ^{\mathrm{nor}} $, we have
  \begin{multline*}
    ( u _h , \delta \rho _h ) _{ \mathcal{T} _h } + \frac{1}{2} \langle \widehat{ u } _h ^{\mathrm{tan}} , \rho _h ^{\mathrm{nor}} \rangle _{ \partial \mathcal{T} _h } - \frac{1}{2} \langle \widehat{ \rho } _h ^{\mathrm{nor}} - \rho _h ^{\mathrm{nor}} , u _h ^{\mathrm{tan}} \rangle _{ \partial \mathcal{T} _h }\\
    \begin{aligned}
      &= - \lVert \rho _h \rVert ^2 _{ \mathcal{T} _h } - \frac{1}{2} \langle \widehat{ u } _h ^{\mathrm{tan}} , \rho _h ^{\mathrm{nor}} \rangle _{ \partial \mathcal{T} _h } + \frac{1}{2} \bigl\langle \alpha ^k (\widehat{ u } _h ^{\mathrm{tan}} - u _h ^{\mathrm{tan}}) , u _h ^{\mathrm{tan}} \bigr\rangle _{ \partial \mathcal{T} _h } \\
      &= - \lVert \rho _h \rVert ^2 _{ \mathcal{T} _h } - \frac{1}{2} \bigl\langle \widehat{ u } _h ^{\mathrm{tan}} , \alpha ^k ( \widehat{ u } _h ^{\mathrm{tan}} - u _h ^{\mathrm{tan}} ) \bigr\rangle _{ \partial \mathcal{T} _h } + \frac{1}{2} \bigl\langle \alpha ^k (\widehat{ u } _h ^{\mathrm{tan}} - u _h ^{\mathrm{tan}}) , u _h ^{\mathrm{tan}} \bigr\rangle _{ \partial \mathcal{T} _h } \\
      &= - \lVert \rho _h \rVert ^2 _{ \mathcal{T} _h } - \frac{1}{2} \bigl\langle \alpha ^k (\widehat{ u } _h ^{\mathrm{tan}} - u _h ^{\mathrm{tan}}) , \widehat{ u } _h ^{\mathrm{tan}} - u _h ^{\mathrm{tan}} \bigr\rangle _{ \partial \mathcal{T} _h } ,
    \end{aligned}
  \end{multline*}
  where the second equality uses \eqref{eq:ldg-h_wave_vtan} with
  $ \widehat{ v } _h ^{\mathrm{tan}} = \widehat{ u } _h
  ^{\mathrm{tan}} $, and the last line collects terms. Altogether,
  \begin{multline*}
    \frac{1}{2} \Bigl( ( \mathbf{z} _h , \mathbf{D} \mathbf{z} _h ) _{ \mathcal{T} _h }  + [ \widehat{ \mathbf{z} } _h , \mathbf{z} _h ] _{ \partial \mathcal{T} _h } \Bigr) = - \lVert \sigma _h \rVert ^2 _{ \mathcal{T} _h } - \lVert \rho _h \rVert ^2 _{ \mathcal{T} _h } \\
    + \frac{1}{2} \bigl\langle \alpha ^{ k -1 } ( \widehat{ \sigma } _h ^{\mathrm{tan}} - \sigma _h ^{\mathrm{tan}} ), \widehat{ \sigma } _h ^{\mathrm{tan}} - \sigma _h ^{\mathrm{tan}} \bigr\rangle _{ \partial \mathcal{T} _h } - \frac{1}{2} \bigl\langle \alpha ^k (\widehat{ u } _h ^{\mathrm{tan}} - u _h ^{\mathrm{tan}}) , \widehat{ u } _h ^{\mathrm{tan}} - u _h ^{\mathrm{tan}} \bigr\rangle _{ \partial \mathcal{T} _h },
  \end{multline*}
  which yields the claimed expression for the discrete Hamiltonian.
\end{proof}

Again, this substantially generalizes the work of S\'anchez and
collaborators on the Hamiltonian structure of LDG-H methods for linear
\citep[Theorem 1]{SaCiNgPe17} and semilinear \citep[Theorem
4.1]{SaVa24} scalar wave equations, as well as Maxwell's equations
\citep[Theorem 4.2]{SaDuCo22}.

\section{Structure-preserving time integration of semidiscretized systems}
\label{sec:time_integrators}

In this section, we discuss the application of numerical integrators
to the finite-dimensional dynamical systems resulting from the
semidiscretization methods in \cref{sec:semidiscrete}. First,
following similar approach to \cref{sec:global_hamiltonian}, we
express \eqref{eq:weak} as a system of ODEs, rather than a system
containing both dynamical equations and (linear) algebraic
constraints. Next, we discuss the application of numerical integrators
to this system of ODEs, focusing particularly on symplectic
Runge--Kutta and partitioned Runge--Kutta methods. Finally, we use the
theory of \emph{functional equivariance} from \citet{McSt2024} to show
that, when a multisymplectic semidiscretization method is combined
with a symplectic integrator, we obtain a fully discrete (in both
space and time) multisymplectic conservation law for Hamiltonian
systems.

As in \cref{sec:global_hamiltonian}, we impose homogeneous Dirichlet
boundary conditions
$ \widehat{ \mathbf{z} } _h ^{\mathrm{tan}} \in \ringhat{ \mathbf{V} }
_h ^{\mathrm{tan}} $, but the arguments may be adapted to homogeneous
Neumann or other boundary conditions as described in
\cref{rmk:hamiltonian_bc}.

\subsection{Semidiscretized dynamics as systems of ODEs}

In \cref{sec:global_hamiltonian}, we used \cref{assumption:theta} to
express \eqref{eq:weak} as a Hamiltonian system of ODEs on a
symplectic vector space
$ \widetilde{ \mathbf{W} } _h = \widetilde{ W } _h \otimes
\mathbb{R}^2 $ in the case where
$ \mathbf{f} ( t, \mathbf{z} _h ) = \partial H / \partial \mathbf{z}
_h $ (\cref{thm:discrete_hamiltonian}). We begin by generalizing this
to arbitrary $\mathbf{f}$, where the resulting system of ODEs is not
necessarily Hamiltonian unless $\mathbf{f}$ is. Note that we do not
yet need the assumption that $ \mathbf{\Phi} $ is multisymplectic.

\begin{lemma}
  \label{lem:ftilde}
  Suppose \cref{assumption:theta} holds. Then
  $ ( \mathbf{z} _h , \widehat{ \mathbf{z} } _h ) = \mathbf{\Theta} (
  t, \widetilde{ \mathbf{z} } _h ) $ satisfies \eqref{eq:weak} if and
  only if
  \begin{equation}
    \label{eq:ztilde_ODE}
    \widetilde{ \mathbf{J} } \dot{ \widetilde{ \mathbf{z} } } _h = \widetilde{ \mathbf{f} } ( t, \widetilde{ \mathbf{z} } _h ) ,
  \end{equation}
  where
  $ \widetilde{ \mathbf{f} } \colon I \times \widetilde{ \mathbf{W} }
  _h \rightarrow \widetilde{ \mathbf{W} } _h $ is defined by
  \begin{equation}
    \label{eq:ftilde}
    \bigl( \widetilde{ \mathbf{f} } ( t, \widetilde{ \mathbf{z} } _h ), \widetilde{ \mathbf{w} } _h \bigr) _{ \mathcal{T} _h } = \bigl( \mathbf{f} ( t, \mathbf{z} _h ) , \mathbf{w} _h \bigr) _{ \mathcal{T} _h } - ( \mathbf{z} _h , \mathbf{D} \mathbf{w} _h ) _{ \mathcal{T} _h } - [ \widehat{ \mathbf{z} } _h , \mathbf{w} _h ] _{ \partial \mathcal{T} _h } , \quad \forall \widetilde{ \mathbf{w} } _h \in \widetilde{ \mathbf{W} } _h ,
  \end{equation}
  with
  $ \mathbf{w} _h = \frac{ \partial \mathbf{z} _h }{ \partial
    \widetilde{ \mathbf{z} } _h } \widetilde{ \mathbf{w} } _h $.
\end{lemma}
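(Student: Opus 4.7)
The plan is to show, in both directions, that modulo the constraints handled by \cref{assumption:theta_constraints}, the remaining dynamical equation \eqref{eq:weak_w} is equivalent to \eqref{eq:ztilde_ODE} via direct substitution using the chain rule and the symplecticity condition in \cref{assumption:theta_symplectic}. The first order of business is to note that well-definedness of $\widetilde{\mathbf{f}}$ by Riesz representation on $\widetilde{\mathbf{W}}_h$ (with the nondegenerate inner product $(\cdot,\cdot)_{\mathcal{T}_h}$) is immediate, since the right-hand side of \eqref{eq:ftilde} depends on $\widetilde{\mathbf{w}}_h$ only through $\mathbf{w}_h = \frac{\partial \mathbf{z}_h}{\partial \widetilde{\mathbf{z}}_h}\widetilde{\mathbf{w}}_h$.

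Next, I would dispense with the constraint equations: by \cref{assumption:theta_constraints}, \eqref{eq:weak_wnor} and \eqref{eq:weak_wtan} hold automatically for any $\widetilde{\mathbf{z}}_h \in \widetilde{\mathbf{W}}_h$, so \eqref{eq:weak} reduces to \eqref{eq:weak_w}. To handle the latter, observe that by the chain rule
\begin{equation*}
  \dot{\mathbf{z}}_h = \frac{\partial \mathbf{z}_h}{\partial \widetilde{\mathbf{z}}_h} \dot{\widetilde{\mathbf{z}}}_h,
\end{equation*}
so that for any $\widetilde{\mathbf{w}}_h \in \widetilde{\mathbf{W}}_h$, setting $\mathbf{w}_h = \frac{\partial \mathbf{z}_h}{\partial \widetilde{\mathbf{z}}_h}\widetilde{\mathbf{w}}_h$, \cref{assumption:theta_symplectic} gives $(\mathbf{J}\dot{\mathbf{z}}_h, \mathbf{w}_h)_{\mathcal{T}_h} = (\widetilde{\mathbf{J}} \dot{\widetilde{\mathbf{z}}}_h, \widetilde{\mathbf{w}}_h)_{\mathcal{T}_h}$. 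Rearranging \eqref{eq:weak_w} and comparing with the definition \eqref{eq:ftilde} of $\widetilde{\mathbf{f}}$, \eqref{eq:weak_w} restricted to such test functions is exactly $(\widetilde{\mathbf{J}}\dot{\widetilde{\mathbf{z}}}_h, \widetilde{\mathbf{w}}_h)_{\mathcal{T}_h} = (\widetilde{\mathbf{f}}(t,\widetilde{\mathbf{z}}_h),\widetilde{\mathbf{w}}_h)_{\mathcal{T}_h}$ for all $\widetilde{\mathbf{w}}_h \in \widetilde{\mathbf{W}}_h$, which by nondegeneracy of $(\cdot,\cdot)_{\mathcal{T}_h}$ on $\widetilde{\mathbf{W}}_h$ is equivalent to \eqref{eq:ztilde_ODE}.

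For the forward direction, if \eqref{eq:weak_w} holds for all $\mathbf{w}_h \in \mathbf{W}_h$, it certainly holds for the subclass parametrized by $\widetilde{\mathbf{w}}_h \in \widetilde{\mathbf{W}}_h$, and hence \eqref{eq:ztilde_ODE} follows. The reverse direction is where \cref{assumption:theta_wtilde} is essential: assuming \eqref{eq:ztilde_ODE} and solving for $\dot{\widetilde{\mathbf{z}}}_h \in \widetilde{\mathbf{W}}_h$, the chain rule yields $\dot{\mathbf{z}}_h = \frac{\partial \mathbf{z}_h}{\partial \widetilde{\mathbf{z}}_h}\dot{\widetilde{\mathbf{z}}}_h$, and \eqref{eq:weak_w} holds for the restricted class of test functions by the computation above. \cref{assumption:theta_wtilde} then extends validity to arbitrary $\mathbf{w}_h \in \mathbf{W}_h$, completing the equivalence. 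The only delicate point to verify is that the identification of $\dot{\mathbf{z}}_h$ in the assumption matches the one produced by the chain rule here, but this is immediate because $\widetilde{\mathbf{z}}_h \mapsto \mathbf{z}_h$ is constant in $t$ (\cref{assumption:theta_symplectic}), so time differentiation commutes through the parametrization.
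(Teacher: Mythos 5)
Your proposal is correct and follows essentially the same route as the paper's proof: the chain rule plus \cref{assumption:theta_symplectic} converts the time-derivative pairing, \cref{assumption:theta_wtilde} supplies the equivalence between the restricted and full classes of test functions in \eqref{eq:weak_w}, and \cref{assumption:theta_constraints} disposes of \eqref{eq:weak_wnor}--\eqref{eq:weak_wtan}. The added remarks on well-definedness of $\widetilde{\mathbf{f}}$ via Riesz representation and on the map being constant in $t$ are correct elaborations of points the paper leaves implicit.
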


\begin{proof}
  As in the proof of \cref{thm:discrete_hamiltonian}, the chain rule
  and \cref{assumption:theta_symplectic} imply
  \begin{equation*}
    ( \widetilde{ \mathbf{J} } \dot{ \widetilde{ \mathbf{z} } } _h , \widetilde{ \mathbf{w} } _h ) _{ \mathcal{T} _h } = ( \mathbf{J} \dot{ \mathbf{z} } _h , \mathbf{w} _h ) _{ \mathcal{T} _h } .
  \end{equation*}
  Hence, this equals
  $ \bigl( \widetilde{ \mathbf{f} } ( t, \widetilde{ \mathbf{z} } _h
  ), \widetilde{ \mathbf{w} } _h \bigr) _{ \mathcal{T} _h } $ for all
  $ \widetilde{ \mathbf{w} } _h \in \widetilde{ \mathbf{W} } _h $ if
  and only if \eqref{eq:weak_w} holds, by
  \cref{assumption:theta_wtilde}.  Finally,
  \eqref{eq:weak_wnor}--\eqref{eq:weak_wtan} hold by
  \cref{assumption:theta_constraints}.
\end{proof}

Differentiating and applying the chain rule immediately gives us a
similar characterization of the variational equations.

\begin{corollary}
  \label{cor:ftilde_var}
  Under the assumptions of \cref{lem:ftilde},
  $ ( \mathbf{w} _i , \widehat{ \mathbf{w} } _i ) = \frac{ \partial
    \mathbf{\Theta} }{ \partial \widetilde{ \mathbf{z} } _h }
  \widetilde{ \mathbf{w} } _i $ satisfies \eqref{eq:weakvar} if and only if
  \begin{equation*}
    \widetilde{ \mathbf{J} } \dot{ \widetilde{ \mathbf{w} } } _i = \frac{ \partial \widetilde{ \mathbf{f} }}{ \partial \widetilde{ \mathbf{z} } _h } \widetilde{ \mathbf{w} } _i .
  \end{equation*}
\end{corollary}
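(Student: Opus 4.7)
The plan is to obtain the corollary by differentiating \eqref{eq:ztilde_ODE} along a one-parameter family of solutions, exploiting the fact that the weak variational equations \eqref{eq:weakvar} are, by construction, the linearization of \eqref{eq:weak} about a solution. Concretely, embed a reference solution $(\mathbf{z}_h, \widehat{\mathbf{z}}_h)$ into a smooth family $\widetilde{\mathbf{z}}_h(\epsilon,t)$ of solutions to \eqref{eq:ztilde_ODE} indexed by $\epsilon$ near $0$, with $\widetilde{\mathbf{w}}_i = \partial_\epsilon \widetilde{\mathbf{z}}_h \rvert_{\epsilon=0}$. By \cref{lem:ftilde}, the corresponding pair $(\mathbf{z}_h(\epsilon,t), \widehat{\mathbf{z}}_h(\epsilon,t)) = \mathbf{\Theta}(t, \widetilde{\mathbf{z}}_h(\epsilon,t))$ satisfies \eqref{eq:weak} for every $\epsilon$. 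Differentiating this family at $\epsilon=0$ on both sides gives, on the one hand, \eqref{eq:weakvar} for $(\mathbf{w}_i, \widehat{\mathbf{w}}_i) = \tfrac{\partial \mathbf{\Theta}}{\partial \widetilde{\mathbf{z}}_h} \widetilde{\mathbf{w}}_i$, and on the other hand, $\widetilde{\mathbf{J}} \dot{\widetilde{\mathbf{w}}}_i = \tfrac{\partial \widetilde{\mathbf{f}}}{\partial \widetilde{\mathbf{z}}_h} \widetilde{\mathbf{w}}_i$, since $\widetilde{\mathbf{J}}$ is $\epsilon$-independent.

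For the converse, run the same argument in reverse: given a solution $\widetilde{\mathbf{w}}_i$ of the linearized ODE, integrate to a family $\widetilde{\mathbf{z}}_h(\epsilon,t)$ of solutions of \eqref{eq:ztilde_ODE} (which is possible because \eqref{eq:ztilde_ODE} is a well-posed ODE on $\widetilde{\mathbf{W}}_h$ by nondegeneracy of $\widetilde{\mathbf{J}}$), apply \cref{lem:ftilde} to obtain a family of weak solutions, and differentiate. The upshot is that the two equivalent formulations in \cref{lem:ftilde} produce equivalent linearizations, giving both directions simultaneously.

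Alternatively — and this is the version I would actually write out — one can argue directly from \eqref{eq:ftilde} by linearizing without ever mentioning a family. Differentiating the identity \eqref{eq:ftilde} along $\widetilde{\mathbf{w}}_i$, using that $\tfrac{\partial \mathbf{z}_h}{\partial \widetilde{\mathbf{z}}_h}$ is constant in $t$ (so the test function $\mathbf{w}_h = \tfrac{\partial \mathbf{z}_h}{\partial \widetilde{\mathbf{z}}_h} \widetilde{\mathbf{w}}_h$ does not contribute) and that $\partial \mathbf{f}/\partial \mathbf{z}_h$ is the only nonlinear piece, yields
\begin{equation*}
  \Bigl( \tfrac{\partial \widetilde{\mathbf{f}}}{\partial \widetilde{\mathbf{z}}_h} \widetilde{\mathbf{w}}_i, \widetilde{\mathbf{w}}_h \Bigr)_{\mathcal{T}_h} = \Bigl( \tfrac{\partial \mathbf{f}}{\partial \mathbf{z}_h} \mathbf{w}_i, \mathbf{w}_h \Bigr)_{\mathcal{T}_h} - (\mathbf{w}_i, \mathbf{D} \mathbf{w}_h)_{\mathcal{T}_h} - [\widehat{\mathbf{w}}_i, \mathbf{w}_h]_{\partial \mathcal{T}_h},
\end{equation*}
which is precisely \eqref{eq:weakvar_w} written in terms of $\widetilde{\mathbf{w}}_i$, modulo the identification $(\widetilde{\mathbf{J}} \dot{\widetilde{\mathbf{w}}}_i, \widetilde{\mathbf{w}}_h)_{\mathcal{T}_h} = (\mathbf{J} \dot{\mathbf{w}}_i, \mathbf{w}_h)_{\mathcal{T}_h}$ from \cref{assumption:theta_symplectic}. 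The remaining equations \eqref{eq:weakvar_wnor}--\eqref{eq:weakvar_wtan} follow by linearizing \cref{assumption:theta_constraints}, exactly as in \cref{lem:ftilde}.

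The only subtlety, and the step I would double-check carefully, is ensuring that the differentiation of the bracket term $[\widehat{\mathbf{z}}_h, \mathbf{w}_h]_{\partial \mathcal{T}_h}$ with respect to $\widetilde{\mathbf{z}}_h$ produces $[\widehat{\mathbf{w}}_i, \mathbf{w}_h]_{\partial \mathcal{T}_h}$ and not some extra contribution from the $\widetilde{\mathbf{z}}_h$-dependence of $\mathbf{w}_h$ — but the constancy of $\tfrac{\partial \mathbf{z}_h}{\partial \widetilde{\mathbf{z}}_h}$ in $\widetilde{\mathbf{z}}_h$ (implicit in \cref{assumption:theta_symplectic} together with $\mathbf{z}_h$ depending affinely on $\widetilde{\mathbf{z}}_h$ in all our examples, or more generally by viewing $\mathbf{w}_h$ as fixed test data independent of the variation direction) rules this out. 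Given that, the proof is essentially a one-line application of the chain rule to \cref{lem:ftilde}.
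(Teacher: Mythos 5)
Your proposal is correct and matches the paper's approach: the paper's entire proof is the one-line remark that the corollary follows by ``differentiating and applying the chain rule'' to \cref{lem:ftilde}, which is precisely what your second (direct linearization) argument carries out, and the identity you obtain by differentiating \eqref{eq:ftilde} is the same one the paper itself uses in the theorem immediately following. Your caution about the bracket term is resolved exactly as you suggest, since the test direction $\widetilde{\mathbf{w}}_h$ is held fixed and only the solution is varied.
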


The next result shows that multisymplecticity of \eqref{eq:weak}
corresponds to symplecticity of the corresponding system of
ODEs. (This is ultimately equivalent to
\cref{thm:discrete_hamiltonian} by an application of the Poincar\'e
lemma, cf.~\citet[Proposition 2.5.3]{MaRa1999}.)

\begin{theorem}
  Under the assumptions of \cref{lem:ftilde}, if $ \mathbf{\Phi} $ is
  multisymplectic and
  $ \frac{ \partial \mathbf{f} }{ \partial \mathbf{z} _h } $ is
  symmetric, then
  $ \frac{ \partial \widetilde{ \mathbf{f} } }{ \partial \widetilde{
      \mathbf{z} } _h } $ is also symmetric.
\end{theorem}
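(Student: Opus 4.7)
The plan is to show that $\bigl(\frac{\partial \widetilde{\mathbf{f}}}{\partial \widetilde{\mathbf{z}}_h}\widetilde{\mathbf{w}}_1, \widetilde{\mathbf{w}}_2\bigr)_{\mathcal{T}_h} = \bigl(\frac{\partial \widetilde{\mathbf{f}}}{\partial \widetilde{\mathbf{z}}_h}\widetilde{\mathbf{w}}_2, \widetilde{\mathbf{w}}_1\bigr)_{\mathcal{T}_h}$ for all $\widetilde{\mathbf{w}}_1, \widetilde{\mathbf{w}}_2 \in \widetilde{\mathbf{W}}_h$. The starting point is to differentiate the defining identity \eqref{eq:ftilde} in direction $\widetilde{\mathbf{w}}_i$ with the test slot filled by $\widetilde{\mathbf{w}}_j$, which introduces the variations $(\mathbf{w}_i, \widehat{\mathbf{w}}_i) = \frac{\partial \mathbf{\Theta}}{\partial \widetilde{\mathbf{z}}_h}\widetilde{\mathbf{w}}_i$ along with second-derivative contributions of the form $\frac{\partial^2 \mathbf{z}_h}{\partial \widetilde{\mathbf{z}}_h^2}(\widetilde{\mathbf{w}}_i, \widetilde{\mathbf{w}}_j)$ arising because $\mathbf{w}_h = \frac{\partial \mathbf{z}_h}{\partial \widetilde{\mathbf{z}}_h}\widetilde{\mathbf{w}}_h$ itself depends on $\widetilde{\mathbf{z}}_h$. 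Upon subtracting the analogous expression with $(i,j)$ swapped, those second-derivative contributions cancel by symmetry of the Hessian of $\mathbf{\Theta}$ in its two arguments, and the $\frac{\partial \mathbf{f}}{\partial \mathbf{z}_h}$ contribution cancels by the assumed symmetry.

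What remains is the purely kinematic antisymmetric difference
\begin{equation*}
  (\mathbf{w}_2, \mathbf{D}\mathbf{w}_1)_{\mathcal{T}_h} - (\mathbf{w}_1, \mathbf{D}\mathbf{w}_2)_{\mathcal{T}_h} - [\widehat{\mathbf{w}}_1, \mathbf{w}_2]_{\partial \mathcal{T}_h} + [\widehat{\mathbf{w}}_2, \mathbf{w}_1]_{\partial \mathcal{T}_h}.
\end{equation*}
I would fold the Hodge--Dirac pairing into $[\mathbf{w}_1, \mathbf{w}_2]_{\partial \mathcal{T}_h}$ using the integration-by-parts identity \eqref{eq:bracket_D}, use antisymmetry of $[\cdot,\cdot]_{\partial \mathcal{T}_h}$, and then apply the bilinear expansion of $[\widehat{\mathbf{w}}_1 - \mathbf{w}_1, \widehat{\mathbf{w}}_2 - \mathbf{w}_2]_{\partial \mathcal{T}_h}$ already exploited in the proof of \cref{thm:local_ms} to rewrite the result as $[\widehat{\mathbf{w}}_1, \widehat{\mathbf{w}}_2]_{\partial \mathcal{T}_h} - [\widehat{\mathbf{w}}_1 - \mathbf{w}_1, \widehat{\mathbf{w}}_2 - \mathbf{w}_2]_{\partial \mathcal{T}_h}$.

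The final step is to kill both boundary terms. The jump bracket vanishes elementwise: differentiating \cref{assumption:theta_constraints} shows that the variations $(\mathbf{w}_i, \widehat{\mathbf{w}}_i)$ satisfy \eqref{eq:weakvar_wnor}, which by \cref{def:ms_flux} is exactly the hypothesis that makes multisymplecticity of $\mathbf{\Phi}$ force $[\widehat{\mathbf{w}}_1 - \mathbf{w}_1, \widehat{\mathbf{w}}_2 - \mathbf{w}_2]_{\partial K} = 0$. The bracket $[\widehat{\mathbf{w}}_1, \widehat{\mathbf{w}}_2]_{\partial \mathcal{T}_h}$ is more delicate and only vanishes globally: by \cref{assumption:theta}, $\mathbf{\Theta}$ maps into $\ringhat{\mathbf{V}}_h^{\mathrm{tan}}$, so $\widehat{\mathbf{w}}_i^{\mathrm{tan}} \in \ringhat{\mathbf{V}}_h^{\mathrm{tan}}$; applying the linearized tangential constraint \eqref{eq:weakvar_wtan} with test function $\widehat{\mathbf{w}}_j^{\mathrm{tan}}$ then forces $\langle \widehat{\mathbf{w}}_i^{\mathrm{nor}}, \widehat{\mathbf{w}}_j^{\mathrm{tan}}\rangle_{\partial \mathcal{T}_h} = 0$ for both orderings, killing both pieces of the bracket.

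The main obstacle is recognizing that $[\widehat{\mathbf{w}}_1, \widehat{\mathbf{w}}_2]_{\partial \mathcal{T}_h}$ does \emph{not} vanish elementwise; it only does so after summing over $\mathcal{T}_h$, where the global conservativity constraint \eqref{eq:weakvar_wtan}---i.e., the homogeneous Dirichlet condition encoded in $\mathbf{\Theta}$---takes effect. Thus the proof genuinely uses both multisymplecticity of the flux \emph{and} the boundary data baked into \cref{assumption:theta}; the bookkeeping around second derivatives of $\mathbf{\Theta}$ is a minor technical nuisance cleanly handled by symmetry of the Hessian.
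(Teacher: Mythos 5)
Your proof is correct and follows essentially the same route as the paper's: differentiate \eqref{eq:ftilde}, antisymmetrize, integrate by parts via \eqref{eq:bracket_D}, and kill the two boundary brackets using multisymplecticity of $\mathbf{\Phi}$ (elementwise, via the differentiated constraint \eqref{eq:weakvar_wnor}) and the homogeneous tangential condition \eqref{eq:weakvar_wtan} (globally), respectively. The only differences are cosmetic: you are slightly more explicit than the paper about the second-derivative terms of $\mathbf{\Theta}$ (which indeed cancel by symmetry of its Hessian), and your displayed remainder $[\widehat{\mathbf{w}}_1,\widehat{\mathbf{w}}_2]_{\partial\mathcal{T}_h} - [\widehat{\mathbf{w}}_1-\mathbf{w}_1,\widehat{\mathbf{w}}_2-\mathbf{w}_2]_{\partial\mathcal{T}_h}$ has its overall sign flipped relative to what the algebra gives, which is immaterial since both terms vanish.
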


\begin{proof}
  Let
  $ \widetilde{ \mathbf{w} } _1 , \widetilde{ \mathbf{w} } _2 \in
  \widetilde{ \mathbf{W} } _h $ and
  $ ( \mathbf{w} _i , \widehat{ \mathbf{w} } _i ) = \frac{ \partial
    \mathbf{\Theta} }{ \partial \widetilde{ \mathbf{z} } _h }
  \widetilde{ \mathbf{w} } _i $. Differentiating \eqref{eq:ftilde}
  along $ \widetilde{ \mathbf{w} } _1 $ with
  $ \widetilde{ \mathbf{w} } _h = \widetilde{ \mathbf{w} } _2 $ gives
  \begin{equation*}
    \biggl( \frac{ \partial \widetilde{ \mathbf{f} } }{ \partial \widetilde{ \mathbf{z} } _h } \widetilde{ \mathbf{w} } _1 , \widetilde{ \mathbf{w} } _2 \biggr) _{ \mathcal{T} _h } = \biggl( \frac{ \partial \mathbf{f} }{ \partial \mathbf{z} _h } \mathbf{w} _1 , \mathbf{w} _2 \biggr) _{ \mathcal{T} _h } - ( \mathbf{w} _1 , \mathbf{D} \mathbf{w} _2 ) _{ \mathcal{T} _h } - [ \widehat{ \mathbf{w} } _1 , \mathbf{w} _2 ] _{ \partial \mathcal{T} _h } ,
  \end{equation*}
  and similarly,
  \begin{equation*}
    \biggl( \frac{ \partial \widetilde{ \mathbf{f} } }{ \partial \widetilde{ \mathbf{z} } _h } \widetilde{ \mathbf{w} } _2 , \widetilde{ \mathbf{w} } _1 \biggr) _{ \mathcal{T} _h } = \biggl( \frac{ \partial \mathbf{f} }{ \partial \mathbf{z} _h } \mathbf{w} _2 , \mathbf{w} _1 \biggr) _{ \mathcal{T} _h } - ( \mathbf{w} _2 , \mathbf{D} \mathbf{w} _1 ) _{ \mathcal{T} _h } - [ \widehat{ \mathbf{w} } _2 , \mathbf{w} _1 ] _{ \partial \mathcal{T} _h } .
  \end{equation*}
  Subtracting, using symmetry of $ \frac{ \partial \mathbf{f} }{ \partial \mathbf{z} _h } $, and integrating by parts with \eqref{eq:bracket_D} gives
  \begin{align*}
    \biggl( \frac{ \partial \widetilde{ \mathbf{f} } }{ \partial \widetilde{ \mathbf{z} } _h } \widetilde{ \mathbf{w} } _1 , \widetilde{ \mathbf{w} } _2 \biggr) _{ \mathcal{T} _h } - \biggl( \frac{ \partial \widetilde{ \mathbf{f} } }{ \partial \widetilde{ \mathbf{z} } _h } \widetilde{ \mathbf{w} } _2 , \widetilde{ \mathbf{w} } _1 \biggr) _{ \mathcal{T} _h } &= [ \mathbf{w} _1 , \mathbf{w} _2 ] _{ \partial \mathcal{T} _h } - [ \widehat{ \mathbf{w} } _1 , \mathbf{w} _2 ] _{ \partial \mathcal{T} _h } - [ \mathbf{w} _1 , \widehat{ \mathbf{w} } _2 ] _{ \partial \mathcal{T} _h } \\
    &= [ \widehat{ \mathbf{w} } _1 - \mathbf{w} _1 , \widehat{ \mathbf{w} } _2 - \mathbf{w} _2 ] _{ \partial \mathcal{T} _h } - [ \widehat{ \mathbf{w} } _1 , \widehat{ \mathbf{w} } _2 ] _{ \partial \mathcal{T} _h },
  \end{align*}
  which we claim vanishes. Indeed, differentiating
  \cref{assumption:theta_constraints} implies that
  $ ( \mathbf{w} _i , \widehat{ \mathbf{w} } _i ) $ satisfy
  \eqref{eq:weakvar_wnor}--\eqref{eq:weakvar_wtan}. Hence, the first
  right-hand-side term vanishes by \eqref{eq:weakvar_wnor} and
  multisymplecticity of $ \mathbf{\Phi} $, and the second
  right-hand-side term vanishes by \eqref{eq:weakvar_wtan} with
  $ \widehat{ \mathbf{w} } _1, \widehat{ \mathbf{w} } _2 \in \ringhat{
    \mathbf{V} } _h ^{\mathrm{tan}} $.
\end{proof}

\subsection{Numerical integrators for semidiscretized dynamics}

An $s$-stage Runge--Kutta (RK) method for \eqref{eq:ztilde_ODE} with
time-step size $ \Delta t = t ^1 - t ^0 $ can be written in the form
\begin{subequations}
  \label{eq:rk_tilde_dot}
  \begin{align}
    \widetilde{ \mathbf{Z} } _h ^i = \widetilde{ \mathbf{z} } _h ^0 + \Delta t \sum _{ j = 1 } ^s a _{ i j } \smash{\dot{ \widetilde{ \mathbf{Z} } }} _h ^j ,\\
    \widetilde{ \mathbf{z} } _h ^1  = \widetilde{ \mathbf{z} } _h ^0 + \Delta t \sum _{ i = 1 } ^s b _i \smash{\dot{ \widetilde{ \mathbf{Z} } }} _h ^i ,
  \end{align}
\end{subequations}
where
$ \widetilde{ \mathbf{J} } \smash{\dot{ \widetilde{ \mathbf{Z} } }} _h
^i \coloneqq \widetilde{ \mathbf{f} } ( T ^i , \widetilde{ \mathbf{Z}
} _h ^i ) $ and $ T ^i \coloneqq t ^0 + c _i \Delta t $. Applying
$ \widetilde{ \mathbf{J} } $ to both sides gives the equivalent form
\begin{subequations}
  \label{eq:rk_tilde}
  \begin{align}
    \widetilde{ \mathbf{J} } \widetilde{ \mathbf{Z} } _h ^i &= \widetilde{ \mathbf{J}} \widetilde{ \mathbf{z} } _h ^0 + \Delta t \sum _{ j = 1 } ^s a _{ i j } \widetilde{ \mathbf{f} } ( T ^j , \widetilde{ \mathbf{Z} } _h ^j  ) , \label{eq:rk_tilde_stage} \\
    \widetilde{ \mathbf{J} } \widetilde{ \mathbf{z} } _h ^1 &= \widetilde{ \mathbf{J}} \widetilde{ \mathbf{z} } _h ^0 + \Delta t \sum _{ i = 1 } ^s b _i \widetilde{ \mathbf{f} } ( T ^i , \widetilde{ \mathbf{Z} } _h ^i  ) . \label{eq:rk_tilde_step}
  \end{align}
\end{subequations}
Here, $ a _{ i j } $, $ b _i $, and $ c _i $ are given coefficients
specifying the method, often displayed as a \emph{Butcher tableau},
\begin{equation*}
  \begin{array}[b]{c|ccc}
    c _1 & a _{ 1 1 } & \cdots & a _{ 1 s } \\
    \vdots & \vdots & \ddots & \vdots \\
    c _s & a _{ s 1 } & \cdots & a _{ s s } \\
    \hline
    & b _1 & \cdots & b _s 
  \end{array} \quad .
\end{equation*}
Note that the ``dots'' in \eqref{eq:rk_tilde_dot} are not time
derivatives, since none of the variables are continuous-time paths;
rather, this is simply suggestive notation indicating the relationship
to the vector field $ \widetilde{ \mathbf{f} } $.\footnote{One
  exception to this warning: for RK methods corresponding to
  collocation methods,
  $ \smash{\dot{ \widetilde{ \mathbf{Z} } }} _h ^i $ is indeed the
  time derivative of the collocation polynomial at time $ T ^i $
  \citep[Chapter II]{HaLuWa2006}.}

We now establish a Runge--Kutta version of \cref{lem:ftilde}, showing
that the method can be implemented by solving a discrete-time
approximation of the weak problem \eqref{eq:weak}. We strengthen
\cref{assumption:theta_symplectic} slightly by assuming that
$ \widetilde{ \mathbf{z} } _h \mapsto \mathbf{z} _h $ is a
\emph{linear} symplectic map. This holds for all the methods we have
discussed, where
$ \widetilde{ \mathbf{W} } _h \hookrightarrow \mathbf{W} _h $ is the
inclusion map of a symplectic subspace.

\begin{theorem}
  \label{thm:rk_weak}
  Suppose \cref{assumption:theta} holds, with the additional condition
  that the map $ \widetilde{ \mathbf{z} } _h \mapsto \mathbf{z} _h $
  is linear. Then \eqref{eq:rk_tilde_stage} holds if and only if
  $ ( \mathbf{Z} _h ^i , \widehat{ \mathbf{Z} } _h ^i ) =
  \mathbf{\Theta} ( T ^i, \widetilde{ \mathbf{Z} } _h ^i ) $ satisfies
  \begin{subequations}
    \label{eq:rk_weak}
    \begin{align}
      ( \mathbf{J} \mathbf{Z}  _h ^i , \mathbf{w} _h ) _{ \mathcal{T} _h } + \Delta t \sum _{ j = 1 } ^s a _{ i j } \Bigl( ( \mathbf{Z} _h ^j , \mathbf{D} \mathbf{w} _h ) _{ \mathcal{T} _h } + [ \widehat{ \mathbf{Z} } _h ^j , \mathbf{w} _h ] _{ \partial \mathcal{T} _h }  \Bigr) &= ( \mathbf{J} \mathbf{z} _h ^0 , \mathbf{w} _h ) _{ \mathcal{T} _h } + \Delta t \sum _{ j = 1 } ^s a _{ i j } \bigl( \mathbf{f} ( T ^j , \mathbf{Z} _h ^j ) , \mathbf{w} _h \bigr) _{ \mathcal{T} _h } , \label{eq:rk_stage_w} \\
      \bigl\langle \mathbf{\Phi} ( \mathbf{Z} _h ^i , \widehat{ \mathbf{Z} } _h ^i ), \widehat{ \mathbf{w} } _h ^{\mathrm{nor}} \bigr\rangle _{ \partial \mathcal{T} _h } &= 0 , \label{eq:rk_stage_wnor} \\
      \langle \widehat{ \mathbf{Z} } _h ^{i, \mathrm{nor}} , \widehat{ \mathbf{w} } _h ^{\mathrm{tan}} \rangle _{ \partial \mathcal{T} _h } &= 0 , \label{eq:rk_stage_wtan}\\
      \intertext{for all $ \mathbf{w} _h \in \mathbf{W} _h $,
      $ \widehat{ \mathbf{w} } _h ^{\mathrm{nor}} \in \widehat{
      \mathbf{W} } _h ^{\mathrm{nor}} $, and
      $ \widehat{ \mathbf{w} } _h ^{\mathrm{tan}} \in \ringhat{
      \mathbf{V} } _h ^{\mathrm{tan}} $. Subsequently, \eqref{eq:rk_tilde_step}
      holds if and only if}
      ( \mathbf{J} \mathbf{z} _h ^1 , \mathbf{w} _h ) _{ \mathcal{T} _h } + \Delta t \sum _{ i = 1 } ^s b _i \Bigl( ( \mathbf{Z} _h ^i , \mathbf{D} \mathbf{w} _h ) _{ \mathcal{T} _h } + [ \widehat{ \mathbf{Z} } _h ^i , \mathbf{w} _h ] _{ \partial \mathcal{T} _h }  \Bigr) &= ( \mathbf{J} \mathbf{z} _h ^0 , \mathbf{w} _h ) _{ \mathcal{T} _h } + \Delta t \sum _{ i = 1 } ^s b _i \bigl( \mathbf{f} ( T ^i , \mathbf{Z} _h ^i ) , \mathbf{w} _h \bigr) _{ \mathcal{T} _h } , \label{eq:rk_step_w}
    \end{align}
  \end{subequations}
  for all $ \mathbf{w} _h \in \mathbf{W} _h $.
\end{theorem}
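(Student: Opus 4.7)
The plan is to mirror the proof of \cref{lem:ftilde}, working stage by stage. Since the map $\widetilde{\mathbf{z}}_h \mapsto \mathbf{z}_h$ is now assumed linear, its derivative $L \coloneqq \frac{\partial \mathbf{z}_h}{\partial \widetilde{\mathbf{z}}_h}$ is a constant (in $t$) linear map from $\widetilde{\mathbf{W}}_h$ into $\mathbf{W}_h$. This constancy is what will allow us to relate $\widetilde{\mathbf{z}}_h^0$ and $\widetilde{\mathbf{Z}}_h^i$ to $\mathbf{z}_h^0$ and $\mathbf{Z}_h^i$ through the \emph{same} test-function identification $\mathbf{w}_h = L\widetilde{\mathbf{w}}_h$.

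I would begin by taking the inner product of \eqref{eq:rk_tilde_stage} against an arbitrary $\widetilde{\mathbf{w}}_h \in \widetilde{\mathbf{W}}_h$. The symplectic identity of \cref{assumption:theta_symplectic}, applied at the initial datum and at each stage value, turns the two $\widetilde{\mathbf{J}}$-terms into $(\mathbf{J}\mathbf{z}_h^0, \mathbf{w}_h)_{\mathcal{T}_h}$ and $(\mathbf{J}\mathbf{Z}_h^i, \mathbf{w}_h)_{\mathcal{T}_h}$ respectively, with $\mathbf{w}_h = L\widetilde{\mathbf{w}}_h$; this is where the linearity hypothesis is used, since otherwise $L$ would depend on the point at which the derivative is evaluated. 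I then invoke the defining relation \eqref{eq:ftilde} of $\widetilde{\mathbf{f}}$ at each of the times $T^j$ to replace $(\widetilde{\mathbf{f}}(T^j, \widetilde{\mathbf{Z}}_h^j), \widetilde{\mathbf{w}}_h)_{\mathcal{T}_h}$ by $(\mathbf{f}(T^j, \mathbf{Z}_h^j), \mathbf{w}_h)_{\mathcal{T}_h} - (\mathbf{Z}_h^j, \mathbf{D}\mathbf{w}_h)_{\mathcal{T}_h} - [\widehat{\mathbf{Z}}_h^j, \mathbf{w}_h]_{\partial \mathcal{T}_h}$. Collecting terms yields precisely \eqref{eq:rk_stage_w} tested against $\mathbf{w}_h = L\widetilde{\mathbf{w}}_h$ for arbitrary $\widetilde{\mathbf{w}}_h$.

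To upgrade the test space from $L(\widetilde{\mathbf{W}}_h)$ to all of $\mathbf{W}_h$, I would appeal to \cref{assumption:theta_wtilde}. The RK stage equation, viewed as a linear equation in $\mathbf{w}_h$, has the same structural form as \eqref{eq:weak_w}: each summand is an instance of the bilinear pairing appearing there. Since Assumption \ref{assumption:theta_wtilde} guarantees the upgrade for that pairing at each individual time, taking the relevant linear combination (with coefficients $\Delta t\, a_{ij}$ and the boundary terms absorbed into the definition of $\mathbf{Z}_h^j$, $\widehat{\mathbf{Z}}_h^j$) propagates the upgrade to \eqref{eq:rk_stage_w}. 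Meanwhile, \eqref{eq:rk_stage_wnor} and \eqref{eq:rk_stage_wtan} follow for free from \cref{assumption:theta_constraints}, since $(\mathbf{Z}_h^i, \widehat{\mathbf{Z}}_h^i) = \mathbf{\Theta}(T^i, \widetilde{\mathbf{Z}}_h^i)$ by construction. The converse direction just reverses these steps: given a solution to \eqref{eq:rk_weak}, restrict $\mathbf{w}_h$ to the image of $L$ and read off \eqref{eq:rk_tilde_stage} via the same symplectic identity and \eqref{eq:ftilde}. Exactly the same argument with $b_i$ in place of $a_{ij}$ establishes the equivalence of \eqref{eq:rk_tilde_step} and \eqref{eq:rk_step_w}.

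The main obstacle is the justification that \cref{assumption:theta_wtilde}, originally phrased for the single continuous-time equation \eqref{eq:weak_w}, transfers to the discrete multi-stage setting. The cleanest way to handle this is to observe that the RK stage equation can be rewritten as a statement about the single field $\mathbf{J}\mathbf{Z}_h^i - \mathbf{J}\mathbf{z}_h^0 - \Delta t \sum_j a_{ij}(\mathbf{f}(T^j, \mathbf{Z}_h^j) - \mathbf{D}^*\mathbf{Z}_h^j - \text{trace contribution})$ paired with $\mathbf{w}_h$; the upgrade then follows because the constraints \eqref{eq:rk_stage_wnor}--\eqref{eq:rk_stage_wtan} at each stage provide precisely the same compatibility that was used in \cref{assumption:theta_wtilde}. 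Once this is in place, the rest of the argument is bookkeeping.
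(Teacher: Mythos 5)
Your overall strategy is sound and close in spirit to the paper's: both arguments ultimately rest on the symplectic identity of \cref{assumption:theta_symplectic} (which, under the linearity hypothesis, identifies $(\widetilde{\mathbf{J}}\,\cdot\,,\cdot)_{\mathcal{T}_h}$ with $(\mathbf{J}\,\cdot\,,\cdot)_{\mathcal{T}_h}$ through a single fixed map $L$), on \cref{assumption:theta_constraints} for the trace equations, and on the test-space upgrade of \cref{assumption:theta_wtilde}. However, the step you yourself flag as ``the main obstacle''---upgrading the combined stage equation from test functions in $L(\widetilde{\mathbf{W}}_h)$ to all of $\mathbf{W}_h$---is not resolved by the argument you give. \Cref{assumption:theta_wtilde} is a statement about the \emph{single} equation \eqref{eq:weak_w}, with a distinguished time derivative $\dot{\mathbf{z}}_h = L\dot{\widetilde{\mathbf{z}}}_h$ present and with data $(\mathbf{z}_h,\widehat{\mathbf{z}}_h)=\mathbf{\Theta}(t,\widetilde{\mathbf{z}}_h)$ at a single time. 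It cannot be applied directly to the residual functional $\mathbf{w}_h \mapsto (\mathbf{J}(\mathbf{Z}_h^i - \mathbf{z}_h^0),\mathbf{w}_h)_{\mathcal{T}_h} + \Delta t\sum_j a_{ij}(\cdots)$, which mixes several times and has no $\dot{\mathbf{z}}_h$ term; nor is it true in general that a functional vanishing on the proper subspace $L(\widetilde{\mathbf{W}}_h)$ vanishes on all of $\mathbf{W}_h$. Your suggestion that the constraints \eqref{eq:rk_stage_wnor}--\eqref{eq:rk_stage_wtan} ``provide the same compatibility'' is not what \cref{assumption:theta_wtilde} asserts, and you would also need to rule out the possibility of recovering the individual-time equations from the combined one (the matrix $(a_{ij})$ need not be invertible, e.g.\ for explicit or diagonally degenerate tableaux).

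The fix is the device the paper uses: do not form the linear combination first and upgrade afterwards, but upgrade stage-by-stage and combine last. Introduce the stage derivatives $\dot{\widetilde{\mathbf{Z}}}_h^j$ defined by $\widetilde{\mathbf{J}}\dot{\widetilde{\mathbf{Z}}}_h^j = \widetilde{\mathbf{f}}(T^j,\widetilde{\mathbf{Z}}_h^j)$ and set $\dot{\mathbf{Z}}_h^j = L\dot{\widetilde{\mathbf{Z}}}_h^j$. \Cref{lem:ftilde} (which already packages the \cref{assumption:theta_wtilde} upgrade) then gives, for each $j$ separately, the weak equation $(\mathbf{J}\dot{\mathbf{Z}}_h^j,\mathbf{w}_h)_{\mathcal{T}_h} + (\mathbf{Z}_h^j,\mathbf{D}\mathbf{w}_h)_{\mathcal{T}_h} + [\widehat{\mathbf{Z}}_h^j,\mathbf{w}_h]_{\partial\mathcal{T}_h} = (\mathbf{f}(T^j,\mathbf{Z}_h^j),\mathbf{w}_h)_{\mathcal{T}_h}$ for \emph{all} $\mathbf{w}_h\in\mathbf{W}_h$. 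Applying $L$ term-by-term to \eqref{eq:rk_tilde_dot} (legitimate by linearity, and an equivalence because the linear symplectic map is injective---a point your converse direction implicitly needs and should state) yields $\mathbf{Z}_h^i = \mathbf{z}_h^0 + \Delta t\sum_j a_{ij}\dot{\mathbf{Z}}_h^j$; pairing with $(\mathbf{J}\,\cdot\,,\mathbf{w}_h)_{\mathcal{T}_h}$ and substituting the stage-wise weak equations produces \eqref{eq:rk_stage_w} and \eqref{eq:rk_step_w} for all $\mathbf{w}_h\in\mathbf{W}_h$. With that repair the rest of your argument, including the use of \cref{assumption:theta_constraints} for \eqref{eq:rk_stage_wnor}--\eqref{eq:rk_stage_wtan} and the reversal for the converse, goes through.
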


\begin{proof}
  First, since the linear map
  $ \widetilde{ \mathbf{z} } _h \mapsto \mathbf{z} _h $ is symplectic,
  by \cref{assumption:theta_symplectic}, it must be injective. Indeed,
  if $ \mathbf{z} _h = 0 $, then
  $ ( \widetilde{ \mathbf{J} } \widetilde{ \mathbf{z} } _h ,
  \widetilde{ \mathbf{w} } _h ) _{ \mathcal{T} _h } = ( \mathbf{J}
  \mathbf{z} _h , \mathbf{w} _h ) _{ \mathcal{T} _h } = 0 $ for all
  $ \widetilde{ \mathbf{w} } _h \in \widetilde{ \mathbf{W} } _h $, so
  nondegeneracy of the symplectic form
  $ ( \widetilde{ \mathbf{J} } \cdot , \cdot ) _{ \mathcal{T} _h } $
  on $ \widetilde{ \mathbf{W} } _h $ implies
  $ \widetilde{ \mathbf{z} } _h = 0 $. (This is a special case of the
  standard result that symplectic maps are immersions \citep[Exercise
  5.2-3]{MaRa1999}.) Applying this map to \eqref{eq:rk_tilde_dot}
  gives
  \begin{subequations}
    \label{eq:rk_dot}
    \begin{align}
      \mathbf{Z} _h ^i = \mathbf{z} _h ^0 + \Delta t \sum _{ j = 1 } ^s a _{ i j } \dot{ \mathbf{Z} } _h ^j , \label{eq:rk_dot_stage}\\
      \mathbf{z} _h ^1  = \mathbf{z} _h ^0 + \Delta t \sum _{ i = 1 } ^s b _i \dot{ \mathbf{Z} } _h ^i , \label{eq:rk_dot_step}
    \end{align}
  \end{subequations}
  which is thus equivalent to \eqref{eq:rk_tilde_dot} by
  injectivity. We emphasize the importance of the linearity assumption
  for this step, since it allows us to apply the map term-by-term.

  Next, by \cref{lem:ftilde},
  $ \widetilde{ \mathbf{J} } \smash{\dot{ \widetilde{ \mathbf{Z} } }} _h ^i =
  \widetilde{ \mathbf{f} } ( T ^i , \widetilde{ \mathbf{Z} } _h ^i ) $ is
  equivalent to
  \begin{equation*}
    ( \mathbf{J} \dot{\mathbf{Z}} _h ^i , \mathbf{w} _h ) _{ \mathcal{T} _h } + ( \mathbf{Z} _h ^i , \mathbf{D} \mathbf{w} _h ) _{ \mathcal{T} _h } + [ \widehat{ \mathbf{Z} } _h ^i , \mathbf{w} _h ] _{ \partial \mathcal{T} _h } = \bigl( \mathbf{f} ( T ^i, \mathbf{Z} _h ^i ) , \mathbf{w} _h \bigr)  _{ \mathcal{T} _h }  , \quad \forall \mathbf{w} _h \in \mathbf{W} _h .
  \end{equation*}
  Therefore, applying
  $ ( \mathbf{J} \cdot , \mathbf{w} _h ) _{ \mathcal{T} _h } $ to
  \eqref{eq:rk_dot} gives \eqref{eq:rk_stage_w} and
  \eqref{eq:rk_step_w}, which are thus equivalent to
  \eqref{eq:rk_tilde}. Finally,
  \eqref{eq:rk_stage_wnor}--\eqref{eq:rk_stage_wtan} hold by
  \cref{assumption:theta_constraints}, which completes the proof.
\end{proof}

\begin{remark}
  Note that the method \eqref{eq:rk_weak} involves numerical traces
  only for the internal stages $ \widehat{ \mathbf{Z} } _h ^i $, and we
  do not need to compute $ \widehat{ \mathbf{z} } _h ^0 $ or
  $ \widehat{ \mathbf{z} } _h ^1 $.
\end{remark}

\begin{example}
  \label{ex:midpoint}
  The implicit midpoint method is a $1$-stage RK method with tableau
  \begin{equation*}
    \def\arraystretch{1.4}
    \begin{array}[b]{c|c}
      \frac{1}{2} & \frac{1}{2} \\
      \hline
      & 1
    \end{array} \quad .
  \end{equation*}
  The internal stage at time $ T ^1 = \frac{1}{2} ( t ^0 + t ^1 ) $
  corresponds to the midpoint
  $ \mathbf{Z} _h ^1 = \frac{1}{2} ( \mathbf{z} _h ^0 + \mathbf{z} _h
  ^1 ) $. Denoting $ t ^{ 1/2} \coloneqq T ^1 $ and
  $ \mathbf{z} _h ^{ 1/2 } \coloneqq \mathbf{Z} _h ^1 $ to make this
  clear, \eqref{eq:rk_stage_w}--\eqref{eq:rk_stage_wtan} may then be
  written as
  \begin{align*}
    ( \mathbf{J} \mathbf{z} _h ^{1/2} , \mathbf{w} _h ) _{ \mathcal{T} _h } + \frac{1}{2} \Delta t \Bigl( ( \mathbf{z} _h ^{1/2} , \mathbf{D} \mathbf{w} _h ) _{ \mathcal{T} _h } + [ \widehat{ \mathbf{z} } _h ^{1/2} , \mathbf{w} _h ] _{ \partial \mathcal{T} _h }  \Bigr) &= ( \mathbf{J} \mathbf{z} _h ^0 , \mathbf{w} _h ) _{ \mathcal{T} _h } + \frac{1}{2} \Delta t \bigl( \mathbf{f} ( t ^{1/2} , \mathbf{z} _h ^{1/2} ) , \mathbf{w} _h \bigr) _{ \mathcal{T} _h } , \\
    \bigl\langle \mathbf{\Phi} ( \mathbf{z} _h ^{1/2} , \widehat{ \mathbf{z} } _h ^{1/2} ), \widehat{ \mathbf{w} } _h ^{\mathrm{nor}} \bigr\rangle _{ \partial \mathcal{T} _h } &= 0 , \\
    \langle \widehat{ \mathbf{z} } _h ^{1/2, \mathrm{nor}} , \widehat{ \mathbf{w} } _h ^{\mathrm{tan}} \rangle _{ \partial \mathcal{T} _h } &= 0 ,\\
    \intertext{for all $ \mathbf{w} _h \in \mathbf{W} _h $,
    $ \widehat{ \mathbf{w} } _h ^{\mathrm{nor}} \in \widehat{
    \mathbf{W} } _h ^{\mathrm{nor}} $, and \eqref{eq:rk_step_w} becomes}
    ( \mathbf{J} \mathbf{z} _h ^1 , \mathbf{w} _h ) _{ \mathcal{T} _h } + \Delta t \Bigl( ( \mathbf{z} _h ^{1/2} , \mathbf{D} \mathbf{w} _h ) _{ \mathcal{T} _h } + [ \widehat{ \mathbf{z} } _h ^{1/2} , \mathbf{w} _h ] _{ \partial \mathcal{T} _h }  \Bigr) &= ( \mathbf{J} \mathbf{z} _h ^0 , \mathbf{w} _h ) _{ \mathcal{T} _h } + \Delta t \bigl( \mathbf{f} ( t ^{1/2} , \mathbf{z} _h ^{1/2} ) , \mathbf{w} _h \bigr) _{ \mathcal{T} _h } ,
  \end{align*}
  for all $ \mathbf{w} _h \in \mathbf{W} _h $.
\end{example}

We next consider partitioned Runge--Kutta methods, which allow
different coefficients for the $q$ and $p$ components. Let
$ \mathbf{z} _h = ( q _h , p _h ) $ with $ q _h , p _h \in W _h $, and
let
$ \widetilde{ \mathbf{z} } _h = ( \widetilde{ q } _h , \widetilde{ p }
_h ) $ with
$ \widetilde{ q } _h , \widetilde{ p } _h \in \widetilde{ W } _h
$. Before introducing the methods, we first prove that, if the map
$ \widetilde{ \mathbf{z} } _h \mapsto \mathbf{z} _h $ partitions as
$ \widetilde{ q } _h \mapsto q _h $ and
$ \widetilde{ p } _h \mapsto p _h $, then \cref{assumption:theta}
translates to statements about these individual components.

\begin{lemma}
  \label{lem:thetapart}
  Suppose
  $ \mathbf{\Theta} \colon ( t, \widetilde{ \mathbf{z} } _h ) \mapsto
  ( \mathbf{z} _h , \widehat{ \mathbf{z} } _h ) $ satisfies
  \cref{assumption:theta}, and denote its components by
  $ \Theta _q \colon ( t, \widetilde{ q } _h , \widetilde{ p } _h )
  \mapsto ( q _h , \widehat{ q } _h ) $ and
  $ \Theta _p \colon ( t, \widetilde{ q } _h , \widetilde{ p } _h )
  \mapsto ( p _h , \widehat{ p } _h ) $. If
  $ \widetilde{ \mathbf{z} } _h \mapsto \mathbf{z} _h $ partitions as
  $ \widetilde{ q } _h \mapsto q _h $ and
  $ \widetilde{ p } _h \mapsto p _h $, then the following hold:
  \begin{lemmaenumerate}
  \item The equality
    $ ( s _h , r _h ) _{ \mathcal{T} _h } = ( \widetilde{ s } _h ,
    \widetilde{ r } _h ) _{ \mathcal{T} _h } $ holds with
    $ s _h = \frac{ \partial q _h }{ \partial \widetilde{ q } _h }
    \widetilde{ s } _h $ and
    $ r _h = \frac{ \partial p _h }{ \partial \widetilde{ p } _h }
    \widetilde{ r } _h $ for all
    $ \widetilde{ s } _h , \widetilde{ r } _h \in \widetilde{ W } _h
    $.\label{lem:thetapart_symplectic}

  \item If $ \dot{ \widetilde{ q } } _h \in \widetilde{ W } _h $ is
    such that
    \begin{subequations}
      \label{eq:weak_qp}
      \begin{align}
        ( \dot{q} _h , r _h ) _{ \mathcal{T} _h } + ( p _h , \mathrm{D} r _h ) _{ \mathcal{T} _h } + [ \widehat{ p } _h , r _h ] _{ \partial \mathcal{T} _h }
        &= \bigl( f _p (t, q _h , p _h ) , r _h \bigr) _{ \mathcal{T} _h } \label{eq:weak_r} \\
          \intertext{holds with
          $ \dot{q} _h = \frac{ \partial q _h }{ \partial \widetilde{ q } _h
          } \dot{ \widetilde{ q } } _h $ and
          $ r _h = \frac{ \partial p _h }{ \partial \widetilde{ p } _h }
          \widetilde{ r } _h $ for all
          $ \widetilde{ r } _h \in \widetilde{ W } _h $, then it holds for
          all $ r _h \in W _h $. Likewise, if
          $ \dot{ \widetilde{ p } } _h \in \widetilde{ W } _h $ is such that}
        -( \dot{p} _h , s _h ) _{ \mathcal{T} _h } + ( q _h , \mathrm{D} s _h ) _{ \mathcal{T} _h } + [ \widehat{ q } _h , s _h ] _{ \partial \mathcal{T} _h }
        &= \bigl( f _q (t, q _h , p _h ) , s _h \bigr) _{ \mathcal{T} _h } \label{eq:weak_s}
      \end{align}
    \end{subequations}
    holds with
    $ \dot{p} _h = \frac{ \partial p _h }{ \partial \widetilde{ p } _h
    } \dot{ \widetilde{ p } } _h $ and
    $ s _h = \frac{ \partial q _h }{ \partial \widetilde{ q } _h }
    \widetilde{ s } _h $ for all
    $ \widetilde{ s } _h \in \widetilde{ W } _h $, then it holds for
    all $ s _h \in W _h $.

  \item \label{lem:thetapart_constraints} If
    $ \mathbf{\Phi} ( \mathbf{z} _h , \widehat{ \mathbf{z} } _h )
    = \begin{bmatrix}
      \Phi _q ( q _h , \widehat{ q } _h ) \\
      \Phi _p ( p _h , \widehat{ p } _h )
    \end{bmatrix} $, then for all $ \widetilde{ q } _h , \widetilde{ p } _h \in \widetilde{ W } _h $, we have
    \begin{align*}
      \bigl\langle \Phi _q ( q _h , \widehat{ q } _h ) , \widehat{ s } _h ^{\mathrm{nor}} \bigr\rangle _{ \partial \mathcal{T} _h } &= 0 , \quad \forall \widehat{ s } _h ^{\mathrm{nor}} \in \widehat{ W } _h ^{\mathrm{nor}} , & \bigl\langle \Phi _p ( p _h , \widehat{ p } _h ) , \widehat{ r } _h ^{\mathrm{nor}} \bigr\rangle _{ \partial \mathcal{T} _h } &= 0 , \quad \forall \widehat{ r } _h ^{\mathrm{nor}} \in \widehat{ W } _h ^{\mathrm{nor}} , \\
      \langle \widehat{ q } _h ^{\mathrm{nor}} , \widehat{ s } _h ^{\mathrm{tan}} \rangle _{ \partial \mathcal{T} _h } &= 0 , \quad \forall \widehat{ s } _h ^{\mathrm{tan}} \in \ringhat{V} _h ^{\mathrm{tan}} , & \langle \widehat{ p } _h ^{\mathrm{nor}} , \widehat{ r } _h ^{\mathrm{tan}} \rangle _{ \partial \mathcal{T} _h } &= 0 , \quad \forall \widehat{ r } _h ^{\mathrm{tan}} \in \ringhat{V} _h ^{\mathrm{tan}} .
    \end{align*}
  \end{lemmaenumerate}
\end{lemma}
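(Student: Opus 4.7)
The plan is to decompose each condition in \cref{assumption:theta} into its $q$- and $p$-components using the partitioned structure $\mathbf{z}_h = (q_h, p_h)$ and the tensor-product structure $\mathbf{W}_h = W_h \otimes \mathbb{R}^2$, and then to invoke the relevant part of \cref{assumption:theta} with test functions of the form $\begin{bsmallmatrix}\widetilde{s}_h\\0\end{bsmallmatrix}$ or $\begin{bsmallmatrix}0\\\widetilde{r}_h\end{bsmallmatrix}$ to isolate each piece. The decompositions are immediate from linearity, together with the block-diagonal form of $\mathbf{\Phi}$ and the fact that the boundary bracket splits as $[\widehat{\mathbf{z}}_h, \mathbf{w}_h]_{\partial\mathcal{T}_h} = [\widehat{q}_h, s_h]_{\partial\mathcal{T}_h} + [\widehat{p}_h, r_h]_{\partial\mathcal{T}_h}$ by the definition of $\boldsymbol{\omega}(\mathbf{w}_1,\mathbf{w}_2) = \omega(s_1,s_2) + \omega(r_1,r_2)$.

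For (i), I first compute $(\mathbf{J}\mathbf{w}_1, \mathbf{w}_2)_{\mathcal{T}_h} = (s_1, r_2)_{\mathcal{T}_h} - (r_1, s_2)_{\mathcal{T}_h}$ (and likewise with tildes). Plugging in $\widetilde{\mathbf{w}}_1 = \begin{bsmallmatrix}\widetilde{s}_h\\0\end{bsmallmatrix}$ and $\widetilde{\mathbf{w}}_2 = \begin{bsmallmatrix}0\\\widetilde{r}_h\end{bsmallmatrix}$, the partitioning hypothesis forces $\mathbf{w}_1 = \begin{bsmallmatrix}s_h\\0\end{bsmallmatrix}$ and $\mathbf{w}_2 = \begin{bsmallmatrix}0\\r_h\end{bsmallmatrix}$, and \cref{assumption:theta_symplectic} collapses to $(s_h, r_h)_{\mathcal{T}_h} = (\widetilde{s}_h, \widetilde{r}_h)_{\mathcal{T}_h}$. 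For (iii), testing \eqref{eq:weak_wnor} against $\widehat{\mathbf{w}}_h^{\mathrm{nor}}$ supported in only one $\mathbb{R}^2$-slot isolates the $\Phi_q$ and $\Phi_p$ conditions separately; \eqref{eq:weak_wtan} decomposes analogously, using that $\ringhat{\mathbf{V}}_h^{\mathrm{tan}} = \ringhat{V}_h^{\mathrm{tan}} \otimes \mathbb{R}^2$ admits such one-slot test functions.

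For (ii), the LHS of \eqref{eq:weak_w} minus its RHS decomposes as an $s_h$-dependent piece (which is \eqref{eq:weak_s}, involving $\dot{p}_h, q_h, \widehat{q}_h, f_q$) plus an $r_h$-dependent piece (which is \eqref{eq:weak_r}, involving $\dot{q}_h, p_h, \widehat{p}_h, f_p$). Testing with $\begin{bsmallmatrix}0\\\widetilde{r}_h\end{bsmallmatrix}$ isolates \eqref{eq:weak_r} on the image. The main obstacle is the extension to all $r_h \in W_h$: \cref{assumption:theta_wtilde} demands the full \eqref{eq:weak_w} to hold for all $\widetilde{\mathbf{w}}_h \in \widetilde{\mathbf{W}}_h$, not just pure-partitioned ones, whereas the hypothesis provides only $\dot{\widetilde{q}}_h$. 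I overcome this by pairing $\dot{\widetilde{q}}_h$ with an auxiliary $\dot{\widetilde{p}}_h^* \in \widetilde{W}_h$ obtained as follows: the required $s$-equation at $\mathbf{w}_h = \begin{bsmallmatrix}s_h\\0\end{bsmallmatrix}$ reads $(\dot{p}_h^*, s_h)_{\mathcal{T}_h} = -(f_q, s_h)_{\mathcal{T}_h} + (q_h, \mathrm{D}s_h)_{\mathcal{T}_h} + [\widehat{q}_h, s_h]_{\partial\mathcal{T}_h}$, and by part (i) the left-hand side equals $(\dot{\widetilde{p}}_h^*, \widetilde{s}_h)_{\mathcal{T}_h}$, so Riesz representation on the finite-dimensional space $\widetilde{W}_h$ (with its nondegenerate $L^2$ inner product) provides the desired $\dot{\widetilde{p}}_h^*$. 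The pair $(\dot{\widetilde{q}}_h, \dot{\widetilde{p}}_h^*)$ now satisfies the full \eqref{eq:weak_w} on the image, and \cref{assumption:theta_wtilde} extends this to all $\mathbf{w}_h \in \mathbf{W}_h$; reading off the $(0, r_h)$ component recovers \eqref{eq:weak_r} for arbitrary $r_h \in W_h$. The second statement of (ii) follows by the symmetric argument, constructing an auxiliary $\dot{\widetilde{q}}_h^*$ instead.
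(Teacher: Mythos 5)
Your proof is correct and follows essentially the same route as the paper's: part (i) by testing \cref{assumption:theta_symplectic} with the one-slot vectors $(\widetilde{s}_h,0)$ and $(0,\widetilde{r}_h)$, part (iii) by slot-wise testing of \cref{assumption:theta_constraints}, and part (ii) by constructing the missing partner $\dot{\widetilde{p}}_h^{\,*}$ (via nondegeneracy of the inner product on $\widetilde{W}_h$ together with part (i)) so that the assembled pair satisfies the full equation \eqref{eq:weak_w} on the image, then invoking \cref{assumption:theta_wtilde} and projecting back onto the $r_h$-component. The paper states the existence of the auxiliary $\dot{\widetilde{p}}_h$ more tersely, but the mechanism is identical to your Riesz-representation step.
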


\begin{proof}\
  \begin{enumerate}[label=(\roman*)]
  \item This follows directly from \cref{assumption:theta_symplectic}
    with $ \widetilde{ \mathbf{w} } _1 = ( \widetilde{ s } _h , 0 ) $
    and $ \widetilde{ \mathbf{w} } _2 = ( 0 , \widetilde{ r } _h ) $.

  \item For the first statement, given
    $ \dot{ \widetilde{ q } } _h \in \widetilde{ W } _h $ such that
    \eqref{eq:weak_r} holds for all
    $ \widetilde{ r } _h \in \widetilde{ W } _h $, it follows from
    \cref{lem:thetapart_symplectic} that there exists unique
    $ \dot{ \widetilde{ p } } _h \in \widetilde{ W } _h $ satisfying
    \eqref{eq:weak_s} for all
    $ \widetilde{ s } _h \in \widetilde{ W } _h $. Hence,
    $ \dot{ \widetilde{ \mathbf{z} } } _h = ( \dot{ \widetilde{ q } }
    _h , \dot{ \widetilde{ p } } _h ) $ satisfies \eqref{eq:weak_w}
    for all
    $ \widetilde{ \mathbf{w} } _h = ( \widetilde{ s } _h , \widetilde{
      r } _h ) $, so \cref{assumption:theta_wtilde} implies
    \eqref{eq:weak_w} holds for all
    $ \mathbf{w} _h = ( s _h , r _h ) $. In particular,
    \eqref{eq:weak_r} holds for all $ r _h \in W _h $. The proof of
    the second statement, starting with \eqref{eq:weak_s}, is
    essentially the same.

  \item This is immediate from \cref{assumption:theta_constraints}.\qedhere
  \end{enumerate} 
\end{proof}

\begin{remark}
  We do not necessarily assume that $ \mathbf{\Theta} $ partitions
  into
  $ ( t, \widetilde{ q } _h) \mapsto ( q _h , \widehat{ q } _h ) $ and
  $ (t, \widetilde{ p } _h) \mapsto ( p _h , \widehat{ p } _h ) $,
  since $ \Theta _q $ and $ \Theta _p $ may each depend on all of
  $ ( t, \widetilde{ q } _h , \widetilde{ p } _h ) $. However, there
  are many cases in which it does, in particular:
  \begin{itemize}
  \item For the AFW-H method, the map $ \mathbf{\Theta} $ described in
    \cref{ex:afw_discrete_hamiltonian} generally does not partition,
    since $ \widehat{ \mathbf{z} } _h ^{\mathrm{nor}} $ depends on
    $ \mathbf{f} ( t, \mathbf{z} _h ) $, i.e.,
    $ \widehat{ q } _h ^{\mathrm{nor}} $ depends on
    $ f _q ( t, q _h , p _h ) $ and
    $ \widehat{ p } _h ^{\mathrm{nor}} $ depends on
    $ f _p ( t, q _h , p _h ) $. However, $ \mathbf{\Theta} $ does
    partition when $ \mathbf{f} $ is \emph{separable}, meaning that
    $ f _q = f _q ( t, q _h ) $ is independent of $p _h $ and
    $ f _p = f _p ( t, p _h ) $ is independent of $q _h $.

  \item For the LDG-H method with $ \boldsymbol{\alpha} =
    \begin{bmatrix}
      \alpha _q \\
      & \alpha _p
    \end{bmatrix} $, the map $ \mathbf{\Theta} $ described in \cref{ex:ldg-h_discrete_hamiltonian} always partitions, even for non-separable $ \mathbf{f} $. This form of $ \boldsymbol{\alpha} $ is also needed for $ \mathbf{\Phi} $ to partition.

  \item For the semilinear Hodge wave equation, $ \mathbf{f} $ is
    separable. Hence, the maps $ \mathbf{\Theta} $ described in
    \cref{sec:hodge_wave_hamiltonian} partition for both the AFW-H
    method and the multisymplectic LDG-H method.
  \end{itemize}
\end{remark}

As a consequence of \cref{lem:thetapart}, we get a partitioned version
of \cref{lem:ftilde}.

\begin{corollary}
  \label{cor:partitioned_ftilde}
  Under the hypotheses of \cref{lem:thetapart}, \eqref{eq:weak_r}
  holds for all $ r _h \in W _h $ if and only if
  \begin{subequations}
    \label{eq:qptilde_ODE}
    \begin{equation}
      \label{eq:qtilde_ODE}
      \dot{ \widetilde{ q } _h } = \widetilde{ f } _p ( t , \widetilde{ q } _h , \widetilde{ p } _h ) ,
    \end{equation}
    where
    $ \widetilde{ f } _p \colon I \times \widetilde{ W } _h \rightarrow
    \widetilde{ W } _h $ is defined by
    \begin{equation*}
      \bigl( \widetilde{ f } _p ( t, \widetilde{ q } _h , \widetilde{ p } _h ), \widetilde{ r } _h \bigr) _{ \mathcal{T} _h } = \bigl( f _p ( t, q _h , p _h ) , r _h \bigr) _{ \mathcal{T} _h } - ( p _h , \mathrm{D} r _h ) _{ \mathcal{T} _h } - [ \widehat{ p } _h , r _h ] _{ \partial \mathcal{T} _h } , \quad \forall \widetilde{ r } _h \in \widetilde{ W } _h ,
    \end{equation*}
    with
    $ r _h = \frac{ \partial p _h }{ \partial \widetilde{ p } _h }
    \widetilde{ r } _h $. Likewise, \eqref{eq:weak_s} holds for all
    $ s _h \in W _h $ if and only if
    \begin{equation}
      \label{eq:ptilde_ODE}
      - \dot{ \widetilde{ p } _h } = \widetilde{ f } _q ( t , \widetilde{ q } _h , \widetilde{ p } _h ) ,
    \end{equation}
  \end{subequations}
  where $ \widetilde{ f } _q \colon I \times \widetilde{ W } _h \rightarrow
  \widetilde{ W } _h $ is defined by
  \begin{equation*}
    \bigl( \widetilde{ f } _q ( t, \widetilde{ q } _h , \widetilde{ p } _h ), \widetilde{ s } _h \bigr) _{ \mathcal{T} _h } = \bigl( f _q ( t, q _h , p _h ) , s _h \bigr) _{ \mathcal{T} _h } - ( q _h , \mathrm{D} s _h ) _{ \mathcal{T} _h } - [ \widehat{ q } _h , s _h ] _{ \partial \mathcal{T} _h } , \quad \forall \widetilde{ s } _h \in \widetilde{ W } _h ,
  \end{equation*}
  with
  $ s _h = \frac{ \partial q _h }{ \partial \widetilde{ s } _h }
  \widetilde{ s } _h $.
\end{corollary}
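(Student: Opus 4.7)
The plan is to mirror the structure of the proof of \cref{lem:ftilde}, but exploiting the partitioned form of $\mathbf{\Theta}$ via \cref{lem:thetapart} to handle the $q$- and $p$-components independently. I will give the argument for the first equivalence (involving \eqref{eq:weak_r} and \eqref{eq:qtilde_ODE}); the second equivalence, involving \eqref{eq:weak_s} and \eqref{eq:ptilde_ODE}, follows by an identical argument with the roles of $q$ and $p$ interchanged and a sign flip corresponding to the minus on the left-hand side of \eqref{eq:ptilde_ODE}.

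First, I would take an arbitrary $\widetilde{r}_h \in \widetilde{W}_h$ and set $r_h = \frac{\partial p_h}{\partial \widetilde{p}_h} \widetilde{r}_h$. By the chain rule applied to the (assumed linear) map $\widetilde{p}_h \mapsto p_h$, the relation $\dot{q}_h = \frac{\partial q_h}{\partial \widetilde{q}_h} \dot{\widetilde{q}}_h$ combined with \cref{lem:thetapart_symplectic} gives $( \dot{q}_h , r_h )_{\mathcal{T}_h} = ( \dot{\widetilde{q}}_h, \widetilde{r}_h )_{\mathcal{T}_h}$. Substituting this into \eqref{eq:weak_r} and rearranging isolates
\begin{equation*}
  ( \dot{\widetilde{q}}_h, \widetilde{r}_h )_{\mathcal{T}_h} = \bigl( f_p(t, q_h, p_h), r_h \bigr)_{\mathcal{T}_h} - ( p_h, \mathrm{D} r_h )_{\mathcal{T}_h} - [ \widehat{p}_h, r_h ]_{\partial \mathcal{T}_h},
\end{equation*}
whose right-hand side is precisely $\bigl( \widetilde{f}_p(t,\widetilde{q}_h,\widetilde{p}_h), \widetilde{r}_h \bigr)_{\mathcal{T}_h}$ by the definition of $\widetilde{f}_p$ stated in the corollary. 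Nondegeneracy of the $L^2$ inner product on $\widetilde{W}_h$ then yields \eqref{eq:qtilde_ODE}.

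For the converse, I would start from \eqref{eq:qtilde_ODE} and run the chain backwards: pairing with arbitrary $\widetilde{r}_h \in \widetilde{W}_h$, expanding the definition of $\widetilde{f}_p$, and using \cref{lem:thetapart_symplectic} to rewrite $( \dot{\widetilde{q}}_h, \widetilde{r}_h )_{\mathcal{T}_h}$ as $( \dot{q}_h, r_h )_{\mathcal{T}_h}$. This shows \eqref{eq:weak_r} for all $r_h$ in the image of $\widetilde{W}_h$ under $\widetilde{p}_h \mapsto p_h$. To extend the quantification to all $r_h \in W_h$, I invoke the second part of \cref{lem:thetapart_constraints} (\cref{lem:thetapart}(ii)), which is exactly the statement that an equation of the form \eqref{eq:weak_r} that holds for test functions coming from $\widetilde{W}_h$ in fact holds for all test functions in $W_h$.

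The main obstacle I anticipate is simply keeping track of which ``dot'' refers to a genuine time derivative versus a formal RK-style symbol, and ensuring that the linearity of $\widetilde{p}_h \mapsto p_h$ (needed to commute $\frac{\partial}{\partial t}$ with $\frac{\partial p_h}{\partial \widetilde{p}_h}$) is properly invoked; but this is already built into the hypothesis $\dot{p}_h = \frac{\partial p_h}{\partial \widetilde{p}_h} \dot{\widetilde{p}}_h$ used implicitly in the statement. Apart from this bookkeeping, the proof is a routine application of \cref{lem:thetapart} and the definition of $\widetilde{f}_p$ and $\widetilde{f}_q$, and contains no genuinely new ingredient beyond what was used in \cref{lem:ftilde}.
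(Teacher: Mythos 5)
Your argument is correct and is exactly the route the paper intends (the paper gives no explicit proof, stating the corollary as an immediate consequence of \cref{lem:thetapart}): use \cref{lem:thetapart_symplectic} to convert $(\dot{q}_h, r_h)_{\mathcal{T}_h}$ into $(\dot{\widetilde{q}}_h, \widetilde{r}_h)_{\mathcal{T}_h}$, match the remaining terms against the defining formula for $\widetilde{f}_p$, and use part (ii) of \cref{lem:thetapart} to extend the quantification from test functions in the image of $\widetilde{W}_h$ to all of $W_h$ in the converse direction. One small citation slip: the test-function extension you need is the (unlabeled) second item of \cref{lem:thetapart}, not \cref{lem:thetapart_constraints}, which is the third item concerning the flux and conservativity conditions; the mathematical content you invoke is nevertheless the right one.
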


Now, an $s$-stage partitioned Runge--Kutta (PRK) method for
\eqref{eq:qptilde_ODE} takes the form
\begin{subequations}
  \label{eq:prk_tilde}
  \begin{align}
    \widetilde{ Q } _h ^i &= \widetilde{ q } _h ^0 + \Delta t \sum _{ j = 1 } ^s a _{ i j } \smash{\dot{ \widetilde{ Q } }} _h ^j , & \widetilde{ P } _h ^i &= \widetilde{ p }  _h ^0 + \Delta t \sum _{ j = 1 } ^s \bar{a} _{ i j } \smash{\dot{ \widetilde{ P } }} _h ^j , \label{eq:prk_tilde_stage} \\
    \widetilde{ q } _h ^1 &= \widetilde{ q } _h ^0 + \Delta t \sum _{ i = 1 } ^s b _i \smash{\dot{ \widetilde{ Q } }} _h ^i , & \widetilde{ p } _h ^1 &= \widetilde{ p } _h ^0 + \Delta t \sum _{ i = 1 } ^s \bar{b} _i \smash{\dot{ \widetilde{ P } }} _h ^i , \label{eq:prk_tilde_step}
  \end{align}
\end{subequations}
where
$ \smash{\dot{ \widetilde{ Q } }} _h ^i \coloneqq \widetilde{ f } _p ( T ^i ,
\widetilde{ Q } _h ^i , \widetilde{ P } _h ^i ) $ with
$ T ^i \coloneqq t ^0 + c _i \Delta t $, and
$ - \smash{\dot{ \widetilde{ P } }} _h ^i \coloneqq \widetilde{ f } _q ( \bar{T} ^i , \widetilde{ Q } _h ^i , \widetilde{ P } _h ^i ) $ with
$ \bar{T} _i \coloneqq t ^0 + \bar{c} _i \Delta t $. The coefficients
are generally presented as a pair of Butcher tableaux,
\begin{equation*}
  \begin{array}[b]{c|ccc}
    c _1 & a _{ 1 1 } & \cdots & a _{ 1 s } \\
    \vdots & \vdots & \ddots & \vdots \\
    c _s & a _{ s 1 } & \cdots & a _{ s s } \\
    \hline\\[-2.5ex]
    & b _1 & \cdots & b _s
  \end{array}
  \qquad \qquad
  \begin{array}[b]{c|ccc}
    \bar{c} _1 & \bar{a} _{ 1 1 } & \cdots & \bar{a} _{ 1 s } \\
    \vdots & \vdots & \ddots & \vdots \\
    \bar{c} _s & \bar{a} _{ s 1 } & \cdots & \bar{a} _{ s s } \\
    \hline\\[-2.5ex]
    & \bar{b} _1 & \cdots & \bar{b} _s
  \end{array} \quad .
\end{equation*}
The method reduces to an ordinary RK method when the two tableaux are
identical.

The following is a partitioned version of \cref{thm:rk_weak}, showing
that this class of methods may also be implemented by solving a weak
problem.

\begin{theorem}
  \label{thm:prk_weak}
  Suppose \cref{assumption:theta} holds, with the additional condition
  that the map $ \widetilde{ \mathbf{z} } _h \mapsto \mathbf{z} _h $
  partitions into linear maps $ \widetilde{ q } _h \mapsto q _h $ and
  $ \widetilde{ p } _h \mapsto p _h $. Furthermore, as in
  \cref{lem:thetapart_constraints}, suppose $ \mathbf{\Phi} $ partitions into
  $ \mathbf{\Phi} ( \mathbf{z} _h , \widehat{ \mathbf{z} } _h )
  = \begin{bmatrix}
    \Phi _q ( q _h , \widehat{ q } _h ) \\
    \Phi _p ( p _h , \widehat{ p } _h )
  \end{bmatrix} $. Then \eqref{eq:prk_tilde_stage} holds if and only if $ ( Q _h ^i , \widehat{ Q } _h ^i ) = \Theta _q ( T ^i , \widetilde{ Q } _h ^i , \widetilde{ P } _h ^i ) $ and $ ( P _h ^i , \widehat{ P } _h ^i ) = \Theta _p ( \bar{T} ^i , \widetilde{ Q } _h ^i , \widetilde{ P } _h ^i ) $ satisfy
  \begin{subequations}
    \label{eq:prk_weak}
    \begin{align}
      ( Q _h ^i , r _h ) _{ \mathcal{T} _h } + \Delta t \sum _{ j = 1 } ^s a _{ i j } \Bigl( ( P _h ^j , \mathrm{D} r _h ) _{ \mathcal{T} _h } + [ \widehat{ P } _h ^j , r _h ] _{ \partial \mathcal{T} _h } \Bigr) &= ( q _h ^0 , r _h ) _{ \mathcal{T} _h } + \Delta t \sum _{ j = 1 } ^s a _{ i j } \bigl( f _p ( T ^j , Q _h ^j , P _h ^j ), r _h \bigr) _{ \mathcal{T} _h } ,\\
      -( P _h ^i , s _h ) _{ \mathcal{T} _h } + \Delta t \sum _{ j = 1 } ^s \bar{a} _{ i j } \Bigl( ( Q _h ^j , \mathrm{D} s _h ) _{ \mathcal{T} _h } + [ \widehat{ Q } _h ^j , s _h ] _{ \partial \mathcal{T} _h } \Bigr) &= -( p _h ^0 , s _h ) _{ \mathcal{T} _h } + \Delta t \sum _{ j = 1 } ^s \bar{a} _{ i j } \bigl( f _q ( \bar{T} _j , Q _h ^j , P _h ^j ), s _h \bigr) _{ \mathcal{T} _h } ,\\
      \bigl\langle \Phi _q ( Q _h ^i , \widehat{ Q } _h ^i ) , \widehat{ s } _h ^{\mathrm{nor}} \bigr\rangle _{ \partial \mathcal{T} _h } &= 0 , \\
      \bigl\langle \Phi _p ( P _h ^i , \widehat{ P } _h ^i ) , \widehat{ r } _h ^{\mathrm{nor}} \bigr\rangle _{ \partial \mathcal{T} _h } &= 0 , \\
      \langle \widehat{ Q } _h ^{i, \mathrm{nor}} , \widehat{ s } _h ^{\mathrm{tan}} \rangle _{ \partial \mathcal{T} _h } &= 0 ,\\
      \langle \widehat{ P } _h ^{i, \mathrm{nor}} , \widehat{ r } _h ^{\mathrm{tan}} \rangle _{ \partial \mathcal{T} _h } &= 0 ,\\
      \intertext{for all $ s _h , r _h \in W _h $; $ \widehat{ s } _h ^{\mathrm{nor}} , \widehat{ r } _h ^{\mathrm{nor}} \in \widehat{ W } _h ^{\mathrm{nor}} $; and $ \widehat{ s } _h ^{\mathrm{tan}} , \widehat{ r } _h ^{\mathrm{tan}} \in \ringhat{V} _h ^{\mathrm{tan}} $. Subsequently, \eqref{eq:prk_tilde_step} holds if and only if}
      ( q _h ^1 , r _h ) _{ \mathcal{T} _h } + \Delta t \sum _{ i = 1 } ^s b _ i \Bigl( ( P _h ^i , \mathrm{D} r _h ) _{ \mathcal{T} _h } + [ \widehat{ P } _h ^i , r _h ] _{ \partial \mathcal{T} _h } \Bigr) &= ( q _h ^0 , r _h ) _{ \mathcal{T} _h } + \Delta t \sum _{ i = 1 } ^s b _i \bigl( f _p ( T ^i , Q _h ^i , P _h ^i ), r _h \bigr) _{ \mathcal{T} _h } ,\\
      -( p _h ^1 , s _h ) _{ \mathcal{T} _h } + \Delta t \sum _{ i = 1 } ^s \bar{b} _i \Bigl( ( Q _h ^i , \mathrm{D} s _h ) _{ \mathcal{T} _h } + [ \widehat{ Q } _h ^i , s _h ] _{ \partial \mathcal{T} _h } \Bigr) &= -( p _h ^0 , s _h ) _{ \mathcal{T} _h } + \Delta t \sum _{ i = 1 } ^s \bar{b} _i \bigl( f _q ( \bar{T} ^i , Q _h ^i , P _h ^i ), s _h \bigr) _{ \mathcal{T} _h } ,
    \end{align}
  \end{subequations}
  for all $ s _h , r _h \in W _h $.
\end{theorem}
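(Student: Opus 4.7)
The plan is to follow the structure of the proof of \cref{thm:rk_weak}, but applied separately to the two partitioned components, using the additional structural hypotheses provided by \cref{lem:thetapart}. The key observation is that, since the linear maps $\widetilde{q}_h \mapsto q_h$ and $\widetilde{p}_h \mapsto p_h$ each preserve the relevant bilinear pairing by \cref{lem:thetapart_symplectic}, they are both injective: if $s_h = \frac{\partial q_h}{\partial \widetilde{q}_h} \widetilde{s}_h = 0$, then $(\widetilde{s}_h, \widetilde{r}_h)_{\mathcal{T}_h} = (s_h, r_h)_{\mathcal{T}_h} = 0$ for all $\widetilde{r}_h \in \widetilde{W}_h$, and nondegeneracy of the $L^2$ inner product forces $\widetilde{s}_h = 0$; likewise for the $p$-component. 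Hence, the PRK equations \eqref{eq:prk_tilde_stage}--\eqref{eq:prk_tilde_step} in $\widetilde{W}_h$ are equivalent to their images in $W_h$ under these maps, obtained by applying each component map term-by-term (this is where linearity is essential).

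Once the PRK equations are transported into $W_h$, I would apply \cref{cor:partitioned_ftilde} stage-by-stage: the relation $\dot{\widetilde{Q}}_h^i = \widetilde{f}_p(T^i, \widetilde{Q}_h^i, \widetilde{P}_h^i)$ is equivalent, via the defining identity for $\widetilde{f}_p$, to \eqref{eq:weak_r} with $(q_h, \widehat{q}_h, p_h, \widehat{p}_h)$ replaced by $(Q_h^i, \widehat{Q}_h^i, P_h^i, \widehat{P}_h^i)$; the analogous statement for $\widetilde{f}_q$ yields the $s_h$-equation. Pairing these identities with $r_h \in W_h$ (respectively $s_h \in W_h$) using $(\cdot, \cdot)_{\mathcal{T}_h}$ and then substituting into the stage equations \eqref{eq:prk_tilde_stage} yields the first two equations of \eqref{eq:prk_weak}, and the same procedure applied to the update equations \eqref{eq:prk_tilde_step} yields the last two.

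The flux and single-valuedness conditions for $\widehat{Q}_h^i$ and $\widehat{P}_h^i$ follow from the partitioned form of \cref{lem:thetapart_constraints}, applied at each stage with the appropriate time $T^i$ or $\bar{T}^i$; this is exactly what guarantees that $\Phi_q$ and $\Phi_p$ may be imposed independently. The converse direction follows by reading all these steps in reverse, using the injectivity established above.

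The main bookkeeping obstacle is tracking two different stage-time sequences $T^i$ and $\bar{T}^i$: the internal stages $Q_h^i$ and $P_h^i$ evaluate $\Theta_q$ and $\Theta_p$ at different times, so care is needed to ensure that the source terms $f_p(T^j, Q_h^j, P_h^j)$ and $f_q(\bar{T}^j, Q_h^j, P_h^j)$ are evaluated consistently with the definitions of $\widetilde{f}_p$ and $\widetilde{f}_q$. There is no genuine new difficulty beyond that already handled in \cref{thm:rk_weak}, however, since the partitioning of both $\mathbf{\Theta}$ and $\mathbf{\Phi}$ guarantees that the $q$ and $p$ stages decouple cleanly into mirrored problems of the same form as in the unpartitioned case.
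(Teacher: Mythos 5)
Your proposal is correct and follows essentially the same route as the paper: apply the (injective, by \cref{lem:thetapart_symplectic}) linear component maps term-by-term to \eqref{eq:prk_tilde}, invoke \cref{cor:partitioned_ftilde} separately at the times $T^i$ and $\bar{T}^i$ to convert the stage derivatives into the weak equations, and use \cref{lem:thetapart_constraints} for the flux and conservativity conditions. You also correctly identify the one genuine subtlety—the two distinct stage-time sequences—which is exactly the point the paper emphasizes about having established \eqref{eq:weak_r} and \eqref{eq:weak_s} as separate statements.
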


\begin{proof}
  Similarly to the proof of \cref{thm:rk_weak}, we apply the linear
  maps $ \widetilde{ q } _h \mapsto q _h $ and
  $ \widetilde{ p } _h \mapsto p _h $, which are injective by
  \cref{lem:thetapart_symplectic}, to the corresponding parts of
  \eqref{eq:prk_tilde}, obtaining the equivalent system
  \begin{subequations}
    \label{eq:prk}
    \begin{align}
      Q _h ^i &= q _h ^0 + \Delta t \sum _{ j = 1 } ^s a _{ i j } \dot{ Q } _h ^j , & P _h ^i &= p  _h ^0 + \Delta t \sum _{ j = 1 } ^s \bar{a} _{ i j } \dot{ P } _h ^j , \label{eq:prk_stage} \\
      q _h ^1 &= q _h ^0 + \Delta t \sum _{ i = 1 } ^s b _i \dot{ Q } _h ^i , & p _h ^1 &= p _h ^0 + \Delta t \sum _{ i = 1 } ^s \bar{b} _i \dot{ P } _h ^i . \label{eq:prk_step}
    \end{align}
  \end{subequations}
  By \cref{cor:partitioned_ftilde},
  $ \smash{\dot{ \widetilde{ Q } }} _h ^i = \widetilde{ f } _p ( T ^i ,
  \widetilde{ Q } _h ^i , \widetilde{ P } _h ^i ) $ and
  $ - \smash{\dot{ \widetilde{ P } }} _h ^i = \widetilde{ f } _q ( \bar{T} ^i ,
  \widetilde{ Q } _h ^i , \widetilde{ P } _h ^i ) $ are equivalent to
  \begin{alignat*}{2}
    ( \dot{Q} _h ^i , r _h ) _{ \mathcal{T} _h } + ( P _h ^i , \mathrm{D} r _h ) _{ \mathcal{T} _h } + [ \widehat{ P } _h ^i , r _h ] _{ \partial \mathcal{T} _h }
    &= \bigl( f _p ( T ^i, Q _h ^i , P _h ^i ) , r _h \bigr) _{ \mathcal{T} _h }, \quad &\forall r _h &\in W _h ,\\
    -( \dot{P} _h ^i , s _h ) _{ \mathcal{T} _h } + ( Q _h ^i , \mathrm{D} s _h ) _{ \mathcal{T} _h } + [ \widehat{ Q } _h ^i , s _h ] _{ \partial \mathcal{T} _h }
    &= \bigl( f _q ( \bar{T} ^i , Q _h ^i , P _h ^i ) , s _h \bigr) _{ \mathcal{T} _h } , \quad &\forall s _h &\in W _h .
  \end{alignat*}
  We note the importance of having established \eqref{eq:weak_r} and
  \eqref{eq:weak_s} separately: this allows us to apply
  \cref{cor:partitioned_ftilde} with $ t = T ^i $ and
  $ t = \bar{T} ^i $, respectively, even when these times are
  distinct. Substituting into \eqref{eq:prk} and applying
  \cref{lem:thetapart_constraints} completes the proof.
\end{proof}

\begin{example}
  \label{ex:verlet}
  The St\"ormer/Verlet method is a PRK method with tableaux%
  \footnote{\citeauthor{HaLuWa2006} define St\"ormer/Verlet slightly
    differently, taking $ \bar{c} _1 = \bar{c} _2 = \frac{1}{2} $
    \citep[Table II.2.1]{HaLuWa2006}. Although these methods coincide
    for autonomous systems, \citet{Jay2021} has recently shown that
    the version above is preferred for non-autonomous systems---and in
    particular, that it is symplectic, whereas the version with
    $ \bar{c} _1 = \bar{c} _2 = \frac{1}{2} $ is not.}
  \begin{equation*}
    \def\arraystretch{1.4}
    \begin{array}[b]{c|cc}
      0 & 0 & 0  \\
      1 & \frac{1}{2} & \frac{1}{2} \\
      \hline
           & \frac{1}{2} & \frac{1}{2}
    \end{array}
    \qquad \qquad
    \begin{array}[b]{c|cc}
      0 & \frac{1}{2} & 0 \\
      1 & \frac{1}{2} & 0 \\
      \hline
        & \frac{1}{2} & \frac{1}{2}
    \end{array} \quad .
  \end{equation*}
  The expression and implementation of the method can be simplified by
  observing that the stages satisfy $ Q _h ^1 = q _h ^0 $ (since
  $ a _{ 1 j } = 0 $), $ Q _h ^2 = q _h ^1 $ (since
  $ a _{ 2 j } = b _j $), and $ P _1 = P _2 $ (since
  $ \bar{a} _{ 1 j } = \bar{a} _{ 2 j } $). Denoting
  $ p _h ^{ 1/2 } \coloneqq P _h ^1 = P _h ^2 $ and
  $ \widehat{ p } _h ^{ 1/2 } \coloneqq \frac{1}{2} ( \widehat{ P } _h
  ^1 + \widehat{ P } _h ^2 ) $, \eqref{eq:prk_weak} can be expressed
  as the following three-step ``leapfrog'' procedure: First, find
  $ p _h ^{ 1/2 } $ and $ \widehat{ q } _h ^0 $ satisfying
  \begin{align*}
    - ( p _h ^{ 1/2 } , s _h ) _{ \mathcal{T} _h } + \frac{1}{2} \Delta t \Bigl( ( q _h ^0, \mathrm{D} s _h ) _{ \mathcal{T} _h } + [ \widehat{ q } _h ^0 , s _h ] _{ \partial \mathcal{T} _h } \Bigr) &= - ( p _h ^0 , s _h ) _{ \mathcal{T} _h } + \frac{1}{2} \Delta t \bigl(  f _q ( t ^0 , q _h ^0, p _h ^{ 1/2} ), s _h \bigr) _{ \mathcal{T} _h } ,\\
    \bigl\langle \Phi _q ( q _h ^0 , \widehat{ q } _h ^0 ) , \widehat{ s } _h ^{\mathrm{nor}} \bigr\rangle _{ \partial \mathcal{T} _h } &= 0 , \\
    \langle \widehat{ q } _h ^{0, \mathrm{nor}} , \widehat{ s } _h ^{\mathrm{tan}} \rangle _{ \partial \mathcal{T} _h } &= 0 ,
  \end{align*}
  for all $ s _h $ and $ \widehat{ s } _h $. Next, find $ q _h ^1 $
  and $ \widehat{ p } _h ^{ 1/2 } $ satisfying
  \begin{align*}
    ( q _h ^1 , r _h ) _{ \mathcal{T} _h } + \Delta t \Bigr( ( p _h ^{ 1/2}, \mathrm{D} r _h ) _{ \mathcal{T} _h } + [ \widehat{ p } _h ^{ 1/2 } , r _h ] _{ \partial \mathcal{T} _h } \Bigr) &= ( q _h ^0, r _h ) _{ \mathcal{T} _h } + \frac{1}{2} \Delta t \bigl( f _p ( t ^0 , q _h ^0 , p _h ^{ 1/2 } ) , r _h \bigr) _{ \mathcal{T} _h } \\
    & \hphantom{{}= ( q _h ^0, r _h ) _{ \mathcal{T} _h } } + \frac{1}{2} \Delta t \bigl( f _p ( t ^1, q _h ^1, p _h ^{ 1/2 } ), r _h \bigr)_{ \mathcal{T} _h } , \\
    \bigl\langle \Phi _p ( p _h ^{1/2} , \widehat{ p } _h ^{1/2} ) , \widehat{ r } _h ^{\mathrm{nor}} \bigr\rangle _{ \partial \mathcal{T} _h } &= 0 , \\
    \langle \widehat{ p } _h ^{1/2, \mathrm{nor}} , \widehat{ r } _h ^{\mathrm{tan}} \rangle _{ \partial \mathcal{T} _h } &= 0 ,
  \end{align*}
  for all $ r _h $ and $ \widehat{ r } _h $. Finally, find $ p _h ^1 $
  and $ \widehat{ q } _h ^1 $ satisfying
  \begin{align*}
    - ( p _h ^1 , s _h ) _{ \mathcal{T} _h } + \frac{1}{2} \Delta t \Bigl( ( q _h ^1 , \mathrm{D} s _h ) _{ \mathcal{T} _h } + [ \widehat{ q } _h ^1 , s _h ] _{ \partial \mathcal{T} _h } \Bigr) &= - ( p _h ^{1/2} , s _h ) _{ \mathcal{T} _h } + \frac{1}{2} \Delta t \bigl( f _q ( t ^1 , q _h ^1, p _h ^{ 1/2 } ) , s _h \bigr) _{ \mathcal{T} _h } ,\\
    \bigl\langle \Phi _q ( q _h ^1 , \widehat{ q } _h ^1 ) , \widehat{ s } _h ^{\mathrm{nor}} \bigr\rangle _{ \partial \mathcal{T} _h } &= 0 , \\
    \langle \widehat{ q } _h ^{1, \mathrm{nor}} , \widehat{ s } _h ^{\mathrm{tan}} \rangle _{ \partial \mathcal{T} _h } &= 0 ,
  \end{align*}
  for all $ s _h $ and $ \widehat{ s } _h $. When $ \mathbf{f} $ is
  separable, each of these steps requires \emph{only a linear solve},
  even if $ \mathbf{f} $ is nonlinear. This corresponds to the fact
  that St\"ormer/Verlet is \emph{explicit} for separable systems.
\end{example}

\begin{example}
  To illustrate \cref{ex:verlet} more concretely, we now give an
  explicit description of a method for the semilinear Hodge wave
  equation that applies St\"ormer/Verlet time-stepping to the
  multisymplectic LDG-H semidiscretization \eqref{eq:ldg-h_wave}.  The
  method advances
  $ ( u _h ^0 , p _h ^0 ) \mapsto ( u _h ^1 , p _h ^1 ) $ according to
  the following procedure:

  \begin{enumerate}[leftmargin=0pt, itemindent=*, label=\textsc{Step \arabic*.}]
  \item As in \cref{thm:ldg-h_wave_definite}, find
    $ ( \sigma _h ^0, \widehat{ \sigma } _h ^{0, \mathrm{tan}} ) \in W
    _h ^{ k -1 } \times \ringhat{V} _h ^{k-1, \mathrm{tan}} $
    satisfying
    \begin{equation*}
      - ( \sigma _h ^0 , \tau _h ) _{ \mathcal{T} _h } + \bigl\langle \alpha ^{ k -1 } ( \widehat{ \sigma } _h ^{0, \mathrm{tan}} - \sigma _h ^{0, \mathrm{tan}} ) , \widehat{ \tau } _h ^{\mathrm{tan}} - \tau _h ^{\mathrm{tan}} \bigr\rangle _{ \partial \mathcal{T} _h } = ( \delta u _h ^0 , \tau _h ) _{ \mathcal{T} _h } + \langle u _h ^{0, \mathrm{nor}} , \widehat{ \tau } _h ^{\mathrm{tan}} \rangle _{ \partial \mathcal{T} _h } ,
    \end{equation*}
    for all
    $ ( \tau _h , \widehat{ \tau } _h ^{\mathrm{tan}} ) \in W _h ^{ k
      -1 } \times \ringhat{V} _h ^{k-1, \mathrm{tan}} $, and find
    $ ( \rho _h ^0 , \widehat{ u } _h ^{0, \mathrm{tan}} ) \in W _h
    ^{k+1} \times \ringhat{V} _h ^{k, \mathrm{tan}} $ satisfying
    \begin{equation*}
      ( \rho _h ^0 , \eta _h ) _{ \mathcal{T} _h } + \langle \alpha ^k \widehat{ u } _h ^{0, \mathrm{tan}} , \widehat{ v } _h ^{\mathrm{tan}} \rangle _{ \partial \mathcal{T} _h } + \langle \widehat{ u } _h ^{0, \mathrm{tan}} , \eta _h ^{\mathrm{nor}} \rangle _{ \partial \mathcal{T} _h } - \langle \rho _h ^{0, \mathrm{nor}} , \widehat{ v } _h ^{\mathrm{tan}} \rangle _{ \partial \mathcal{T} _h } = - ( u _h ^0 , \delta \eta _h ) _{ \mathcal{T} _h } + \langle \alpha ^k u _h ^{0, \mathrm{tan}} , \widehat{ v } _h ^{\mathrm{tan}} \rangle _{ \partial \mathcal{T} _h } ,
    \end{equation*}
    for all
    $ ( \eta _h , \widehat{ v } _h ^{\mathrm{tan}} ) \in W _h ^{k+1}
    \times \ringhat{V} _h ^{k, \mathrm{tan}} $. Then, find
    $ p _h ^{ 1/2 } \in W _h ^k $ satisfying
    \begin{multline*}
      ( p _h ^{ 1/2 } , v _h ) _{ \mathcal{T} _h } = ( p _h ^0 , v _h ) _{ \mathcal{T} _h } + \frac{1}{2} \Delta t \Bigl( - \bigl( f ( t ^0 , u _h ^0 ) , v _h \bigr) _{ \mathcal{T} _h } \\
      + ( \sigma _h ^0 , \delta v _h ) _{ \mathcal{T} _h } + ( \delta \rho _h ^0 , v _h ) _{ \mathcal{T} _h } + \langle \widehat{ \sigma } _h ^{0, \mathrm{tan}} , v _h ^{\mathrm{nor}} \rangle _{ \partial \mathcal{T} _h } + \bigl\langle \alpha ^k ( \widehat{ u } _h ^{0, \mathrm{tan}} - u _h ^{0, \mathrm{tan}} ) , v _h ^{\mathrm{tan}} \bigr\rangle _{ \partial \mathcal{T} _h } \Bigr) 
      ,
    \end{multline*}
    for all $ v _h \in W _h ^k $.

  \item Take $ u _h ^1 = u _h ^0 + \Delta t \, p _h ^{ 1/2 } $.

  \item Similarly to the first step, find
    $ ( \sigma _h ^1, \widehat{ \sigma } _h ^{1, \mathrm{tan}} ) \in W
    _h ^{ k -1 } \times \ringhat{V} _h ^{k-1, \mathrm{tan}} $
    satisfying
    \begin{equation*}
      - ( \sigma _h ^1 , \tau _h ) _{ \mathcal{T} _h } + \bigl\langle \alpha ^{ k -1 } ( \widehat{ \sigma } _h ^{1, \mathrm{tan}} - \sigma _h ^{1, \mathrm{tan}} ) , \widehat{ \tau } _h ^{\mathrm{tan}} - \tau _h ^{\mathrm{tan}} \bigr\rangle _{ \partial \mathcal{T} _h } = ( \delta u _h ^1 , \tau _h ) _{ \mathcal{T} _h } + \langle u _h ^{1, \mathrm{nor}} , \widehat{ \tau } _h ^{\mathrm{tan}} \rangle _{ \partial \mathcal{T} _h } ,
    \end{equation*}
    for all
    $ ( \tau _h , \widehat{ \tau } _h ^{\mathrm{tan}} ) \in W _h ^{ k
      -1 } \times \ringhat{V} _h ^{k-1, \mathrm{tan}} $, and
    $ ( \rho _h ^1 , \widehat{ u } _h ^{1, \mathrm{tan}} ) \in W _h
    ^{k+1} \times \ringhat{V} _h ^{k, \mathrm{tan}} $ satisfying
    \begin{equation*}
      ( \rho _h ^1 , \eta _h ) _{ \mathcal{T} _h } + \langle \alpha ^k \widehat{ u } _h ^{1, \mathrm{tan}} , \widehat{ v } _h ^{\mathrm{tan}} \rangle _{ \partial \mathcal{T} _h } + \langle \widehat{ u } _h ^{1, \mathrm{tan}} , \eta _h ^{\mathrm{nor}} \rangle _{ \partial \mathcal{T} _h } - \langle \rho _h ^{1, \mathrm{nor}} , \widehat{ v } _h ^{\mathrm{tan}} \rangle _{ \partial \mathcal{T} _h } = - ( u _h ^1 , \delta \eta _h ) _{ \mathcal{T} _h } + \langle \alpha ^k u _h ^{1, \mathrm{tan}} , \widehat{ v } _h ^{\mathrm{tan}} \rangle _{ \partial \mathcal{T} _h } ,
    \end{equation*}
    for all
    $ ( \eta _h , \widehat{ v } _h ^{\mathrm{tan}} ) \in W _h ^{k+1}
    \times \ringhat{V} _h ^{k, \mathrm{tan}} $. Then, find
    $ p _h ^1 \in W _h ^k $ satisfying
    \begin{multline*}
      ( p _h ^1 , v _h ) _{ \mathcal{T} _h } = ( p _h ^{1/2} , v _h ) _{ \mathcal{T} _h } + \frac{1}{2} \Delta t \Bigl( - \bigl( f ( t ^1 , u _h ^1 ) , v _h \bigr) _{ \mathcal{T} _h } \\
      + ( \sigma _h ^1 , \delta v _h ) _{ \mathcal{T} _h } + ( \delta \rho _h ^1 , v _h ) _{ \mathcal{T} _h } + \langle \widehat{ \sigma } _h ^{1, \mathrm{tan}} , v _h ^{\mathrm{nor}} \rangle _{ \partial \mathcal{T} _h } + \bigl\langle \alpha ^k ( \widehat{ u } _h ^{1, \mathrm{tan}} - u _h ^{1, \mathrm{tan}} ) , v _h ^{\mathrm{tan}} \bigr\rangle _{ \partial \mathcal{T} _h } \Bigr) 
      ,
    \end{multline*}
    for all $ v _h \in W _h ^k $.
  \end{enumerate} 
  Note that this method is explicit, in that it only requires solving
  linear variational problems, even when $f$ is nonlinear. Moreover,
  \textsc{Step 3} can be combined with the subsequent \textsc{Step 1},
  e.g.,
    \begin{multline*}
      ( p _h ^{3/2} , v _h ) _{ \mathcal{T} _h } = ( p _h ^{1/2} , v _h ) _{ \mathcal{T} _h } + \Delta t \Bigl( - \bigl( f ( t ^1 , u _h ^1 ) , v _h \bigr) _{ \mathcal{T} _h } \\
      + ( \sigma _h ^1 , \delta v _h ) _{ \mathcal{T} _h } + ( \delta \rho _h ^1 , v _h ) _{ \mathcal{T} _h } + \langle \widehat{ \sigma } _h ^{1, \mathrm{tan}} , v _h ^{\mathrm{nor}} \rangle _{ \partial \mathcal{T} _h } + \bigl\langle \alpha ^k ( \widehat{ u } _h ^{1, \mathrm{tan}} - u _h ^{1, \mathrm{tan}} ) , v _h ^{\mathrm{tan}} \bigr\rangle _{ \partial \mathcal{T} _h } \Bigr) ,
    \end{multline*}
    resulting in a ``leapfrog'' procedure where $ u _h $ is computed
    at integer steps and $ p _h $ at half-integer steps. In practice,
    this means that---except for the very first half-step---the linear
    variational system only needs to be solved once rather than twice
    per time step.
\end{example}

\subsection{Symplectic integrators and the discrete multisymplectic
  conservation law}

We now show that, when a multisymplectic semidiscretization method in
space is combined with a symplectic (P)RK method, the resulting
numerical scheme satisfies a discrete multisymplectic conservation
law. The argument is a direct application of the theory of
\emph{functional equivariance} developed in
\citet{McSt2024}---specifically, quadratic functional equivariance for
symplectic RK methods and bilinear functional equivariance for
symplectic PRK methods---and extends the results of \citep[Section
4.5]{McSt2024} for the special case of time-dependent de~Donder--Weyl
systems.

We recall that an RK method conserves quadratic invariants
\citep{Cooper1987} and is therefore symplectic
\citep{Lasagni1988,Sanz-Serna1988,BoSc1994} if its coefficients satisfy
\begin{equation}
  \label{eq:srk}
  b _i b _j - b _i a _{ i j } - b _j a _{ j i } = 0 , \quad \forall i, j = 1, \ldots, s ,
\end{equation}
with implicit midpoint and other Gauss--Legendre
collocation methods being primary examples. A PRK method conserves
bilinear invariants and is therefore symplectic if its coefficients
satisfy
\begin{subequations}
  \label{eq:sprk}
  \begin{alignat}{2}
    b _i \bar{b} _j - b _i \bar{a} _{ i j } - \bar{b} _j a _{ j i } &= 0 , \quad &\forall i, j &= 1, \ldots, s , \label{eq:sprk_ab}\\
    b _i &= \bar{b} _i , \quad &\forall i &= 1, \ldots, s , \label{eq:sprk_b}\\
    c _i &= \bar{c} _i , \quad &\forall i &= 1, \ldots, s , \label{eq:sprk_c}
  \end{alignat}
\end{subequations}
with St\"ormer/Verlet and other Lobatto IIIA--IIIB methods being
primary examples. The conditions \eqref{eq:sprk_ab}--\eqref{eq:sprk_b}
for autonomous systems appear in \citep[Equation 2.2]{Suris1990} and
\citep[Equation 2.5]{Sun1993}; the addition of \eqref{eq:sprk_c} for
non-autonomous systems can be found in \citep[Equation
2.17]{Sanz-Serna2016}. See also \citep[Sections IV.2 and
VI.4]{HaLuWa2006} and references therein.

Now, suppose $ \partial \mathbf{f} / \partial \mathbf{z} _h $ is
symmetric and $ \mathbf{\Phi} $ is multisymplectic, and consider the
system
\begin{equation*}
  \widetilde{ \mathbf{J} } \dot{ \widetilde{ \mathbf{z} } } _h = \widetilde{ \mathbf{f} } ( t, \widetilde{ \mathbf{z} } _h ), \qquad
  \widetilde{ \mathbf{J} } \dot{ \widetilde{ \mathbf{w} } } _1 = \frac{ \partial \widetilde{ \mathbf{f} }}{ \partial \widetilde{ \mathbf{z} } _h } \widetilde{ \mathbf{w} } _1, \qquad
  \widetilde{ \mathbf{J} } \dot{ \widetilde{ \mathbf{w} } } _2 = \frac{ \partial \widetilde{ \mathbf{f} }}{ \partial \widetilde{ \mathbf{z} } _h } \widetilde{ \mathbf{w} } _2, \qquad
  \dot{ \zeta } = - [ \widehat{ \mathbf{w} } _1 , \widehat{ \mathbf{w} } _2 ] _{ \partial K } ,
\end{equation*}
for any $ K \in \mathcal{T} _h $. This describes the simultaneous
evolution of a solution to \eqref{eq:weak}, by \cref{lem:ftilde}; two
variations satisfying \eqref{eq:weakvar}, by \cref{cor:ftilde_var};
and the observable
$ \zeta = ( \mathbf{J} \mathbf{w} _1, \mathbf{w} _2 ) _K $, by
\cref{thm:local_ms}. We define the vector field
\begin{equation*}
  \widetilde{ \mathbf{g} } ( t, \widetilde{ \mathbf{z} } _h , \widetilde{ \mathbf{w} } _1 , \widetilde{ \mathbf{w} } _2 , \zeta ) = \biggl( \widetilde{ \mathbf{f} } ( t, \widetilde{ \mathbf{z} } _h ), \frac{ \partial \widetilde{ \mathbf{f} }}{ \partial \widetilde{ \mathbf{z} } _h } \widetilde{ \mathbf{w} } _1, \frac{ \partial \widetilde{ \mathbf{f} }}{ \partial \widetilde{ \mathbf{z} } _h } \widetilde{ \mathbf{w} } _2, - [ \widehat{ \mathbf{w} } _1 , \widehat{ \mathbf{w} } _2 ] _{ \partial K } \biggr),
\end{equation*}
corresponding to this system. If $ \Psi $ is a numerical integrator
with time-step size $ \Delta t $, we denote its application to the
vector field $ \widetilde{ \mathbf{f} } $ by
$ \Psi _{ \widetilde{ \mathbf{f} } } \colon I \times \widetilde{
  \mathbf{W} } _h \rightarrow \widetilde{ \mathbf{W} } _h $,
$ ( t ^0, \widetilde{ \mathbf{z} } _h ^0 ) \mapsto \widetilde{
  \mathbf{z} } _h ^1 $, and likewise its application to the vector
field $ \widetilde{ \mathbf{g} } $ by
$ \Psi _{ \widetilde{ \mathbf{g} } } \colon I \times ( \widetilde{
  \mathbf{W} } _h ) ^3 \times \mathbb{R} \rightarrow ( \widetilde{
  \mathbf{W} } _h ) ^3 \times \mathbb{R} $. When $ \Psi $ is a PRK
method, we partition all three copies of
$ \widetilde{ \mathbf{W} } _h $ in the same way, i.e.,
$ ( \widetilde{ q } _h , \widetilde{ s } _1 , \widetilde{ s } _2 ) $
in one part and
$ ( \widetilde{ p } _h , \widetilde{ r } _1 , \widetilde{ r } _2 ) $
in the other.

\begin{theorem}
  \label{thm:discrete_mscl}
  Let $ \partial \mathbf{f} / \partial \mathbf{z} _h $ be symmetric
  and $ \mathbf{\Phi} $ be multisymplectic. Suppose that either
  \begin{enumerate}[label=(\roman*)]
  \item the hypotheses of \cref{thm:rk_weak} hold, and $ \Psi $ is an
    RK method satisfying \eqref{eq:srk};
    or \label{thm:discrete_mscl_rk}
  \item the hypotheses of \cref{thm:prk_weak} hold, and $ \Psi $ is a
    PRK method satisfying
    \eqref{eq:sprk}. \label{thm:discrete_mscl_prk}
  \end{enumerate}
  Then we have
  \begin{equation}
    \label{eq:Psi_g}
    \Bigl(  \widetilde{ \mathbf{z} } _h ^1 , \widetilde{ \mathbf{w} } _1 ^1 , \widetilde{ \mathbf{w} } _2 ^1 , ( \mathbf{J} \mathbf{w} _1 ^1 , \mathbf{w} _2 ^1 ) _K \Bigr) = \Psi _{ \widetilde{ \mathbf{g} } } \Bigl( t ^0, \widetilde{ \mathbf{z} } _h ^0 , \widetilde{ \mathbf{w} } _1 ^0 , \widetilde{ \mathbf{w} } _2 ^0 , ( \mathbf{J} \mathbf{w} _1 ^0 , \mathbf{w} _2 ^0 ) _K \Bigr),
  \end{equation}
  where
  \begin{equation*}
    \widetilde{ \mathbf{z} } _h ^1 = \Psi _{ \widetilde{ \mathbf{f} } } ( t ^0 , \widetilde{ \mathbf{z} } _h ^0 ) , \qquad \widetilde{ \mathbf{w} } _1 ^1 = \frac{ \partial \widetilde{ \mathbf{z} } _h ^1 }{ \partial \widetilde{ \mathbf{z} } _h ^0 } \widetilde{ \mathbf{w} } _1 ^0 , \qquad \widetilde{ \mathbf{w} } _2 ^1 = \frac{ \partial \widetilde{ \mathbf{z} } _h ^1 }{ \partial \widetilde{ \mathbf{z} } _h ^0 } \widetilde{ \mathbf{w} } _2 ^0 .
  \end{equation*}
  The equality of the last components of \eqref{eq:Psi_g} can be
  written as
  \begin{equation}
    \label{eq:discrete_mscl}
    ( \mathbf{J} \mathbf{w} _1 ^1 , \mathbf{w} _2 ^1 ) _K =  ( \mathbf{J} \mathbf{w} _1 ^0 , \mathbf{w} _2 ^0 ) _K - \Delta t \sum _{ i = 1 } ^s b _i [ \widehat{ \mathbf{W} } _1 ^i , \widehat{ \mathbf{W} } _2 ^i ] _{ \partial K } ,
  \end{equation}
  which we call the \emph{discrete multisymplectic conservation law}.
\end{theorem}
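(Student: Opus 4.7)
The plan is to reduce the theorem to two ingredients: the standard variational propagation of (P)RK stages (the Jacobian of the discrete flow satisfies the discrete variational equation), together with a localized-to-$K$ version of the classical Cooper/Sun symplectic identity for the bilinear form $(\mathbf{J}\cdot,\cdot)_K$. Conceptually this is the functional equivariance framework of \citet{McSt2024}, applied locally on $K$ with the stage-level multisymplectic identity playing the role of the source term in the observable's evolution.

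First, I would establish the first three components of $\Psi_{\widetilde{\mathbf{g}}}$. Applying the (P)RK method to the variational system amounts to linearizing the stage and update equations of \cref{thm:rk_weak} or \cref{thm:prk_weak}, yielding stages $\widetilde{\mathbf{W}}_i^j = \frac{\partial \widetilde{\mathbf{Z}}_h^j}{\partial \widetilde{\mathbf{z}}_h^0}\widetilde{\mathbf{w}}_i^0$ and updates $\widetilde{\mathbf{w}}_i^1 = \frac{\partial \widetilde{\mathbf{z}}_h^1}{\partial \widetilde{\mathbf{z}}_h^0}\widetilde{\mathbf{w}}_i^0$. Pulling back through $\partial\mathbf{\Theta}/\partial\widetilde{\mathbf{z}}_h|_{\widetilde{\mathbf{Z}}_h^j}$, the associated $(\mathbf{W}_i^j,\widehat{\mathbf{W}}_i^j)$ satisfy the linearized weak problem at each stage. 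Mimicking the proof of \cref{thm:local_ms} stage-by-stage---using the extension by zero of $\mathbf{W}_2^i|_K$ as a test function, the symmetry of $\partial\mathbf{f}/\partial\mathbf{z}_h$, the identity \eqref{eq:bracket_D}, and multisymplecticity of $\mathbf{\Phi}$ to cancel $[\widehat{\mathbf{W}}_1^i - \mathbf{W}_1^i, \widehat{\mathbf{W}}_2^i - \mathbf{W}_2^i]_{\partial K}$---yields the per-stage identity
\[
(\mathbf{J}\dot{\mathbf{W}}_1^i, \mathbf{W}_2^i)_K + (\mathbf{J}\mathbf{W}_1^i, \dot{\mathbf{W}}_2^i)_K = -[\widehat{\mathbf{W}}_1^i, \widehat{\mathbf{W}}_2^i]_{\partial K}.
\]

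Next, I would close the argument with the classical symplectic identity for $(\mathbf{J}\cdot,\cdot)_K$, now localized to $K$. In case \ref{thm:discrete_mscl_rk}, expanding $(\mathbf{J}\mathbf{w}_1^1,\mathbf{w}_2^1)_K - (\mathbf{J}\mathbf{w}_1^0,\mathbf{w}_2^0)_K$ via $\mathbf{w}_i^1 = \mathbf{w}_i^0 + \Delta t\sum_k b_k\dot{\mathbf{W}}_i^k$ and $\mathbf{w}_i^0 = \mathbf{W}_i^k - \Delta t\sum_l a_{kl}\dot{\mathbf{W}}_i^l$ produces the linear-in-$\Delta t$ sum $\Delta t\sum_i b_i[(\mathbf{J}\dot{\mathbf{W}}_1^i,\mathbf{W}_2^i)_K + (\mathbf{J}\mathbf{W}_1^i,\dot{\mathbf{W}}_2^i)_K]$ plus a remainder with coefficient $b_ib_j - b_ia_{ij} - b_ja_{ji}$, vanishing by \eqref{eq:srk}. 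Substituting the per-stage identity gives \eqref{eq:discrete_mscl}. In case \ref{thm:discrete_mscl_prk}, writing $\mathbf{w}_i = (s_i,r_i)$ so that $(\mathbf{J}\mathbf{w}_1,\mathbf{w}_2)_K = (s_1,r_2)_K - (r_1,s_2)_K$, the analogous PRK expansion produces the mixed coefficient $b_i\bar{b}_j - b_i\bar{a}_{ij} - \bar{b}_j a_{ji}$, vanishing by \eqref{eq:sprk_ab}; conditions \eqref{eq:sprk_b}--\eqref{eq:sprk_c} ensure $T^i = \bar{T}^i$, so that the $q$- and $p$-stages pair at a common time, making them compatible with the per-stage identity.

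The main obstacle is organizational rather than conceptual, particularly in case \ref{thm:discrete_mscl_prk}, where one must carefully track how the partitioned stages, tableaux, and times interact with the $q$/$p$ splitting of $(\mathbf{J}\cdot,\cdot)_K$. Aside from this bookkeeping, the entire argument is a direct localization to $K$ of the functional equivariance arguments of \citet{McSt2024}.
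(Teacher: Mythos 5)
Your proposal is correct and follows essentially the same route as the paper: the paper's proof simply observes that $ ( \mathbf{J} \mathbf{w} _1 , \mathbf{w} _2 ) _K $ is quadratic (resp.\ bilinear) in the variational data and invokes the functional equivariance results of \citep{McSt2024} as a black box, and the computation you carry out---propagating variations through the (P)RK stages, the per-stage identity obtained by rerunning \cref{thm:local_ms} on the stage values, and the Cooper/Lasagni-type expansion whose $ \Delta t ^2 $ residual vanishes by \eqref{eq:srk} or \eqref{eq:sprk_ab}, with \eqref{eq:sprk_b}--\eqref{eq:sprk_c} aligning the partitioned stages---is precisely the content of those cited results, inlined and localized to $K$. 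Nothing is missing.
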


\begin{proof}
  If \ref{thm:discrete_mscl_rk} holds, then linearity of
  $ \widetilde{ \mathbf{z} } _h \mapsto \mathbf{z} _h $ implies that
  $ ( \mathbf{J} \mathbf{w} _1 , \mathbf{w} _2 ) _K $ is quadratic in
  $ \widetilde{ \mathbf{w} } _1 , \widetilde{ \mathbf{w} } _2 $. Since
  RK methods satisfying \eqref{eq:srk} conserve quadratic invariants
  and are therefore quadratic functionally equivariant
  \citep[Corollary 2.10(b)]{McSt2024}, the conclusion follows from the
  general results in \citet[Section 2.4.4]{McSt2024}: compare
  \eqref{eq:Psi_g} with \citep[Equation 5]{McSt2024} and
  \eqref{eq:discrete_mscl} with \citep[Equation 6]{McSt2024}.

  Similarly, if \ref{thm:discrete_mscl_prk} holds, then linearity of
  $ \widetilde{ q } _h \mapsto q _h $ and
  $ \widetilde{ p } _h \mapsto p _h $ implies that
  $ ( \mathbf{J} \mathbf{w} _1 , \mathbf{w} _2 ) _K = ( s _1 , r _2 )
  _K - ( r _1 , s _2 ) _K $ is bilinear in
  $ \widetilde{ s } _1 , \widetilde{ s } _2 $ and
  $ \widetilde{ r } _1 , \widetilde{ r } _2 $. Since PRK methods
  satisfying \eqref{eq:sprk} conserve bilinear invariants and are
  therefore bilinear functionally equivariant \citep[Example
  5.18]{McSt2024}, the conclusion follows as in \citep[Section
  5.3]{McSt2024}.
\end{proof}

\begin{remark}
  Although the results in \citet{McSt2024} are stated for autonomous
  systems, they are readily extended to non-autonomous systems with
  only minor modifications. In particular, \eqref{eq:sprk_b} is
  sufficient to ensure that a PRK method is affine functionally
  equivariant for autonomous systems \citep[Example 5.17]{McSt2024},
  while the additional condition \eqref{eq:sprk_c} allows the argument
  to extend to non-autonomous systems. The sufficiency of
  \eqref{eq:sprk} for bilinear functional equivariance of PRK methods
  can also be seen directly from \citet[Lemma 2.5]{Sanz-Serna2016}.
\end{remark}

\begin{example}
  For the implicit midpoint method in \cref{ex:midpoint}, the discrete
  multisymplectic conservation law on $ K \in \mathcal{T} _h $ takes
  the form
  \begin{equation*}
    (  \mathbf{J} \mathbf{w} _1 ^1 , \mathbf{w} _2 ^1 ) _K = ( \mathbf{J} \mathbf{w} _1 ^0 , \mathbf{w} _2 ^0 ) _K - \Delta t [ \widehat{ \mathbf{w} } _1 ^{ 1/2 } , \widehat{ \mathbf{w} } _2 ^{ 1/2 } ] _{ \partial K } .
  \end{equation*}
\end{example}

\begin{remark}
  For strongly multisymplectic methods, as in \cref{sec:strong_ms} we
  may replace $ K \in \mathcal{T} _h $ by any collection of elements
  $ \mathcal{K} \subset \mathcal{T} _h $, obtaining the discrete
  multisymplectic conservation law
  \begin{equation*}
    ( \mathbf{J} \mathbf{w} _1 ^1 , \mathbf{w} _2 ^1 ) _{ \mathcal{K} } = ( \mathbf{J} \mathbf{w} _1 ^0 , \mathbf{w} _2 ^0 ) _{ \mathcal{K} } - \Delta t \sum _{ i = 1 } ^s b _i [ \widehat{ \mathbf{W} } _1 ^i , \widehat{ \mathbf{W} } _2 ^i ] _{\partial (\overline{\bigcup\mathcal{K}})} .
  \end{equation*}
\end{remark}

\section{Numerical examples}
\label{sec:numerical_examples}

We illustrate the behavior of these methods by considering the
$ k = 1 $ semilinear Hodge wave equation in dimension $ n = 2 $. For
$ \Omega \subset \mathbb{R}^2 $, we may identify
$ H \Lambda (\Omega) $ with the complex of scalar and vector
``proxies''
\begin{equation*}
  0 \rightarrow H ^1 (\Omega) \xrightarrow{ \operatorname{curl} } H ( \operatorname{div} ; \Omega ) \xrightarrow{ \operatorname{div} } L ^2 (\Omega) \rightarrow 0
\end{equation*}
and $ H ^\ast \Lambda (\Omega) $ with the dual complex
\begin{equation*}
  0 \leftarrow L ^2 (\Omega) \xleftarrow{ \operatorname{rot} } H ( \operatorname{rot} ; \Omega ) \xleftarrow{ -\operatorname{grad} } H ^1 (\Omega) \leftarrow 0 ,
\end{equation*}
where $\operatorname{curl} \tau \coloneqq (\partial_y \tau, -\partial_x \tau)$ and
$\operatorname{rot} v \coloneqq \partial_x v_y - \partial_y v_x$. As in \citep[Table
1]{AwFaGuSt2023}, proxies for tangential and normal traces on
$ \partial K $ are
\begin{equation*}
  \tau ^{\mathrm{tan}} = \tau \rvert _{ \partial K } , \qquad v ^{\mathrm{nor}} = v \times \widehat{ n } , \qquad v ^{\mathrm{tan}} = ( v \cdot \widehat{ n } ) \widehat{ n } , \qquad \eta ^{\mathrm{nor}} = \eta \widehat{ n } ,
\end{equation*}
where $ \widehat{ n } $ is the outer unit normal vector and
$ v \times \widehat{ n } \coloneqq v _x \widehat{ n } _y - v _y
\widehat{ n } _x $.

Using this vector calculus correspondence, \eqref{eq:hodge_wave} gives
a first-order formulation of the semilinear vector wave equation,
\begin{subequations}
  \begin{align}
    \dot{\sigma} + \operatorname{rot} p &= 0, \\
    \dot{u} &= p, \\
    \dot{\rho} + \operatorname{div} p &= 0, \\
    \operatorname{rot} u &= -\sigma, \\
    -\dot{p} + \operatorname{curl} \sigma - \operatorname{grad} \rho &= \frac{\partial F}{\partial u}, \\
    \operatorname{div} u &= -\rho .
  \end{align}
\end{subequations}
Recall that the dynamics of $ ( u , p ) $ correspond to the global Hamiltonian
\begin{equation*}
  \mathcal{H} ( t, u , p ) = \frac{1}{2} \bigl( \lVert \sigma \rVert _\Omega ^2 + \lVert p \rVert _\Omega ^2 + \lVert \rho \rVert _\Omega ^2 \bigr) + \int _\Omega F ( t, x, u ) \,\mathrm{vol},
\end{equation*}
which we previously denoted by
$ \widetilde{ \mathcal{H} } _{ \mathbf{S} } $ in
\cref{ex:global_hamiltonian_wave}.

For the discretization, we employ the LDG-H methods introduced in
\cref{sec:hodge_wave_methods}, using equal-order spaces and
piecewise-constant penalties. To satisfy the hypothesis that
$ \alpha ^0 $ be negative-definite and $ \alpha ^1 $ be
positive-definite, required throughout
\cref{sec:hodge_wave_methods,sec:hodge_wave_hamiltonian}, the penalty
constants must satisfy $ \alpha ^0 _e < 0 $ and $ \alpha ^1 _e > 0 $
on each facet $ e \subset \partial \mathcal{T} _h $. The
multisymplectic LDG-H method \eqref{eq:ldg-h_wave} reads: Find
\begin{equation*}
  ( u _h , p _h , \sigma _h , \rho _h , \widehat{ \sigma } _h ^{\mathrm{tan}} , \widehat{ u } _h ^{\mathrm{tan}} ) \colon I \rightarrow W _h ^1 \times W _h ^1 \times W _h ^0 \times W _h ^2 \times \ringhat{ V } _h ^{0, \mathrm{tan}} \times \ringhat{ V } _h ^{1, \mathrm{tan}}
\end{equation*}
satisfying the dynamical equations
\begin{alignat*}{2}
  ( \dot{u} _h , r _h ) _{ \mathcal{T} _h } &= ( p _h , r _h ) _{ \mathcal{T} _h } , \quad &\forall r _h &\in W _h ^1 , \\
  \begin{multlined}[b]
    - ( \dot{p} _h , v _h ) _{ \mathcal{T} _h } + ( \sigma _h , \operatorname{rot} v _h ) _{ \mathcal{T} _h } - ( \operatorname{grad} \rho _h , v _h ) _{ \mathcal{T} _h } \\
    + \langle \widehat{ \sigma } _h ^{\mathrm{tan}} , v _h \times \widehat{ n } \rangle _{ \partial \mathcal{T} _h } + \bigl\langle \alpha ^1 ( \widehat{ u } _h ^{\mathrm{tan}} - u _h ) \cdot \widehat{ n } , v _h \cdot \widehat{ n } \bigr\rangle _{ \partial \mathcal{T} _h }
  \end{multlined} &= \biggl( \frac{ \partial F }{ \partial u _h } , v _h \biggr) _{ \mathcal{T} _h } , \quad &\forall v _h &\in W _h ^1 , \\
  \intertext{together with the constraints}
  ( \operatorname{rot} u _h , \tau _h ) _{ \mathcal{T} _h } + \bigl\langle \alpha ^0 ( \widehat{ \sigma } _h ^{\mathrm{tan}} - \sigma _h ) , \tau _h \bigr\rangle _{ \partial \mathcal{T} _h } &= - ( \sigma _h , \tau _h ) _{ \mathcal{T} _h } , \quad &\forall \tau _h &\in W _h ^0 , \\
  -( u _h , \operatorname{grad} \eta _h ) _{ \mathcal{T} _h } + \langle \widehat{ u } _h ^{\mathrm{tan}} \cdot  \widehat{ n } , \eta _h \rangle _{ \partial \mathcal{T} _h } &= - ( \rho _h , \eta _h )  _{ \mathcal{T} _h } , \quad &\forall \eta _h &\in W _h ^2 , \\
  \intertext{and the conservativity conditions}
  \bigl\langle u _h \times \widehat{ n } - \alpha ^0 ( \widehat{ \sigma } _h ^{\mathrm{tan}} - \sigma _h ) , \widehat{ \tau } _h ^{\mathrm{tan}} \bigr\rangle _{ \partial \mathcal{T} _h } &= 0 , \quad &\forall \widehat{ \tau } _h ^{\mathrm{tan}} &\in \ringhat{V} _h ^{0, \mathrm{tan}}, \\
  \bigl\langle \rho _h - \alpha ^1 ( \widehat{ u } _h ^{\mathrm{tan}} - u _h ) \cdot \widehat{ n } , \widehat{ v } _h ^{\mathrm{tan}} \cdot \widehat{ n } \bigr\rangle _{ \partial \mathcal{T} _h } &= 0 , \quad &\forall \widehat{ v } _h ^{\mathrm{tan}} &\in \ringhat{V} _h ^{1, \mathrm{tan}} .
\end{alignat*}
By \cref{thm:ldg-h_discrete_hamiltonian}, the semidiscrete dynamics of
$ ( u _h , p _h ) $ correspond to the discrete Hamiltonian
\begin{multline*}
  \mathcal{H} _h ( t, u _h , p _h ) = \frac{1}{2} \Bigl( \lVert \sigma _h \rVert _\Omega ^2 + \lVert p _h \rVert _\Omega ^2 + \lVert \rho _h \rVert _\Omega ^2 \Bigr) + \int _\Omega F ( t, x, u _h ) \,\mathrm{vol} \\
  - \frac{1}{2} \bigl\langle \alpha ^0 ( \widehat{ \sigma } _h ^{\mathrm{tan}} - \sigma _h ), \widehat{ \sigma } _h ^{\mathrm{tan}} - \sigma _h \bigr\rangle _{ \partial \mathcal{T} _h } + \frac{1}{2} \bigl\langle \alpha ^1 ( \widehat{ u } _h ^{\mathrm{tan}} - u _h ) \cdot \widehat{ n } , ( \widehat{ u } _h ^{\mathrm{tan}} - u _h ) \cdot \widehat{ n } \bigr\rangle _{ \partial \mathcal{T} _h } .
\end{multline*}
On the other hand, the non-multisymplectic LDG-H method
\eqref{eq:ldg-h_wave_mixed} reads: Find
\begin{equation*}
  ( \sigma _h , u _h , \rho _h , p _h , \widehat{ \sigma } _h ^{\mathrm{tan}} , \widehat{ p } _h ^{\mathrm{tan}} ) \colon I \rightarrow W _h ^0 \times W _h ^1 \times W _h ^2 \times W _h ^1 \times \ringhat{ V } _h ^{0, \mathrm{tan}} \times \ringhat{ V } _h ^{1, \mathrm{tan}}
\end{equation*}
satisfying the dynamical equations
\begin{alignat*}{2}
  ( \dot{ \sigma } _h , \tau _h ) _{ \mathcal{T} _h } + ( \operatorname{rot} p _h , \tau _h ) _{ \mathcal{T} _h } + \bigl\langle \alpha ^0 ( \widehat{ \sigma } _h ^{\mathrm{tan}} - \sigma _h ) , \tau _h \bigr\rangle _{ \partial \mathcal{T} _h } &= 0, \quad &\forall \tau _h &\in W _h ^0 , \\
  ( \dot{ u } _h , r _h ) _{ \mathcal{T} _h } &= ( p _h , r _h ) _{ \mathcal{T} _h } , \quad &\forall r _h &\in W _h ^1 , \\
  ( \dot{ \rho } _h , \eta _h ) _{ \mathcal{T} _h } - ( p _h , \operatorname{grad} \eta _h ) _{ \mathcal{T} _h } + \langle \widehat{ p } _h ^{\mathrm{tan}} \cdot \widehat{ n } , \eta _h \rangle _{ \partial \mathcal{T} _h } &= 0, \quad &\forall \eta _h &\in W _h ^2 , \\
  \begin{multlined}[b]
    - ( \dot{ p } _h , v _h ) _{ \mathcal{T} _h } + ( \sigma _h , \operatorname{rot} v _h ) _{ \mathcal{T} _h } - ( \operatorname{grad} \rho _h , v _h ) _{ \mathcal{T} _h } \\
    + \langle \widehat{ \sigma } _h ^{\mathrm{tan}} , v _h \times \widehat{ n } \rangle _{ \partial \mathcal{T} _h } + \bigl\langle \alpha ^1 ( \widehat{ p } _h ^{\mathrm{tan}} - p _h ) \cdot \widehat{ n } , v _h \cdot \widehat{ n }  \bigr\rangle _{ \partial \mathcal{T} _h }
  \end{multlined} &= \biggl( \frac{ \partial F }{ \partial u _h } , v _h \biggr)  _{ \mathcal{T} _h } ,\quad &\forall v _h &\in W _h ^1 , \\
  \intertext{together with the conservativity conditions}
  \bigl\langle p _h \times \widehat{ n } - \alpha ^0 ( \widehat{ \sigma } _h ^{\mathrm{tan}} - \sigma _h ) , \widehat{ \tau } _h ^{\mathrm{tan}} \bigr\rangle _{ \partial \mathcal{T} _h } &= 0 , \quad &\forall \widehat{ \tau } _h ^{\mathrm{tan}} &\in \ringhat{V} _h ^{0, \mathrm{tan}}, \\
  \bigl\langle \rho _h - \alpha ^1 ( \widehat{ p } _h ^{\mathrm{tan}} - p _h ) \cdot \widehat{ n } , \widehat{ v } _h ^{\mathrm{tan}} \cdot \widehat{ n } \bigr\rangle _{ \partial \mathcal{T} _h } &= 0 , \quad &\forall \widehat{ v } _h ^{\mathrm{tan}} &\in \ringhat{V} _h ^{1, \mathrm{tan}} .
\end{alignat*}
All numerical computations were performed using NGSolve
\citep{Schoeberl2014}. The code used to conduct these experiments is
freely available from
\url{https://github.com/EnricoZampa/HamiltonianLDG}.

\subsection{The linear homogeneous case}
We first consider the linear homogeneous vector wave equation
($ F = 0 $) on the domain $ \Omega = [0, 1] \times [0, 0.1] $ with
periodic boundary conditions. We compute numerical solutions to the
problem whose exact solution is the traveling plane wave
\begin{equation*}
  u ( t, x , y ) = \Bigl( 0, \sin \bigl( 4 \pi ( x - t ) \bigr) \Bigr) .
\end{equation*}
Notice that $ u _y $ is a traveling-wave solution to the
one-dimensional scalar wave equation, allowing comparison with
\citet[Example 4.3]{SaCiNgPe17}. In contrast to \citep{SaCiNgPe17},
however, we are computing a solution to the vector wave equation on a
two-dimensional strip discretized by an unstructured triangle mesh,
rather than the scalar wave equation on a one-dimensional grid.

We now compare the behavior of the multisymplectic and
non-multisymplectic LDG-H methods. For both methods, we semidiscretize
using equal-order spaces with polynomial degree $ r = 1 $, constant
penalties $ - \alpha ^0 = \alpha ^1 = 0.05 $, and mesh size
$ h = 0.025 $. Following \citep[Example 4.3]{SaCiNgPe17}, we then
integrate in time with $ \Delta t = h $ using a symplectic diagonally
implicit RK method of order $6$, obtained by composing several steps
of implicit midpoint with weights discovered by \citet[Table 1,
Solution A]{Yoshida1990}; see also \citet[Equation V.3.11]{HaLuWa2006}
and \citep[Table A.1]{SaCiNgPe17}.

\begin{figure}
  \centering
  \includegraphics{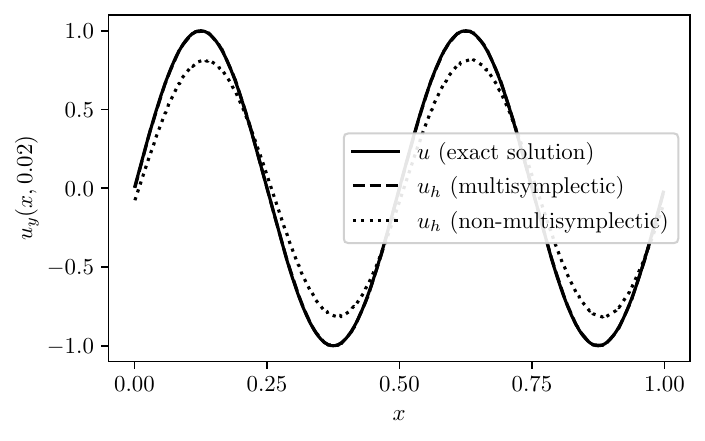}
  \caption{Cross-section of the $y$-component of the exact and
    numerical solutions, taken along the line $y = 0.02 $ at time
    $ T = 20 $. The multisymplectic LDG-H method nearly matches the
    exact solution, whereas the non-multisymplectic LDG-H method shows
    amplitude decrease due to energy dissipation, as well as phase
    error.\label{fig:u_comparison_linear}}
\end{figure}

\Cref{fig:u_comparison_linear} shows a cross-section of the exact
solution $u$ and the numerical solutions $ u _h $ for the
multisymplectic and non-multisymplectic LDG-H methods at time
$ T = 20 $. The multisymplectic LDG-H solution is barely
distinguishable from the exact solution with amplitude $1$. By
contrast, the non-multisymplectic LDG-H solution shows substantial
amplitude decrease due to the energy dissipation described in
\cref{lem:ldg-h_wave_mixed_energy}, as well as visible phase
error. Compare \citep[Figures 1 and 2]{SaCiNgPe17}.

\begin{figure}
  \centering
  \includegraphics{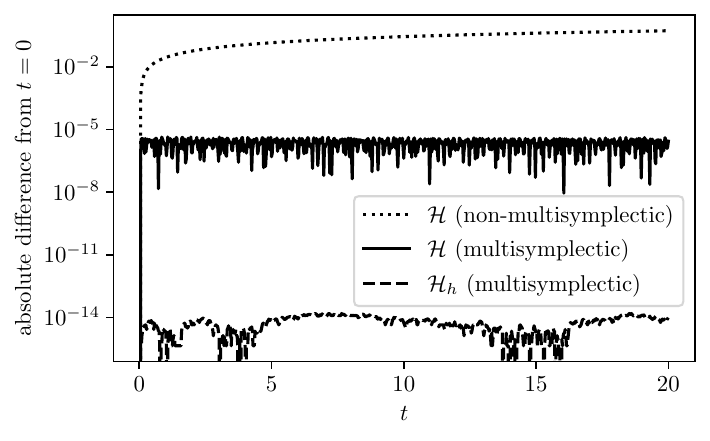}
  \caption{Absolute error in the global Hamiltonian $ \mathcal{H} $
    and discrete Hamiltonian $ \mathcal{H} _h $ along numerical
    solutions. The multisymplectic LDG-H method conserves
    $ \mathcal{H} _h $ up to floating-point error and nearly conserves
    $ \mathcal{H} $, whereas the non-multisymplectic LDG-H method
    shows significant drift due to its
    dissipativity.\label{fig:H_comparison_linear}}
\end{figure}

\Cref{fig:H_comparison_linear} shows the evolution of the global
Hamiltonian $\mathcal{H}$ for both LDG-H methods and of the discrete
Hamiltonian $ \mathcal{H} _h $ for the multisymplectic LDG-H
method. Since we are applying a symplectic RK method, and these
Hamiltonians are quadratic in the linear case, their conservation or
lack thereof is due to the spatial semidiscretization rather than the
time discretization. The multisymplectic LDG-H method conserves
$ \mathcal{H} _h $ in exact arithmetic, and the
$ \approx 10 ^{ - 14 } $ errors seen here are on the order of
accumulated floating-point error. The multisymplectic method also
nearly conserves $\mathcal{H}$ within $ \approx 10 ^{ - 5 } $, with
bounded errors reflecting the difference between $\mathcal{H} _h $ and
$ \mathcal{H} $. On the other hand, the dissipativity of the
non-multisymplectic LDG-H method leads to large energy drift, with
error $ \approx 10 ^0 $ by the final time $ T = 20 $. Compare
\citep[Figure 3]{SaCiNgPe17}.

\subsection{The nonlinear case}
We now consider a nonlinear example with
\begin{equation*}
  F ( t, x, u ) = \frac{1}{2} \lvert u \rvert ^2 - \frac{ 1 }{ 4 } \lvert u \rvert ^4 , \qquad f ( t, x, u ) = \frac{ \partial F }{ \partial u } = \bigl( 1 - \lvert u \rvert ^2 \bigr) u ,
\end{equation*}
which is a cubic nonlinear vector Klein--Gordon equation, akin to
\citet[Example 2]{SaVa24}. We take the domain to be the unit square
$ \Omega = [ 0, 1 ] ^2 $ with periodic boundary conditions and compute
a numerical solution to the problem whose exact solution is the
traveling plane wave
\begin{equation*}
  u ( t, x, y ) = \frac{1}{2} \Bigl( \cos \bigl( 2 \pi ( x + y ) - \theta t \bigr) , \sin \bigl( 2 \pi ( x + y ) - \theta t \bigr) \Bigr) ,
\end{equation*}
where $ \theta ^2 = 8 \pi ^2 + 3 / 4 $. We semidiscretize using the
multisymplectic LDG-H method with equal-order spaces of polynomial
degree $ r = 3 $, constant penalties $ - \alpha ^0 = \alpha ^1 = 1 $,
and mesh size $ h = 0.1 $. We integrate in time using the
St\"ormer/Verlet method, which requires only a linear (rather than
nonlinear) solve at each step due to the separability of the system,
taking step size $ \Delta t = 0.01 h $.

\begin{figure}
  \centering
  \includegraphics[width=5in]{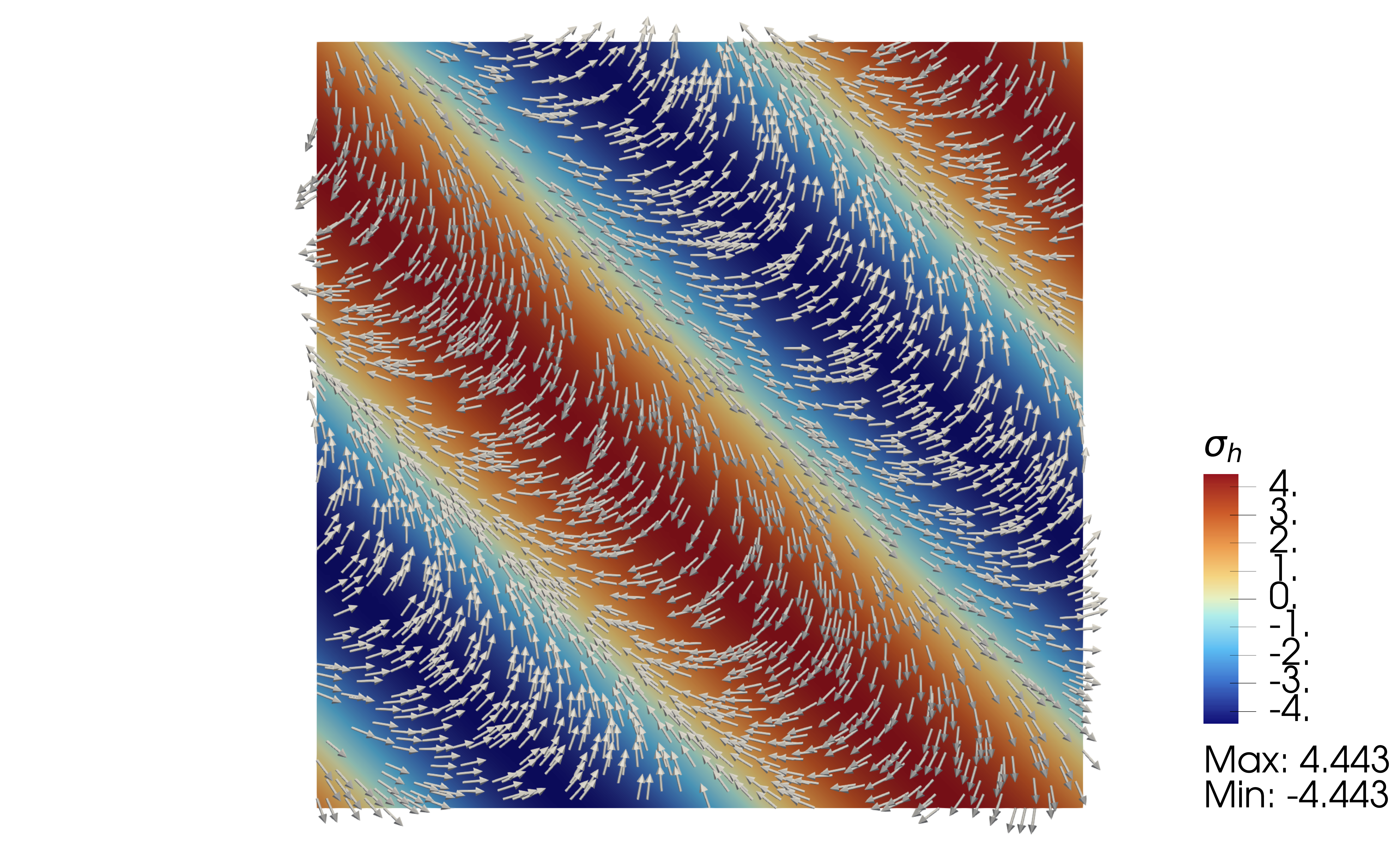}
  \caption{Visualization of the vector field $ u _h $ (arrows) and
    scalar field $ \sigma _h $ (color) at time $ T = 10 $, computed
    using multisymplectic LDG-H semidiscretization with
    St\"ormer/Verlet time stepping. Note that the exact solution
    $\sigma$ has amplitude $ \pi \sqrt{ 2 } \approx 4.443 $, closely
    matched by that of the numerical solution
    $ \sigma _h $.\label{fig:u_semilinear}}
\end{figure}

\begin{figure}
  \centering
  \includegraphics{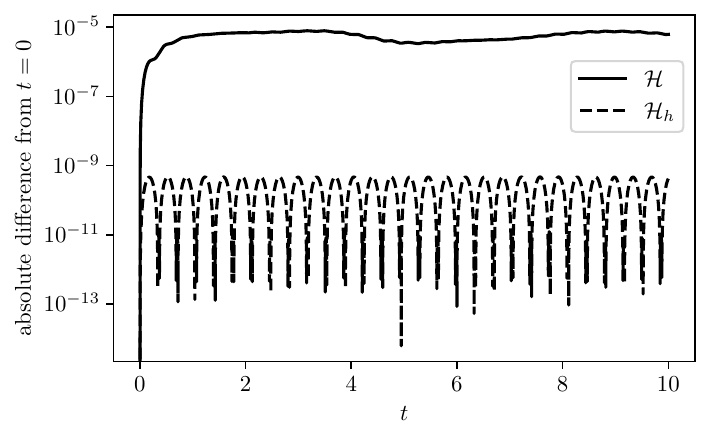}
  \caption{Absolute error in the global Hamiltonian $\mathcal{H}$ and
    discrete Hamiltonian $ \mathcal{H} _h $ for the multisymplectic
    LDG-H method with St\"ormer/Verlet time stepping. For this
    nonlinear problem, as in the linear case, both quantities are
    nearly conserved. \label{fig:H_semilinear}}
\end{figure}

\Cref{fig:u_semilinear} shows the numerical solution computed at time
$ T = 10 $, evincing near-preservation of the amplitude of the plane
wave. \Cref{fig:H_semilinear} shows the evolution of the global
Hamiltonian $\mathcal{H}$ and discrete Hamiltonian $ \mathcal{H} _h
$. We observe near-conservation of $ \mathcal{H} _h $ within
$ \approx 10 ^{ - 9 } $ and of $ \mathcal{H} $ within
$ \approx 10 ^{ - 5 } $, with bounded errors reflecting the difference
between $ \mathcal{H} _h $ and $\mathcal{H}$. Unlike the linear case,
since $ \mathcal{H} _h $ is not bilinear (not even quadratic), we
should not expect the St\"ormer/Verlet method to conserve it exactly,
even in exact arithmetic. However, since the method is symplectic, we
observe bounded oscillation of $ \mathcal{H} _h $, rather than drift,
due to conservation of a nearby modified discrete Hamiltonian;
cf.~\citet[Section IX.3]{HaLuWa2006} and references therein.

\footnotesize

\end{document}